\newtheorem{theorem}{Theorem}[section]
\newtheorem{lemma}[theorem]{Lemma}
\newtheorem*{lemma*}{Lemma}
\newtheorem{corollary}[theorem]{Corollary}
\newtheorem{remark}[theorem]{Remark}
\numberwithin{equation}{section}
\newcommand{\labitem}[2]{%
\def\@itemlabel{\textbf{#1}}
\item
\def\@currentlabel{#1}\label{#2}}
\newcommand{\norm}[1]{\left\|{#1}\right\|}
\newcommand{\abs}[1]{\left|{#1}\right|}
\newcommand{\rkla}[1]{{\left(#1\right)}}
\newcommand{\trkla}[1]{{(#1)}}
\newcommand{\gkla}[1]{{\left\{#1\right\}}}
\newcommand{\tgkla}[1]{{\{#1\}}}
\newcommand{\skla}[1]{{\left\langle#1\right\rangle}}
\newcommand{\ekla}[1]{{\left[#1\right]}}
\newcommand{\tekla}[1]{{[#1]}}
\newcommand{\tabs}[1]{|{#1}|}
\newcommand{\bs}[1]{\boldsymbol{#1}}
\newcommand{\iOmega}{\int_{\Omega}}
\newcommand{\Om}{\mathcal{O}}
\newcommand{\iO}{\int_{\Om}}
\newcommand{\iGamma}{\int_\Gamma}
\newcommand{\para}[1]{\partial _{#1}}
\newcommand{\dx}{\, \mathrm{d}{x}}
\newcommand{\dt}{\, \mathrm{d}t}
\newcommand{\ds}{\, \mathrm{d}s}
\newcommand{\dG}{\,\mathrm{d}\Gamma}
\renewcommand{\div}{\operatorname{div}}
\newcommand{\nablaG}{\nabla_\Gamma}
\newcommand{\nn}{^{n}}
\newcommand{\no}{^{n-1}}
\newcommand{\tl}{^{\tau}}
\newcommand{\tp}{^{\tau,+}}
\newcommand{\tm}{^{\tau,-}}
\newcommand{\tpm}{^{\tau,(\pm)}}
\newcommand{\pml}{^{(\pm)}}
\newcommand{\h}{_{h}}
\newcommand{\hj}{_{h_j}}
\newcommand{\weak}{\rightharpoonup}
\newcommand{\weakstar}{\stackrel{*}{\rightharpoonup}}
\newcommand{\weaktop}{{\mathrm{weak}}}
\newcommand{\Uh}{U_{h}^{\Om}}
\newcommand{\Uhj}{U_{h_j}^{\Om}}
\newcommand{\Th}{\mathcal{T}_h}
\newcommand{\UhG}{U_{h}^{\Gamma}}
\newcommand{\UhGj}{U_{h_j}^{\Gamma}}
\newcommand{\ThG}{\mathcal{T}_h^{\Gamma}}
\newcommand{\Ihop}{\mathcal{I}_h}
\newcommand{\Ih}[1]{\Ihop\gkla{#1}}
\newcommand{\Ihjop}{\mathcal{I}_{h_j}}
\newcommand{\Ihj}[1]{\Ihjop\gkla{#1}}
\newcommand{\IhGop}{\mathcal{I}_h^\Gamma}
\newcommand{\IhG}[1]{\IhGop\gkla{#1}}
\newcommand{\IhGjop}{\mathcal{I}_{h_j}^\Gamma}
\newcommand{\IhGj}[1]{\IhGjop\gkla{#1}}
\newcommand{\restr}[2]{\ensuremath{
  \left.\kern-\nulldelimiterspace 
  #1 
  \vphantom{\big|} 
  \right|_{#2} 
  }}
\newcommand{\extend}[2]{\ensuremath{
  \left.\kern-\nulldelimiterspace 
  #1 
  \vphantom{\big|} 
  \right|^{#2} 
  }}
\newcommand{\diam}{\operatorname{diam}}
\newcommand{\g}[1]{\mathfrak{g}_{#1}}
\newcommand{\Zh}{\mathds{Z}\h}
\newcommand{\p}{{\mathfrak{p}}}
\newcommand{\expected}[1]{\mathds{E}\ekla{#1}}
\newcommand{\expectedt}[1]{\widetilde{\mathds{E}}\ekla{#1}}
\newcommand{\Prob}{\mathds{P}}
\DeclareMathOperator*{\esssup}{ess\,sup}
\newcommand{\Ito}{It\^{o}}
\newcommand{\sinc}[1]{\blacktriangle^{#1}\boldsymbol{\xi}^\tau}
\newcommand{\sinctilde}[1]{\blacktriangle^{#1}\widetilde{\boldsymbol{\xi}}^j}
\newcommand{\trace}[1]{\left.\ekla{#1}\right|_\Gamma}
\newcommand{\LapInv}{\mathfrak{D}}
\newcommand{\dom}{\mathrm{dom}\,}
\newcommand{\citeASAV}{Metzger2024_arxiv}
\begin{document}
\title[Contact line tension with thermal noise]{A convergent augmented SAV scheme for stochastic Cahn--Hilliard equations with dynamic boundary conditions describing contact line tension}
\date{\today}
\author[S.~Metzger]{Stefan Metzger}
\address[S.~Metzger]{Friedrich--Alexander Universität Erlangen--Nürnberg,~Cauerstraße 11,~91058~Erlangen,~Germany}
\email{stefan.metzger@fau.de}


\keywords{contact line tension, stochastic dynamic boundary conditions, multiplicative noise, finite elements, convergence, scalar auxiliary variable}
\subjclass[2010]{60H35, 65M60, 60H15, 65M12, 35K25}




%
\selectlanguage{english}

\allowdisplaybreaks

\begin{abstract}
We augment a thermodynamically consistent diffuse interface model for the description of line tension phenomena by multiplicative stochastic noise to capture the effects of thermal fluctuations and establish the existence of pathwise unique (stochastically) strong solutions.
By starting from a fully discrete linear finite element scheme, we do not only prove the well-posedness of the model, but also provide a practicable and convergent scheme for its numerical treatment.
Conceptually, our discrete scheme relies on a recently developed augmentation of the scalar auxiliary variable approach, which reduces the requirements on the time regularity of the solution.
By showing that fully discrete solutions to this scheme satisfy an energy estimate, we obtain first uniform regularity results.
Establishing Nikolskii estimates with respect to time, we are able to show convergence towards pathwise unique martingale solutions by applying Jakubowski's generalization of Skorokhod's theorem.
Finally, a generalization of the Gyöngy--Krylov characterization of convergence in probability provides convergence towards strong solutions and thereby completes the proof. 

\end{abstract}
\maketitle

\section{Introduction}
The description of the evolution of two immiscible fluids in a confined domain $\Om$ has been a significant research topic throughout the last centuries.
Thereby, the evolution of the three-phase contact line between the two fluids and the solid wall was of particular interest.
The investigation of mathematical formulas to predict the contact angles of a droplet wetting a solid substrate dates back to the beginning of the 19th century, when T.~Young \cite{Young1805} proposed the following famous formula for the equilibrium contact angle $\Theta$:
\begin{align}\label{eq:Young}
\tilde{\sigma}\cos\Theta=\gamma_{fs,1}-\gamma_{fs,2}\,.
\end{align}
Here, $\gamma_{fs,2}$ denotes the contact energy density between the wetting fluid and the solid substrate, $\gamma_{fs,1}$ denotes the contact energy density between the surrounding fluid (e.g.~air or vapor) and the substrate, and $\tilde{\sigma}$ describes the interfacial tension between the two fluids.
Relation \eqref{eq:Young} can be derived by minimizing the energy
\begin{align}
\mathcal{E}_1:=\int_{I_f}\tilde{\sigma}\dG+\int_{A}\trkla{\gamma_{fs,2}-\gamma_{fs,1}}\dG\,,
\end{align}
where $I_f$ denotes the fluid-fluid interface and $A\subset\partial\Om$ denotes the surface wetted by the droplet.
It was, however, already noted by J.~W.~Gibbs towards the end of the 19th century that contact line effects should be included (cf.~Chapter III in \cite{Gibbs1961}).
Yet Gibbs did not provide any mathematical formulation.
Since then, the influence of contact lines on the contact angle was investigated by many authors (see e.g.~\cite{Toshev1988, Widom95, SwainLipowsky98, AmirfazliNeumann2004, BlecuaLipowskyKierfeld06, Law_etal2017, ZhangWangNestler2023}, and the references therein).
In order to include contact line effects into the description, the energy $\mathcal{E}_1$ has to be augmented by an additional contact line integral, i.e.
\begin{align}
\mathcal{E}_2:=\int_{I_f}\tilde{\sigma}\dG+\int_A\trkla{\gamma_{fs,2}-\gamma_{fs,1}}\dG+\int_{I_f\cap\partial\Om}\tilde{\kappa}\ds\,.
\end{align}
Here, $I_f\cap\partial\Om$ is the three-phase contact line on the boundary $\partial\Om$ of the fluid domain and $\tilde{\kappa}$ denotes the line tension.
Although the line tension can be negative, we will restrict ourselves to the case $\tilde{\kappa}>0$.
Assuming that the droplet is spherical (or a spherical cap when attached to the substrate) and minimizing $\mathcal{E}_2$ leads to the following formula for the stationary contact line (cf.~\cite{Widom95}):
\begin{align}\label{eq:contactlinetension}
\tilde{\sigma}\cos\Theta=\trkla{\gamma_{fs,1}-\gamma_{fs,2}}+\frac{\tilde{\kappa}}{r}\,.
\end{align}
Formula \eqref{eq:contactlinetension} indicates that Young's original description \eqref{eq:Young} predicts the stationary contact angle sufficiently well, if the radius $r$ of the circular contact area $A$ is sufficiently large.
For smaller droplets with smaller contact areas, however, this effect is able to cause significant deviations from the results predicted by Young's formula (see e.g.~\cite{Widom95}).
As it was noted in \cite{SwainLipowsky98} these descriptions should also take thermally excited fluctuations into account, as they are able to change the contact line contour and thereby modify the contact angle.\\
In this publication, we analyze a diffuse interface model with dynamic boundary conditions describing contact line tension effects including thermal fluctuations.
The basic idea of a diffuse interface model is to replace surface (or line) Dirac functions of $I_f$ (or $I_f\cap\partial\Om$, respectively) by smooth approximations of the form $\tfrac12\varepsilon\tabs{\nabla\phi}^2+ \varepsilon^{-1}F\trkla{\phi}$, where $\phi$ is the phase-field parameter describing the two fluid phases, $F$ is a double-well potential with minima in (or close to) the values $\pm1$ indicating the pure phases, and $\varepsilon$ is a small parameter related to the width of the diffuse interface.
Typical choices for this double-well potential are the logarithmic double-well potential
\begin{align}
W_{\log}\trkla{\phi}:=\frac{\vartheta}2\trkla{1+\phi}\log\trkla{1+\phi}+\frac{\vartheta}2\trkla{1-\phi}\log\trkla{1-\phi}-\frac{\vartheta_c}2\phi^2
\end{align}
with $0<\vartheta<\vartheta_c$, the double obstacle potential
\begin{align}
W_{\operatorname{obst}}\trkla{\phi}:=\left\{\begin{array}{cc}
\vartheta\trkla{1-\phi^2}&\text{if~}\phi\in\tekla{-1,+1}\\
\infty&\text{else}
\end{array}\right.
\end{align}
with $\vartheta>0$, and the polynomial double-well potential $W_{\operatorname{pol}}\trkla{\phi}:=\tfrac14\trkla{\phi^2-1}^2$.
The logarithmic and double obstacle potentials are of great analytical interest, as they allow to confine the phase-field parameter to the physical meaningful interval $\tekla{-1,+1}$.
The numerical treatment of these potentials, however, is rather intricate (cf.~\cite{CopettiElliott92, Blowey1992, Blowey1996, Barrett1999, Barrett2001, Frank2020}) and most numerical schemes are based on the polynomial double-well potential $W_{\operatorname{pol}}$.
Assuming $F\trkla{\phi}\equiv W_{\operatorname{pol}}\trkla{\phi}$, we approximate $\mathcal{E}_2$ by
\begin{multline}
\widetilde{\mathcal{E}}\trkla{\phi}:=\iO\sigma\rkla{\frac{\varepsilon}2\abs{\nabla\phi}^2+\frac1\varepsilon F\trkla{\phi}}\dx+\int_{\partial\Om}\gamma_{fs}\trkla{\phi}\dG \\
+\int_{\partial\Om}\kappa\rkla{\frac\varepsilon2\abs{\nablaG\phi}^2+\frac1\varepsilon F\trkla{\phi}}\dG\,,
\end{multline}
with $\gamma_{fs}$ interpolating between $\gamma_{fs,1}$ and $\gamma_{fs,2}$, $\sigma$ and $\kappa$ being rescaled versions of $\tilde{\sigma}$ and $\tilde{\kappa}$, and $\nablaG$ denoting the surface gradient.
A deterministic evolution of the phase-field that minimizes $\widetilde{\mathcal{E}}$ and conserves $\iO\phi\dx$ can be described by
\begin{subequations}\label{eq:CHAC}
\begin{align}
\para{t}\phi=&\Delta\mu&&\text{in~}\Om\,,\\
\mu=&-\sigma\varepsilon\Delta\phi+\sigma\varepsilon^{-1} F^\prime\trkla{\phi}&&\text{in~}\Om\,,\\
\para{t}\phi =&-\gamma^\prime_{fs}\trkla{\phi}-\kappa\trkla{-\varepsilon\Delta_\Gamma\phi+\varepsilon^{-1} F^\prime\trkla{\phi}}-\sigma\varepsilon\nabla\phi\cdot\bs{n}&&\text{on~}\partial\Om\,,\\
\nabla\mu\cdot\bs{n}=&0&&\text{on~}\partial\Om\,,
\end{align}
\end{subequations}
i.e.~by a Cahn--Hilliard equation with Allen--Cahn-type dynamic boundary conditions.
For a more rigorous derivation of \eqref{eq:CHAC} and a discussion of the involved parameters, we refer the reader to \cite{ZhangWangNestler2023}.
Similar equations were derived in \cite{Qian2006}, where \eqref{eq:CHAC} with $\kappa=0$ was coupled with suitable Navier--Stokes equations to describe moving contact lines.
Such Allen--Cahn-type boundary conditions have been extensively studied.
A by no means exhausting list of contributions includes e.g.~\cite{Fischer1997, Fischer1998, Kenzler2001, RackeZheng2003, Wu2004, ChillFasangovaPruess2006, Gilardi2009, Miranville2010, Cherfils2011, Colli2015, Mininni2017}.
As in this publication we are not interested in the sharp interface limit $\varepsilon\searrow0$, we will simplify the representation of \eqref{eq:CHAC} by setting $\varepsilon=\sigma=\kappa=1$ and introduce $G\trkla{\phi}:=F\trkla{\phi}+\gamma_{fs}\trkla{\phi}$.\\
To include thermal fluctuations into the model, we consider a $\mathcal{Q}$-Wiener process $W=\trkla{W_t}_{t\in\tekla{0,T}}$ defined on a filtered probability space $\trkla{\Omega,\mathcal{A},\mathcal{F},\Prob}$.
The exact assumptions on the $\mathcal{Q}$-Wiener process are listed in Assumptions \ref{item:W1} and \ref{item:W2} in Section \ref{sec:preliminaries}.
They in particular imply that its trace $\trkla{\trace{W_t}}_{t\in\tekla{0,T}}$ on $\Gamma:=\partial\Omega$ is well-defined.
With this $\mathcal{Q}$-Wiener process we augment the system \eqref{eq:CHAC} by multiplicative It\^o noise:
\begin{subequations}\label{eq:model}
\begin{align}
\mathrm{d}\phi-\Delta\mu\dt&\,=\Phi\trkla{\phi}\mathrm{d}W&&\text{in~}\Om\,,\\
\mu=-\Delta\phi+&\,F^\prime\trkla{\phi}&&\text{in~}\Om\,,\\
\mathrm{d}\!\trace{\phi} + \trkla{-\Delta_\Gamma \trace{\phi}+G^\prime\trkla{\trace{\phi}}+ \nabla\phi\cdot\bs{n}}\dt&=\trace{\Phi\trkla{\phi}\mathrm{d}W}&&\text{on~}\partial\Om\,,\\
\nabla\mu\cdot\bs{n}=&\,0 &&\text{on~}\partial\Om\,
\end{align}
\end{subequations}
with $\trace{\,\cdot\,}$ denoting the trace operator, i.e.~we model the noise on the boundary $\Gamma=\partial\Om$ as the trace of the noise in the bulk.
The operator $\Phi$ maps the stochastic process $\phi$ into the space of Hilbert--Schmidt operators from $\mathcal{Q}^{1/2} L^2\trkla{\Om}$.
For a detailed definition, we refer the reader to Section \ref{sec:preliminaries} below.\\
To show the existence of solutions to \eqref{eq:model}, we will start from a fully discrete finite element scheme.
Hence, we will not only prove the well-posedness of \eqref{eq:model} but also establish convergence for our numerical scheme.
The considered scheme relies on an augmented version of the scalar auxiliary variable (SAV) approach.
Originally, the SAV approach was introduced in \cite{ShenXuYang2018} for deterministic PDEs describing gradient flows.
This approach has been applied to various deterministic problems (see e.g.~\cite{ShenXuYang19, ShenYang20} and the references therein) and many variations of this approch have been developed and tested (see e.g.~\cite{HouAzaiezXu19, LiuLi20, HuangShenYang2020, YangDong2020, ZhangShen2022,LamWang2023}).
As this approach provides linear schemes, it allows for a significant reduction in computation time.
Hence, an application to stochastic PDEs, where often multiple different paths need to be simulated, is tempting.
Yet, a straightforward application of the standard SAV scheme to stochastic PDEs is not always crowned with success.
Although there are positive results for the stochastic wave equation (see e.g.~\cite{CuiHongSun2022_arxiv}), for most SPDEs the poor time regularity of the solutions impedes convergence results.
As shown in \cite{\citeASAV} this is not only an artificial analytical problem that jeopardizes rigorous convergence proofs, but can also lead to wrong results in practical simulations.
To overcome these difficulties, the author proposed an augmented version of the SAV scheme in \cite{\citeASAV}, which extends the applicability of the SAV schemes to stochastic PDEs with less regular solutions.\\

The outline of this paper is as follows:
In Section \ref{sec:preliminaries}, we will introduce the relevant function spaces and interpolation operators.
We will also collect our assumptions on the data and important auxiliary results.
In Section \ref{sec:discretescheme}, we present the numerical scheme. The main convergence results of this publication can be found in Section \ref{sec:mainresults}:
Theorem \ref{thm:mainresult} provides convergence towards pathwise unique martingale solutions and Theorem \ref{thm:strongsolutions} provides for a given $\mathcal{Q}$-Wiener process the convergence of discrete solutions towards (stochastically) strong solutions.
The results are proven in the remaining sections:
The existence of solutions to the numerical scheme is established in Section \ref{sec:existence}.
Uniform regularity results for these solutions are established in Section \ref{sec:regularity}.
In Section \ref{sec:compactness}, we apply Jakubowski's theorem to identify weakly and strongly converging subsequences based on the prior established regularity results.
In Section \ref{sec:limit}, we pass to the limit $\trkla{h,\tau}\searrow0$ and establish the convergence towards (and existence of) martingale solutions.
Exploiting monotonicity arguments in Section \ref{sec:uniqueness}, we show that these martingale solutions are pathwise unique which concludes the proof of Theorem \ref{thm:mainresult}.
The proof of Theorem \ref{thm:strongsolutions} can be found in Section \ref{sec:strongsolutions}, where we apply a generalized version of the Gyöngy--Krylov characterization of convergence in probability to show that for a given $\mathcal{Q}$-Wiener process our discrete solutions converge towards strong solutions to \eqref{eq:model}.

\section{Notation and assumptions}\label{sec:preliminaries}
The spatial domain $\Om\subset\mathds{R}^d$ with $d\in\tgkla{2,3}$ is assumed to be bounded and convex with lipschitzian boundary $\Gamma:=\partial\Om$.
To avoid additional technicalities, we shall assume that $\Om$ is polygonal (or polyhedral, respectively).
We denote the space of $k$-times weakly differentiable functions with weak derivatives in $L^p\trkla{\Om}$ by $W^{k,p}\trkla{\Om}$.
For $p=2$, these spaces are Hilbert spaces, which we shall denote by $H^k\trkla{\Om}:=W^{k,2}\trkla{\Om}$.
The dual space of $H^1\trkla{\Om}$ will be denoted by $\trkla{H^1\trkla{\Om}}^\prime$.
The symbol $\skla{.,.}$ stands for the duality pairing between $\trkla{H^1\trkla{\Om}}^\prime$ and $H^1\trkla{\Om}$ which satisfies $\skla{u,v}=\trkla{u,v}_{L^2\trkla{\Om}}$ for $u\in L^2\trkla{\Om}$ and $v\in H^1\trkla{\Om}$.
For $\varphi\in\trkla{H^1\trkla{\Om}}^\prime$ we define a generalized mean value by
\begin{align}
\trkla{\phi}_{\Om}:=\tabs{\Om}^{-1}\skla{\varphi,1}\,.
\end{align}
We shall use a similar notation for function space on $\Gamma$.
As we always assume that the domain $\Om$ has a lipschitzian boundary, the spaces $L^p\trkla{\Gamma}$ and $W^{1,p}\trkla{\Gamma}$ are well-defined for all $p\geq1$ (cf.~\cite{Kufner77}) and the trace operator $\trace{\cdot}$ is uniquely defined as an element of $\mathcal{L}\trkla{W^{1,p}\trkla{\Om};W^{1-1/p,p}\trkla{\Gamma}}$ (cf.~\cite{Necas2012}).\\
We also define the space
\begin{align}
\mathcal{H}^1:= H^1\trkla{\Om}\times H^1\trkla{\Gamma}\,,
\end{align}
which is a Hilbert space with respect to the inner product
\begin{align}
\trkla{\trkla{\phi,\psi},\trkla{\zeta,\xi}}_{\mathcal{H}^1}:=\trkla{\phi,\zeta}_{H^1\trkla{\Om}}+\trkla{\psi,\xi}_{H^1\trkla{\Gamma}}
\end{align}
for all $\trkla{\phi,\psi},\trkla{\zeta,\xi}\in\mathcal{H}^1$.
Furthermore, we introduce the following subspace  of $H^1\trkla{\Om}$:
\begin{align}
\mathcal{V}:=\tgkla{\varphi\in H^1\trkla{\Om}\,:\,\trkla{\varphi,\trace{\varphi}}\in\mathcal{H}^1}\,.
\end{align}
For a Banach space $X$ and a set $I$, the symbol $L^p\trkla{I;X}$ ($p\in\trkla{1,\infty}$) stands for the Bochner space of strongly measurable $L^p$-integrable functions on $I$ with values in $X$.
If $X$ is only separable (and not reflexive), we follow the notation used in \cite[Chapter 0.3]{FeireislNovotny2009} and denote the dual space of $L^{p/\trkla{p-1}}\trkla{I;X}$ ($p\in\trkla{1,\infty}$) by $L^p_{\operatorname{weak-}\trkla{*}}\trkla{I;X^\prime}$.
A space $X$ endowed with the weak topology is denoted by $X_{\weaktop}$.\\
By $C^{k,\alpha}\trkla{I;X}$ with $k\in\mathds{N}_0$ and $\alpha\in(0,1]$, we denote the space of $k$-times continuously differentiable functions from $I$ to $X$ whose $k$-th derivatives are Hölder continuous with Hölder exponent $\alpha$.
If $I=X=\mathds{R}$, we shall write $C^{k,\alpha}\trkla{\mathds{R}}$.
We shall also introduce the Nikolskii space $N^{\alpha,\beta}$ ($\alpha\in\trkla{0,1},\,\beta\in\tekla{1,\infty}$) which are defined for a time interval $\trkla{0,T}$ and a Banach space $X$ via
\begin{align}
N^{\alpha,\beta}\trkla{0,T;X}:=\gkla{f\in L^\beta\trkla{0,T;X}\,:\,\sup_{k>0}k^{-\alpha}\norm{f\trkla{\cdot+k}-f\trkla{\cdot}}_{L^\beta\trkla{-k,T-k;X}}<\infty}\,.
\end{align}
Here, the standard convention $f\equiv0$ outside of $\trkla{0,T}$ for $f\in L^\beta\trkla{0,T;X}$ is used.

Concerning the discretization with respect to time, we shall assume that
\begin{itemize}
\labitem{\textbf{(T)}}{item:time} the time interval $I:=\tekla{0,T}$ is subdivided into $N$ equidistant intervals $I_n$ given by nodes $\trkla{t\nn}_{n=0,\ldots,N}$ with $t^0=0$, $t^N=T$, and $t\nn-t\no=\tau=:\tfrac{T}{N}$. Without loss of generality, we assume $\tau<1$.
\end{itemize}
Concerning the spacial domain, we consider a bounded and convex domain $\Om\subset\mathds{R}^d$ ($d\in\tgkla{2,3}$) with boundary $\Gamma=\partial\Om$.
Furthermore, we shall assume that $\Om$ is polygonal (or polyhedral, respectively) to avoid additional technicalities.
The spatial discretization is based on partitions $\Th$ of $\Om$ depending on a discretization parameter $h>0$ satisfying the following assumption:
\begin{itemize}
\labitem{\textbf{(S1)}}{item:space1} Let $\tgkla{\Th}_{h>0}$ be a quasiuniform family (in the sense of \cite{BrennerScott}) of partitions of $\Om$ into disjoint, open simplices $K$, satisfying
\begin{align*}
\overline{\Om}\equiv\bigcup_{K\in\Th}\overline{K}\qquad\text{with~}\max_{K\in\Th}\diam\trkla{K}\leq h\,.
\end{align*}

\end{itemize}
For the discretization of $\Gamma$, we follow the ideas employed in \cite{Metzger2021a, KnopfLamLiuMetzger2021, Metzger2023} and use a partition consisting of the edges (or faces, respectively) of elements of $\Th$. 
In particular, we shall consider partitions $\ThG$ of $\Gamma$ satisfying the following assumption:
\begin{itemize}
\labitem{\textbf{(S2)}}{item:space2} Let $\tgkla{\ThG}_{h>0}$ be a quasiuniform family of partitions of $\Gamma$ into disjoint open simplices $K^\Gamma$, satisfying
\begin{align*}
&&&\forall K^\Gamma\in\ThG\qquad\exists!K\in\Th\text{~such that~}\overline{K^\Gamma}=\overline{K}\cap\Gamma\,,\\
\text{and}&&&\Gamma\equiv\bigcup_{K^\Gamma\in\ThG}\overline{K^\Gamma}\qquad\text{with~}\max_{K^\Gamma\in\ThG}\diam\trkla{K^\Gamma}\leq h\,.
\end{align*}
\end{itemize}
For the approximation of the phase-field $\phi$ and the chemical potential $\mu$, we use continuous, piecewise linear finite element functions on $\Th$.
This space will be denoted by $\Uh$ and is spanned by functions $\tgkla{\chi_{h,k}}_{k=1,\ldots,\dim \Uh}$ forming a dual basis to the vertices $\tgkla{\bs{x}_{h,k}}_{k=1,\ldots,\dim\Uh}$ of $\Th$.
For the discretization of $\trace{\phi}$ and $\theta$, we use continuous, piecewise linear finite element functions on $\ThG$ which shall be denoted by $\UhG$.
Due to the assumptions on $\Th$ and $\ThG$, this space is given by traces of functions in $\Uh$, i.e.
\begin{align}\label{eq:tracespace}
\UhG:=\operatorname{span}\tgkla{\trace{\psi\h}\,:\,\psi\h\in\Uh}\,.
\end{align}
This space can also be described as the span of functions $\tgkla{\chi_{h,k}^\Gamma}_{k=1,\ldots,\dim\UhG}$, which form a dual basis to the vertices $\tgkla{\bs{x}_{h,k}^\Gamma}_{k=1,\ldots,\dim\UhG}\subset\tgkla{\bs{x}_{h,j}}_{j=1,\ldots,\dim\Uh}$ of $\ThG$.
For a different discretization approach which relaxes the cennection between $\Uh$ and $\UhG$ by introducing Lagrange multipliers, we refer to \cite{AltmannZimmer23} and the references therein.
We define the nodal interpolation operators $\Ihop\,:\,C^0\trkla{\overline{\Om}}\rightarrow\Uh$ and $\IhGop\,:\,C^0\trkla{\Gamma}\rightarrow\UhG$ by
\begin{align}
\Ih{a}:=\sum_{k=1}^{\dim\Uh}a\trkla{\bs{x}_{h,k}}\chi_{h,k}\,,&&\text{and}&&\IhG{a}:=\sum_{k=1}^{\dim\UhG}a\trkla{\bs{x}_{h,k}^\Gamma}\chi_{h,k}^\Gamma\,.
\end{align}
For future reference, we state the following norm equivalences for $p\in[1,\infty)$ and $f\h\in\Uh$, $g\h\in\UhG$:
\begin{subequations}\label{eq:normequivalence}
\begin{align}
c\rkla{\iO\tabs{f\h}^p\dx}^{1/p}\leq&\, \rkla{\iO\Ih{\tabs{f\h}^p}\dx}^{1/p}\leq C\rkla{\iO\tabs{f\h}^p\dx}^{1/p}\,,\\
c\rkla{\iGamma\tabs{g\h}^p\dG}^{1/p}\leq &\,\rkla{\iGamma\IhG{\tabs{g\h}^p}\dG}^{1/p}\leq C\rkla{\iGamma\tabs{g\h}^p\dG}^{1/p}
\end{align}
\end{subequations}
with $c,C>0$ independent of $h$.
The proof of this estimate relies on standard inverse estimates and can be found e.g.~in \cite[Lemma 3.2.11]{SieberDiss2021}.
To simplify the notation, we introduce the discrete norms
\begin{subequations}\label{eq:def:hnorms}
\begin{align}
\norm{\zeta\h}_{h,\Om}&:=\sqrt{\iO\Ih{\tabs{\zeta\h}^2}\dx}&\text{and}&&\norm{\zeta\h}_{H\h^1\trkla{\Om}}&:=\sqrt{\norm{\zeta\h}_{h,\Om}^2+\norm{\nabla\zeta\h}_{L^2\trkla{\Om}}^2}\,,\\
\norm{\widehat{\zeta}\h}_{h,\Gamma}&:=\sqrt{\iGamma\IhG{\abs{\widehat{\zeta}\h}^2}\dx}&\text{and}&&\norm{\widehat{\zeta}\h}_{H\h^1\trkla{\Gamma}}&:=\sqrt{\norm{\widehat{\zeta}\h}_{h,\Gamma}^2+\norm{\nablaG\widehat{\zeta}\h}_{L^2\trkla{\Gamma}}^2}
\end{align}
\end{subequations}
on $\Uh$ and $\UhG$.
Due to \eqref{eq:normequivalence}, these norms are equivalent to the usual $L^2$- and $H^1$-norms on $\Omega$ and $\Gamma$, respectively.
Furthermore, we state the following lemma that was proven in \cite{Metzger2020}:
\begin{lemma}\label{lem:interpolationerror}
Let $\Th$ and $\ThG$ satisfy \ref{item:space1} and \ref{item:space2}, respectively. 
Furthermore, let $p\in[1,\infty)$, $1\leq q\leq \infty$ and $q^*=\tfrac{q}{q-1}$ for $q<\infty$ or $q^*=1$ for $q=\infty$.
Then
\begin{subequations}
\begin{align}
\norm{\trkla{1-\Ihop}\tgkla{f\h g\h}}_{L^p\trkla{\Om}}\leq& \,Ch^2\norm{\nabla f\h}_{L^{pq}\trkla{\Om}}\norm{\nabla g\h}_{L^{pq^*}\trkla{\Om}}\,,\\
\norm{\trkla{1-\Ihop}\tgkla{f\h g\h}}_{W^{1,p}\trkla{\Om}}\leq &\, Ch\norm{\nabla f\h}_{L^{pq}\trkla{\Om}}\norm{\nabla g\h}_{L^{pq^*}\trkla{\Om}}\,,\\
\norm{\trkla{1-\IhGop}\tgkla{u\h v\h}}_{L^p\trkla{\Gamma}}\leq&\,Ch^2\norm{\nablaG u\h}_{L^{pq}\trkla{\Gamma}}\norm{\nablaG v\h}_{L^{pq^*}\trkla{\Gamma}}\,,\\
\norm{\trkla{1-\IhGop}\tgkla{u\h v\h}}_{W^{1,p}\trkla{\Gamma}}\leq &\,Ch\norm{\nablaG u\h}_{L^{pq}\trkla{\Gamma}}\norm{\nablaG v\h}_{L^{pq^*}\trkla{\Gamma}}
\end{align}
\end{subequations}
hold true for all $f\h,g\h\in\Uh$ and $u\h,v\h\in\UhG$.
\end{lemma}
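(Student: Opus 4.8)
The plan is to localize to a single simplex, use the standard nodal interpolation error bound there, and then reassemble the global estimate through a discrete Hölder inequality. Because $\Om$ is polyhedral, every boundary simplex $K^\Gamma\in\ThG$ is flat and $\nablaG$ acts on $\UhG$ exactly as $\nabla$ acts on $\Uh$; hence I would carry out the bulk case in detail and obtain the two boundary estimates verbatim after replacing $\Om,\Th,\Ihop,\nabla$ by $\Gamma,\ThG,\IhGop,\nablaG$.

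First I would fix $K\in\Th$. Since $f\h,g\h$ are affine on $K$, their gradients are constant there and $f\h g\h$ is a quadratic polynomial whose Hessian is the constant matrix $\nabla f\h\otimes\nabla g\h+\nabla g\h\otimes\nabla f\h$, so that $\tabs{D^2\trkla{f\h g\h}}\leq 2\,\tabs{\nabla f\h}\,\tabs{\nabla g\h}$ pointwise on $K$. As the nodal interpolant reproduces affine functions, the Bramble--Hilbert lemma combined with a scaling argument on a shape-regular simplex yields $\norm{\trkla{1-\Ihop}\tgkla{f\h g\h}}_{L^p\trkla{K}}\leq C\,\diam\trkla{K}^2\,\tabs{f\h g\h}_{W^{2,p}\trkla{K}}$ and the analogous bound for the gradient with a single power $\diam\trkla{K}$, the constant being uniform across the mesh thanks to the quasiuniformity in \ref{item:space1}. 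Applying Hölder's inequality on $K$ with the conjugate pair $\trkla{q,q^*}$ to the Hessian bound gives $\tabs{f\h g\h}_{W^{2,p}\trkla{K}}\leq C\norm{\nabla f\h}_{L^{pq}\trkla{K}}\norm{\nabla g\h}_{L^{pq^*}\trkla{K}}$, and I would absorb $\diam\trkla{K}\leq h$ into the prefactor.

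Next I would raise the local $L^p$-bounds to the power $p$, sum over $K\in\Th$, and apply the discrete Hölder inequality with the same exponents $\trkla{q,q^*}$ to the sum $\sum_K\norm{\nabla f\h}_{L^{pq}\trkla{K}}^p\norm{\nabla g\h}_{L^{pq^*}\trkla{K}}^p$. This recombines the element contributions into $\trkla{\sum_K\norm{\nabla f\h}_{L^{pq}\trkla{K}}^{pq}}^{1/q}\trkla{\sum_K\norm{\nabla g\h}_{L^{pq^*}\trkla{K}}^{pq^*}}^{1/q^*}=\norm{\nabla f\h}_{L^{pq}\trkla{\Om}}^p\,\norm{\nabla g\h}_{L^{pq^*}\trkla{\Om}}^p$, where the last equality is just additivity of the integral over the partition. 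Taking $p$-th roots yields the first estimate, and the second follows identically, keeping the single power of $h$ and using $\norm{w}_{W^{1,p}\trkla{\Om}}\leq C\trkla{\norm{w}_{L^p\trkla{\Om}}+\norm{\nabla w}_{L^p\trkla{\Om}}}$.

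The argument is essentially routine, so I do not expect a genuine obstacle; the only points needing care are the two-level bookkeeping of exponents (the interplay of the local and the discrete Hölder inequalities) and the degenerate cases $q=1$, giving $q^*=\infty$, and $q=\infty$, giving $q^*=1$, in which both Hölder inequalities are read with the corresponding $L^\infty$/$\ell^\infty$ convention. The one conceptual ingredient — and the reason the powers of $h$ come attached to \emph{gradients} rather than second derivatives of the individual factors — is the observation that the Hessian of a product of affine functions is controlled by the product of their gradients; everything else is standard finite element interpolation theory.
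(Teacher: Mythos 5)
Your proof is correct: localizing to one simplex, observing that the Hessian of the product of two affine functions is the constant matrix $\nabla f\h\otimes\nabla g\h+\nabla g\h\otimes\nabla f\h$, applying the scaled interpolation estimate for (quadratic) polynomial data, and then recombining via the elementwise and discrete H\"older inequalities with the conjugate pair $\trkla{q,q^*}$ is exactly the standard argument, and the flat-face reduction handles the boundary estimates as you describe. Note that the paper itself does not prove this lemma but defers to \cite{Metzger2020}, where the proof proceeds in essentially this same elementwise fashion, so your attempt matches the intended route and has no gaps.
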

Using the nodal interpolation operator, we define the discrete Laplacian $\Delta\h$ on $\Om$ via
\begin{align}\label{eq:def_discLaplace}
\iO\Ih{\Delta\h\zeta\h\psi\h}\dx=-\iO\nabla\zeta\h\cdot\nabla\psi\h\dx
\end{align}
for $\zeta\h,\psi\h\in\Uh$.\\
Concerning the potentials $F$ and $G$, we postulate assumptions similar to the ones used in \cite{\citeASAV}. 
In particular, we shall assume that
\begin{itemize}
\labitem{\textbf{(P)}}{item:potentials} $F,G\in C^2\trkla{\mathds{R}}$ are bounded from below by a positive constant $\gamma>0$ and satisfy the growth estimates
\begin{align*}
c_\gamma\trkla{1+\tabs{\zeta}^4}\leq&\, F\trkla{\zeta}\leq C\trkla{1+\tabs{\zeta}^4}\,,&\tabs{F^\prime\trkla{\zeta}}\leq &\,C\trkla{1+\tabs{\zeta}^3}\,,&\tabs{F^{\prime\prime}\trkla{\zeta}}\leq&\,C\trkla{1+\tabs{\zeta}^2}\,,\\
c_\gamma\trkla{1+\tabs{\zeta}^4}\leq&\, G\trkla{\zeta}\leq C\trkla{1+\tabs{\zeta}^4}\,,&\tabs{G^\prime\trkla{\zeta}}\leq &\,C\trkla{1+\tabs{\zeta}^3}\,,&\tabs{G^{\prime\prime}\trkla{\zeta}}\leq&\,C\trkla{1+\tabs{\zeta}^2}
\end{align*}
for all $\zeta\in \mathds{R}$ and a positive constant $c_\gamma$.\\
Furthermore, $F$ and $G$ can be decomposed into $F_1, G_1\in C^{2,\nu}\trkla{\mathds{R}}$ with $\nu\in\trkla{0,1}$ and $F_2,G_2\in C^3\trkla{\mathds{R}}$, $\tabs{F_2^{\prime\prime\prime}\trkla{\zeta}}\leq C\trkla{1+\tabs{\zeta}^2}$, and $\tabs{G_2^{\prime\prime\prime}\trkla{\zeta}}\leq C\trkla{1+\tabs{\zeta}^2}$.
\end{itemize}
\begin{remark}
Having a positive lower bound for $F$ and $G$ is a purely technical assumptions which is required to state the SAV scheme.
In particular, we will need that $\rkla{\iO F\trkla{\phi}\dx}^{-1/2}$ and $\rkla{\iGamma G\trkla{\phi}\dG}^{-1/2}$ are well-defined and bounded.
As the system stated in \eqref{eq:model} only depends on the derivatives of $F$ and $G$, it is always possible to add arbitrary constants without changing \eqref{eq:model}.
Hence, any lower bounds for $F$ and $G$ are sufficient.
The necessary shift can then be interpreted as a numerical parameter.\\
The assumptions stated in \ref{item:potentials} are satisfied for instance by a shifted polynomial double-well potential $\widetilde{W}_{\operatorname{pol}}\trkla{\phi}:=\tfrac14\trkla{\phi^2-1}^2+\gamma$ with $\gamma>0$. 
We are, however, not limited to this specific choice.
As stated before in \eqref{eq:CHAC}, we also consider a wetting energy density $\gamma_{fs}$ which can be chosen e.g.~as
\begin{align}
\gamma_{fs}\trkla{\phi}:=\left\{\begin{array}{cc}
\frac{\gamma_{fs,2}-\gamma_{fs,1}}{2}\rkla{\tfrac38\phi^5-\tfrac54\phi^3+\tfrac{15}5\phi }+\frac{\gamma_{fs,2}+\gamma_{fs,1}}{2}&\text{if~}\phi\in\tekla{-1,+1}\,,\\
\gamma_{fs,1}&\text{if~}\phi<-1\,,\\
\gamma_{fs,2}&\text{if~}\phi>1\,
\end{array}\right.
\end{align}
to satisfy assumption \ref{item:potentials}.
An admissible choice obtained by merging this wetting energy density with the polynomial double-well potential is e.g.
\begin{align}
G\trkla{\phi}=\widetilde{W}_{\operatorname{pol}}+\gamma_{fs}\trkla{\phi} -\min\tgkla{\gamma_{fs,1},\gamma_{fs,2}}\,.
\end{align}

In comparison to the assumptions used in \cite{Metzger2023} for the numerical treatment of a deterministic Cahn--Hilliard--Cahn--Hilliard-system, the assumptions in \ref{item:potentials} are stricter.
In particular, the decrease in regularity caused by the additional stochastic noise terms requires us to enhance our discrete scheme by additional terms involving higher derivatives of the potentials (cf. Section \ref{sec:discretescheme}).
Hence, the potentials used in the stochastic setting need to allow for one derivative more than their counterparts in the deterministic setting.
The growth conditions imposed on $F$ and $G$ are necessary to control the source terms using a Gronwall argument.
As discussed in \cite[Remark 5]{LamWang2023} these growth conditions seem to be sharp even for deterministic source terms.
\end{remark}

For simplicity, we consider deterministic initial data and assume that
\begin{itemize}
\labitem{\textbf{(I)}}{item:initial} $\phi_0\in H^2\trkla{\Om}$.
The discrete initial data $\phi\h^0\in\Uh$ is then obtained via $\phi\h^0:=\Ih{\phi_0}$.
\end{itemize}
Immediate consequences of \ref{item:initial} are
\begin{subequations}
\begin{align}
&&\norm{\phi\h^0}_{W^{1,6}\trkla{\Om}}+\norm{\trace{\phi\h^0}}_{W^{1,4}\trkla{\Gamma}}\leq&\, C\trkla{\phi_0}\,,\\
\text{and}&&\norm{\phi\h^0-\phi_0}_{H^1\trkla{\Om}}+\norm{\trace{\phi\h^0}-\trace{\phi_0}}_{L^4\trkla{\Gamma}}\leq&\,C\trkla{\phi_0}h\,.
\end{align}
\end{subequations}
The stochastic source terms in \eqref{eq:model} are governed by a $\mathcal{Q}$-Wiener process and an operator $\Phi$.
Throughout this paper, we shall assume that
\begin{itemize}
\labitem{\textbf{(W1)}}{item:W1} the trace class operator $\mathcal{Q}\,:\,L^2\trkla{\Om}\rightarrow L^2\trkla{\Om}$ satisfies
\begin{align}
\mathcal{Q}g:=\sum_{k\in\mathds{Z}}\lambda_k^2\trkla{g,\g{k}}_{L^2\trkla{\Om}}\g{k}\,,
\end{align}
where $\trkla{\lambda_k}_{k\in\mathds{Z}}$ are given real numbers, and $\trkla{\g{k}}_{k\in\mathds{Z}}$ is an orthonormal basis of $L^2\trkla{\Om}$.
\end{itemize}
Hence, $\mathcal{Q}^{1/2}$ given by $\mathcal{Q}^{1/2}g:=\sum_{k\in\mathds{Z}}\lambda_k\trkla{g,\g{k}}_{L^2\trkla{\Om}}\g{k}$ is a Hilbert--Schmidt operator from $L^2\trkla{\Om}$ to $L^2\trkla{\Om}$.
We shall denote its image of $L^2\trkla{\Om}$ by
\begin{align}
\mathcal{Q}^{1/2}L^2\trkla{\Om}:=\gkla{\mathcal{Q}^{1/2}g\,:\,g\in L^2\trkla{\Om}}\,.
\end{align}
Hence, the $\mathcal{Q}$-Wiener process $\trkla{W_t}_{t\in\tekla{0,T}}$ has the representation
\begin{align}
W_t:=\sum_{k\in\mathds{Z}}\lambda_k\g{k}\beta_k\trkla{t}
\end{align}
with mutually independent Brownian motions $\trkla{\beta_k}_{k\in\mathds{Z}}$.
Furthermore, we assume that the $\mathcal{Q}$-Wiener process is colored in the sense that
\begin{itemize}
\labitem{\textbf{(W2)}}{item:W2} there exists a positive constant $\widetilde{C}$ such that
\begin{align}\label{eq:colorW2}
\sum_{k\in\mathds{Z}}\lambda_k^2\norm{\g{k}}_{W^{2,\infty}\trkla{\Om}}^2\leq\widetilde{C}\,.
\end{align}
\end{itemize}
The operator $\Phi$ mapping the stochastic process $\phi$ into the space of Hilbert--Schmidt operators from $\mathcal{Q}^{1/2}L^2\trkla{\Om}$ to $L^2\trkla{\Om}$ is defined via
\begin{align}\label{eq:defPhi}
\Phi\trkla{\phi}g:=\varrho\trkla{\phi}\sum_{k\in\mathds{Z}}\trkla{g,\g{k}}_{L^2\trkla{\Om}}\g{k}
\end{align}
for all $g\in\mathcal{Q}^{1/2}L^2\trkla{\Om}$.
Concerning the coefficient function $\varrho\,:\,\mathds{R}\rightarrow\mathds{R}$, we follow \cite{MajeeProhl2018} and \cite{\citeASAV} and assume that
\begin{itemize}
\labitem{\textbf{(C)}}{item:rho} $\varrho\in L^\infty\trkla{\mathds{R}}\cap C^{0,1}\trkla{\mathds{R}}$.
\end{itemize}
This assumption in particular guarantees that $\Phi$ as defined in \eqref{eq:defPhi} is indeed a mapping into the space of Hilbert--Schmidt operators from $\mathcal{Q}^{1/2}L^2\trkla{\Om}$ to $L^2\trkla{\Om}$, since
\begin{align}
\sum_{k\in\mathds{Z}}\norm{\Phi\trkla{\phi}\trkla{\lambda_k}\g{k}}_{L^2\trkla{\Om}}^2=\sum_{k\in\mathds{Z}}\norm{\varrho\trkla{\phi}\lambda_k\g{k}}_{L^2\trkla{\Om}}^2\leq \norm{\varrho}_{L^\infty\trkla{\mathds{R}}}^2\sum_{k\in\mathds{Z}}\norm{\mathcal{Q}^{1/2}\g{k}}_{L^2\trkla{\Om}}^2\leq C\,.
\end{align}
Hence, the source terms on the right-hand side of \eqref{eq:model} are governed by
\begin{align}\label{eq:def:Phi}
\Phi\trkla{\phi}\mathrm{d}W:=\sum_{k\in\mathds{Z}}\lambda_k \g{k}\varrho\trkla{\phi}\mathrm{d}\beta_{k}\,.
\end{align}
Another straightforward consequence of Assumption \ref{item:rho} are the estimates
\begin{align}
\norm{\nabla\Ih{\varrho\trkla{\zeta\h}}}_{L^p\trkla{\Om}}&\,\leq \overline{C}\norm{\nabla \zeta\h}_{L^p\trkla{\Om}}\,,& \norm{\nablaG\IhG{\varrho\trkla{\widehat{\zeta\h}}}}_{L^p\trkla{\Gamma}}\leq \overline{C}\norm{\nablaG\widehat{\zeta}\h}_{L^p\trkla{\Gamma}}
\end{align}
for any $\zeta\h\in\Uh$, $\widehat{\zeta}\h\in\UhG$, and $p\in\tekla{1,\infty}$ with a constant $\overline{C}$ depending on the Lipschitz constant of $\varrho$.
\begin{remark}
As discussed in \cite{Scarpa2021b} the multiplicative noise can be chosen in the form
\begin{align}
\Phi\trkla{\phi}\g{k}:=\widetilde{\varrho}_k\trkla{\phi}-\iO\widetilde{\varrho}_k\trkla{\phi}\dx\,,
\end{align}
resulting in $\iO\phi\dx$ being conserved.
Such a choice makes sense from the modeling point of view, but is not necessary for the results presented in this paper.
\end{remark}
In our discrete scheme, we shall approximate the $\mathcal{Q}$-Wiener process $W$ by a sequence of discrete stochastic increments $\tgkla{\sinc{n}}_{n=1,\ldots,N}$ which are supposed to have the following properties:
\begin{itemize}
\labitem{\textbf{(D0)}}{item:filtration} Let $\mathcal{F}^\tau=\trkla{\mathcal{F}_t^\tau}_{t\in\tekla{0,T}}$ be defined by $\mathcal{F}_t^\tau=\mathcal{F}_{t\no}$ for $t\in[t\no,t\nn)$.
\labitem{\textbf{(D1)}}{item:increment} $\sinc{n}$ is $\mathcal{F}^\tau_{t\nn}$-measurable and independent of $\mathcal{F}^\tau_{t^m}$ for all $0\leq m\leq n-1$.
\labitem{\textbf{(D2)}}{item:randomvars} There exist mutually independent symmetric random variables $\xi_k^{n,\tau}$ such that
\begin{align*}
\sinc{n}=\sqrt{\tau}\sum_{k\in\mathds{Z}}\lambda_k\g{k}\xi_k^{n,\tau}\,,
\end{align*}
where $\expected{\xi_k^{n,\tau}}=0$, $\expected{\tabs{\xi_k^{n,\tau}}^2}=1$, and $\expected{\tabs{\xi_k^{n,\tau}}^p}\leq C_p$ for all integer $p\geq2$ with a constant $C_p$ depending on the exponent $p$ but not on $h$, $\tau$, $k$, or $n$.
\end{itemize}
Here $\trkla{\g{k}}_{k\in\mathds{Z}}$ are an orthogonal basis of $L^2\trkla{\Om}$ and $\trkla{\lambda_k}_{k\in\mathds{Z}}$ are given real numbers such that
\begin{itemize}
\labitem{\textbf{(D3)}}{item:color} the noise $\sinc{n}$ is colored in the sense that \eqref{eq:colorW2} in Assumption \ref{item:W2} is satisfied for a positive constant $\widetilde{C}$ that is independent of $h$ and $\tau$.
\end{itemize}
Due to the trace theorem (see e.g~\cite{Kufner77, Necas2012}), assumption \ref{item:color} also implies
\begin{align}\label{eq:tracecolor}
\sum_{k\in\mathds{Z}}\lambda_k^2\norm{\trace{\g{k}}}_{W^{1,\infty}\trkla{\Gamma}}^2\leq C\,.
\end{align}
Summing $\sinc{n}$ from $n=1$ to $m$ provides the discrete approximation $\bs{\xi}^{m,\tau}$ of the $\mathcal{Q}$-Wiener process.
We want to emphasize that the assumptions stated in \ref{item:randomvars} are more general than assuming that $\bs{\xi}^{m,\tau}$ is obtained by evaluating a $\mathcal{Q}$-Wiener process satisfying \ref{item:W1} at given points in time, as \ref{item:randomvars} does not require the independent random variables $\xi_k^{n,\tau}$ to be $\mathcal{N}\trkla{0,1}$-distributed.
Further, we shall approximate $\Phi$ by $\Phi\h$ which is defined via
\begin{align}\label{eq:def:phih}
\Phi\h\trkla{\zeta\h} f:=\sum_{k\in\Zh}\Ih{\varrho\trkla{\zeta\h}\trkla{f,\g{k}}_{L^2\trkla{\Om}}\g{k}}
\end{align}
for all $f\in \mathcal{Q}^{1/2} L^2\trkla{\Om}$ and $\zeta\h\in\Uh$.
Here, $\Zh$ is an $h$-dependent finite subset of $\mathds{Z}$ satisfying $\bigcup_{h>0}\Zh=\mathds{Z}$ and $\mathds{Z}_{h_1}\subset\mathds{Z}_{h_2}$ for $h_1\geq h_2$.
As a consequence, only a finite number of modes will enter the discrete scheme for every given $h$.
Hence, it suffices to approximate the $\mathcal{Q}$-Wiener process by
\begin{align}\label{eq:defxih}
\bs{\xi}\h^{m,\tau}:=\sum_{n=1}^m\sqrt{\tau}\sum_{k\in\Zh}\lambda_k\g{k}\xi_k^{n,\tau}\,.
\end{align}

In order to control higher moments of the (discrete) stochastic integrals, we will need the following estimates:
\begin{lemma}\label{lem:BDG}
Let $\trkla{\sinc{n}}_{n=1,\ldots,N}$ be a sequence of discrete stochastic increments satisfying \ref{item:filtration}-\ref{item:color}.
Furthermore, let $\trkla{\Phi\h\no}_{n=1,\ldots,N}$ be a sequence of $\trkla{\mathcal{F}_{t\no}^\tau}_{n=1,\ldots,N}$-measurable mappings from $\Omega$ to the set of Hilbert--Schmidt operators mapping $\mathcal{Q}^{1/2}L^2\trkla{\Om}$ to a separable Hilbert space $H$, i.e.~$\Phi\h\no\in L_2\trkla{\mathcal{Q}^{1/2}L^2\trkla{\Om};H}$, satisfying $\Phi\h\no \g{k}=0$ for all $k\notin\Zh$ (cf.~\eqref{eq:def:phih}).
Then, for $p\in[1,\infty)$ the estimates
\begin{subequations}
\begin{align}\label{eq:bdg1}
\expected{\norm{\Phi\h\sinc{n}}_H^p}\leq&\, C_p\tau^{p/2}\expected{\rkla{\sum_{k\in\Zh}\norm{\lambda_k\Phi\h\no\g{k}}_H^2}^{p/2}}\,,\\
\label{eq:bdg2}\expected{\max_{1\leq l\leq m }\norm{\sum_{n=1}^l \Phi\h\no\sinc{n}}_H^p}\leq &\,C_p\rkla{m\tau}^{\tfrac{p-2}2}\sum_{n=1}^m\tau\expected{\rkla{\sum_{k\in\Zh}\norm{\lambda_k\Phi\h\no\g{k}}_H^2}^{p/2}}
\end{align}
\end{subequations}
hold true with a constant $C_p>0$ which depends on $p$ but is independent of $h$ and $\tau$.
\end{lemma}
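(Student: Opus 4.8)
The plan is to read \eqref{eq:bdg1} and \eqref{eq:bdg2} as discrete Burkholder--Davis--Gundy (BDG) inequalities and to prove them in this order, treating the single-increment bound \eqref{eq:bdg1} as the main building block for the maximal bound \eqref{eq:bdg2}.

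First I would establish \eqref{eq:bdg1} by conditioning on $\mathcal{F}_{t\no}^\tau$. Since $\Phi\h\no$ is $\mathcal{F}_{t\no}^\tau$-measurable and, by \ref{item:randomvars}, $\sinc{n}=\sqrt{\tau}\sum_{k\in\Zh}\lambda_k\g{k}\xi_k^{n,\tau}$ with $\Phi\h\no\g{k}=0$ for $k\notin\Zh$, one has the representation $\Phi\h\no\sinc{n}=\sqrt{\tau}\sum_{k\in\Zh}\lambda_k\,\xi_k^{n,\tau}\,\Phi\h\no\g{k}$. Conditionally on $\mathcal{F}_{t\no}^\tau$ the vectors $v_k:=\Phi\h\no\g{k}\in H$ are frozen, and by \ref{item:increment} the family $\trkla{\xi_k^{n,\tau}}_{k}$ is independent of $\mathcal{F}_{t\no}^\tau$, mutually independent, symmetric, and mean zero, with the moment bounds of \ref{item:randomvars}. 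I would then invoke a Hilbert-space valued Khintchine-type inequality: ordering $\Zh$ and viewing $j\mapsto\sum_{k\le j}\lambda_k\xi_k^{n,\tau}v_k$ as a finite discrete $H$-valued martingale, the discrete BDG inequality together with $\expected{\tabs{\xi_k^{n,\tau}}^p}\le C_p$ yields $\expected{\norm{\sum_{k\in\Zh}\lambda_k\xi_k^{n,\tau}v_k}_H^p\,|\,\mathcal{F}_{t\no}^\tau}\le C_p\rkla{\sum_{k\in\Zh}\lambda_k^2\norm{v_k}_H^2}^{p/2}$. Multiplying by $\tau^{p/2}$ and taking the full expectation gives \eqref{eq:bdg1}; this step is valid for every $p\in[1,\infty)$.

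For \eqref{eq:bdg2} I would set $D_n:=\Phi\h\no\sinc{n}$ and $M_l:=\sum_{n=1}^l D_n$ and first verify that $\trkla{M_l}_l$ is an $\trkla{\mathcal{F}_{t^l}^\tau}_l$-martingale: by \ref{item:increment} and \ref{item:randomvars}, $\expected{\sinc{n}\,|\,\mathcal{F}_{t\no}^\tau}=0$, and since $\Phi\h\no$ is $\mathcal{F}_{t\no}^\tau$-measurable it may be pulled out of the conditional expectation, so $\expected{D_n\,|\,\mathcal{F}_{t\no}^\tau}=0$. The discrete BDG inequality for the $H$-valued martingale $M$ then bounds $\expected{\max_{1\le l\le m}\norm{M_l}_H^p}$ by $C_p\,\expected{\rkla{\sum_{n=1}^m\norm{D_n}_H^2}^{p/2}}$. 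For $p\ge 2$ the discrete Jensen (power-mean) inequality gives $\rkla{\sum_{n=1}^m\norm{D_n}_H^2}^{p/2}\le m^{p/2-1}\sum_{n=1}^m\norm{D_n}_H^p$; taking expectations and applying \eqref{eq:bdg1} to each summand produces the prefactor $m^{p/2-1}\tau^{p/2}=\rkla{m\tau}^{(p-2)/2}\tau$, which is exactly the constant appearing in \eqref{eq:bdg2}.

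The main obstacle is the Hilbert-space valued moment inequality underlying the proof of \eqref{eq:bdg1}: the increments $\xi_k^{n,\tau}$ are only assumed symmetric with polynomially bounded moments rather than Gaussian, so one cannot appeal to Gaussian hypercontractivity and must instead run the discrete martingale argument in the index $k$, carefully tracking that the constant $C_p$ depends only on $p$ (through the bounds in \ref{item:randomvars} and the separable Hilbert structure of $H$) and not on $h$, $\tau$, $k$, or $n$. A secondary point is that the coupling of the $m$-dependent prefactor in \eqref{eq:bdg2} rests on convexity of $t\mapsto t^{p/2}$, so the discrete Jensen step is performed for $p\ge 2$, which is the range relevant for the subsequent moment and Nikolskii estimates, while the single-increment bound \eqref{eq:bdg1} remains valid for all $p\in[1,\infty)$.
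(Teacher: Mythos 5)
Your proposal is correct and is, in substance, the intended argument: the paper itself does not prove Lemma \ref{lem:BDG} but defers to \cite[Lemma 2.4]{\citeASAV}, an extension of the real-valued discrete estimate in \cite[Lemma 2.8]{Ondrejat2022}, and that proof follows exactly your two-step scheme --- condition on $\mathcal{F}^\tau_{t\no}$ to freeze $\Phi\h\no$, use the independence, symmetry and moment bounds of \ref{item:randomvars} for the one-step bound \eqref{eq:bdg1}, then treat $l\mapsto\sum_{n\leq l}\Phi\h\no\sinc{n}$ as an $H$-valued martingale and combine the discrete Burkholder--Davis--Gundy inequality with the power-mean inequality and \eqref{eq:bdg1}. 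One cosmetic repair in the first step: the route \elqq BDG in $k$ plus moment bounds\erqq{} needs Minkowski's inequality in $L^{p/2}$, hence $p\geq2$; for $1\leq p<2$ you should instead use the exact identity $\expected{\norm{\sum_{k\in\Zh}\lambda_k\xi_k^{n,\tau}v_k}_H^2}=\sum_{k\in\Zh}\lambda_k^2\norm{v_k}_H^2$ (orthogonality from independence and mean zero) together with Jensen's inequality; the conclusion \eqref{eq:bdg1} then holds for all $p\in[1,\infty)$, as you assert.

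Your restriction of \eqref{eq:bdg2} to $p\geq2$ is not a gap in your proof but an inherent restriction of the statement: for $p\in[1,2)$ the maximal inequality with the prefactor $\trkla{m\tau}^{\trkla{p-2}/2}$ is false as written. Take $H=\mathds{R}$, a single mode with $\lambda_1=1$ and $\norm{\Phi\h^0\g{1}}_H=1$, all $\xi_k^{n,\tau}$ Rademacher, and $\Phi\h\no\equiv0$ for $n\geq2$; then the left-hand side of \eqref{eq:bdg2} equals $\tau^{p/2}$, the right-hand side equals $C_p\trkla{m\tau}^{\trkla{p-2}/2}\tau$, and their ratio is $m^{\trkla{2-p}/2}/C_p$, which is unbounded over $1\leq m\leq N$ as $\tau=T/N\searrow0$, contradicting the claimed $\tau$-independence of $C_p$. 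So the stated range $p\in[1,\infty)$ must be read as $p\in[2,\infty)$ for \eqref{eq:bdg2}. This is harmless for the paper: every application of \eqref{eq:bdg2} concerns quantities of the form $\expected{Y^{\p}}$ with $Y\geq0$, and the case $\p<2$ follows from the case $\p=2$ via Jensen's inequality, which is in effect how the paper proceeds (proving the energy estimate first for large even powers).
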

These results are an extension of the results established for $\mathds{R}$-valued stochastic increments in \cite[Lemma 2.8]{Ondrejat2022}.
For a proof of Lemma \ref{lem:BDG}, we refer to \cite[Lemma 2.4]{\citeASAV}.\\
When passing to the limit $\trkla{h,\tau}\rightarrow\trkla{0,0}$ with families of fully discrete random variables, we need time-interpolants of our time-discrete random variables.
For a time-discrete function $a\nn$, $n=0,\ldots,N$ we introduce some time-index-free notation as follows:
\begin{subequations}\label{eq:timeinterpol}
\begin{align}
a\tl\trkla{.,t}&:=\tfrac{t-t\no}\tau a\nn\trkla{.}+\tfrac{t\nn-t}{\tau}a\no\trkla{.}&&t\in\tekla{t\no,t\nn},\,n\geq1\,,\\
a\tm\trkla{.,t}&:=a\no\trkla{.}&&t\in[t\no,t\nn),\,n\geq1\,,\\
a\tp\trkla{.,t}&:=a\nn\trkla{.}&&t\in(t\no,t\nn],\,n\geq1\,.
\end{align}
\end{subequations}
we shall complete this definition by setting $a\tp\trkla{.,0}:=a^0\trkla{.}$.
Obviously, we have
\begin{align}
a\tl\trkla{.,t}-a\tm\trkla{.,t}=\tfrac{t-t\no}\tau\trkla{a\nn-a\no}&&\text{for all~}t\in[t\no,t\nn)\,.
\end{align}
To indicate that a statement is valid for all three time-interpolants defined in \eqref{eq:timeinterpol} we use the abbreviation $a\tpm$. 

\section{The discrete scheme}\label{sec:discretescheme}
For the ease of representation, we define the abbreviations 
\begin{subequations}
\begin{align}
E\h^\Om\trkla{\zeta\h}&:=\iO\Ih{F\trkla{\zeta\h}}\dx\,,\\
E\h^\Gamma\trkla{\widehat{\zeta}\h}&:=\iGamma\IhG{G\trkla{\widehat{\zeta}\h}}\dG
\end{align}
\end{subequations}
for $\zeta\h\in\Uh$ and $\widehat{\zeta}\h\in\UhG$.
To linearize the discrete system, we introduce the stochastic auxiliary variables $r,\,s\,:\,\Omega\times\tekla{0,T}\rightarrow\mathds{R}$ with $r\trkla{\omega,t}:=\sqrt{\iO F\trkla{\phi\trkla{\omega,t,x}}\dx}$ and $s\trkla{\omega,t}:=\sqrt{\iGamma G\trkla{\trace{\phi\trkla{\omega,t,x}}}\dG}$ as the square roots of the non-quadratic parts of the energy.
An application of the chain rule suggests that the evolution of these auxiliary variables is given by
\begin{subequations}
\begin{align}
\para{t}r &=\frac{1}{2\sqrt{\iO F\trkla{\phi}\dx}}\iO F^\prime\trkla{\phi}\para{t}\phi\dx\,,\label{eq:SAVrformal}\\
\para{t}s&=\frac{1}{2\sqrt{\iGamma G\trkla{\trace{\phi}}\dG}}\iGamma G^\prime\rkla{\trace{\phi}}\para{t}\trace{\phi}\dG\,.
\end{align}
\end{subequations}
The original idea of scalar auxiliary variable method, as presented in \cite{ShenXuYang2018}, is to approximate \eqref{eq:SAVrformal} by
\begin{align}\label{eq:SAVr:semidisc}
r\trkla{t\nn}-r\trkla{t\no}=\frac{1}{2\sqrt{\iO F\trkla{\phi\trkla{t\no}}\dx}}\iO F^\prime\trkla{\phi\trkla{t\no}}\trkla{\phi\trkla{t\nn}-\phi\trkla{t\no}}\dx\,,
\end{align}
which leads to a time-discrete scheme that is linear with respect to the unknown quantities, but also still stable w.r.t.~the modified energy expressed in terms of $r\trkla{t\nn}$ and $s\trkla{t\nn}$ instead of $\iO F\trkla{\phi\trkla{t\nn}}\dx$ and $\iGamma G\trkla{\trace{\phi\trkla{t\nn}}}\dG$.
When discussing the convergence behavior of the auxiliary variables, i.e.~the question whether the discrete  approximations of $r\trkla{t\nn}$ converge towards $\sqrt{\iO F\trkla{\phi\trkla{t\nn}}\dx}$ in a suitable sense, it is important to notice that the right-hand side of \eqref{eq:SAVr:semidisc} is given as the first order Taylor approximation of $\sqrt{\iO F\trkla{\phi\trkla{t\nn}}}\dx$ around $\phi\trkla{t\no}$. 
Hence, as outlined in \cite[Remark 3.1]{\citeASAV}, we can anticipate that the additional approximation error per time step stemming from the time-discretization of $r$ will depend on $\trkla{\phi\trkla{t\nn}-\phi\trkla{t\no}}^2$, i.e.~the global approximation error will depend on $\sum_{k=1}^n\trkla{\phi\trkla{t^k}-\phi\trkla{t^{k-1}}}^2$.
Unfortunately, in the stochastic setting, the time regularity of $\phi$ is severely limited by the regularity of the Brownian motions on the right-hand side of \eqref{eq:model}.
As a consequence, this approximation error will not vanish for $\tau\searrow0$.
Hence, we follow the ideas presented in \cite{\citeASAV} and add suitable linear approximations of the second order terms of the Taylor approximation that guarantee convergence of the scalar auxiliary variables.
For the rigorous computations, we refer to Lemma \ref{lem:errorsav} below.\\
In our discrete scheme, the scalar auxiliary variables are approximated using a sequence of random variables $\trkla{r\h\nn}_{n=0,\ldots,N}$ and $\trkla{s\h\nn}_{n=0,\ldots,N}$ which are supposed to approximate $\trkla{\sqrt{E\h^\Om\trkla{\phi\h\nn}}}_{n=0,\ldots,N}$ and $\trkla{\sqrt{E\h^\Gamma\trkla{\trace{\phi\h\nn}}}}_{n=0,\ldots,N}$.
Defining the discrete initial data via $\phi\h^0:=\Ih{\phi_0}$ (cf.~Assumption \ref{item:initial}), $r\h^0:=\sqrt{E\h^\Om\trkla{\phi\h^0}}$, and $s\h^0:=\sqrt{E\h^\Gamma\trkla{\phi\h^0}}$, we state the following discrete scheme:\\

For a given $\Uh\times\mathds{R}\times\mathds{R}$-valued random variable $\trkla{\phi\h\no,r\h\no,s\h\no}$, find an $\Uh\times\Uh\times\UhG\times\mathds{R}\times\mathds{R}$-valued random variable $\trkla{\phi\h\nn,\mu\h\nn,\theta\h\nn,r\h\nn,s\h\nn}$ such that pathwise
\begin{subequations}\label{eq:modeldisc}
\begin{align}\label{eq:modeldisc:phiO}
\iO\Ih{\trkla{\phi\h\nn-\phi\h\no}\psi\h}\dx+\tau\iO\nabla\mu\h\nn\cdot\nabla\psi\h\dx&=\iO\Ih{\Phi\h\trkla{\phi\h\no}\sinc{n}\psi\h}\dx\,,\\
\iGamma\IhG{\trace{\phi\h\nn-\phi\h\no}\widehat{\psi}\h}\dG+\tau\iGamma\IhG{\theta\h\nn\widehat{\psi}\h}\dG&=\iGamma\IhG{\trace{\Phi\h\trkla{\phi\h\no}\sinc{n}}\widehat{\psi}\h}\dG\,,\label{eq:modeldisc:phiG}
\end{align}
\begin{align}\label{eq:modeldisc:pot}
\begin{split}
\iO&\Ih{\mu\h\nn\eta\h}\dx+\iGamma\IhG{\theta\h\nn\trace{\eta\h}}\dG= \iO\nabla\phi\h\nn\cdot\nabla\eta\h\dx + \iGamma\nablaG\trace{\phi\h\nn}\cdot\nablaG\trace{\eta\h}\dG\\
&+\left[\frac{r\h\nn}{\sqrt{E\h^\Om\trkla{\phi\h\no}}}\iO\Ih{F^\prime\trkla{\phi\h\no}\eta\h}\dx\right.\\
&-\frac{r\h\nn}{4\tekla{E\h^\Om\trkla{\phi\h\no}}^{3/2}}\iO\Ih{F^\prime\trkla{\phi\h\no}\Phi\h\trkla{\phi\h\no}\sinc{n}}\dx\iO\Ih{F^\prime\trkla{\phi\h\no}\eta\h}\dx\\
&+\left.\frac{r\h\nn}{2\sqrt{E\h^\Om\trkla{\phi\h\no}}}\iO\Ih{F^{\prime\prime}\trkla{\phi\h\no}\Phi\h\trkla{\phi\h\no}\sinc{n}\eta\h}\dx\right]\\
&+\left[\frac{s\h\nn}{\sqrt{E\h^\Gamma\rkla{\trace{\phi\h\no}}}}\iGamma\IhG{G^\prime\rkla{\trace{\phi\h\no}}\trace{\eta\h}}\dG\right.\\
&-\frac{s\h\nn}{4\ekla{E\h^\Gamma\rkla{\trace{\phi\h\no}}}^{3/2}}\iGamma\IhG{G^\prime\rkla{\trace{\phi\h\no}}\trace{\Phi\h\trkla{\phi\h\no}\sinc{n}}}\dG\\
&\qquad\qquad\times\iGamma\IhG{G^\prime\rkla{\trace{\phi\h\no}}\trace{\eta\h}}\dG\\
&+\left.\frac{s\h\nn}{2\sqrt{E\h^\Gamma\rkla{\trace{\phi\h\no}}}}\iGamma\IhG{G^{\prime\prime}\rkla{\trace{\phi\h\no}}\trace{\Phi\h\trkla{\phi\h\no}\sinc{n}}\trace{\eta\h}}\dG\right]\,,
\end{split}
\end{align}
for all $\psi\h,\eta\h\in\Uh$ and $\widehat{\psi}\h\in\UhG$ together with
\begin{align}\label{eq:modeldisc:r}
\begin{split}
r\h\nn&-r\h\no=\frac{1}{2\sqrt{E\h^\Om\trkla{\phi\h\no}}}\iO\Ih{F^\prime\trkla{\phi\h\no}\trkla{\phi\h\nn-\phi\h\no}}\dx\\
&-\frac{1}{8\tekla{E\h^\Om\trkla{\phi\h\no}}^{3/2}}\iO\Ih{F^\prime\trkla{\phi\h\no}\Phi\h\trkla{\phi\h\no}\sinc{n}}\dx\iO\Ih{F^\prime\trkla{\phi\h\no}\trkla{\phi\h\nn-\phi\h\no}}\dx\\
&+\frac{1}{4\sqrt{E\h^\Om\trkla{\phi\h\no}}}\iO\Ih{F^{\prime\prime}\trkla{\phi\h\no}\Phi\h\trkla{\phi\h\no}\sinc{n}\trkla{\phi\h\nn-\phi\h\no}}\dx
\end{split}
\end{align}
and 
\begin{align}\label{eq:modeldisc:s}
\begin{split}
s\h\nn&-s\h\no=\frac{1}{2\sqrt{E\h^\Gamma\rkla{\trace{\phi\h\no}}}}\iGamma\IhG{G^\prime\rkla{\trace{\phi\h\no}}\trace{\phi\h\nn-\phi\h\no}}\dG\\
&-\frac{1}{8\ekla{E\h^\Gamma\rkla{\trace{\phi\h\no}}}^{3/2}}\iGamma\IhG{G^\prime\rkla{\trace{\phi\h\no}}\trace{\Phi\h\trkla{\phi\h\no}\sinc{n}}}\dG\\
&\qquad\qquad\times\iGamma\IhG{G^\prime\rkla{\trace{\phi\h\no}}\trace{\phi\h\nn-\phi\h\no}}\dG\\
&+\frac{1}{4\sqrt{E\h^\Gamma\rkla{\trace{\phi\h\no}}}}\iGamma\IhG{G^{\prime\prime}\rkla{\trace{\phi\h\no}}\trace{\Phi\h\trkla{\phi\h\no}\sinc{n}}\trace{\phi\h\nn-\phi\h\no}}\dG\,.
\end{split}
\end{align}
\end{subequations}
Here, the terms in the square brackets in \eqref{eq:modeldisc:pot} are the discrete approximations of $\iO F^\prime\trkla{\phi\trkla{t\nn}}\eta\dx$ and $\iGamma G^\prime\trkla{\trace{\phi\trkla{t\nn}}}\trace{\eta}\dG$.
To simplify the notation, we shall suppress the trace operator $\trace{\cdot}$ whenever the notation is unambiguous.
For future reference, we shall denote the additional terms added to \eqref{eq:modeldisc:pot} by the more accurate Taylor expansion by $\Uh$- and $\UhG$-valued random variables $\Xi_{h,\Om}\nn$ and $\Xi_{h,\Gamma}\nn$, i.e.
\begin{subequations}\label{eq:deferrorterms}
\begin{align}
\begin{split}\label{eq:deferrorterms:Om}
\Xi_{h,\Om}\nn:=&\,-\frac{r\h\nn}{4\ekla{E\h^\Om\trkla{\phi\h\no}}^{3/2}}\iO\Ih{F^\prime\trkla{\phi\h\no}\Phi\h\trkla{\phi\h\no}\sinc{n}}\dx\,\Ih{F^\prime\trkla{\phi\h\no}}\\
&\,+\frac{r\h\nn}{2\sqrt{E\h^\Om\trkla{\phi\h\no}}}\Ih{F^{\prime\prime}\trkla{\phi\h\no}\Phi\h\trkla{\phi\h\no}\sinc{n}}\,,
\end{split}\\
\begin{split}\label{eq:deferrorterms:Gamma}
\Xi_{h,\Gamma}\nn:=&\,-\frac{s\h\nn}{4\ekla{E\h^\Gamma\trkla{\phi\h\no}}^{3/2}}\iGamma\IhG{G^\prime\trkla{\phi\h\no}\trace{\Phi\h\trkla{\phi\h\no}\sinc{n}}}\dG\,\IhG{G^\prime\trkla{\phi\h\no}}\\
&\,+\frac{s\h\nn}{2\sqrt{E\h^\Gamma\trkla{\phi\h\no}}}\IhG{G^{\prime\prime}\trkla{\phi\h\no}\trace{\Phi\h\trkla{\phi\h\no}\sinc{n}}}\,.
\end{split}
\end{align}
\end{subequations}
As we shall show in Lemma \ref{lem:potential} that these additional terms vanish for $\tau\searrow 0$.
With these definitions, \eqref{eq:modeldisc:pot} can be written as
\begin{align}
\begin{split}
\iO\Ih{\mu\h\nn\eta\h}\dx+\iGamma\IhG{\theta\h\nn\trace{\eta\h}}\dG=\iO\nabla\phi\h\nn\cdot\nabla\eta\h\dx +\iGamma\nablaG\phi\h\nn\cdot\nablaG\eta\h\dG\\
+\frac{r\h\nn}{\sqrt{E\h^\Omega\trkla{\phi\h\no}}}\iO\Ih{F^\prime\trkla{\phi\h\no}\eta\h}\dx+\iO\Ih{\Xi_{h,\Om}\nn\eta\h}\dx\\
+\frac{s\h\nn}{\sqrt{E\h^\Gamma\trkla{\phi\h\no}}}\iGamma\IhG{G^\prime\trkla{\phi\h\no}\eta\h}\dG+\iGamma\IhG{\Xi_{h,\Gamma}\nn\eta\h}\dG\,.
\end{split}
\end{align}
\section{Main results}\label{sec:mainresults}
In this section, we shall state the main results which will be proven in the subsequent sections.
The first result provides the existence of pathwise unique solutions to the discrete scheme \eqref{eq:modeldisc}:
\begin{lemma}\label{lem:existence}
Let the assumptions \ref{item:time}, \ref{item:space1}, \ref{item:space2}, and \ref{item:filtration} hold true.
Then, there exists a sequence $\trkla{\phi\h\nn,\mu\h\nn,\theta\h\nn,r\h\nn,s\h\nn}_{n\geq1}$ of $\Uh\times\Uh\times\UhG\times\mathds{R}\times\mathds{R}$-valued random variables that solves \eqref{eq:modeldisc} for each $\omega\in\Omega$. Furthermore, the map $\trkla{\phi\h\nn,\mu\h\nn,\theta\h\nn,r\h\nn,s\h\nn}\,:\,\Omega\rightarrow\Uh\times\Uh\times\UhG\times\mathds{R}\times\mathds{R}$ is $\mathcal{F}_{t\nn}$-measurable.
\end{lemma}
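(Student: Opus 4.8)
\emph{Proof plan.} The plan is to fix a sample point $\omega\in\Omega$ together with the given previous data $\trkla{\phi\h\no,r\h\no,s\h\no}$ and to read \eqref{eq:modeldisc} as a finite-dimensional, \emph{square} system for the unknown tuple $\trkla{\phi\h\nn,\mu\h\nn,\theta\h\nn,r\h\nn,s\h\nn}\in\Uh\times\Uh\times\UhG\times\R\times\R$: testing \eqref{eq:modeldisc:phiO} with all $\psi\h\in\Uh$, \eqref{eq:modeldisc:phiG} with all $\widehat{\psi}\h\in\UhG$, and \eqref{eq:modeldisc:pot} with all $\eta\h\in\Uh$, together with the two scalar equations \eqref{eq:modeldisc:r}--\eqref{eq:modeldisc:s}, yields exactly $2\dim\Uh+\dim\UhG+2$ equations for the same number of degrees of freedom. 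Since every nonlinearity acts only on the \emph{given} quantity $\phi\h\no$, the system is genuinely affine-linear in the unknowns, so both existence and pathwise uniqueness follow at once once I show that the associated homogeneous system has only the trivial solution. Note first that \ref{item:potentials} gives $E\h^\Om\trkla{\phi\h\no}\geq\gamma\abs{\Om}>0$ and $E\h^\Gamma\trkla{\phi\h\no}\geq\gamma\abs{\Gamma}>0$, so all coefficients carrying negative powers of these energies are well defined and finite for every $\omega$.

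For the homogeneous system I denote its solution by $\trkla{\phi,\mu,\theta,r,s}$. The decisive structural observation, obtained by a direct comparison of coefficients, is that the linear functionals of the test function multiplying $r\h\nn$ and $s\h\nn$ in the right-hand side of \eqref{eq:modeldisc:pot} are \emph{exactly twice} the functionals of $\phi\h\nn$ occurring on the right-hand sides of \eqref{eq:modeldisc:r} and \eqref{eq:modeldisc:s}; denoting the latter by $a_\Om\trkla{\cdot}$ and $a_\Gamma\trkla{\cdot}$, the homogeneous \eqref{eq:modeldisc:r}--\eqref{eq:modeldisc:s} read $r=a_\Om\trkla{\phi}$ and $s=a_\Gamma\trkla{\trace{\phi}}$, while the homogeneous \eqref{eq:modeldisc:pot} tested with $\eta\h=\phi$ contributes $2r\,a_\Om\trkla{\phi}+2s\,a_\Gamma\trkla{\trace{\phi}}=2r^2+2s^2$. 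Testing the homogeneous \eqref{eq:modeldisc:phiO} with $\psi\h=\mu$ gives $\iO\Ih{\phi\mu}\dx=-\tau\norm{\nabla\mu}_{L^2\trkla{\Om}}^2$, and testing the homogeneous \eqref{eq:modeldisc:phiG} with $\widehat{\psi}\h=\theta$ gives $\iGamma\IhG{\trace{\phi}\,\theta}\dG=-\tau\norm{\theta}_{h,\Gamma}^2$. Substituting these two identities into the tested potential equation produces
\begin{align*}
-\tau\norm{\nabla\mu}_{L^2\trkla{\Om}}^2-\tau\norm{\theta}_{h,\Gamma}^2=\norm{\nabla\phi}_{L^2\trkla{\Om}}^2+\norm{\nablaG\trace{\phi}}_{L^2\trkla{\Gamma}}^2+2r^2+2s^2\,.
\end{align*}
As the left-hand side is non-positive and the right-hand side non-negative, both vanish; in particular $\nabla\mu=0$, $\theta=0$, $\nabla\phi=0$, $\nablaG\trace{\phi}=0$, and $r=s=0$.

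It remains to upgrade this to $\phi=\mu=0$. With $\theta=0$, $r=s=0$, $\nabla\phi=0$, and $\nablaG\trace{\phi}=0$, the homogeneous \eqref{eq:modeldisc:pot} collapses to $\iO\Ih{\mu\eta\h}\dx=0$ for all $\eta\h\in\Uh$; choosing $\eta\h=\mu$ and invoking the norm equivalence \eqref{eq:normequivalence} forces $\mu=0$. Since $\Om$ is convex, hence connected, $\nabla\phi=0$ makes $\phi$ constant; testing the homogeneous \eqref{eq:modeldisc:phiO} with $\psi\h\equiv1$ yields $\iO\Ih{\phi}\dx=0$, so this constant vanishes and $\phi=0$. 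Thus the homogeneous system admits only the trivial solution, the system matrix is invertible, and \eqref{eq:modeldisc} possesses a unique pathwise solution. I expect the only genuinely nontrivial point to be the coefficient identity producing the sign-definite balance above; the rest is standard finite-dimensional linear algebra.

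Finally, I argue measurability by induction on $n$, with the deterministic initial data $\phi\h^0=\Ih{\phi_0}$, $r\h^0$, $s\h^0$ providing the base case. Writing the system as $A\trkla{\omega}\,\mb{x}=\mb{b}\trkla{\omega}$, both $A\trkla{\omega}$ and $\mb{b}\trkla{\omega}$ depend measurably on $\omega$ through $\trkla{\phi\h\no,r\h\no,s\h\no}$ (measurable by the induction hypothesis) and through $\sinc{n}$ (measurable by \ref{item:increment}), because $E\h^\Om$, $E\h^\Gamma$ and the remaining coefficients are continuous functions of these quantities. Since $A\trkla{\omega}$ is invertible for every $\omega$ by the argument above and matrix inversion is continuous on the open set of invertible matrices, $\omega\mapsto A\trkla{\omega}^{-1}$, and hence $\mb{x}\trkla{\omega}=A\trkla{\omega}^{-1}\mb{b}\trkla{\omega}$, is measurable. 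As $\phi\h\no$ is $\mathcal{F}_{t\no}$-measurable with $\mathcal{F}_{t\no}\subseteq\mathcal{F}_{t\nn}$ and $\sinc{n}$ is $\mathcal{F}_{t\nn}$-measurable by \ref{item:increment}, the solution $\trkla{\phi\h\nn,\mu\h\nn,\theta\h\nn,r\h\nn,s\h\nn}$ is $\mathcal{F}_{t\nn}$-measurable, which closes the induction.
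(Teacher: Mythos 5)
Your proposal is correct and follows essentially the same route as the paper: both treat \eqref{eq:modeldisc} for fixed $\omega$ as a square linear system so that injectivity yields existence, and both obtain the decisive sign-definite identity by substituting the $r$- and $s$-equations (via the factor-two structure of the coefficients) into the potential equation tested with the phase-field unknown, concluding with the constant-plus-zero-mean argument for $\phi$ and $\mu$. The only minor deviations are your choice of $\widehat{\psi}\h=\theta\h\nn$ in the boundary equation instead of the paper's $\widehat{\psi}\h=\tau^{-1}\hat{\phi}$ (both give a valid energy balance, yours yielding $\theta=0$ directly and the paper's yielding $\trace{\hat{\phi}}=0$ directly), and your explicit induction/matrix-inversion argument for $\mathcal{F}_{t\nn}$-measurability, where the paper instead cites Theorem 6.7 of \cite{Grillmeier2020}.
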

The proof of this result can be found in Section \ref{sec:existence}.
Starting from these fully discrete solutions, we can pass to the limit $\trkla{h,\tau}\rightarrow\trkla{0,0}$ to obtain the existence of pathwise unique martingale solutions to \eqref{eq:model}:
\begin{theorem}\label{thm:mainresult}
Let the assumptions \ref{item:time}, \ref{item:space1}, \ref{item:space2}, \ref{item:potentials}, \ref{item:initial}, \ref{item:rho} and \ref{item:filtration}-\ref{item:color}, hold true.
Then, there exist a filtered probability space $\trkla{\widetilde{\Omega},\widetilde{\mathcal{A}},\widetilde{\mathcal{F}},\widetilde{\Prob}}$ and a sequence of random variables $\trkla{\widetilde{\phi}\h\tl,\widetilde{\phi_{\Gamma}}\h\tl, \widetilde{\mu}\h\tp,\widetilde{\theta}\h\tp }_{h,\tau}$ on $\widetilde{\Omega}$ whose laws coincide with the laws of the time-interpolants $\trkla{{\phi}\h\tl,\trace{\phi\h\tl}, {\mu}\h\tp,{\theta}\h\tp }_{h,\tau}$ of the discrete solutions to \eqref{eq:modeldisc} established in Lemma \ref{lem:existence}.
Furthermore, there exists an $\widetilde{\mathcal{F}}$-measurable $\mathcal{Q}$-Wiener process $\widetilde{W}:=\sum_{k\in\mathds{Z}}\lambda_k\g{k}\widetilde{\beta}_k$ on $\widetilde{\Omega}$ and progressively $\mathcal{F}$-measurable processes
\begin{align*}
\widetilde{\phi}&\in L^{2\p}_{\operatorname{weak-}(*)}\trkla{\widetilde{\Omega};L^\infty\trkla{0,T;H^1\trkla{\Om}}}\cap L^{4\p}\trkla{\widetilde{\Omega};C^{0,\trkla{\p-1}/\trkla{4\p}}\trkla{\tekla{0,T};L^2\trkla{\Om}}}\,,\\
\widetilde{\phi_\Gamma}&\in L^{2\p}_{\operatorname{weak-}(*)}\trkla{\widetilde{\Omega};L^\infty\trkla{0,T;H^1\trkla{\Gamma}}}\cap L^{2\p}\trkla{\widetilde{\Omega};C^{0,\trkla{\p-1}/\trkla{2\p}}\trkla{\tekla{0,T};L^2\trkla{\Gamma}}}\,,\\
\widetilde{\mu}&\in L^{2\p}\trkla{\widetilde{\Omega};L^2\trkla{0,T;H^1\trkla{\Om}}}\,,\\
\widetilde{\theta}&\in L^{2\p}\trkla{\widetilde{\Omega};L^2\trkla{0,T;L^2\trkla{\Gamma}}}
\end{align*}
for $\p\in\trkla{1,\infty}$ such that $\widetilde{\Prob}$-almost surely
\begin{align*}
\lim_{\trkla{h,\tau}\rightarrow\trkla{0,0}}\widetilde{\phi}\h\tl&=\widetilde{\phi}&&\text{in~}C\trkla{\tekla{0,T};L^s\trkla{\Om}}\,,\\
\lim_{\trkla{h,\tau}\rightarrow\trkla{0,0}}\widetilde{\phi_{\Gamma}}\h\tl&=\widetilde{\phi_\Gamma}&&\text{in~}C\trkla{\tekla{0,T};L^r\trkla{\Gamma}}\,,\\
\lim_{\trkla{h,\tau}\rightarrow\trkla{0,0}}\widetilde{\mu}\h\tp&=\widetilde{\mu}&&\text{in~}L^2\trkla{0,T;H^1\trkla{\Om}}_{\weaktop}\,,\\
\lim_{\trkla{h,\tau}\rightarrow\trkla{0,0}}\widetilde{\theta}\h\tp&=\widetilde{\theta}&&\text{in~}L^2\trkla{0,T;L^2\trkla{\Gamma}}_{\weaktop}
\end{align*}
with $s\in[1,\tfrac{2d}{d-2})$ and $r\in[1,\infty)$.
These processes are pathwise unique and satisfy
\begin{subequations}\label{eq:weakform}
\begin{align}
\iO\trkla{\widetilde{\phi}\trkla{t}-\phi\trkla{0}}\psi\dx+\int_0^t\iO\nabla \widetilde{\mu}\cdot\nabla\psi\dx\ds=\sum_{k\in\mathds{Z}}\int_0^t\iO\varrho\trkla{\widetilde{\phi}}\lambda_k\g{k}\psi\dx\,\mathrm{d}\widetilde{\beta}_k\,,\\
\iGamma\rkla{\widetilde{\phi_\Gamma}\trkla{t}-\trace{\phi}\trkla{0}}\widehat{\psi}\dG+\int_0^t\iGamma\widetilde{\theta}\widehat{\psi}\dG\ds=\sum_{k\in\mathds{Z}}\int_0^t\iGamma\trace{\varrho\trkla{\widetilde{\phi}}\lambda_k\g{k}}\widehat{\psi}\dG\,\mathrm{d}\widetilde{\beta}_k\,,
\end{align}
$\widetilde{\Prob}$-a.s.~for all $t\in\tekla{0,T}$ and test functions $\psi\in H^1\trkla{\Om}$ and $\widehat{\psi}\in L^2\trkla{\Gamma}$.
Furthermore,
\begin{multline}
\iO\widetilde{\mu}\eta\dx+\iGamma\widetilde{\theta}\trace{\eta}\dG=\iO\nabla\widetilde{\phi}\cdot\nabla\eta\dx+\iGamma\nablaG\widetilde{\phi_\Gamma}\cdot\nablaG\trace{\eta}\dG\\
+\iO F^\prime\trkla{\widetilde{\phi}}\eta\dx+\iGamma G^\prime\trkla{\widetilde{\phi_\Gamma}}\trace{\eta}\dG
\end{multline}
\end{subequations}
holds true $\widetilde{\Prob}$-a.s.~for almost all $t\in\tekla{0,T}$ and test functions $\eta\in\mathcal{V}$.
In addition, $\trace{\widetilde{\phi}}=\widetilde{\phi_\Gamma}$ $\widetilde{\Prob}$-almost surely almost everywhere on $\trkla{0,T}\times\Gamma$. 
\end{theorem}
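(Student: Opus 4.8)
The plan is to follow the stochastic compactness method: combine uniform-in-$\trkla{h,\tau}$ regularity of the discrete solutions from Lemma~\ref{lem:existence} with Jakubowski's theorem and a martingale identification, and then close with a separate pathwise-uniqueness argument. The first step (Section~\ref{sec:regularity}) is the energy estimate. Testing \eqref{eq:modeldisc:phiO} with $\mu\h\nn$, \eqref{eq:modeldisc:phiG} with $\theta\h\nn$, and \eqref{eq:modeldisc:pot} with $\phi\h\nn-\phi\h\no$, while multiplying \eqref{eq:modeldisc:r} and \eqref{eq:modeldisc:s} by $2r\h\nn$ and $2s\h\nn$, respectively, the augmented SAV structure is designed so that all potential contributions telescope into the discrete energy $\tfrac12\norm{\nabla\phi\h\nn}_{L^2\trkla{\Om}}^2+\tfrac12\norm{\nablaG\trace{\phi\h\nn}}_{L^2\trkla{\Gamma}}^2+\trkla{r\h\nn}^2+\trkla{s\h\nn}^2$, with the correction terms $\Xi_{h,\Om}\nn$ and $\Xi_{h,\Gamma}\nn$ absorbing precisely the It\^o correction produced by the quadratic Taylor terms. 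Summing over $n$, taking expectations, and controlling the stochastic contributions by the discrete Burkholder--Davis--Gundy estimates of Lemma~\ref{lem:BDG} together with the growth bounds of Assumption~\ref{item:potentials}, a discrete Gronwall argument furnishes uniform bounds on arbitrary moments, yielding the $L^{2\p}$- and $L^{2\p}_{\operatorname{weak-}\trkla{*}}$-bounds recorded in the statement.

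Next I would prove Nikolskii estimates in time, bounding $\expected{\norm{\phi\h\tl\trkla{\cdot+k}-\phi\h\tl}_{L^2\trkla{\Om}}^2}$ and its boundary analogue by a fractional power of $k$: the deterministic flux contributes a factor $k$, whereas the stochastic increments, again controlled through Lemma~\ref{lem:BDG}, contribute only $\sqrt{k}$, which produces the fractional H\"older exponents appearing in the statement. Combined with the $L^\infty\trkla{0,T;H^1}$ bounds, these estimates give tightness of the laws of the interpolants $\trkla{\phi\h\tl,\trace{\phi\h\tl},\mu\h\tp,\theta\h\tp,\bs{\xi}\h\tp}$ on a product path space whose $\mu$- and $\theta$-factors carry the (non-metrizable) weak topology. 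I would therefore invoke Jakubowski's generalization of Skorokhod's theorem to pass to a new probability space $\trkla{\widetilde{\Omega},\widetilde{\mathcal{A}},\widetilde{\Prob}}$ carrying copies $\trkla{\widetilde{\phi}\h\tl,\dots}$ with identical laws and converging $\widetilde{\Prob}$-almost surely in the stated topologies (Section~\ref{sec:compactness}).

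Then I would pass to the limit $\trkla{h,\tau}\rightarrow\trkla{0,0}$ in the weak formulation (Section~\ref{sec:limit}). The strong convergence of $\widetilde{\phi}\h\tl$ in $C\trkla{\tekla{0,T};L^s\trkla{\Om}}$ with $s$ large enough, combined with the growth control of Assumption~\ref{item:potentials}, identifies the limits of the nonlinear terms $F^\prime\trkla{\phi\h\no}$ and $G^\prime\trkla{\phi\h\no}$. The decisive point is that the SAV ratios $r\h\nn/\sqrt{E\h^\Om\trkla{\phi\h\no}}$ and $s\h\nn/\sqrt{E\h^\Gamma\trkla{\phi\h\no}}$ converge to $1$: this is exactly what Lemma~\ref{lem:errorsav} guarantees by controlling the per-step Taylor remainder, while Lemma~\ref{lem:potential} shows that the correction terms $\Xi_{h,\Om}\nn$, $\Xi_{h,\Gamma}\nn$ vanish in the limit. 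The stochastic integral is then identified by the standard martingale argument: one verifies that the limiting process minus its drift is a continuous square-integrable martingale whose quadratic variation matches $\int_0^t\Phi\trkla{\widetilde{\phi}}\mathcal{Q}\Phi\trkla{\widetilde{\phi}}^\ast\ds$, which reconstructs $\widetilde{W}$ together with the stochastic integral on the right-hand side of \eqref{eq:weakform}. The identity $\trace{\widetilde{\phi}}=\widetilde{\phi_\Gamma}$ then follows from continuity of the trace operator together with the two separate limits.

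Finally (Section~\ref{sec:uniqueness}) I would establish pathwise uniqueness by a monotonicity argument. Given two solutions driven by the same $\widetilde{W}$, I subtract the equations for the difference $\delta:=\widetilde{\phi}^{\trkla{1}}-\widetilde{\phi}^{\trkla{2}}$, test the (conserved) bulk part with $\LapInv\delta$ to exploit the $H^{-1}$-gradient-flow structure and the (Allen--Cahn-type) boundary part with $\delta_\Gamma$ directly, apply It\^o's formula, and absorb the nonlinear and noise contributions using the one-sided bounds on $F^\prime,G^\prime$ from Assumption~\ref{item:potentials} and the Lipschitz property of $\varrho$ from Assumption~\ref{item:rho}. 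A Gronwall argument then forces $\delta\equiv0$ $\widetilde{\Prob}$-almost surely. I expect the main obstacle to lie in the limit passage of the augmented SAV variables: showing that $r\h\nn$, $s\h\nn$ converge to the correct square-root functionals and that the correction terms disappear despite the Brownian-limited time regularity is precisely the difficulty the augmented scheme was built to overcome, and it is entangled with the martingale identification of the stochastic integral in the non-metrizable weak topology.
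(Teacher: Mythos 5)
Your overall strategy --- energy estimates, Nikolskii estimates in time, tightness and Jakubowski's theorem, martingale identification through quadratic and cross variations, and a separate pathwise-uniqueness step --- is exactly the paper's (Sections \ref{sec:regularity}--\ref{sec:uniqueness}). However, two of your steps contain genuine gaps. In the energy estimate, your list of test functions is incomplete in a way that breaks the argument. Testing \eqref{eq:modeldisc:phiO} with $\mu\h\nn$ leaves the term $\iO\Ih{\Phi\h\trkla{\phi\h\no}\sinc{n}\,\mu\h\nn}\dx$ on the right-hand side, and this term cannot be handled by Young's inequality: the dissipation only controls $\tau\norm{\nabla\mu\h\nn}_{L^2\trkla{\Om}}^2$, the mean of $\mu\h\nn$ is not yet controlled at this stage of the proof, and the Young remainder $\tau^{-1}\norm{\Phi\h\trkla{\phi\h\no}\sinc{n}}_{h,\Om}^2$ has expectation of order one per time step, hence order $\tau^{-1}$ after summation; the same problem appears on the boundary with $\theta\h\nn$. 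The paper removes these terms structurally by \emph{also} testing \eqref{eq:modeldisc:pot} with $\eta\h\equiv-\Phi\h\trkla{\phi\h\no}\sinc{n}$, which your proposal omits. Moreover, since the noise is not mean-free, $\iO\phi\h\nn\dx$ is not conserved, so the gradient--SAV energy does not control the full $H^1$-norms; the paper additionally tests \eqref{eq:modeldisc:phiO} with $\phi\h\nn$ and \eqref{eq:modeldisc:phiG} with $\trace{\phi\h\nn}$. Finally, nothing ``telescopes'' cleanly: $r\h\nn$ and $s\h\nn$ carry no sign, so the terms they multiply (the analogues of $S_4$ and $S_7$ in the paper) must be split by Young's inequality and estimated with Lemma \ref{lem:BDG}, the growth conditions in \ref{item:potentials}, and the positive lower bounds of $E\h^\Om$, $E\h^\Gamma$; your picture of $\Xi_{h,\Om}\nn$, $\Xi_{h,\Gamma}\nn$ ``absorbing the It\^o correction'' is a correct motivation for the scheme but not a proof of the estimate.

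Your uniqueness argument fails at two points. First, the bulk difference is \emph{not} ``conserved'': for two solutions driven by the same Wiener process, choosing $\psi\equiv1$ in \eqref{eq:weakform} gives $\iO\rkla{\widetilde{\phi}_1\trkla{t}-\widetilde{\phi}_2\trkla{t}}\dx=\sum_{k\in\mathds{Z}}\int_0^t\iO\rkla{\varrho\trkla{\widetilde{\phi}_1}-\varrho\trkla{\widetilde{\phi}_2}}\lambda_k\g{k}\dx\,\mathrm{d}\widetilde{\beta}_k$, a nontrivial martingale, so $\LapInv\rkla{\widetilde{\phi}_1-\widetilde{\phi}_2}$ is not even defined ($\LapInv$ acts on mean-zero data). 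This is precisely why the paper's functional $\mathfrak{N}$ carries the mean-value terms $2\tabs{\Gamma}\trkla{\varphi^*}_\Om^2+2\trkla{\varphi^*}_\Om\trkla{\varphi^*_\Gamma}_\Gamma$ and why the norm \eqref{eq:def:dualnorm} splits off the mean. Second, Assumption \ref{item:potentials} provides growth bounds, not one-sided or monotonicity bounds on $F^\prime$, $G^\prime$; the paper estimates the differences of the nonlinear terms via H\"older, Sobolev embeddings and the interpolation inequality \eqref{eq:Ehrling}, which produces a Gronwall coefficient of the form $C\rkla{1+\norm{\widetilde{\phi}_1}_{H^1\trkla{\Om}}^8+\norm{\widetilde{\phi}_2}_{H^1\trkla{\Om}}^8+\norm{\widetilde{\phi_\Gamma}_1}_{H^1\trkla{\Gamma}}^4+\norm{\widetilde{\phi_\Gamma}_2}_{H^1\trkla{\Gamma}}^4}$. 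This coefficient is random and not uniformly bounded, so a plain Gronwall argument on expectations does not close. The paper instead applies It\^o's formula to the exponentially weighted quantity $\mathfrak{R}$, with weight $e^{-\int_0^t\trkla{\widehat{C}+\vartheta}\ds}$ --- a local-monotonicity, stochastic-Gronwall device in the spirit of \cite{Liu2013, rocknerShangZhang2024} --- and uses that $\int_0^T\vartheta\dt<\infty$ almost surely together with the time continuity of the solutions in $L^2\trkla{\Om}\times L^2\trkla{\Gamma}$. Without this weighting, or an equivalent substitute, your final step does not force the difference to vanish.
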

The above results use an arbitrary finite dimensional random walk $\bs{\xi}\h^{m,\tau}$ (cf.~\eqref{eq:defxih}) satisfying \ref{item:filtration}-\ref{item:color} as starting point in the finite element scheme and provide the existence of pathwise unique martingale solutions.
Hence, the Yamada--Watanabe theorem provides the existence of strong solutions (cf.~\cite{RoecknerSchmulandZhang2008} or Theorem E.0.8 in \cite{LiuRoeckner}).
These strong solutions can also be obtained as the limit of a sequence of fully discrete solutions:
If we approximate a $\mathcal{Q}$-Wiener process $W$ satisfying \ref{item:W1} and \ref{item:W2} via
\begin{align}\label{eq:finiteQWiener}
\bs{\xi}\h^{m,\tau}=\sum_{k\in\mathds{Z}_h}\rkla{W\trkla{t^m},\g{k}}_{L^2\trkla{\Om}}\g{k}\,,
\end{align}
$\bs{\xi}\h^{m,\tau}$ still satisfies \ref{item:filtration}-\ref{item:color} with Gaussian random variables $\xi_k^{n,\tau}$.
For this choice, we can establish convergence towards strong solutions:

\begin{theorem}\label{thm:strongsolutions}
Let $W$ be a Wiener process defined on $\trkla{\Omega,\mathcal{A},\mathcal{F},\Prob}$ satisfying \ref{item:W1} and \ref{item:W2} with finite dimensional approximations given by \eqref{eq:finiteQWiener}.
Furthermore, let the assumptions \ref{item:time}, \ref{item:space1}, \ref{item:space2}, \ref{item:potentials}, \ref{item:initial}, \ref{item:rho}, and \ref{item:filtration}--\ref{item:randomvars} hold true. Then, there exist pathwise unique, progressively $\mathcal{F}$-measurable processes
\begin{align*}
\phi&\in L^{2\p}_{\operatorname{weak-}(*)}\trkla{{\Omega};L^\infty\trkla{0,T;H^1\trkla{\Om}}}\cap L^{4\p}\trkla{\Omega;C^{0,\trkla{\p-1}/\trkla{4\p}}\trkla{\tekla{0,T};L^2\trkla{\Om}}}\,,\\
\phi_\Gamma&\in L^{2\p}_{\operatorname{weak-}(*)}\trkla{{\Omega};L^\infty\trkla{0,T;H^1\trkla{\Gamma}}}\cap L^{2\p}\trkla{{\Omega};C^{0,\trkla{\p-1}/\trkla{2\p}}\trkla{\tekla{0,T};L^2\trkla{\Gamma}}}\,,\\
\mu&\in L^{2\p}\trkla{{\Omega};L^2\trkla{0,T;H^1\trkla{\Om}}}\,,\\
\theta&\in L^{2\p}\trkla{{\Omega};L^2\trkla{0,T;L^2\trkla{\Gamma}}}
\end{align*}
for $\p\in\trkla{1,\infty}$, which are the limits of the time interpolants $\trkla{\phi\h\tl,\trace{\phi\h\tl},\mu\h\tp,\theta\h\tp}_{h,\tau}$ of the discrete solutions to \eqref{eq:modeldisc}.
In particular, we have $\Prob$-almost surely
\begin{align*}
\lim_{\trkla{h,\tau}\rightarrow\trkla{0,0}}{\phi}\h\tl&={\phi}&&\text{in~}C\trkla{\tekla{0,T};L^s\trkla{\Om}}\,,\\
\lim_{\trkla{h,\tau}\rightarrow\trkla{0,0}}\trace{\phi\h\tl}&={\phi_\Gamma}&&\text{in~}C\trkla{\tekla{0,T};L^r\trkla{\Gamma}}\,,\\
\lim_{\trkla{h,\tau}\rightarrow\trkla{0,0}}\mu\h\tp&=\mu&&\text{in~}L^2\trkla{0,T;H^1\trkla{\Om}}_{\weaktop}\,,\\
\lim_{\trkla{h,\tau}\rightarrow\trkla{0,0}}\theta\h\tp&=\theta&&\text{in~}L^2\trkla{0,T;L^2\trkla{\Gamma}}_{\weaktop}
\end{align*}
for $s\in[1,\tfrac{2d}{d-2})$ and $r\in[1,\infty)$.
These processes satisfy
\begin{subequations}
\begin{align*}
\iO\trkla{{\phi}\trkla{t}-\phi\trkla{0}}\psi\dx+\int_0^t\iO\nabla {\mu}\cdot\nabla\psi\dx\ds=\sum_{k\in\mathds{Z}}\int_0^t\iO\varrho\trkla{{\phi}}\lambda_k\g{k}\psi\dx\,\mathrm{d}{\beta}_k\,,\\
\iGamma\rkla{{\phi_\Gamma}\trkla{t}-\trace{\phi}\trkla{0}}\widehat{\psi}\dG+\int_0^t\iGamma{\theta}\widehat{\psi}\dG\ds=\sum_{k\in\mathds{Z}}\int_0^t\iGamma\trace{\varrho\trkla{{\phi}}\lambda_k\g{k}}\widehat{\psi}\dG\,\mathrm{d}{\beta}_k\,,
\end{align*}
${\Prob}$-a.s.~for all $t\in\tekla{0,T}$ and test functions $\psi\in H^1\trkla{\Om}$ and $\widehat{\psi}\in L^2\trkla{\Gamma}$.
Furthermore,
\begin{multline*}
\iO{\mu}\eta\dx+\iGamma{\theta}\trace{\eta}\dG=\iO\nabla{\phi}\cdot\nabla\eta\dx+\iGamma\nablaG{\phi_\Gamma}\cdot\nablaG\trace{\eta}\dG\\
+\iO F^\prime\trkla{{\phi}}\eta\dx+\iGamma G^\prime\trkla{{\phi_\Gamma}}\trace{\eta}\dG
\end{multline*}
\end{subequations}
${\Prob}$-a.s.~for almost all $t\in\tekla{0,T}$ and test functions $\eta\in\mathcal{V}$.
Further, $\trace{{\phi}}={\phi_\Gamma}$ ${\Prob}$-almost surely almost everywhere on $\trkla{0,T}\times\Gamma$. 
\end{theorem}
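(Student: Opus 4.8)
The plan is to combine the existence of martingale solutions from Theorem~\ref{thm:mainresult} with the pathwise uniqueness proven in Section~\ref{sec:uniqueness}, and to promote these to convergence on the \emph{given} stochastic basis $\trkla{\Omega,\mathcal{A},\mathcal{F},\Prob}$ by a generalized Gyöngy--Krylov argument. First I would confirm that the finite-dimensional approximation \eqref{eq:finiteQWiener} of the fixed $\mathcal{Q}$-Wiener process $W$ meets the discrete-increment hypotheses: the increments $\sinc{n}$ are assembled from the Gaussian coordinates $\trkla{W\trkla{t\nn}-W\trkla{t\no},\g{k}}_{L^2\trkla{\Om}}$, which are $\mathcal{F}^\tau_{t\nn}$-measurable, independent of $\mathcal{F}^\tau_{t^m}$ for $m\leq n-1$, symmetric with vanishing mean and unit variance, so that \ref{item:filtration}--\ref{item:randomvars} hold; the colouring \ref{item:color} is inherited from \ref{item:W2}. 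Hence every regularity and compactness result from Sections~\ref{sec:regularity}--\ref{sec:limit} applies verbatim to this particular scheme.

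The heart of the proof is the Gyöngy--Krylov characterization of convergence in probability, adapted to the quasi-Polish (sub-Polish) spaces that arise in Jakubowski's framework. Writing $u\h\tl:=\trkla{\phi\h\tl,\trace{\phi\h\tl},\mu\h\tp,\theta\h\tp}$ and fixing two arbitrary subsequences indexed by $\trkla{h_j,\tau_j}$ and $\trkla{h_l,\tau_l}$, I would study the joint laws of the triples $\trkla{u_{h_j}^{\tau_j},u_{h_l}^{\tau_l},W}$ on the product space $\X\times\X\times\XW$. The uniform moment bounds from Section~\ref{sec:regularity}, controlling $\phi$ in $L^\infty\trkla{0,T;H^1}$ together with the Nikolskii seminorms in time, yield tightness of each marginal and therefore of the joint laws in the relevant topologies, several of which are weak.

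Applying Jakubowski's theorem to these joint laws, I would transfer to a new probability space carrying variables $\trkla{\widehat{u}^1,\widehat{u}^2,\widehat{W}}$ whose laws match those of a convergent subsequence and which converge almost surely in $\X\times\X\times\XW$. Re-running the limit identification from the proof of Theorem~\ref{thm:mainresult} on each component, both $\widehat{u}^1$ and $\widehat{u}^2$ are martingale solutions of \eqref{eq:model} driven by the \emph{same} Wiener process $\widehat{W}$ and the same deterministic initial datum. Pathwise uniqueness (Section~\ref{sec:uniqueness}) then forces $\widehat{u}^1=\widehat{u}^2$ almost surely, so that every weak limit of the joint laws is concentrated on the diagonal $\tgkla{u^1=u^2}$. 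By the generalized Gyöngy--Krylov lemma this entails convergence in probability of $\trkla{u\h\tl}_{h,\tau}$ on $\trkla{\Omega,\mathcal{A},\mathcal{F},\Prob}$ towards a limit $u=\trkla{\phi,\phi_\Gamma,\mu,\theta}$ in the product topology.

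Finally, extracting an almost surely convergent subsequence and repeating the identification argument once more — now with the prescribed $W$, filtration, and initial data fixed — shows that $u$ solves the weak formulation stated in the theorem and satisfies $\trace{\phi}=\phi_\Gamma$, i.e.\ $u$ is a stochastically strong solution. Pathwise uniqueness upgrades subsequential convergence to convergence of the whole family, giving the asserted almost sure limits. I expect the principal obstacle to be the adaptation of the Gyöngy--Krylov criterion to the non-metrizable weak topologies carried by $\mu$ and $\theta$: one has to install a quasi-Polish structure compatible with Jakubowski's theorem on which convergence in probability is well-defined, and verify that diagonal concentration of the joint laws genuinely enforces convergence in probability of the full state rather than of its individual marginals only.
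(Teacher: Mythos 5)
Your proposal is correct and follows essentially the same route as the paper: verification that the Gaussian increments of \eqref{eq:finiteQWiener} satisfy \ref{item:filtration}--\ref{item:color}, tightness of the joint laws of two subsequences together with $W$ on an extended path space, Jakubowski's theorem, identification of both limits as martingale solutions driven by the same Wiener process, pathwise uniqueness to concentrate the limit law on the diagonal, and the generalized Gy\"ongy--Krylov criterion from \cite{BreitFeireislHofmanova} to obtain convergence in probability on the original basis. The concern you flag at the end (handling the weak topologies) is exactly what the quasi-Polish framework of \cite{BreitFeireislHofmanova} resolves, and the paper invokes it in the same way.
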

The remainder of the paper is structured as follows:
In Section \ref{sec:existence}, we present the proof of Lemma \ref{lem:existence}.
The proof of Theorem \ref{thm:mainresult} can be found in the Sections \ref{sec:regularity}--\ref{sec:uniqueness}:
In Section \ref{sec:regularity}, we establish regularity results for discrete solutions to \eqref{eq:modeldisc} which are independent of the discretization parameters $h$ and $\tau$.
These results will be used in Section \ref{sec:compactness} together with Jakubowski's theorem (cf.~\cite{Jakubowski1998}) to establish the existence of converging subsequences.
In Section \ref{sec:limit}, we discuss the passage to the limit $\trkla{h,\tau}\searrow0$ which provides the existence statement in Theorem \ref{thm:mainresult}.
The pathwise uniqueness of these martingale solutions is then established in Section \ref{sec:uniqueness}.
Section \ref{sec:strongsolutions} is devoted to the proof of Theorem \ref{thm:strongsolutions}.
\section{Existence of discrete solutions}\label{sec:existence}
In this section we will prove Lemma \ref{lem:existence} by establishing the existence of pathwise unique solutions to \eqref{eq:modeldisc}.
We adapt the ideas of \cite{\citeASAV} and start by showing that for each fixed $\omega\in\Omega$, \eqref{eq:modeldisc} has a unique solution:
Since $\sinc{n}$ is given for fixed $\omega$, \eqref{eq:modeldisc} is linear with respect to the unknown quantities $\phi\h\nn,\mu\h\nn,\theta\h\nn,r\h\nn$, and $s\h\nn$.
As for finite dimensional linear problems, uniqueness of possible solutions guarantees the existence of solutions for arbitrary right-hand sides, we assume that for given $\phi\h\no$, $r\h\no$, $s\h\no$, and $\sinc{n}$ there exist two solutions.
We denote their difference by $\hat{\phi}$, $\hat{\mu}$, $\hat{\theta}$, $\hat{r}$, and $\hat{s}$. 
Obviously, these differences satisfy
\begin{subequations}
\begin{align}\label{eq:existencephi}
\iO\Ih{\hat{\phi}\psi\h}\dx+\tau\iO\nabla\hat{\mu}\cdot\nabla\psi\h\dx&=0\,,\\
\iGamma\IhG{\hat{\phi}\widehat{\psi}\h}\dG+\tau\iGamma\IhG{\hat{\theta}\widehat{\psi}\h}\dG&=0\,,\label{eq:existencephi2}
\end{align}
\begin{align}\label{eq:existencepot}
\begin{split}
\iO&\Ih{\hat{\mu}\eta\h}\dx+\iGamma\IhG{\hat{\theta}\eta\h}\dG=\iO\nabla\hat{\phi}\cdot\nabla\eta\h\dx+\iGamma\nablaG \hat{\phi}\cdot\nablaG \eta\h\dG\\
&+\frac{\hat{r}}{\sqrt{E\h^\Om\trkla{\phi\h\no}}}\iO\Ih{F^\prime\trkla{\phi\h\no}\eta\h}\dx\\
&-\frac{\hat{r}}{4\tekla{E\h^\Om\trkla{\phi\h\no}}^{3/2}}\iO\Ih{F^\prime\trkla{\phi\h\no}\Phi\h\trkla{\phi\h\no}\sinc{n}}\dx\iO\Ih{F^\prime\trkla{\phi\h\no}\eta\h}\dx\\
&+\frac{\hat{r}}{2\sqrt{E\h^\Om\trkla{\phi\h\no}}}\iO\Ih{F^{\prime\prime}\trkla{\phi\h\no}\Phi\h\trkla{\phi\h\no}\sinc{n}\eta\h}\dx +\frac{\hat{s}}{\sqrt{E\h^\Gamma\trkla{\phi\h\no}}}\iGamma\IhG{G^\prime\trkla{\phi\h\no}\eta\h}\dG\\
&-\frac{\hat{s}}{4\ekla{E\h^\Gamma\trkla{\phi\h\no}}^{3/2}}\iGamma\IhG{G^\prime\trkla{\phi\h\no}\trace{\Phi\h\trkla{\phi\h\no}\sinc{n}}}\dG\iGamma\IhG{G^\prime\trkla{\phi\h\no}\eta\h}\dG\\
&+\frac{\hat{s}}{2\sqrt{E\h^\Gamma\trkla{\phi\h\no}}}\iGamma\IhG{G^{\prime\prime}\trkla{\phi\h\no}\trace{\Phi\h\trkla{\phi\h\no}\sinc{n}}\eta\h}\dG\,,
\end{split}
\end{align}
\begin{align}
\begin{split}\label{eq:existencer}
\hat{r}=&\frac{1}{2\sqrt{E\h^\Om\trkla{\phi\h\no}}}\iO\Ih{F^\prime\trkla{\phi\h\no}\hat{\phi}}\dx\\
&-\frac{1}{8\ekla{E\h^\Om\trkla{\phi\h\no}}^{3/2}}\iO\Ih{F^\prime\trkla{\phi\h\no}\Phi\h\trkla{\phi\h\no}\sinc{n}}\dx\iO\Ih{F^\prime\trkla{\phi\h\no}\hat{\phi}}\dx\\
&+\frac{1}{4\sqrt{E\h^\Om\trkla{\phi\h\no}}}\iO\Ih{F^{\prime\prime}\trkla{\phi\h\no}\hat{\phi}\Phi\trkla{\phi\h\no}\sinc{n}}\dx\,,
\end{split}\\
\begin{split}\label{eq:existences}
\hat{s}=&\frac{1}{2\sqrt{E\h^\Gamma\trkla{\phi\h\no}}}\iGamma\IhG{G^\prime\trkla{\phi\h\no}\hat{\phi}}\dG\\
&-\frac{1}{8\ekla{E\h^\Gamma\trkla{\phi\h\no}}^{3/2}}\iGamma\IhG{G^\prime\trkla{\phi\h\no}\trace{\Phi\h\trkla{\phi\h\no}\sinc{n}}}\dG\iGamma\IhG{G^\prime\trkla{\phi\h\no}\hat{\phi}}\dG\\
&+\frac{1}{4\sqrt{E\h^\Gamma\trkla{\phi\h\no}}}\iGamma\IhG{G^{\prime\prime}\trkla{\phi\h\no}\hat{\phi}\trace{\Phi\h\trkla{\phi\h\no}\sinc{n}}}\dG\,.
\end{split}
\end{align}
\end{subequations}
Choosing $\psi\h\equiv\hat{\mu}$ in \eqref{eq:existencephi}, $\widehat{\psi}\h\equiv\tau^{-1}\hat{\phi}$ in \eqref{eq:existencephi2}, and $\eta\h\equiv\hat{\phi}$ in \eqref{eq:existencepot}, we obtain after substituting \eqref{eq:existencer} and \eqref{eq:existences}
\begin{align}\label{eq:differgy}
\tau\iOmega\abs{\nabla\hat{\mu}}^2dx+\tau^{-1}\iGamma\IhG{\abs{\hat{\phi}}^2}\dG +\iO\abs{\nabla\hat{\phi}}^2\dx+\iGamma\abs{\nablaG\hat{\phi}}^2\dG+2\tabs{\hat{r}}^2+2\tabs{\hat{s}}^2=0\,.
\end{align}
This immediately implies $\hat{r}=\hat{s}=0$ and $\hat{\phi}\equiv0$ which entails $\hat{\theta}\equiv0$ by \eqref{eq:existencephi2}.
Finally, using $\eta\h\equiv 0$ in \eqref{eq:existencepot}, we obtain $\iO\hat{\mu}\dx=1$.
Due to \eqref{eq:differgy}, we have $\hat{\mu}\equiv0$.
This provides the uniqueness and therefore the existence of solutions. 
As shown in, e.g., Theorem 6.7 in \cite{Grillmeier2020} the uniqueness of the solution obtained for each $\omega\in\Omega$ also entails its measurability.

\section{Regularity results}\label{sec:regularity}
In this section, we shall establish uniform regularity results for the discrete solutions obtained in Lemma \ref{lem:existence}.

\begin{lemma}\label{lem:energy}
Let the assumptions \ref{item:time}, \ref{item:space1}, \ref{item:space2}, \ref{item:potentials}, \ref{item:initial}, \ref{item:rho}, and \ref{item:filtration}-\ref{item:color} hold true.
Then, for every $1\leq\p<\infty$, there exists a constant $C\equiv C\trkla{\p,T}>0$ independent of $h$ and $\tau$ such that
\begin{align}\label{eq:energyestimate}
\begin{split}
\expected{\max_{0\leq m\leq N}\norm{\phi\h^m}_{H^1\trkla{\Om}}^{2\p}}+\expected{\max_{0\leq m\leq N}\norm{\phi\h^m}_{H^1\trkla{\Gamma}}^{2\p}} +\expected{\max_{0\leq m\leq N}\tabs{r\h^m}^{2\p}} +\expected{\max_{0\leq m\leq N}\tabs{s\h^m}^{2\p}}\\
+\expected{\rkla{\sum_{n=1}^N\norm{\phi\h\nn-\phi\h\no}_{H^1\trkla{\Om}}^2}^\p}+\expected{\rkla{\sum_{n=1}^N\norm{\phi\h\nn-\phi\h\no}_{H^1\trkla{\Gamma}}^2}^\p} +\expected{\rkla{\sum_{n=1}^m\tabs{r\h\nn-r\h\no}^2}^\p}\\
+\expected{\rkla{\sum_{n=1}^m\tabs{s\h\nn-s\h\no}^2}^\p} + \expected{\rkla{\sum_{n=1}^N\tau\norm{\nabla\mu\h\nn}_{L^2\trkla{\Om}}^2}^\p} +\expected{\rkla{\sum_{n=1}^N\tau\norm{\theta\h\nn}_{L^2\trkla{\Gamma}}^2}^\p}\leq C\,.
\end{split}
\end{align}
\end{lemma}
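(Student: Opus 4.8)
The plan is to derive a pathwise discrete energy balance and then close a stochastic Gronwall estimate at every finite order $\p$. First I would test \eqref{eq:modeldisc:phiO} with $\psi\h=\mu\h\nn$, test \eqref{eq:modeldisc:phiG} with $\widehat{\psi}\h=\theta\h\nn$, test the potential relation \eqref{eq:modeldisc:pot} with $\eta\h=\phi\h\nn-\phi\h\no$, and add to these the relations \eqref{eq:modeldisc:r} and \eqref{eq:modeldisc:s} multiplied by $2r\h\nn$ and $2s\h\nn$, respectively. The cross terms $\iO\Ih{\mu\h\nn\trkla{\phi\h\nn-\phi\h\no}}\dx$ and $\iGamma\IhG{\theta\h\nn\trace{\phi\h\nn-\phi\h\no}}\dG$ cancel, and---this being the \emph{raison d'être} of the augmented correction terms \eqref{eq:deferrorterms}---the nonlinear potential contributions in \eqref{eq:modeldisc:pot} together with $\Xi_{h,\Om}\nn$ and $\Xi_{h,\Gamma}\nn$ coincide exactly with $2r\h\nn$ times the right-hand side of \eqref{eq:modeldisc:r} and $2s\h\nn$ times the right-hand side of \eqref{eq:modeldisc:s}. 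Using $2a\trkla{a-b}=a^2-b^2+\trkla{a-b}^2$ for the quadratic and the $r,s$-contributions and abbreviating $w\h\nn:=\Phi\h\trkla{\phi\h\no}\sinc{n}\in\Uh$, I expect the pathwise identity
\begin{align*}
&\mathcal{E}\h\nn-\mathcal{E}\h\no+\tfrac12\norm{\nabla\trkla{\phi\h\nn-\phi\h\no}}_{L^2\trkla{\Om}}^2+\tfrac12\norm{\nablaG\trace{\phi\h\nn-\phi\h\no}}_{L^2\trkla{\Gamma}}^2\\
&\quad+\tabs{r\h\nn-r\h\no}^2+\tabs{s\h\nn-s\h\no}^2+\tau\norm{\nabla\mu\h\nn}_{L^2\trkla{\Om}}^2+\tau\norm{\theta\h\nn}_{L^2\trkla{\Gamma}}^2\\
&=\iO\Ih{w\h\nn\mu\h\nn}\dx+\iGamma\IhG{\trace{w\h\nn}\theta\h\nn}\dG\,,
\end{align*}
where $\mathcal{E}\h\nn:=\tfrac12\norm{\nabla\phi\h\nn}_{L^2\trkla{\Om}}^2+\tfrac12\norm{\nablaG\trace{\phi\h\nn}}_{L^2\trkla{\Gamma}}^2+\tabs{r\h\nn}^2+\tabs{s\h\nn}^2$.

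The crux is the stochastic forcing on the right, whose integrand $\mu\h\nn,\theta\h\nn$ is evaluated at the new time level and is therefore \emph{not} $\mathcal{F}^\tau_{t\no}$-measurable. Since $w\h\nn\in\Uh$, I would reinsert it as a test function into \eqref{eq:modeldisc:pot}, turning $\iO\Ih{w\h\nn\mu\h\nn}\dx+\iGamma\IhG{\trace{w\h\nn}\theta\h\nn}\dG$ into gradient terms of $\phi\h\nn$ paired with $w\h\nn$, the $F^\prime$/$G^\prime$-terms weighted by $r\h\nn/\sqrt{E\h^\Om\trkla{\phi\h\no}}$ and $s\h\nn/\sqrt{E\h^\Gamma\trkla{\phi\h\no}}$, and the $\Xi$-terms. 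In each contribution I then split the implicit factors as $\phi\h\nn=\phi\h\no+\trkla{\phi\h\nn-\phi\h\no}$, $r\h\nn=r\h\no+\trkla{r\h\nn-r\h\no}$, $s\h\nn=s\h\no+\trkla{s\h\nn-s\h\no}$. The predictable parts (those carrying $\phi\h\no,r\h\no,s\h\no$) yield, after summation over $n$, discrete stochastic integrals with $\mathcal{F}^\tau_{t\no}$-measurable integrands times $\sinc{n}$; the increment parts are treated with Young's inequality so that a fraction is absorbed into the increment terms $\tfrac12\norm{\nabla\trkla{\phi\h\nn-\phi\h\no}}^2$, $\tabs{r\h\nn-r\h\no}^2,\dots$ on the left, the remainders being of order $\tau$ and at most linear in $\mathcal{E}\h\no$ (using \eqref{eq:normequivalence}, Assumptions \ref{item:rho} and \ref{item:color}/\eqref{eq:tracecolor} to control $\varrho$, the discrete gradients $\nabla\Ih{\varrho\trkla{\phi\h\no}\g{k}}$, and the $k$-sums of $\norm{\g{k}}_{W^{1,\infty}}$).

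Summing over $n=1,\dots,m$, taking $\max_{m\le M}$, raising to the power $\p$ and taking expectations, I would bound the discrete stochastic integrals by Lemma \ref{lem:BDG}. The main obstacle, and the step requiring care, is to keep every drift contribution \emph{at most linear} in $\trkla{\mathcal{E}\h^{\,\cdot}}^\p$ despite the cubic growth of $F^\prime,G^\prime$. Here the SAV prefactors $\tekla{E\h^\Om\trkla{\phi\h\no}}^{-1/2}$ are decisive: the lower bound $F\ge c_\gamma\trkla{1+\tabs{\cdot}^4}$ from \ref{item:potentials} gives $\norm{F^\prime\trkla{\phi\h\no}}_{L^1\trkla{\Om}}\lesssim 1+\tekla{E\h^\Om\trkla{\phi\h\no}}^{3/4}$, while the growth bound together with $H^1\hookrightarrow L^4$ gives $E\h^\Om\trkla{\phi\h\no}\lesssim 1+\norm{\phi\h\no}_{H^1\trkla{\Om}}^4$; combining these, the $k$-summed squared integrands of the martingale terms are controlled by $C\trkla{1+\mathcal{E}\h\no^2}$, so that Lemma \ref{lem:BDG} produces $C\sum_n\tau\,\expected{\trkla{1+\mathcal{E}\h\no}^\p}$, up to a term absorbable into $\expected{\trkla{\sum_n\tau\norm{\nabla\mu\h\nn}_{L^2\trkla{\Om}}^2}^\p}$. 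The boundary contributions are handled identically using the trace embedding and the lower bound on $G$.

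It remains to upgrade the gradient control encoded in $\mathcal{E}\h\nn$ to the full $H^1\trkla{\Om}$- and $H^1\trkla{\Gamma}$-norms required in \eqref{eq:energyestimate}. For this I would test \eqref{eq:modeldisc:phiO} and \eqref{eq:modeldisc:phiG} with the constant function to track the spatial means of $\phi\h\nn$ and $\trace{\phi\h\nn}$: the bulk mean performs a bounded-increment random walk (its $k$-summed coefficients being $\lesssim\norm{\varrho}_{L^\infty\trkla{\mathds{R}}}^2\sum_k\lambda_k^2\norm{\g{k}}_{L^1\trkla{\Om}}^2\le C$), whereas the boundary mean additionally acquires a drift controlled by $\sum_n\tau\norm{\theta\h\nn}_{L^2\trkla{\Gamma}}^2$; both are estimated by Lemma \ref{lem:BDG}, and Poincaré's inequality then recovers the missing $L^2$-norms. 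Collecting everything, I expect to arrive at
\begin{align*}
\expected{\max_{0\le m\le M}\trkla{\mathcal{E}\h^m}^\p}\le C+C\sum_{n=1}^{M}\tau\,\expected{\max_{0\le l\le n}\trkla{\mathcal{E}\h^l}^\p}\,,
\end{align*}
so that a discrete Gronwall lemma closes the estimate; reinserting the resulting bound into the summed identity then also controls the dissipation and increment sums, which is precisely \eqref{eq:energyestimate}.
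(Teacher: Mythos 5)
Your proposal follows essentially the same route as the paper's proof: the same combination of test functions ($\mu\h\nn$, $\theta\h\nn$, $\phi\h\nn-\phi\h\no$, and---via your reinsertion of $w\h\nn=\Phi\h\trkla{\phi\h\no}\sinc{n}$ into \eqref{eq:modeldisc:pot}---effectively $\eta\h\equiv-\Phi\h\trkla{\phi\h\no}\sinc{n}$), the same exact cancellation of the nonlinear terms against the $r$- and $s$-updates, the same Young-splitting of the implicit factors into predictable parts plus absorbable increments, and the same use of Lemma \ref{lem:BDG} combined with the SAV denominators and the lower bounds on $F,G$ to keep every drift term at most linear in the $\p$-th power of the energy before closing with a discrete Gronwall argument. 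The only cosmetic deviations are that the paper recovers the full $H^1$-norms by testing \eqref{eq:modeldisc:phiO} and \eqref{eq:modeldisc:phiG} with $\phi\h\nn$ and $\trace{\phi\h\nn}$ rather than tracking the spatial means and invoking Poincar\'e, and that it organizes the Gronwall argument in two passes (first with the maximum outside the expectation, then inside via the maximal inequality \eqref{eq:bdg2}) instead of your single pass on $\expected{\max_{0\leq m\leq N}\trkla{\cdot}^\p}$; both variants close the estimate.
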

\begin{proof}
In a first step, we will establish
\begin{align}\label{eq:energystep1}
\max_{0\leq n\leq N}\expected{\norm{\phi\h^m}_{H^1\trkla{\Om}}^{2\p}}+\max_{0\leq n\leq N}\expected{\norm{\phi\h^m}_{H^1\trkla{\Gamma}}^{2\p}}+\max_{0\leq n\leq N}\expected{\tabs{r\h^m}^{2\p}}+\max_{0\leq n\leq N}\expected{\tabs{s\h^m}^{2\p}}\leq C
\end{align}
before proving \eqref{eq:energyestimate}.
For fixed $\omega\in\Omega$, we test \eqref{eq:modeldisc:phiO} by $\psi\h\equiv\mu\h\nn$, \eqref{eq:modeldisc:phiG} by $\widehat{\psi}\h\equiv\theta\h\nn$, and \eqref{eq:modeldisc:pot} by $\eta\h\equiv\phi\h\nn-\phi\h\no$ and $\eta\h\equiv -\Phi\h\trkla{\phi\h\no}\sinc{n}$.
This provides
\begin{align}
\begin{split}
0=& \iO\nabla\phi\h\nn\cdot\nabla\rkla{\phi\h\nn-\phi\h\no}\dx+ \iGamma\nablaG{\phi\h\nn}\cdot\nablaG\trkla{\phi\h\nn-\phi\h\no}\dG\\
&+\frac{r\h\nn}{\sqrt{E\h^\Om\trkla{\phi\h\no}}}\iO\Ih{ F^\prime\trkla{\phi\h\no}\trkla{\phi\h\nn-\phi\h\no}}\dx\\
&-\frac{r\h\nn}{4\tekla{E\h^\Om\trkla{\phi\h\no}}^{3/2}}\iO\Ih{F^\prime\trkla{\phi\h\no}\Phi\h\trkla{\phi\h\no}\sinc{n}}\dx\iO\Ih{F^\prime\trkla{\phi\h\no}\trkla{\phi\h\nn-\phi\h\no}}\dx\\
&+\frac{r\h\nn}{2\sqrt{E\h^\Om\trkla{\phi\h\no}}}\iO\Ih{F^{\prime\prime}\trkla{\phi\h\no}\Phi\h\trkla{\phi\h\no}\sinc{n}\trkla{\phi\h\nn-\phi\h\no}}\dx\\
&+\frac{s\h\nn}{\sqrt{E\h^\Gamma\trkla{\phi\h\no}}}\iGamma\IhG{G^\prime\trkla{\phi\h\no}\trkla{\phi\h\nn-\phi\h\no}}\dG\\
&-\frac{s\h\nn}{4\tekla{E\h^\Gamma\trkla{\phi\h\no}}^{3/2}}\iGamma\IhG{G^\prime\trkla{\phi\h\no}\trace{\Phi\h\trkla{\phi\h\no}\sinc{n}}}\dG\\
&\qquad\qquad\qquad\times\iGamma\IhG{G^\prime\trkla{\phi\h\no}\trkla{\phi\h\nn-\phi\h\no}}\dG
\end{split}\nonumber\\+\begin{split}
&+\frac{s\h\nn}{2\sqrt{E\h^\Gamma\trkla{\phi\h\no}}}\iGamma\IhG{G^{\prime\prime}\trkla{\phi\h\no}\trace{\Phi\h\trkla{\phi\h\no}\sinc{n}}\trkla{\phi\h\nn-\phi\h\no}}\dG\\
&-\iO\nabla\phi\h\nn\cdot\nabla\rkla{\Phi\h\trkla{\phi\h\no}\sinc{n}}\dx-\iGamma\nablaG\phi\h\nn\cdot\nablaG\trace{\Phi\h\trkla{\phi\h\no}\sinc{n}}\dG\\
&-\frac{r\h\nn}{\sqrt{E\h^\Om\trkla{\phi\h\no}}}\iO\Ih{F^\prime\trkla{\phi\h\no}\Phi\h\trkla{\phi\h\no}\sinc{n}}\dx\\
&+\frac{r\h\nn}{4\tekla{E\h^\Om\trkla{\phi\h\no}}^{3/2}}\abs{\iO\Ih{F^\prime\trkla{\phi\h\no}\Phi\h\trkla{\phi\h\no}\sinc{n}}\dx}^2\\
&-\frac{r\h\nn}{2\sqrt{E\h^\Om\trkla{\phi\h\no}}}\iO\Ih{F^{\prime\prime}\trkla{\phi\h\no}\abs{\Phi\h\trkla{\phi\h\no}\sinc{n}}^2}\dx\\
&-\frac{s\h\nn}{\sqrt{E\h^\Gamma\trkla{\phi\h\no}}}\iGamma\IhG{G^\prime\trkla{\phi\h\no}\trace{\Phi\h\trkla{\phi\h\no}\sinc{n}}}\dG\\
&+\frac{s\h\nn}{4\tekla{E\h^\Gamma\trkla{\phi\h\no}}^{3/2}}\abs{\iGamma\IhG{G^\prime\trkla{\phi\h\no}\trace{\Phi\h\trkla{\phi\h\no}\sinc{n}}}\dG}^2\\
&-\frac{s\h\nn}{2\sqrt{E\h^\Gamma\trkla{\phi\h\no}}}\iGamma\IhG{G^{\prime\prime}\trkla{\phi\h\no}\trace{\Phi\h\trkla{\phi\h\no}\sinc{n}}^2}\dG\\
&+\tau\iO\tabs{\nabla\mu\h\nn}^2\dx+\tau\iGamma\IhG{\tabs{\theta\h\nn}^2}\dG\,.
\end{split}
\end{align}
Using \eqref{eq:existencer} and \eqref{eq:existences} multiplied by $r\h\nn$ and $s\h\nn$, respectively,

\begin{align}
\begin{split}
\tfrac12&\norm{\nabla\phi\h\nn}_{L^2\trkla{\Om}}^2+\tfrac12\norm{\nabla\trkla{\phi\h\nn-\phi\h\no}}_{L^2\trkla{\Om}}^2-\tfrac12\norm{\nabla\phi\h\no}_{L^2\trkla{\Om}}^2\\
&+\tfrac12\norm{\nablaG\phi\h\nn}_{L^2\trkla{\Gamma}}^2+\tfrac12\norm{\nablaG\trkla{\phi\h\nn-\phi\h\no}}_{L^2\trkla{\Gamma}}^2-\tfrac12\norm{\nablaG\phi\h\no}_{L^2\trkla{\Gamma}}^2\\
&+\tabs{r\h\nn}^2+\tabs{r\h\nn-r\h\no}^2-\tabs{r\h\no}^2 +\tabs{s\h\nn}^2+\tabs{s\h\nn-s\h\no}^2-\tabs{s\h\no}^2\\
&+\tau\norm{\nabla\mu\h\nn}_{L^2\trkla{\Om}}^2+\tau\norm{\theta\h\nn}_{h,\Gamma}^2
\end{split}\nonumber\\
\begin{split}\label{eq:discen1}
=&\,\iO\nabla\phi\h\nn\cdot\nabla\rkla{\Phi\h\trkla{\phi\h\no}\sinc{n}}\dx+\iGamma\nablaG\phi\h\nn\cdot\nablaG\trace{\Phi\h\trkla{\phi\h\no}\sinc{n}}\dG\\
&+\frac{r\h\nn}{\sqrt{E\h^\Om\trkla{\phi\h\no}}}\iO\Ih{F^\prime\trkla{\phi\h\no}\Phi\h\trkla{\phi\h\no}\sinc{n}}\dx\\
&-\frac{r\h\nn}{4\tekla{E\h^\Om\trkla{\phi\h\no}}^{3/2}}\abs{\iO\Ih{F^\prime\trkla{\phi\h\no}\Phi\h\trkla{\phi\h\no}\sinc{n}}\dx}^2\\
&+\frac{r\h\nn}{2\sqrt{E\h^\Om\trkla{\phi\h\no}}}\iO\Ih{F^{\prime\prime}\trkla{\phi\h\no}\abs{\Phi\h\trkla{\phi\h\no}\sinc{n}}^2}\dx
\end{split}\\
\begin{split}\nonumber
&+\frac{s\h\nn}{\sqrt{E\h^\Gamma\trkla{\phi\h\no}}}\iGamma\IhG{G^\prime\trkla{\phi\h\no}\trace{\Phi\h\trkla{\phi\h\no}\sinc{n}}}\dG\\
&-\frac{s\h\nn}{4\tekla{E\h^\Gamma\trkla{\phi\h\no}}^{3/2}}\abs{\iGamma\IhG{G^\prime\trkla{\phi\h\no}\trace{\Phi\h\trkla{\phi\h\no}\sinc{n}}}\dG}^2\\
&+\frac{s\h\nn}{2\sqrt{E\h^\Gamma\trkla{\phi\h\no}}}\iGamma\IhG{G^{\prime\prime}\trkla{\phi\h\no}\trace{\Phi\h\trkla{\phi\h\no}\sinc{n}}^2}\dG\\
=:&\,S_1+S_2+S_3+S_4+S_5+S_6+S_7+S_8\,.
\end{split}
\end{align}
As the scalar auxiliary variables $r\h\nn$ and $s\h\nn$ are merely approximations of positive terms, there is no non-negativity result available. 
Therefore, the terms $S_4$ and $S_7$ can not simply be neglected.
Similarly to \cite{\citeASAV}, we apply Young's inequality to separate the implicit terms and and the stochastic increments $\sinc{n}$.
In particular, we obtain
\begin{align}
\begin{split}
S_1\leq &\,\tfrac14\norm{\nabla\phi\h\nn-\nabla\phi\h\no}_{L^2\trkla{\Om}}^2 + C\norm{\nabla\rkla{\Phi\h\trkla{\phi\h\no}\sinc{n}}}_{L^2\trkla{\Om}}^2 \\
&+\iO\nabla\phi\h\no\cdot\nabla\rkla{\Phi\h\trkla{\phi\h\no}\sinc{n}}\dx\,,
\end{split}\\
\begin{split}
S_2\leq&\,\tfrac14 \norm{\nablaG\phi\h\nn-\nablaG\phi\h\no}_{L^2\trkla{\Gamma}}^2 +C\norm{\nablaG\trace{\Phi\h\trkla{\phi\h\no}\sinc{n}}}_{L^2\trkla{\Gamma}}\\
&+\iGamma\nablaG\phi\h\no\cdot\nablaG\trace{\Phi\h\trkla{\phi\h\no}\sinc{n}}\dG\,,
\end{split}\\
\begin{split}
S_3\leq&\,\tfrac14\tabs{r\h\nn-r\h\no}^2 + C\frac{1}{E\h^\Om\trkla{\phi\h\no}}\abs{\iO\Ih{F^\prime\trkla{\phi\h\no}\Phi\h\trkla{\phi\h\no}\sinc{n}}\dx}^2\\
&+\frac{r\h\no}{\sqrt{E\h^\Om\trkla{\phi\h\no}}}\iO\Ih{F^\prime\trkla{\phi\h\no}\Phi\h\trkla{\phi\h\no}\sinc{n}}\dx\,,
\end{split}\\
\begin{split}
S_4\leq&\,\tfrac14\tabs{r\h\nn-r\h\no}^2 +C\frac{1}{\tekla{E\h^\Om\trkla{\phi\h\no}}^3}\abs{\iO\Ih{F^\prime\trkla{\phi\h\no}\Phi\h\trkla{\phi\h\no}\sinc{n}}\dx}^4\\
&-\frac{r\h\no}{4\ekla{E\h^\Om\trkla{\phi\h\no}}^{3/2}}\abs{\iO\Ih{F^\prime\trkla{\phi\h\no}\Phi\h\trkla{\phi\h\no}\sinc{n}}\dx}^2\,,
\end{split}\\
\begin{split}
S_5\leq&\,\tfrac14\tabs{r\h\nn-r\h\no}^2+C\frac{1}{E\h^\Om\trkla{\phi\h\no}}\abs{\iO\Ih{F^{\prime\prime}\trkla{\phi\h\no}\abs{\Phi\h\trkla{\phi\h\no}\sinc{n}}^2}\dx}^2\\
&+\frac{r\h\no}{2\sqrt{E\h^\Om\trkla{\phi\h\no}}}\iO\Ih{F^{\prime\prime}\trkla{\phi\h\no}\abs{\Phi\h\trkla{\phi\h\no}\sinc{n}}^2}\dx\,,
\end{split}\\
\begin{split}
S_6\leq&\,\tfrac14\tabs{s\h\nn-s\h\no}^2+C\frac{1}{E\h^\Gamma\trkla{\phi\h\no}}\abs{\iGamma\IhG{G^\prime\trkla{\phi\h\no}\trace{\Phi\h\trkla{\phi\h\no}\sinc{n}}}\dG}^2\\
&+\frac{s\h\no}{\sqrt{E\h^\Gamma\trkla{\phi\h\no}}}\iGamma\IhG{G^\prime\trkla{\phi\h\no}\trace{\Phi\h\trkla{\phi\h\no}\sinc{n}}}\dG\,,
\end{split}\\
\begin{split}
S_7\leq&\,\tfrac14\tabs{s\h\nn-s\h\no}^2+C\frac{1}{\ekla{E\h^\Gamma\trkla{\phi\h\no}}^2}\abs{\iGamma\IhG{G^\prime\trkla{\phi\h\no}\trace{\Phi\h\trkla{\phi\h\no}\sinc{n}}}\dG}^4\\
&-\frac{s\h\no}{4\ekla{E\h^\Gamma\trkla{\phi\h\no}}^{3/2}}\abs{\iGamma\IhG{G^\prime\trkla{\phi\h\no}\trace{\Phi\h\trkla{\phi\h\no}\sinc{n}}}\dG}^2\,,
\end{split}\\
\begin{split}
S_8\leq&\,\tfrac14\tabs{s\h\nn-s\h\no}^2+C\frac{1}{E\h^\Gamma\trkla{\phi\h\no}}\abs{\iGamma\IhG{G^{\prime\prime}\trkla{\phi\h\no}\trace{\Phi\h\trkla{\phi\h\no}\sinc{n}}^2}\dG}^2\\
&+\frac{s\h\no}{2\sqrt{E\h^\Gamma\trkla{\phi\h\no}}}\iGamma\IhG{G^{\prime\prime}\trkla{\phi\h\no}\trace{\Phi\h\trkla{\phi\h\no}\sinc{n}}^2}\dG\,.
\end{split}
\end{align}
As we allow the stochastic term in \eqref{eq:modeldisc:phiO} to act as a source or sink term, controlling the $H^1\trkla{\Om}$-semi-norm of the phase-field parameter is insufficient.
In order to obtain the full $H^1$-norms on the left-hand side of \eqref{eq:discen1}, we choose $\psi\h\equiv\phi\h\nn$ in \eqref{eq:modeldisc:phiO} and $\widehat{\psi}\h\equiv\trace{\phi\h\nn}$ in \eqref{eq:modeldisc:phiG}.
After applying Young's inequality, we obtain
\begin{align}
\begin{split}
\tfrac12&\norm{\phi\h\nn}_{h,\Om}^2+\tfrac12\norm{\phi\h\nn-\phi\h\no}_{h,\Om}^2-\tfrac12\norm{\phi\h\no}_{h,\Om}^2 +\tfrac12\norm{\phi\h\nn}_{h,\Gamma}^2+\tfrac12\norm{\phi\h\nn-\phi\h\no}_{h,\Gamma}^2-\tfrac12\norm{\phi\h\no}_{h,\Gamma}^2\\
&\leq \tau\tfrac34\norm{\nabla\mu\h\nn}_{L^2\trkla{\Om}}^2+\tau\tfrac13\norm{\nabla\phi\h\nn}_{L^2\trkla{\Om}}^2 +\tfrac14\norm{\phi\h\nn-\phi\h\no}_{h,\Om}^2+\norm{\Phi\h\trkla{\phi\h\no}\sinc{n}}_{h,\Om}^2\\
&~+\iO\Ih{\phi\h\no\Phi\h\trkla{\phi\h\no}\sinc{n}}\dx +\tau\tfrac34\norm{\theta\h\nn}_{h,\Gamma}^2+\tau\tfrac13\norm{\phi\h\nn}_{h,\Gamma}^2+\tfrac14\norm{\phi\h\nn-\phi\h\no}_{h,\Gamma}^2\\
&~+\norm{\trace{\Phi\h\trkla{\phi\h\no}\sinc{n}}}_{h,\Gamma}^2+\iGamma\IhG{\phi\h\no\trace{\Phi\h\trkla{\phi\h\no}\sinc{n}}}\dG\,.
\end{split}
\end{align}
Combining the above equations and summing from $n=1$ to $m$, we obtain
\begin{align}
\begin{split}
\tfrac12&\norm{\phi\h^m}_{H^1\h\trkla{\Om}}^2+\tfrac12\norm{\phi\h^m}_{H^1\h\trkla{\Gamma}}^2 +\tabs{r\h^m}^2+\tabs{s\h^m}^2+\tfrac14\sum_{n=1}^m\norm{\phi\h\nn-\phi\h\no}_{H^1\h\trkla{\Om}}^2\\
&+\tfrac14\sum_{n=1}^m\norm{\phi\h\nn-\phi\h\no}_{H^1\h\trkla{\Gamma}}^2+\tfrac14\sum_{n=1}^m\tabs{r\h\nn-r\h\no}^2+\tfrac14\sum_{n=1}^m\tabs{s\h\nn-s\h\no}^2\\
&+\tfrac14\tau\sum_{n=1}^m\norm{\nabla\mu\h\nn}_{L^2\trkla{\Om}}^2+\tfrac14\tau \sum_{n=1}^m\norm{\theta\h\nn}_{h,\Gamma}^2\nonumber
\end{split}\\
\begin{split}
\leq&\,\tfrac12\norm{\phi\h^0}_{H^1\h\trkla{\Om}}^2+\tfrac12\norm{\phi\h^0}_{H^1\h\trkla{\Gamma}}^2+\tabs{r\h^0}^2+\tabs{s\h^0}^2 +\tfrac13\tau\sum_{n=1}^m\norm{\nabla\phi\h\nn}_{L^2\trkla{\Om}}^2+\tfrac13\tau\sum_{n=1}^m\norm{\phi\h\nn}_{h,\Gamma}^2\\
&+C\sum_{n=1}^m\norm{\nabla\rkla{\Phi\h\trkla{\phi\h\no}\sinc{n}}}_{L^2\trkla{\Om}}^2+\sum_{n=1}^m\iO\nabla\phi\h\no\cdot\nabla\rkla{\Phi\h\trkla{\phi\h\no}\sinc{n}}\dx\\
&+C\sum_{n=1}^m\frac{1}{E\h^\Om\trkla{\phi\h\no}}\abs{\iO\Ih{F^\prime\trkla{\phi\h\no}\Phi\h\trkla{\phi\h\no}\sinc{n}}\dx}^2\\
&+\sum_{n=1}^m\frac{r\h\no}{\sqrt{E\h^\Om\trkla{\phi\h\no}}}\iO\Ih{F^\prime\trkla{\phi\h\no}\Phi\h\trkla{\phi\h\no}\sinc{n}}\dx\\
&+C\sum_{n=1}^m\frac{1}{\ekla{E\h^\Om\trkla{\phi\h\no}}^{3}}\abs{\iO\Ih{F^\prime\trkla{\phi\h\no}\Phi\h\trkla{\phi\h\no}\sinc{n}}\dx}^4\\
&-\sum_{n=1}^m\frac{r\h\no}{4\ekla{E\h^\Om\trkla{\phi\h\no}}^{3/2}}\abs{\iO\Ih{F^\prime\trkla{\phi\h\no}\Phi\h\trkla{\phi\h\no}\sinc{n}}\dx}^2\\
&+C\sum_{n=1}^m\frac{1}{E\h^\Om\trkla{\phi\h\no}}\abs{\iO\Ih{F^{\prime\prime}\trkla{\phi\h\no}\abs{\Phi\h\trkla{\phi\h\no}\sinc{n}}^2}\dx}^2\\
&+\sum_{n=1}^m\frac{r\h\no}{2\sqrt{E\h^\Om\trkla{\phi\h\no}}}\iO\Ih{F^{\prime\prime}\trkla{\phi\h\no}\abs{\Phi\h\trkla{\phi\h\no}\sinc{n}}^2}\dx\\
&+C\sum_{n=1}^m\norm{\nablaG\trace{\Phi\h\trkla{\phi\h\no}\sinc{n}}}_{L^2\trkla{\Gamma}}^2 \\
&+\sum_{n=1}^m\iGamma\nablaG\phi\h\no\cdot\nablaG\trace{\Phi\h\trkla{\phi\h\no}\sinc{n}}\dG\\
&+C\sum_{n=1}^m\frac{1}{E\h^\Gamma\trkla{\phi\h\no}}\abs{\iGamma\IhG{G^\prime\trkla{\phi\h\no}\trace{\Phi\h\trkla{\phi\h\no}\sinc{n}}}\dG}^2\\
&+\sum_{n=1}^m\frac{s\h\no}{\sqrt{E\h^\Gamma\trkla{\phi\h\no}}}\iGamma\IhG{G^\prime\trkla{\phi\h\no}\trace{\Phi\h\trkla{\phi\h\no}\sinc{n}}}\dG\\
&+C\sum_{n=1}^m\frac{1}{\ekla{E\h^\Gamma\trkla{\phi\h\no}}^3}\abs{\iGamma\IhG{G^\prime\trkla{\phi\h\no}\trace{\Phi\h\trkla{\phi\h\no}\sinc{n}}}\dG}^4\\
&-\sum_{n=1}^m\frac{s\h\no}{4\ekla{E\h^\Gamma\trkla{\phi\h\no}}^{3/2}}\abs{\iGamma\IhG{G^\prime\trkla{\phi\h\no}\trace{\Phi\h\trkla{\phi\h\no}\sinc{n}}}\dG}^2\\
&+C\sum_{n=1}^m\frac{1}{E\h^\Gamma\trkla{\phi\h\no}}\abs{\iGamma\IhG{G^{\prime\prime}\trkla{\phi\h\no}\trace{\Phi\h\trkla{\phi\h\no}\sinc{n}}^2}\dG}^2\\
&+\sum_{n=1}^m\frac{s\h\no}{2\sqrt{E\h^\Gamma\trkla{\phi\h\no}}}\iGamma\IhG{G^{\prime\prime}\trkla{\phi\h\no}\trace{\Phi\h\trkla{\phi\h\no}\sinc{n}}^2}\dG\\
=:&\,\tfrac12\norm{\phi\h^0}_{H^1\h\trkla{\Om}}^2+\tfrac12\norm{\phi\h^0}_{H^1\h\trkla{\Gamma}}^2+\tabs{r\h^0}^2+\tabs{s\h^0}^2 +\tfrac13\tau\sum_{n=1}^m\norm{\nabla\phi\h\nn}_{L^2\trkla{\Om}}^2+\tfrac13\tau\sum_{n=1}^m\norm{\phi\h\nn}_{h,\Gamma}^2\\
&+\sum_{\alpha=1}^{16} R_{\alpha,m}\,.
\end{split}
\end{align}
Absorbing $\tau\tfrac13\norm{\nabla\phi\h^m}_{L^2\trkla{\Om}}^2$ and $\tau\tfrac13\norm{\phi\h^m}_{h,\Gamma}^2$ on the left-hand side, taking the $\p$-th power, and using he equivalence between the norms defined in \eqref{eq:def:hnorms} and their standard counterparts, we obtain for any $m\in\tgkla{0,\ldots,N}$ the estimate
\begin{align}\label{eq:energytmp:1}
\begin{split}
&\norm{\phi\h^m}_{H^1\trkla{\Om}}^{2\p}+\norm{\phi\h^m}_{H^1\trkla{\Gamma}}^{2\p}+\tabs{r\h^m}^{2\p}+\tabs{s\h^m}^{2\p}+\rkla{\sum_{n=1}^m\norm{\phi\h\nn-\phi\h\no}_{H^1\trkla{\Om}}^2}^\p\\
&\qquad+\rkla{\sum_{n=1}^m\norm{\phi\h\nn-\phi\h\no}_{H^1\trkla{\Gamma}}^2}^\p+\rkla{\sum_{n=1}^m\tabs{r\h\nn-r\h\no}^2}^\p+\rkla{\sum_{n=1}^m\tabs{s\h\nn-s\h\no}^2}^\p\\
&\qquad+\rkla{\tau\sum_{n=1}^m\norm{\nabla\mu\h\nn}_{L^2\trkla{\Om}}^2}^\p+\rkla{\tau\sum_{n=1}^m\norm{\theta\h\nn}_{L^2\trkla{\Gamma}}^2}^\p\\
&\quad\leq C\norm{\phi\h^0}_{H^1\trkla{\Om}}^{2\p}+C\norm{\phi\h^0}_{H^1\trkla{\Gamma}}^{2\p} +C\tabs{r\h^0}^{2\p}+C\tabs{s\h^0}^{2\p}+C\sum_{n=1}^m\tau\norm{\nabla\phi\h\no}_{L^2\trkla{\Om}}^{2\p}\\
&\qquad+C\sum_{n=1}^m\tau\norm{\phi\h\no}_{L^2\trkla{\Gamma}}^{2\p} +C\sum_{\alpha=1}^{16}\tabs{R_{\alpha,m}}^\p\,,
\end{split}
\end{align}
where the constant $C$ depends on $\p$ but not on $h$ or $\tau$.
We will now follow the lines of \cite{\citeASAV} to derive estimates for the expected values of the stochastic terms $\tabs{R_{1,m}}^\p,\ldots,\tabs{R_{16,m}}^\p$.
We start by deducing an estimate for $\expected{\tabs{R_{1,m}}^\p}$.
Applying Hölder's inequality, Lemma \ref{lem:BDG}, \eqref{eq:normequivalence}, and assumptions \ref{item:color} and \ref{item:rho}, we compute
\begin{align}\label{eq:tmp:R1}
\begin{split}
\expected{\tabs{R_{1,m}}^\p}\leq&\,Cm^{\p-1}\sum_{n=1}^m\expected{\norm{\nabla\rkla{\Phi\h\trkla{\phi\h\no}\sinc{n}}}_{L^2\trkla{\Om}}^{2\p}}\\
\leq &\,C\sum_{n=1}^m\tau\expected{\rkla{\sum_{k\in\Zh}\norm{\lambda_k\nabla\rkla{\Phi\h\trkla{\phi\h\no}\g{k}}}_{L^2\trkla{\Om}}^2}^{\p}}\\
\leq&\, C\sum_{n=1}^m\tau\expected{\norm{\Ih{\varrho\trkla{\phi\h\no}}}_{H^1\trkla{\Om}}^{2\p}\rkla{\sum_{k\in\mathds{Z}}\lambda_k^2\norm{\g{k}}_{W^{1,\infty}\trkla{\Om}}^2}^{\p}}\\
\leq&\,C\sum_{n=1}^m\tau\expected{\rkla{1+\norm{\nabla\phi\h\no}_{L^2\trkla{\Om}}^{2\p}}}\,.
\end{split}
\end{align}
For $R_{2,m}$, we obtain from Lemma \ref{lem:BDG}
\begin{align}
\begin{split}
\expected{\tabs{R_{2,m}}^\p}\leq&\,\expected{\rkla{\max_{1\leq l\leq m}\abs{\sum_{n=1}^l\iO\nabla\rkla{\Phi\h\trkla{\phi\h\no}\sinc{n}}\cdot\nabla\phi\h\no\dx}}^\p}\\
\leq&\,C\sum_{n=1}^m\tau\expected{\rkla{\sum_{k\in\Zh}\lambda_k^2\abs{\iO\nabla\Ih{\varrho\trkla{\phi\h\no}\g{k}}\cdot\nabla\phi\h\no\dx}^2}^{\p/2}}\\
\leq &\, C\sum_{n=1}^m\tau\expected{\norm{\Ih{\varrho\trkla{\phi\h\no}}}_{H^1\trkla{\Om}}^\p\norm{\nabla\phi\h\no}_{L^2\trkla{\Om}}^\p}\\
\leq&\,C+C\expected{\sum_{n=1}^m\tau\norm{\nabla\phi\h\no}_{L^2\trkla{\Om}}^{2\p}}\,.
\end{split}
\end{align}
Using similar arguments, we obtain for $R_{3,m}$
\begin{align}
\begin{split}
\expected{\tabs{R_{3,m}}^\p}\leq C\sum_{n=1}^m\tau\expected{\frac1{\tekla{E\h^\Om\trkla{\phi\h\no}}^\p}\norm{\Ih{\varrho\trkla{\phi\h\no} F^\prime\trkla{\phi\h\no}}}_{L^1\trkla{\Om}}^{2\p}}\,.
\end{split}
\end{align}
As the left-hand side of \eqref{eq:energytmp:1} only includes $2\p$-th powers of $\phi\h^m$, it will not be possible to control $\norm{F^\prime\trkla{\phi\h\no}}_{L^1\trkla{\Om}}^{2\p}$.
We therefore follow the approach used in \cite{\citeASAV} (see also \cite{LamWang2023}) and use the negative powers of $E\h^\Om\trkla{\phi\h\no}$ to our advantage.
As $\varrho$ is bounded, we can use the growth condition stated in \ref{item:potentials}, the norm equivalence \eqref{eq:normequivalence}, and Hölder's inequality to obtain
\begin{align}\label{eq:fprime}
\norm{\Ih{\varrho\trkla{\phi\h\no}F^\prime\trkla{\phi\h\no}}}_{L^1\trkla{\Om}}^2\leq C +C\norm{\phi\h\no}_{L^2\trkla{\Om}}^2\iO\Ih{F\trkla{\phi\h\no}}\dx\,.
\end{align}
Hence, the lower bound on $F$ provides
\begin{align}
\expected{\tabs{R_{3,m}}^\p}\leq C+C\sum_{n=1}^m\tau\expected{\norm{\phi\h\no}_{L^2\trkla{\Om}}^{2\p}}\,.
\end{align}
Reusing \eqref{eq:fprime}, we estimate $\expected{\tabs{R_{4,m}}^\p}$ via
\begin{align}
\begin{split}
\expected{\tabs{R_{4,m}}^\p}\leq&\, C\sum_{n=1}^m\tau\expected{\rkla{\frac{\tabs{r\h\no}}{\sqrt{E\h^\Om\trkla{\phi\h\no}}}\norm{\Ih{\varrho\trkla{\phi\h\no}F^\prime\trkla{\phi\h\no}}}_{L^1\trkla{\Om}}}^\p}\\
\leq&\, C+C\sum_{n=1}^m\tau\expected{\tabs{r\h\no}^{2\p}}+C\sum_{n=1}^m\tau\expected{\norm{\phi\h\no}_{L^2\trkla{\Om}}^{2\p}}\,.
\end{split}
\end{align}
Similar to the lines of the proof of \cite[Lemma 6.1]{\citeASAV}, we obtain
\begin{align}
\expected{\tabs{R_{5,m}}^\p}\leq & C\tau^\p\,,\\
\expected{\tabs{R_{6,m}}^\p}\leq & C\sum_{n=1}^m\tau\expected{\tabs{r\h\no}^{2\p}}+C\,,\\
\expected{\tabs{R_{7,m}}^\p}\leq & C\tau^\p\,,\\
\expected{\tabs{R_{8,m}}^\p}\leq & C\sum_{n=1}^m\tau\expected{\tabs{r\h\no}^{2\p}}+C\,.
\end{align}
The remaining terms can be estimated analogously using \eqref{eq:tracecolor}.
In particular, we interpret $\nablaG\trace{\Phi\h\trkla{\phi\h\no}\,\cdot\,}$ as a Hilbert--Schmidt operator onto $\trkla{L^2\trkla{\Gamma}}^{d-1}$ and apply Lemma \ref{lem:BDG} to obtain
\begin{align}
\begin{split}
\expected{\tabs{R_9,m}^\p}\leq&\,C m^{\p-1}\sum_{n=1}^m\expected{\norm{\nablaG\trace{\Phi\h\trkla{\phi\h\no}\sinc{n}}}_{L^2\trkla{\Gamma}}^{2\p}}\\
\leq &\,C\sum_{n=1}^m\tau\expected{\norm{\IhG{\varrho\trkla{\phi\h\no}}}_{H^1\trkla{\Gamma}}^{2\p}\rkla{\sum_{k\in\mathds{Z}}\lambda_k^2\norm{\g{k}}_{W^{1,\infty}\trkla{\Gamma}}}^\p}\\
\leq &\,C\sum_{n=1}^m\tau\expected{\rkla{1+\norm{\nablaG\phi\h\no}_{L^2\trkla{\Gamma}}^{2\p}}}\,.
\end{split}
\end{align}
Similarly, we obtain
\begin{align}
\expected{\tabs{R_{10,m}}^\p}\leq&\, C+C\expected{\sum_{n=1}^m\tau\norm{\nablaG\phi\h\no}_{L^2\trkla{\Gamma}}^{2\p}}\,,\\
\expected{\tabs{R_{11,m}}^\p}\leq&\,C+C\expected{\sum_{n=1}^m\tau\norm{\phi\h\no}_{L^2\trkla{\Gamma}}^{2\p}}\,,\\
\expected{\tabs{R_{12,m}}^\p}\leq&\,C+C\expected{\sum_{n=1}^m\tau\tabs{s\h\no}^{2\p}}+C\expected{\sum_{n=1}^m\tau\norm{\phi\h\no}_{L^2\trkla{\Gamma}}^{2\p}}\,,\\
\expected{\tabs{R_{13,m}}^\p}\leq&\,C\tau^\p\,,\\
\expected{\tabs{R_{14,m}}^\p}\leq&\,C+C\expected{\sum_{n=1}^m\tau\tabs{s\h\no}^{2\p}}\,,\\
\expected{\tabs{R_{15,m}}^\p}\leq&\,C\tau^\p\,,\\
\expected{\tabs{R_{16,m}}^\p}\leq&\,C+C\expected{\sum_{n=1}^m\tau\tabs{s\h\no}^{2\p}}\,.
\end{align}
Hence, we obtain from \eqref{eq:energytmp:1} after neglecting non-negative terms
\begin{align}
\begin{split}
&\expected{\norm{\phi\h^m}_{H^1\trkla{\Om}}^{2p}}+\expected{\norm{\phi\h^m}_{H^1\trkla{\Gamma}}^{2\p}}+\expected{\tabs{r\h^m}^{2\p}}+\expected{\tabs{s\h^m}^{2\p}}\\
&\quad\leq C\norm{\phi\h^0}_{H^1\trkla{\Om}}^{2\p}+C\norm{\phi\h^0}_{H^1\trkla{\Gamma}}^{2\p}+C\tabs{r\h^0}^{2\p}+C\tabs{s\h^0}^{2\p} +C\sum_{n=1}^m\tau\expected{\norm{\phi\h\no}_{H^1\trkla{\Om}}^{2\p}}\\
&\qquad+C\sum_{n=1}^m\tau\expected{\norm{\phi\h\no}_{H^1\trkla{\Gamma}}^{2\p}} +C + C\sum_{n=1}^m\tau\expected{\tabs{r\h\no}^{2\p}}+ C\sum_{n=1}^m\tau\expected{\tabs{s\h\no}^{2\p}}
\end{split}
\end{align}
for all $m\in\tgkla{1,\dots,N}$.
As assumption \ref{item:initial} allows us to control the first four terms on the right-hand side independently of the discretization parameters, we can apply a discrete version of Gronwall's inequality and obtain \eqref{eq:energystep1}.
In the second step, we shall now establish \eqref{eq:energyestimate} for $\p\in2\mathds{N}$.
Starting from \eqref{eq:energytmp:1}, we can take the maximum over $m\in\tgkla{1,\ldots,N}$ before taking the expected value.
Reusing the above calculations and in particular \eqref{eq:energystep1}, provides the claim.
\end{proof}
In the next step, we shall analyze \eqref{eq:modeldisc:pot} in more detail and verify that the additional terms $\Xi_{h,\Om}\nn$ and $\Xi_{h,\Gamma}\nn$ introduced in the approximation of $F^\prime\trkla{\phi}$ and $G^\prime\trkla{\phi}$ (cf.~\eqref{eq:deferrorterms}) will vanish for $\tau\searrow0$.

\begin{lemma}\label{lem:potential}
Let the assumptions \ref{item:time}, \ref{item:space1}, \ref{item:space2}, \ref{item:potentials}, \ref{item:initial}, \ref{item:rho}, and \ref{item:filtration}-\ref{item:color} hold true.
Then, for every $1\leq q<\infty$, there exists a positiv constant $C$ independent of $h$ and $\tau$ such that

\begin{subequations}
\begin{align}
\expected{\rkla{\sum_{n=1}^N\tau\norm{\Xi_{h,\Om}\nn}_{h,\Om}^2}^q}\leq C\tau^q\,,\label{eq:errorOm}\\
\expected{\rkla{\sum_{n=1}^N\tau\norm{\Xi_{h,\Gamma}\nn}_{h,\Om}^2}^q}\leq C\tau^q\,.\label{eq:errorGam}
\end{align}
\end{subequations}
Furthermore, the following estimate on the $L^2\trkla{\Om}$-norm holds true:
\begin{align}\label{eq:mul2}
\expected{\rkla{\sum_{n=1}^N\tau\norm{\mu\h\nn}_{L^2\trkla{\Om}}^2}^q}\leq C\,.
\end{align}
\end{lemma}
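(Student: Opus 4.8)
\emph{Strategy for \eqref{eq:errorOm} and \eqref{eq:errorGam}.} The idea is that each summand of $\Xi_{h,\Om}\nn$ in \eqref{eq:deferrorterms:Om} is linear in the increment $\Phi\h\trkla{\phi\h\no}\sinc{n}$, which by \ref{item:randomvars} carries a factor $\sqrt\tau$; hence $\norm{\Xi_{h,\Om}\nn}_{h,\Om}^2$ is of order $\tau$ and the time-summed quantity of order $\tau$. Since $\sum_{n=1}^N\tau=T$, I would first apply the discrete Jensen inequality (convexity of $t\mapsto t^q$, weights $\tau/T$) to reduce the moment of the sum to a single-index moment,
\begin{align*}
\expected{\rkla{\sum_{n=1}^N\tau\norm{\Xi_{h,\Om}\nn}_{h,\Om}^2}^q}\leq T^{q-1}\sum_{n=1}^N\tau\,\expected{\norm{\Xi_{h,\Om}\nn}_{h,\Om}^{2q}}\,,
\end{align*}
so that it suffices to prove $\expected{\norm{\Xi_{h,\Om}\nn}_{h,\Om}^{2q}}\leq C\tau^q$ uniformly in $n$.

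\emph{The single-step estimate.} I would bound the two summands of $\Xi_{h,\Om}\nn$ separately in $\norm{\cdot}_{h,\Om}$. Because the positive lower bound $F\geq\gamma>0$ gives $E\h^\Om\trkla{\phi\h\no}\geq\gamma\tabs{\Om}>0$, the prefactors $\ekla{E\h^\Om\trkla{\phi\h\no}}^{-3/2}$ and $\ekla{E\h^\Om\trkla{\phi\h\no}}^{-1/2}$ are bounded by constants, while the growth conditions in \ref{item:potentials} and the norm equivalence \eqref{eq:normequivalence} turn the factors $\Ih{F^\prime\trkla{\phi\h\no}}$ and $\Ih{F^{\prime\prime}\trkla{\phi\h\no}\,\cdot\,}$ into polynomial expressions in $\norm{\phi\h\no}_{H^1\trkla{\Om}}$ (using $H^1\trkla{\Om}\emb L^6\trkla{\Om}$ for $d\leq3$) times $\norm{\Phi\h\trkla{\phi\h\no}\sinc{n}}_{h,\Om}$. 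Since $r\h\nn$ is $\mathcal{F}^\tau_{t\nn}$-measurable and thus correlated with the increment, I would separate it by Hölder's inequality in $\omega$,
\begin{align*}
\expected{\norm{\Xi_{h,\Om}\nn}_{h,\Om}^{2q}}\leq\expected{\tabs{r\h\nn}^{4q}\trkla{1+\norm{\phi\h\no}_{H^1\trkla{\Om}}}^{Cq}}^{1/2}\,\expected{\norm{\Phi\h\trkla{\phi\h\no}\sinc{n}}_{h,\Om}^{4q}}^{1/2}\,,
\end{align*}
bounding the first factor by the energy estimate \eqref{eq:energyestimate} (which supplies arbitrary moments of $r\h\nn$ and $\phi\h\no$) and the second by Lemma \ref{lem:BDG}, estimate \eqref{eq:bdg1}, with $H=L^2\trkla{\Om}$ and the $\mathcal{F}^\tau_{t\no}$-measurable operator $\Phi\h\trkla{\phi\h\no}$; the latter produces the decisive factor $\tau^q$ since $\varrho$ is bounded and \ref{item:color} holds. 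Collecting terms yields $\expected{\norm{\Xi_{h,\Om}\nn}_{h,\Om}^{2q}}\leq C\tau^q$, and \eqref{eq:errorOm} follows. The estimate \eqref{eq:errorGam} is obtained verbatim, replacing $\Om,F,\Ihop$ by $\Gamma,G,\IhGop$ and invoking \eqref{eq:tracecolor} in place of \ref{item:color}.

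\emph{The bound \eqref{eq:mul2}.} Since \eqref{eq:energyestimate} only controls $\nabla\mu\h\nn$, the remaining task is to recover the mean value. By the Poincaré inequality,
\begin{align*}
\norm{\mu\h\nn}_{L^2\trkla{\Om}}^2\leq C\norm{\nabla\mu\h\nn}_{L^2\trkla{\Om}}^2+C\tabs{\Om}^{-1}\abs{\iO\mu\h\nn\dx}^2\,,
\end{align*}
so after multiplying by $\tau$, summing, raising to the $q$-th power and taking expectations, the gradient contribution is controlled by \eqref{eq:energyestimate}. For the mean value I would test \eqref{eq:modeldisc:pot} with $\eta\h\equiv1$, which annihilates the two Dirichlet terms and yields
\begin{align*}
\iO\mu\h\nn\dx=-\iGamma\IhG{\theta\h\nn}\dG+\frac{r\h\nn}{\sqrt{E\h^\Om\trkla{\phi\h\no}}}\iO\Ih{F^\prime\trkla{\phi\h\no}}\dx+\iO\Ih{\Xi_{h,\Om}\nn}\dx+\rkla{\Gamma\text{-terms}}\,.
\end{align*}
The $\theta$-term is bounded by Cauchy--Schwarz and \eqref{eq:energyestimate}; the $F^\prime$-term by the factorization $\tabs{F^\prime}\leq C\trkla{1+\tabs{\,\cdot\,}}F^{1/2}$ (cf.~\eqref{eq:fprime}), giving $\tfrac{\tabs{r\h\nn}}{\sqrt{E\h^\Om}}\iO\Ih{\tabs{F^\prime\trkla{\phi\h\no}}}\dx\leq C\tabs{r\h\nn}\trkla{1+\norm{\phi\h\no}_{L^2\trkla{\Om}}}$, hence controllable by \eqref{eq:energyestimate}; and the $\Xi$-terms by Cauchy--Schwarz together with \eqref{eq:errorOm} and \eqref{eq:errorGam}. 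Assembling these estimates, splitting products by Hölder in $\omega$ where needed, gives \eqref{eq:mul2}. The main obstacle throughout is the single-step estimate: it requires extracting exactly the factor $\tau^q$ from Lemma \ref{lem:BDG} while keeping the $\mathcal{F}^\tau_{t\nn}$-measurable prefactor $r\h\nn$ cleanly separated from the increment, so that the discrete Burkholder--Davis--Gundy inequality is applicable.
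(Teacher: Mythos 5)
Your overall architecture --- discrete Jensen in time, H\"older in $\omega$ to decouple the $\mathcal{F}^\tau_{t\nn}$-measurable factor $r\h\nn$ from the stochastic increment, Lemma \ref{lem:BDG} for the noise factor, and the choice $\eta\h\equiv1$ plus Poincar\'e for \eqref{eq:mul2} --- coincides with the paper's, and your treatment of the \emph{first} summand of $\Xi_{h,\Om}\nn$ (via discrete Cauchy--Schwarz, cubic growth of $F^\prime$ and $H^1\trkla{\Om}\emb L^6\trkla{\Om}$) as well as of the mean-value estimate \eqref{eq:mul2} is sound. The gap is in your single-step bound for the \emph{second} summand $\tfrac{r\h\nn}{2\sqrt{E\h^\Om\trkla{\phi\h\no}}}\Ih{F^{\prime\prime}\trkla{\phi\h\no}\Phi\h\trkla{\phi\h\no}\sinc{n}}$: the claimed pathwise factorization
\begin{align*}
\norm{\Ih{F^{\prime\prime}\trkla{\phi\h\no}\Phi\h\trkla{\phi\h\no}\sinc{n}}}_{h,\Om}\leq C\rkla{1+\norm{\phi\h\no}_{H^1\trkla{\Om}}^{m}}\norm{\Phi\h\trkla{\phi\h\no}\sinc{n}}_{h,\Om}
\end{align*}
does not hold. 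The norm $\norm{\cdot}_{h,\Om}$ is a lumped $L^2$-norm, so extracting the nodal factor $F^{\prime\prime}\trkla{\phi\h\no}$ from the product requires $\norm{F^{\prime\prime}\trkla{\phi\h\no}}_{L^\infty\trkla{\Om}}$, i.e.~$L^\infty$-control of $\phi\h\no$, which $H^1\trkla{\Om}$ does not supply for $d=2,3$. The discrete H\"older alternative, $\norm{\Ihop\tgkla{fg}}_{h,\Om}\leq\norm{f}_{L^3_h}\norm{g}_{L^6_h}$, only trades this for a discrete $L^6$-norm of the increment, where the Hilbert-space-valued Lemma \ref{lem:BDG} no longer applies, and an inverse estimate back to $L^2$ would cost a factor $h^{-d/3}$ and destroy uniformity in $h$.

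The repair is exactly what the paper does: do not separate $F^{\prime\prime}\trkla{\phi\h\no}$ from the increment at the level of the full noise term, but absorb it into the Hilbert--Schmidt operator. That is, apply \eqref{eq:bdg1} to the $\mathcal{F}^\tau_{t\no}$-measurable composite operator $f\mapsto\Ih{F^{\prime\prime}\trkla{\phi\h\no}\Phi\h\trkla{\phi\h\no}f}$ with $H=\rkla{\Uh,\norm{\cdot}_{h,\Om}}$; the separation of the nonlinearity from the noise then happens mode by mode, where each $\g{k}$ is bounded in $L^\infty\trkla{\Om}$ by \ref{item:color}:
\begin{align*}
\norm{\lambda_k\Ih{F^{\prime\prime}\trkla{\phi\h\no}\varrho\trkla{\phi\h\no}\g{k}}}_{h,\Om}
\leq\tabs{\lambda_k}\norm{\varrho}_{L^\infty\trkla{\mathds{R}}}\norm{\g{k}}_{L^\infty\trkla{\Om}}\norm{\Ih{F^{\prime\prime}\trkla{\phi\h\no}}}_{h,\Om}
\leq C\tabs{\lambda_k}\norm{\g{k}}_{L^\infty\trkla{\Om}}\rkla{1+\norm{\phi\h\no}_{H^1\trkla{\Om}}^2}\,,
\end{align*}
using the quadratic growth of $F^{\prime\prime}$, \eqref{eq:normequivalence}, and $H^1\trkla{\Om}\emb L^4\trkla{\Om}$. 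This yields $\expected{\norm{\Ih{F^{\prime\prime}\trkla{\phi\h\no}\Phi\h\trkla{\phi\h\no}\sinc{n}}}_{h,\Om}^{4q}}\leq C\tau^{2q}$, after which your H\"older-in-$\omega$ separation of $r\h\nn$ and the energy estimate from Lemma \ref{lem:energy} close the argument as you outline. The same correction is needed for the $G^{\prime\prime}$-term in \eqref{eq:errorGam}, there together with \eqref{eq:tracecolor}.
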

\begin{proof}
We start by establishing \eqref{eq:errorOm}.
Discussing both summands in the definition of $\Xi_{h,\Om}\nn$ separately, we obtain by using Hölder's inequality, the lower bound on $E\h^\Om\trkla{\cdot}$, and the growth estimate for $F^\prime$:
\begin{align}
\begin{split}
&\expected{\rkla{\sum_{n=1}^N\tau\frac{\tabs{r\h\nn}^2}{\ekla{E\h^\Om\trkla{\phi\h\no}}^3}\abs{\iO\Ih{F^\prime\trkla{\phi\h\no}\Phi\h\trkla{\phi\h\no}\sinc{n}}\dx}^2\norm{\Ih{F^\prime\trkla{\phi\h\no}}}_{h,\Om}^2}^q}\\
&\quad\leq C\expected{\rkla{\max_{1\leq n\leq N}\tabs{r\h\nn}^2\rkla{1+\max_{1\leq n\leq N}\norm{\phi\h\no}_{H^1\trkla{\Om}}^{6}}}^{2q}}^{1/2}\\
&\qquad\times \expected{\rkla{\sum_{n=1}^N\tau\abs{\iO\Ih{F^\prime\trkla{\phi\h\no}\Phi\h\trkla{\phi\h\no}\sinc{n}}\dx}^2}^{2q}}^{1/2}\,.
\end{split}
\end{align}
Here, the first factor on the right-hand side can immediately be controlled using Lemma \ref{lem:energy}.
To control the second factor, we apply Lemma \ref{lem:BDG}, \eqref{eq:def:phih}, \ref{item:rho}, and \ref{item:color} to obtain
\begin{multline}
\expected{\rkla{\sum_{n=1}^N\tau\abs{\iO\Ih{F^\prime\trkla{\phi\h\no}\Phi\h\trkla{\phi\h\no}\sinc{n}}\dx}^2}^{2q}}\\
\leq C\expected{\sum_{n=1}^N\tau\abs{\iO\Ih{F^\prime\trkla{\phi\h\no}\Phi\h\trkla{\phi\h\no}\sinc{n}}\dx}^{4q}}\\
\leq C\tau^{2q}\sum_{n=1}^N\tau\expected{\rkla{\sum_{k\in\Zh}\abs{\lambda_k\iO\Ih{F^\prime\trkla{\phi\h\no}\varrho\trkla{\phi\h\no}\g{k}}\dx}^2}^{2q}}\\
\leq C\tau^{2q}\sum_{n=1}^N\tau\expected{\rkla{1+\norm{\phi\h\no}_{H^1\trkla{\Om}}^{12q}}\rkla{\sum_{k\in\Zh}\lambda_k^2\norm{\g{k}}_{L^\infty\trkla{\Om}}^2}^{2q}}\leq C\tau^{2q}\,.
\end{multline}
The estimate for the second summand in \eqref{eq:deferrorterms:Om} can be deduced using similar considerations:
\begin{multline}
\expected{\rkla{\sum_{n=1}^N\tau\frac{\tabs{r\h\nn}^2}{E\h^\Om\trkla{\phi\h\no}}\norm{\Ih{F^{\prime\prime}\trkla{\phi\h\no}\Phi\h\trkla{\phi\h\no}\sinc{n}}}_{h,\Om}^2}^q}\\
\leq C\expected{\max_{1\leq n\leq N}\tabs{r\h\nn}^{4q}}^{1/2}\expected{\sum_{n=1}^N\tau\norm{\Ih{F^{\prime\prime}\trkla{\phi\h\no}\Phi\h\trkla{\phi\h\no}\sinc{n}}}_{h,\Om}^{4q}}^{1/2}\\
\leq C\rkla{\sum_{n=1}^N\tau\expected{\rkla{\tau\sum_{k\in\Zh}\norm{\Ih{F^{\prime\prime}\trkla{\phi\h\no}\varrho\trkla{\phi\h\no}\lambda_k\g{k}}}_{h,\Om}^2}^{2q}}}^{1/2}\leq C\tau^q\,,
\end{multline}
due to Hölder's inequality, Lemma \ref{lem:energy}, Lemma \ref{lem:BDG}, \ref{item:rho}, \ref{item:color}, and \ref{item:potentials}.
The estimate \eqref{eq:errorGam} on $\Xi_{h,\Gamma}\nn$ can be proven analogously by applying Lemma \ref{lem:BDG} to the the operators $\IhG{G^\prime\trkla{\phi\h\no}\trace{\Phi\h\trkla{\phi\h\no}\,\cdot\,}}$ and $\IhG{G^{\prime\prime}\trkla{\phi\h\no}\trace{\Phi\h\trkla{\phi\h\no}\,\cdot\,}}$ and recalling \eqref{eq:tracecolor}.\\
Choosing $\eta\h\equiv1$ in \eqref{eq:modeldisc:pot}, we obtain
\begin{align}
\begin{split}
\abs{\iO\mu\h\nn\dx}\leq &\,\abs{\iGamma\theta\h\nn\dG} +\abs{\frac{r\h\nn}{\sqrt{E\h^\Om\trkla{\phi\h\no}}}\iO\Ih{F^\prime\trkla{\phi\h\no}}\dx}\\
&+\abs{\frac{r\h\nn}{4\tekla{E\h^\Om\trkla{\phi\h\no}}^{3/2}}\iO\Ih{F^\prime\trkla{\phi\h\no}\Phi\h\trkla{\phi\h\no}\sinc{n}}\dx\iO\Ih{F^\prime\trkla{\phi\h\no}}\dx}\\
&+\abs{\frac{r\h\nn}{2\sqrt{E\h^\Om\trkla{\phi\h\no}}}\iO\Ih{F^{\prime\prime}\trkla{\phi\h\no}\Phi\h\trkla{\phi\h\no}\sinc{n}}\dx}\\
&+\abs{\frac{s\h\nn}{\sqrt{E\h^\Gamma\trkla{\phi\h\no}}}\iGamma\IhG{G^\prime\trkla{\phi\h\no}}\dG}\\
&+\abs{\frac{s\h\nn}{4\tekla{E\h^\Gamma\trkla{\phi\h\no}}^{3/2}}\iGamma\IhG{G^\prime\trkla{\phi\h\no}\trace{\Phi\h\trkla{\phi\h\no}\sinc{n}}}\dG\iGamma\IhG{G^\prime\trkla{\phi\h\no}}\dG}\\
&+\abs{\frac{s\h\nn}{2\sqrt{E\h^\Gamma\trkla{\phi\h\no}}}\iGamma\IhG{G^{\prime\prime}\trkla{\phi\h\no}\trace{\Phi\h\trkla{\phi\h\no}\sinc{n}}}\dG}\,.
\end{split}
\end{align}
Taking the second power on both sides, summing from $n=1$ to $N$, taking the $q$-th power, and taking the expected value, provides due to the above estimates
\begin{multline}
\expected{\rkla{\sum_{n=1}^N\tau\abs{\iO\mu\h\nn\dx}^2}^q}\\
\leq C\expected{\rkla{\sum_{n=1}^N\tau\norm{\theta\h\nn}_{L^2\trkla{\Gamma}}^2}^q} + C\expected{\rkla{\sum_{n=1}^N\tau\abs{\frac{r\h\nn}{\sqrt{E\h^\Om\trkla{\phi\h\no}}}\iO\Ih{F^\prime\trkla{\phi\h\no}}\dx}^2}^q}\\
  +C\expected{\rkla{\sum_{n=1}^N\tau\abs{\frac{s\h\nn}{\sqrt{E\h^\Gamma\trkla{\phi\h\no}}}\iGamma\IhG{G^\prime\trkla{\phi\h\no}}\dG}^2}^q}+C\tau^q\,.
\end{multline}
Using the growth conditions for $F^\prime$ and $G^\prime$ stated in \ref{item:potentials} and the regularity results established in Lemma \ref{lem:energy}, we obtain
\begin{multline}
\expected{\rkla{\sum_{n=1}^N\tau\abs{\frac{r\h\nn}{\sqrt{E\h^\Om\trkla{\phi\h\no}}}\iO\Ih{F^\prime\trkla{\phi\h\no}}\dx}^2}^q} \\
\leq C\expected{\max_{1\leq n\leq N}\tabs{r\h\nn}^{2q}\rkla{1+\norm{\phi\h\no}_{H^1\trkla{\Om}}^{6q}}}\leq C\,,
\end{multline}
and
\begin{multline}
\expected{\rkla{\sum_{n=1}^N\tau\abs{\frac{s\h\nn}{\sqrt{E\h^\Gamma\trkla{\phi\h\no}}}\iGamma\IhG{G^\prime\trkla{\phi\h\no}}\dG}^2}^q} \\
\leq C\expected{\max_{1\leq n\leq N}\tabs{s\h\nn}^{2q}\rkla{1+\norm{\phi\h\no}_{H^1\trkla{\Gamma}}^{6q}}}\leq C\,.
\end{multline}
Therefore, we can apply Poincar\'e's inequality to deduce \eqref{eq:mul2}.
\end{proof}
\begin{lemma}\label{lem:nikolskii}
Let the assumptions \ref{item:time}, \ref{item:space1}, \ref{item:space2}, \ref{item:potentials}, \ref{item:initial}, \ref{item:rho}, and \ref{item:filtration}-\ref{item:color} hold true. 
Then for all $\alpha,p\geq1$ and $1\leq\beta\leq 2$ there exists a constant $C>0$ which is independent of $h$ and $\tau$ such that
\begin{subequations}
\begin{align}\label{eq:nikolskii:1}
\expected{\sum_{m=0}^{N-l}\tau\norm{\phi\h^{m+l}-\phi\h^m}_{L^2\trkla{\Om}}^{2\alpha}}&\leq C\trkla{l\tau}^{\alpha/2}\,,\\
\expected{\rkla{\sum_{n=1}^N\tau\norm{\phi\h\nn-\phi\h\no}_{L^2\trkla{\Om}}^{2\beta}}^p}&\leq C\tau^{\beta p}\,,\label{eq:nikolskii:2}\\
\expected{\sum_{m=0}^{N-l}\tau\norm{\phi\h^{m+l}-\phi\h^m}_{L^2\trkla{\Gamma}}^{2\alpha}}&\leq C\trkla{l\tau}^\alpha\label{eq:nikolskii:3}
\end{align}
\end{subequations}
for all $l=0,\ldots,N$.
\end{lemma}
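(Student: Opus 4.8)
The plan is to exploit the weak formulation of the scheme together with the regularity results of Lemma~\ref{lem:energy} and the Burkholder--Davis--Gundy-type estimates of Lemma~\ref{lem:BDG}. For fixed $m$ and $l$ I would write the time difference as a telescoping sum $\phi\h^{m+l}-\phi\h^m=\sum_{n=m+1}^{m+l}\trkla{\phi\h\nn-\phi\h\no}$, sum the phase-field equation \eqref{eq:modeldisc:phiO} over $n=m+1,\ldots,m+l$, and test with $\psi\h\equiv\phi\h^{m+l}-\phi\h^m$, which yields
\begin{align*}
\norm{\phi\h^{m+l}-\phi\h^m}_{h,\Om}^2=&\,-\iO\nabla\Big(\tau\!\!\sum_{n=m+1}^{m+l}\!\!\mu\h\nn\Big)\cdot\nabla\trkla{\phi\h^{m+l}-\phi\h^m}\dx\\
&\,+\iO\Ih{\Big(\sum_{n=m+1}^{m+l}\Phi\h\trkla{\phi\h\no}\sinc{n}\Big)\trkla{\phi\h^{m+l}-\phi\h^m}}\dx\,.
\end{align*}
The crucial point is that the test function $\phi\h^{m+l}-\phi\h^m$ is not $\mathcal{F}_{t^m}$-measurable, so the last term cannot be treated as a genuine stochastic integral. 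I would circumvent this by Young's inequality, bounding it by $\tfrac12\norm{\phi\h^{m+l}-\phi\h^m}_{h,\Om}^2+\tfrac12\norm{\sum_{n=m+1}^{m+l}\Phi\h\trkla{\phi\h\no}\sinc{n}}_{h,\Om}^2$ and absorbing the first summand on the left. What remains is the discrete stochastic sum $\sum_{n=m+1}^{m+l}\Phi\h\trkla{\phi\h\no}\sinc{n}$, whose moments are controlled by Lemma~\ref{lem:BDG} \emph{without} any adaptedness of a test function.

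For \eqref{eq:nikolskii:1} I would estimate the deterministic term by Cauchy--Schwarz in the sum over $n$, producing a factor $\trkla{l\tau}^{1/2}$ times $\rkla{\tau\sum_{n=m+1}^{m+l}\norm{\nabla\mu\h\nn}_{L^2\trkla{\Om}}^2}^{1/2}$ and the gradient of the difference, the latter bounded by $2\max_{0\leq k\leq N}\norm{\phi\h^k}_{H^1\trkla{\Om}}$. Raising to the power $\alpha$, multiplying by $\tau$, summing over $m$ (contributing at most $T$), and applying Hölder's inequality together with Lemma~\ref{lem:energy} bounds this contribution by $C\trkla{l\tau}^{\alpha/2}$. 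For the stochastic remainder, \eqref{eq:bdg2} with exponent $2\alpha$ gives
\begin{align*}
&\expected{\norm{\sum_{n=m+1}^{m+l}\Phi\h\trkla{\phi\h\no}\sinc{n}}_{h,\Om}^{2\alpha}}\\
&\quad\leq C\trkla{l\tau}^{\alpha-1}\sum_{n=m+1}^{m+l}\tau\,\expected{\Big(\sum_{k\in\Zh}\norm{\lambda_k\Phi\h\trkla{\phi\h\no}\g{k}}_{h,\Om}^2\Big)^{\alpha}}\leq C\trkla{l\tau}^{\alpha}\,,
\end{align*}
where the inner series is uniformly bounded via \ref{item:rho} and \ref{item:color}. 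Since $\alpha\geq1$ and $l\tau\leq T$, the factor $\trkla{l\tau}^{\alpha}$ is dominated by $C\trkla{l\tau}^{\alpha/2}$, so the deterministic part sets the rate.

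The single-step estimate \eqref{eq:nikolskii:2} is the case $l=1$ with an additional outer power $p$. Testing \eqref{eq:modeldisc:phiO} with $\phi\h\nn-\phi\h\no$ and using the same absorption, I obtain $\norm{\phi\h\nn-\phi\h\no}_{L^2\trkla{\Om}}^2\leq C\tau\norm{\nabla\mu\h\nn}_{L^2\trkla{\Om}}\max_{0\leq k\leq N}\norm{\phi\h^k}_{H^1\trkla{\Om}}+C\norm{\Phi\h\trkla{\phi\h\no}\sinc{n}}_{h,\Om}^2$. The deterministic part then carries a prefactor $\tau^{\beta}$ and, after raising to $p$, a discrete Hölder inequality (using $\beta\leq2$) together with Lemma~\ref{lem:energy} yields $C\tau^{\beta p}$. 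For the stochastic part I would apply Jensen's inequality in the normalized measure $\tfrac{\tau}{T}\,\mathrm{d}n$ to pull the outer power inside, and then bound each $\expected{\norm{\Phi\h\trkla{\phi\h\no}\sinc{n}}_{h,\Om}^{2\beta p}}$ by $C\tau^{\beta p}$ via \eqref{eq:bdg1}; resumming recovers $C\tau^{\beta p}$.

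Finally, \eqref{eq:nikolskii:3} follows the same pattern applied to the boundary equation \eqref{eq:modeldisc:phiG}, with $\theta\h\nn$ replacing $\nabla\mu\h\nn$ and \eqref{eq:tracecolor} replacing \ref{item:color}. The decisive structural difference is that $\theta\h\nn$ enters \emph{without} a derivative, so the deterministic term pairs with $\trace{\phi\h^{m+l}-\phi\h^m}$ in the same $L^2\trkla{\Gamma}$-norm as the left-hand side; Young's inequality then lets me absorb the full difference norm and leaves $\big(\tau\sum_{n=m+1}^{m+l}\norm{\theta\h\nn}_{h,\Gamma}\big)^2\leq\trkla{l\tau}\,\tau\sum_{n=m+1}^{m+l}\norm{\theta\h\nn}_{h,\Gamma}^2$. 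This produces the sharper rate $\trkla{l\tau}^{\alpha}$, matching the stochastic contribution from Lemma~\ref{lem:BDG}, consistent with the better temporal regularity of the boundary phase field recorded in Theorem~\ref{thm:mainresult}. The main obstacle throughout is precisely the non-adaptedness of the test function in the stochastic term, which I resolve uniformly by the absorb-then-BDG device described above.
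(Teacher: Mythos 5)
Your proposal is correct and follows essentially the same route as the paper's proof: telescoping the scheme, testing with the non-adapted difference $\phi\h^{m+l}-\phi\h^m$, absorbing it via Young's inequality, controlling the remaining discrete stochastic sum with Lemma \ref{lem:BDG}, handling the deterministic term by Hölder's inequality and Lemma \ref{lem:energy}, and obtaining the sharper boundary rate $\trkla{l\tau}^\alpha$ precisely because $\theta\h\nn$ enters without a gradient. The only cosmetic deviation is in \eqref{eq:nikolskii:2}, where the paper first establishes the case $\alpha=2$, $l=1$ and then passes to $1\leq\beta\leq2$ by Hölder interpolation, whereas you absorb pathwise at the single-step level and use Jensen's inequality for the stochastic part; both variants are valid.
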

\begin{proof}
Summing \eqref{eq:modeldisc:phiO} from $n=m+1$ to $m+l\leq N$, choosing $\psi\h=\trkla{\phi\h^{m+l}-\phi\h^m}$, taking the $\alpha$-th power on both sides, multiplying by $\tau$, summing the result from $m=0$ to $N-l$, and computing the expected value, we obtain
\begin{align}
\begin{split}\label{eq:tmp:nikolskii}
\expected{\sum_{m=0}^{N-l}\tau\norm{\phi\h^{m+l}-\phi\h^m}_{h,\Om}^{2\alpha}}\leq &\,C\expected{\sum_{m=0}^{N-l}\tau\abs{\sum_{n=m+1}^{m+l}\tau\iO\nabla\mu\h\nn\cdot\nabla\trkla{\phi\h^{m+l}-\phi\h^m}\dx}^\alpha}\\
&+C\expected{\sum_{m=0}^{N-l}\tau\abs{\iO\sum_{n=m+1}^{m+l}\Ih{\Phi\h\trkla{\phi\h\no}\sinc{n}\trkla{\phi\h^{m+l}-\phi\h^m}}\dx}^\alpha}\\
=:&\,A+B\,.
\end{split}
\end{align}
Using Hölder's inequality multiple times and recalling the already established regularity results from Lemma \ref{lem:energy}, the first term can be controlled via
\begin{align}
\begin{split}
A\leq &\,C\expected{\max_{0\leq n\leq N} \norm{\nabla\phi\h\nn}_{L^2\trkla{\Om}}^\alpha\trkla{l\tau}^{\alpha/2}\sum_{m=0}^{N-l}\tau  \rkla{\sum_{n=m+1}^{m+l}\tau\norm{\nabla\mu\h\nn}_{L^2\trkla{\Om}}^2}^{\alpha/2}}\leq C\trkla{l\tau}^{\alpha/2}\,.
\end{split}
\end{align}
As shown in \cite[Lemma 6.3]{\citeASAV}, the second term is bounded by 
\begin{align}
\begin{split}\label{eq:nikolskii:B}  
B\leq &\, \tfrac14\expected{\sum_{m=0}^{N-l}\tau\norm{\phi\h^{m+l}-\phi\h^m}_{h,\Om}^{2\alpha}}+C\sum_{m=0}^{N-l}\tau\expected{\norm{\sum_{n=m+1}^{m+l}\Ih{\Phi\h\trkla{\phi\h\no}\sinc{n}}}_{h,\Om}^{2\alpha}}\\
\leq&\, \tfrac14\expected{\sum_{m=0}^{N-l}\tau\norm{\phi\h^{m+l}-\phi\h^m}_{h,\Om}^{2\alpha}}+C\sum_{m=0}^{N-l}\tau\trkla{l\tau}^{\alpha-1}\sum_{n=m+1}^{m+l}\tau\expected{\norm{\Ih{\varrho\trkla{\phi\h\no}}}_{L^2\trkla{\Om}}^{2\alpha}}
\end{split}
\end{align}
due to Young's inequality and Lemma \ref{lem:BDG}.
As $\varrho\in L^\infty\trkla{\mathds{R}}$, the second term can be controlled by $C\trkla{l\tau}^{\alpha}$.
By applying the norm equivalence \eqref{eq:normequivalence} we obtain \eqref{eq:nikolskii:1}.\\
To prove \eqref{eq:nikolskii:2}, we set $\alpha=2$ and $l=1$ and obtain analogously to \eqref{eq:tmp:nikolskii} 
\begin{multline}
\expected{\rkla{\sum_{n=1}^{N}\tau\norm{\phi\h\nn-\phi\h\no}_{h,\Om}^4}^p}\leq C\expected{\rkla{\sum_{n=1}^{N}\tau\abs{\tau\iO\nabla\mu\h\nn\cdot\nabla\trkla{\phi\h\nn-\phi\h\no}\dx}^2}^p}\\
+C\expected{\rkla{\sum_{n=1}^N\tau\abs{\iO\Ih{\Phi\h\trkla{\phi\h\no}\sinc{n}\trkla{\phi\h\nn-\phi\h\no}}\dx}^2}^p}\\
=:\hat{A}+\hat{B}\,.
\end{multline}
Applying Hölder's inequality and the regularity results established in Lemma \ref{lem:energy}, we compute
\begin{align}
\begin{split}
\hat{A}\leq&\, C\tau^{2p}\expected{\max_{1\leq n\leq N}\norm{\nabla\phi\h\nn-\nabla\phi\h\no}_{L^2\trkla{\Om}}^{2p}\rkla{\sum_{n=1}^N\tau\norm{\nabla\mu\h\nn}_{L^2\trkla{\Om}}^2}^p}\\
\leq&\,C\tau^{2p}\expected{\rkla{\sum_{n=1}^N\norm{\nabla\phi\h\nn-\nabla\phi\h\no}_{L^2\trkla{\Om}}^2}^{2p}}^{1/2}\expected{\rkla{\sum_{n=1}^N\tau\norm{\nabla\mu\h\nn}_{L^2\trkla{\Om}}^2}^{2p}}^{1/2}\leq C\tau^{2p}\,.
\end{split}
\end{align}
To obtain an estimate for $\hat{B}$, we adapt \eqref{eq:nikolskii:B} and combine Young's inequality, Hölder's inequality, and Lemma \ref{lem:BDG} to obtain
\begin{align}
\begin{split}
\hat{B}\leq&\, \tfrac14\expected{\rkla{\sum_{n=1}^N\tau\norm{\phi\h\nn-\phi\h\no}_{h,\Om}^4}^p}+C\sum_{n=1}^N\tau\expected{\norm{\Ih{\Phi\h\trkla{\phi\h\no}\sinc{n}}}_{h,\Om}^{4p}}\\
\leq&\,\tfrac14\expected{\rkla{\sum_{n=1}^N\tau\norm{\phi\h\nn-\phi\h\no}_{h,\Om}^4}^p}+ C\tau^{2p}\,.
\end{split}
\end{align}
The estimate \eqref{eq:nikolskii:2} for arbitrary $1\leq \beta\leq 2$ then follows by \eqref{eq:normequivalence} and Hölder's inequality.\\
The remaining estimate \eqref{eq:nikolskii:3} can be shown in a similar manner using
\begin{multline}
C\expected{\sum_{m=0}^{N-l}\tau\abs{\sum_{n=m+1}^{m+l}\tau\iGamma\IhG{\theta\h\nn\trkla{\phi\h^{m+l}-\phi\h^m}}\dG}^\alpha}\\
\leq \tfrac14\expected{\sum_{m=0}^{N-l}\tau\norm{\phi\h^{m+l}-\phi\h^m}_{h,\Gamma}^{2\alpha}}+C\trkla{l\tau}^\alpha\expected{\sum_{m=0}^{N-l}\tau\rkla{\sum_{n=m+1}^{m+l}\tau\norm{\theta\h\nn}_{L^2\trkla{\Gamma}}^2}^\alpha}\\
\leq  \tfrac14\expected{\sum_{m=0}^{N-l}\tau\norm{\phi\h^{m+l}-\phi\h^m}_{h,\Gamma}^{2\alpha}}+C\trkla{l\tau}^\alpha
\end{multline}
and
\begin{multline}
C\sum_{m=0}^{N-l}\tau\expected{\norm{\sum_{n=m+1}^{m+l}\IhG{\trace{\Phi\h\trkla{\phi\h\no}\sinc{n}}}}_{h,\Gamma}^{2\alpha}}\\
\leq C\sum_{m=0}^{N-l}\tau\trkla{l\tau}^{\alpha-1}\sum_{n=m+1}^{m+l}\tau\expected{\rkla{\sum_{k\in\mathds{Z}}\lambda_k^2\norm{\g{k}}_{L^\infty\trkla{\Gamma}}^2\norm{\IhG{\varrho\trkla{\phi\h\no}}}_{L^2\trkla{\Gamma}}^2}^\alpha}\leq C\trkla{l\tau}^\alpha\,.
\end{multline}
\end{proof}
Next, we shall derive an estimate for the error introduced by the scalar auxiliary variables:
\begin{lemma}\label{lem:errorsav}
Let the assumptions \ref{item:time}, \ref{item:space1}, \ref{item:space2}, \ref{item:potentials}, \ref{item:initial}, \ref{item:rho}, and \ref{item:filtration}-\ref{item:color} hold true.
Then, for all $p\in[1,\infty)$, there exists a constant $C$ which is independent of $h$ and $\tau$ such that the estimates
\begin{subequations}
\begin{align}
\expected{\max_{0\leq m\leq N}\abs{r\h^m-\sqrt{E\h^\Om\trkla{\phi\h^m}}}^p}\leq &C\tau^{p\max\tgkla{\nu/2,\,1/8}}\,,\label{eq:errorsavO}\\
\expected{\max_{0\leq m\leq N}\abs{s\h^m-\sqrt{E\h^\Gamma\trkla{\phi\h^m}}}^p}\leq &C\tau^{\nu p/2}\,,\label{eq:errorsavG}
\end{align}
\end{subequations}
hold true.
\end{lemma}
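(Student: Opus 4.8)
The plan is to track the per-step discrepancy between the scalar auxiliary variable and the quantity it is meant to approximate, and to sum it up using the discrete Burkholder--Davis--Gundy estimates of Lemma \ref{lem:BDG}. I focus on the bulk estimate \eqref{eq:errorsavO}; the boundary estimate \eqref{eq:errorsavG} is obtained by the same argument with $\iGamma\IhG{\cdots}\dG$, $E\h^\Gamma$, $G$ and the trace increments replacing their bulk counterparts. Set $\delta^m:=r\h^m-\sqrt{E\h^\Om\trkla{\phi\h^m}}$ and note $\delta^0=0$ by the choice of initial data, so that $\delta^m=\sum_{n=1}^m e^n$ with $e^n:=\trkla{r\h\nn-r\h\no}-\trkla{\sqrt{E\h^\Om\trkla{\phi\h\nn}}-\sqrt{E\h^\Om\trkla{\phi\h\no}}}$.

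First I would Taylor-expand the map $t\mapsto\sqrt{E\h^\Om\trkla{\phi\h\no+t\trkla{\phi\h\nn-\phi\h\no}}}$ to second order with an exact Lagrange remainder at some $\eta_n\in\trkla{0,1}$. The zeroth- and first-order contributions reproduce exactly $r\h\no$ and the leading term on the right-hand side of the update \eqref{eq:modeldisc:r}. Writing the increment as $\phi\h\nn-\phi\h\no=\Phi\h\trkla{\phi\h\no}\sinc{n}+R\nn$, where $R\nn$ collects the drift contribution governed by $\mu\h\nn$ through \eqref{eq:modeldisc:phiO}, one checks that the two correction terms on the right-hand side of \eqref{eq:modeldisc:r} are precisely the pieces of the second-order remainder obtained by evaluating $F'$, $F''$ and $E\h^\Om$ at $\phi\h\no$ and by retaining only the purely stochastic factor $\Phi\h\trkla{\phi\h\no}\sinc{n}$. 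Consequently $e^n$ reduces to a remainder consisting of (i) the mismatch in the second-order coefficients caused by evaluating $F'$, $F''$, $E\h^\Om$ at the intermediate state $\phi\h\no+\eta_n\trkla{\phi\h\nn-\phi\h\no}$ rather than at $\phi\h\no$, and (ii) the cross- and drift-terms in which at least one stochastic factor $\Phi\h\trkla{\phi\h\no}\sinc{n}$ is replaced by $R\nn$.

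The decisive structural point, which I would emphasise, is that the naive size of the second-order remainder is not small: since $\sum_n\norm{\phi\h\nn-\phi\h\no}_{H^1\trkla{\Om}}^2$ is only bounded (not $o(1)$) by Lemma \ref{lem:energy}, the cancellation supplied by the augmentation is indispensable, and all smallness must come from the extra modulus of continuity of the coefficients in (i) and from the higher-order scaling in (ii). For (i) I would invoke the decomposition $F=F_1+F_2$ of \ref{item:potentials}: the Hölder continuity of $F_1''\in C^{0,\nu}$ produces terms bounded by $\abs{\phi\h\nn-\phi\h\no}^{\nu}\,\abs{\Phi\h\trkla{\phi\h\no}\sinc{n}}^2$, while $F_2''\in C^{1}$ (from $F_2\in C^3$) together with a further Taylor step produces terms bounded by $\abs{\phi\h\nn-\phi\h\no}\,\abs{\Phi\h\trkla{\phi\h\no}\sinc{n}}^2$; the factors $\sqrt{E\h^\Om\trkla{\cdot}}$ in the denominators are controlled by their uniform positive lower bound. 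After taking the $p$-th power of $\max_{0\le m\le N}\abs{\delta^m}$ and the expectation, I would split these sums into a conditionally centred (martingale) part, to which \eqref{eq:bdg2} applies, and a predictable part, to which $\expected{\trkla{\sum_n\abs{\cdot}}^p}$ together with Hölder's inequality in $n$ applies.

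The quantitative rate then follows by combining the moment bounds for $\Phi\h\trkla{\phi\h\no}\sinc{n}$ from Lemma \ref{lem:BDG}, \ref{item:rho} and \ref{item:color}, the energy bounds of Lemma \ref{lem:energy}, and the quartic Nikolskii bound \eqref{eq:nikolskii:2}, which together force the single-step scaling $\norm{\phi\h\nn-\phi\h\no}_{L^2\trkla{\Om}}\sim\tau^{1/2}$. The Hölder contribution in (i) carries the factor $\trkla{l\tau}^{\nu/2}$ and, summed over the $\tfrac{T}{\tau}$ steps, yields the rate $\tau^{\nu/2}$; estimating the same contribution instead through \eqref{eq:nikolskii:2} and the crude moments available without using $\nu$ yields the robust floor $\tau^{1/8}$, and keeping the smaller of the two bounds gives $\tau^{\max\tgkla{\nu/2,\,1/8}}$. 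The drift and cross terms in (ii), controlled through $\sum_n\tau\norm{\nabla\mu\h\nn}_{L^2\trkla{\Om}}^2$ and the $\tau$-scaling of $\norm{R\nn}$, contribute strictly better powers and are absorbed. On $\Gamma$ the same strategy applies, with the sharper time-regularity \eqref{eq:nikolskii:3} (exponent $\trkla{l\tau}^{\alpha}$ in place of $\trkla{l\tau}^{\alpha/2}$) entering the estimates and delivering the single rate $\tau^{\nu p/2}$ of \eqref{eq:errorsavG}. The main obstacle is exactly the bookkeeping of step (i): verifying that, once the augmentation has cancelled the $O(1)$ part, every surviving remainder term genuinely carries the extra $\nu/2$ (or $1/8$) power uniformly in $h$ and $\tau$; this is the step that mirrors \cite[Lemma 6.1]{\citeASAV} most closely.
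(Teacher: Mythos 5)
Your skeleton coincides with the paper's proof: a Taylor expansion around $\phi\h\no$, the identification of the two augmentation terms in \eqref{eq:modeldisc:r} as exactly the retained second-order pieces, a remainder consisting of coefficient-mismatch terms and of drift/cross terms, and control via Lemma \ref{lem:BDG}, Lemma \ref{lem:energy} and Lemma \ref{lem:nikolskii}. The genuine gap lies in your treatment of the drift contributions, i.e.~the terms in which a factor $\Phi\h\trkla{\phi\h\no}\sinc{n}$ is replaced by $R\nn:=\tau\Delta\h\mu\h\nn$. You claim these are controlled ``through $\sum_{n}\tau\norm{\nabla\mu\h\nn}_{L^2\trkla{\Om}}^2$ and the $\tau$-scaling of $\norm{R\nn}$''. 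However, the energy estimate only controls $\Delta\h\mu\h\nn$ in $\trkla{H^1\trkla{\Om}}^\prime$, and to exploit this in a term like $\iO\Ih{F^{\prime\prime}\trkla{\phi\h\no}\trkla{\phi\h\nn-\phi\h\no}\,\tau\Delta\h\mu\h\nn}\dx$ you would have to pair $\Delta\h\mu\h\nn$ with $\Ih{F^{\prime\prime}\trkla{\phi\h\no}\trkla{\phi\h\nn-\phi\h\no}}$ in the duality $\trkla{H^1}^\prime\times H^1$, and the $H^1$-norm of that interpolated product is not uniformly bounded: for the $C^{2,\nu}$-part $F_1$ there is no chain rule, nodal difference quotients only give $\tabs{\nabla\Ih{F_1^{\prime\prime}\trkla{\phi\h\no}}}\lesssim h^{\nu-1}\tabs{\nabla\phi\h\no}^{\nu}$, which degenerates as $h\searrow0$, and a brute-force inverse estimate costs $h^{-1}$. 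This is precisely where the paper inserts its key technical device, which your proposal lacks: testing \eqref{eq:modeldisc:phiO} with $\psi\h\equiv\tau^{1/2}\Delta\h\mu\h\nn$ yields \eqref{eq:deltamul2} and hence the $\tau$-weighted $L^2$-bound \eqref{eq:Deltahmu}, namely $\expected{\rkla{\sum_{n=1}^N\tau^{3/2}\norm{\Delta\h\mu\h\nn}_{L^2\trkla{\Om}}^2}^p}\leq C$. Splitting $\tau=\tau^{1/4}\cdot\tau^{3/4}$ and combining with $\sum_n\norm{\phi\h\nn-\phi\h\no}_{H^1\trkla{\Om}}^2\leq C$ then produces the rate $\tau^{p/4}$ for the drift and cross terms ($R_{1,n}$ and $R_{3,n}$ in the paper's notation). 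Without this bound your terms of type (ii) do not close; your observation that on $\Gamma$ no such device is needed, since the boundary drift involves $\theta\h\nn$ itself rather than a Laplacian, is correct and is exactly why the boundary estimate is easier.

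Second, your derivation of the exponent $\max\tgkla{\nu/2,\,1/8}$ by ``keeping the smaller of the two bounds'' is not a valid argument. Under \ref{item:potentials} the potential is a sum $F=F_1+F_2$ with $F_1\in C^{2,\nu}$ and $F_2\in C^3$ both present simultaneously; the Hölder argument applies only to the $F_1$-part, and the $C^3$ argument (growth of $F_2^{\prime\prime\prime}$, Gagliardo--Nirenberg, Young with weights $\tau^{1/8}$ and $\tau^{-7/8}$, and \eqref{eq:nikolskii:2}) applies only to the $F_2$-part. Since both parts must be estimated, the worse exponent governs the mismatch term, and indeed the paper's own proof concludes $\expected{\tabs{\sum_n R_{2,n}}^p}\leq C\tau^{p\min\tgkla{\nu/2,1/8}}$ --- there is a $\min$/$\max$ discrepancy between the statement of Lemma \ref{lem:errorsav} and its proof, which your reasoning reproduces but cannot resolve, because one cannot choose a single route for the whole of $F$ when neither route applies to the whole of $F$. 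A smaller slip in the same bookkeeping: for $\nu>4/5$ the Hölder route does not deliver $\tau^{\nu/2}$ either; the limited time regularity of the phase field caps the gain at the $\nu=4/5$ scaling (the paper recovers only $\tau^{2/5}$ there), so the exponent $\nu/2$ you quote for the $F_1$-contribution is itself only correct for $\nu\leq4/5$.
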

\begin{proof}
Adapting the ideas in \cite[Lemma 6.5]{\citeASAV}, we start with a Taylor expansion of $\sqrt{E\h^\Om\trkla{\phi\h\nn}}$ to quantify the approximation error for one time step, i.e.~the difference between the increments $r\h\nn-r\h\no$ and $\sqrt{E\h^\Om\trkla{\phi\h\nn}}-\sqrt{E\h^\Om\trkla{\phi\h\no}}$:
\begin{align}\label{eq:taylor1}
\begin{split}
\sqrt{E\h^\Om\trkla{\phi\h\nn}}&-\sqrt{E\h^\Om\trkla{\phi\h\no}}\\
=&\,\frac{1}{2\sqrt{E\h^\Om\trkla{\phi\h\no}}}\trkla{E\h^\Om\trkla{\phi\h\nn}-E\h^\Om\trkla{\phi\h\no}}-\frac{1}{8\tekla{E\h^\Om\trkla{\phi\h\no}}^{3/2}}\trkla{E\h^\Om\trkla{\phi\h\nn}-E\h^\Om\trkla{\phi\h\no}}^2\\
&+\frac{1}{16\tekla{E\h^\Om\trkla{\varphi_1}}^{5/2}}\rkla{E\h^\Om\trkla{\phi\h\nn}-E\h^\Om\trkla{\phi\h\no}}^3\\
=&\,\frac{1}{2\sqrt{E\h^\Om\trkla{\phi\h\no}}}\rkla{\iO\Ih{F^\prime\trkla{\phi\h\no}\trkla{\phi\h\nn-\phi\h\no}+\tfrac12F^{\prime\prime}\trkla{\phi\h\no}\trkla{\phi\h\nn-\phi\no}^2}\dx}\\
&\,+\frac{1}{2\sqrt{E\h^\Om\trkla{\phi\h\no}}}\iO\Ih{\tfrac12\trkla{F^{\prime\prime}\trkla{\varphi_2}-F^{\prime\prime}\trkla{\phi\h\no}}\trkla{\phi\h\nn-\phi\h\no}^2}\dx\\
&\,-\frac{1}{8\tekla{E\h^\Om\trkla{\phi\h\no}}^{3/2}}\rkla{\iO\Ih{F^\prime\trkla{\phi\h\no}\rkla{\phi\h\nn-\phi\h\no}}\dx}^2\\
&\,-\frac{1}{8\tekla{E\h^\Om\trkla{\phi\h\no}}^{3/2}}\rkla{\iO\Ih{F^\prime\trkla{\phi\h\no}\trkla{\phi\h\nn-\phi\h\no}}\dx}\\
&\qquad\qquad\times\rkla{\iO\Ih{F^{\prime\prime}\trkla{\varphi_3}\trkla{\phi\h\nn-\phi\h\no}^2}\dx}\\
&\,-\frac{1}{8\tekla{E\h^\Om\trkla{\phi\h\no}}^{3/2}}\rkla{\tfrac12\iO\Ih{F^{\prime\prime}\trkla{\varphi_3}\trkla{\phi\h\nn-\phi\h\no}^2}\dx}^2\\
&\,+\frac{1}{16\tekla{E\h^\Om\trkla{\varphi_1}}^{5/2}}\rkla{\iO\Ih{F^\prime\trkla{\varphi_4}\trkla{\phi\h\nn-\phi\h\no}}\dx}^3
\end{split}
\end{align}
with $\varphi_1,\varphi_2,\varphi_3,\varphi_4\in\mathrm{conv}\tgkla{\phi\h\nn,\phi\h\no}$.
Recalling the definition of the discrete Laplacian $\Delta\h$ in \eqref{eq:def_discLaplace}, we write \eqref{eq:modeldisc:phiO} as
\begin{align}
\phi\h\nn-\phi\h\no=\tau\Delta\h\mu\h\nn+\Phi\h\trkla{\phi\h\no}\sinc{n}\,.
\end{align}
Hence, we obtain from \eqref{eq:taylor1}
\begin{align}\label{eq:taylor2}
\begin{split}
\sqrt{E\h^\Om\trkla{\phi\h\nn}}&\,-\sqrt{E\h^\Om\trkla{\phi\h\no}}=r\h\nn-r\h\no\\
&\,+\frac{1}{4\sqrt{E\h^\Om\trkla{\phi\h\no}}}\iO\Ih{F^{\prime\prime}\trkla{\phi\h\no}\trkla{\phi\h\nn-\phi\h\no}\tau\Delta\h\mu\h\nn}\dx\\
&\,+\frac{1}{4\sqrt{E\h^\Om\trkla{\phi\h\no}}}\iO\Ih{\trkla{F^{\prime\prime}\trkla{\varphi_2}-F^{\prime\prime}\trkla{\phi\h\no}}\trkla{\phi\h\nn-\phi\h\no}^2}\dx\\
&\,-\frac{1}{8\tekla{E\h^\Om\trkla{\phi\h\no}}^{3/2}}\rkla{\iO\Ih{F^\prime\trkla{\phi\h\no}\trkla{\phi\h\nn-\phi\h\no}}\dx}\\
&\qquad\qquad\times\rkla{\iO\Ih{F^\prime\trkla{\phi\h\no}\tau\Delta\h\mu\h\nn}\dx}\\
&\,-\frac{1}{8\tekla{E\h^\Om\trkla{\phi\h\no}}^{3/2}}\rkla{\iO\Ih{F^\prime\trkla{\phi\h\no}\trkla{\phi\h\nn-\phi\h\no}}\dx}\\
&\qquad\qquad\times\rkla{\iO\Ih{F^{\prime\prime}\trkla{\varphi_3}\trkla{\phi\h\nn-\phi\h\no}^2}\dx}\\
&-\frac{1}{8\tekla{E\h^\Om\trkla{\phi\h\no}}^{3/2}}\rkla{\tfrac12\iO\Ih{F^{\prime\prime}\trkla{\varphi_3}\trkla{\phi\h\nn-\phi\h\no}^2}\dx}^2\\
&+\frac{1}{16\tekla{E\h^\Om\trkla{\varphi_1}}^{5/2}}\rkla{\iO\Ih{F^\prime\trkla{\varphi_4}\trkla{\phi\h\nn-\phi\h\no}}\dx}^3\\
=:&\,r\h\nn-r\h\no+R_{1,n}+R_{2,n}+R_{3,n}+R_{4,n}+R_{5,n}+R_{6,n}\,.
\end{split}
\end{align}
Summing \eqref{eq:taylor2} from $n=1$ to $m\leq N$, noting that by definition $r\h^0=\sqrt{E\h^\Om\trkla{\phi\h^0}}$, taking the $p$-th power and the supremum over all $m\in\tgkla{0,\ldots,N}$, and computing the expected value, we obtain
\begin{multline}
\expected{\sup_{0\leq m\leq N}\abs{r\h^m-\sqrt{E\h^\Om\trkla{\phi\h\nn}}}^p}\leq C\expected{\abs{\sum_{n=1}^NR_{1,n}}^p}+C\expected{\abs{\sum_{n=1}^NR_{2,n}}^p}+C\expected{\abs{\sum_{n=1}^NR_{3,n}}^p}\\
+C\expected{\abs{\sum_{n=1}^NR_{4,n}}^p}+C\expected{\abs{\sum_{n=1}^NR_{5,n}}^p}+C\expected{\abs{\sum_{n=1}^NR_{6,n}}^p}\,.
\end{multline}
Deriving estimates for this right-hand side is more intricate  than for the one discussed in \cite{\citeASAV} as uniform bounds for $\Delta\h\mu\h\nn$ are only available in the $\trkla{H^1\trkla{\Om}}^\prime$-norm.
It is, however, possible to derive a $\tau$-dependent bound that provides sufficient regularity.
Choosing $\psi\h\equiv\tau^{1/2}\Delta\h\mu\h\nn$ in \eqref{eq:modeldisc:phiO}, we obtain
\begin{align}
\begin{split}\label{eq:deltamul2}
\tau^{3/2}\norm{\Delta\h\mu\h\nn}_{h,\Om}^2=&\, \tau^{1/2}\iO\Ih{\trkla{\phi\h\nn-\phi\h\no}\Delta\h\mu\h\nn}\dx-\tau^{1/2}\iO\Ih{\Phi\h\trkla{\phi\h\no}\sinc{n}\Delta\h\mu\h\nn}\dx\\
\leq&\,\tau\norm{\nabla\mu\h\nn}_{L^2\trkla{\Om}}^2+\tfrac12\norm{\nabla\phi\h\nn-\nabla\phi\h\no}_{L^2\trkla{\Om}}^2+\tfrac12\norm{\nabla\rkla{\Phi\h\trkla{\phi\h\no}\sinc{n}}}_{L^2\trkla{\Om}}^2\,.
\end{split}
\end{align}
Recalling the computation in \eqref{eq:tmp:R1} and the regularity results from Lemma \ref{lem:energy}
\begin{multline}\label{eq:Deltahmu}
\expected{\rkla{\sum_{n=1}^N\tau^{3/2}\norm{\Delta\h\mu\h\nn}_{L^2\trkla{\Om}}^2}^p}\leq C\expected{\rkla{\sum_{n=1}^N\tau\norm{\nabla\mu\h\nn}_{L^2\trkla{\Om}}^2}^p}\\
+ C\expected{\rkla{\sum_{n=1}^N\norm{\nabla\phi\h\nn-\nabla\phi\h\no}_{L^2\trkla{\Om}}^2}^p} +C\expected{\rkla{\sum_{n=1}^N\norm{\nabla\rkla{\Phi\h\trkla{\phi\h\no}\sinc{n}}}_{L^2\trkla{\Om}}^2}^p}\leq C
\end{multline}
for any $p\geq1$.
Using this estimate, we compute an upper bound for $R_{1,n}$ by applying Hölder's inequality, \ref{item:potentials}, Young's inequality, and Lemma \ref{lem:energy}
\begin{align}
\begin{split}
&\expected{\abs{\sum_{n=1}^N R_{1,n}}^p}\leq C \expected{\abs{\sum_{n=1}^N\tau\norm{\Delta\h\mu\h\nn}_{L^2\trkla{\Om}}\norm{\Ih{F^{\prime\prime}\trkla{\phi\h\no}}}_{L^3\trkla{\Om}}\norm{\phi\h\nn-\phi\h\no}_{L^6\trkla{\Om}}}^p}\\
&\,\leq C\expected{\abs{\rkla{1+\max_{0\leq n\leq N}\norm{\phi\h\nn}_{H^1\trkla{\Om}}^2}\tau^{1/4}\rkla{\sum_{n=1}^N\tau^{3/2}\norm{\Delta\h\mu\h\nn}_{L^2\trkla{\Om}}^2+\sum_{n=1}^N\norm{\phi\h\nn-\phi\h\no}_{H^1\trkla{\Om}}^2}}^p}\\
&\,\leq C\tau^{p/4}\,.
\end{split}
\end{align}
Concerning the second term $R_{2,n}$ we start by discussing the $C^{2,\nu}$-part $F_1$ of the potential $F$ (cf.~\ref{item:potentials}).
Due to the Gagliardo--Nirenberg inequality, we obtain
\begin{multline}
\abs{\iO\Ih{\rkla{F_1^{\prime\prime}\trkla{\varphi_2}-F_1^{\prime\prime}\trkla{\phi\h\no}}\trkla{\phi\h\nn-\phi\h\no}^2}\dx}\leq C\iO\Ih{\abs{\phi\h\nn-\phi\h\no}^{2+\nu}}\dx\\
\leq C\norm{\phi\h\nn-\phi\h\no}_{H^1\trkla{\Om}}^{3\nu/2}\norm{\phi\h\nn-\phi\h\no}_{L^2\trkla{\Om}}^{\trkla{4-\nu}/2}=:\trkla{*}\,.
\end{multline}
If $\nu\leq 0.8$, we may apply Young's inequality to deduce
\begin{align}
\trkla{*}\leq C\tau^{\nu/2}\norm{\phi\h\nn-\phi\h\no}_{H^1\trkla{\Om}}^2+C\tau^{-3\nu^2/\trkla{8-6\nu}}\norm{\phi\h\nn-\phi\h\no}_{L^2\trkla{\Om}}^{\trkla{8-2\nu}/\trkla{4-3\nu}}\,,
\end{align}
where we will control the second term using \eqref{eq:nikolskii:2} as $\trkla{8-2\nu}/\trkla{4-3\nu}\leq 4$.
Unfortunately, the limited regularity of the phase-field parameter reduces gains for larger $\nu$. 
Yet, we still can rely on the discrete $L^\infty\trkla{0,T;H^1\trkla{\Om}}$-bounds for $\trkla{\phi\h\nn}_n$ and recover the scaling obtained for $\nu=0.8$. 
In particular, we estimate
\begin{align}
\trkla{*}\leq C\norm{\phi\h\nn-\phi\h\no}_{H^1\trkla{\Om}}^{\trkla{5\nu-4}/4}\rkla{\tau^{4/10}\norm{\phi\h\nn-\phi\h\no}_{H^1\trkla{\Om}}^2+\tau^{-6/10}\norm{\phi\h\nn-\phi\h\no}_{L^2\trkla{\Om}}^4}\,.
\end{align}
To deal with the remaining part, we recall the growth condition for $F_2^{\prime\prime\prime}$ and apply the Gagliardo-Nirenberg inequality to deduce
\begin{align}
\begin{split}
&\abs{\iO\Ih{\trkla{F_2^{\prime\prime}\trkla{\varphi_2}-F_2^{\prime\prime}\trkla{\phi\h\no}}\trkla{\phi\h\nn-\phi\h\no}^2}\dx}\\
&\qquad\leq C\rkla{1+\norm{\phi\h\nn}_{H^1\trkla{\Om}}^2+\norm{\phi\h\no}_{H^1\trkla{\Om}}^2}\norm{\phi\h\nn-\phi\h\no}_{H^1\trkla{\Om}}^{5/2}\norm{\phi\h\nn-\phi\h\no}_{L^2\trkla{\Om}}^{1/2}\\
&\qquad\leq C\rkla{1+\norm{\phi\h\nn}_{H^1\trkla{\Om}}^{11/4}+\norm{\phi\h\no}_{H^1\trkla{\Om}}^{11/4}}\norm{\phi\h\nn-\phi\h\no}_{H^1\trkla{\Om}}^{7/4}\norm{\phi\h\nn-\phi\h\no}_{L^2\trkla{\Om}}^{1/2}\\
&\qquad \leq C\rkla{1+\norm{\phi\h\nn}_{H^1\trkla{\Om}}^{11/4}+\norm{\phi\h\no}_{H^1\trkla{\Om}}^{11/4}}\rkla{\tau^{1/8}\norm{\phi\h\nn-\phi\h\no}_{H^1\trkla{\Om}}^2+\tau^{-7/8}\norm{\phi\h\nn-\phi\h\no}_{L^2\rkla{\Om}}^4}\,.
\end{split}
\end{align}
Combining the above results with the uniform lower bound on $E\h^\Om\trkla{.}$, Hölder's inequality, Lemma \ref{lem:energy}, and Lemma \ref{lem:nikolskii}, we compute
\begin{align}
\expected{\abs{\sum_{n=1}^N R_{2,n}}^p}\leq C\tau^{p\min\tgkla{\nu/2,1/8}}\,.
\end{align}
Combining Hölder's inequality, \ref{item:potentials}, Young's inequality, Lemma \ref{lem:energy}, and \eqref{eq:Deltahmu} provides
\begin{align}
\begin{split}
&\expected{\abs{\sum_{n=1}^N R_{3,n}}^p}\leq C\expected{\rkla{\sum_{n=1}^N\tau\norm{\Ih{F^\prime\trkla{\phi\h\no}}}_{L^2\trkla{\Om}}^2\norm{\phi\h\nn-\phi\h\no}_{L^2\trkla{\Om}}\norm{\Delta\h\mu\h\nn}_{L^2\trkla{\Om}}}^p}\\
&~\leq C\expected{\rkla{1+\max_{1\leq n\leq N}\norm{\phi\h\no}_{H^1\trkla{\Om}}^6}^p\tau^{p/4}\rkla{\sum_{n=1}^N\rkla{\tau^{3/2}\norm{\Delta\h\mu\h\nn}_{L^2\trkla{\Om}}^2+\norm{\phi\h\nn-\phi\h\no}_{L^2\trkla{\Om}}^2}}^p}\\
&~\leq C\tau^{p/4}\,.
\end{split}
\end{align}
The remaining terms $R_{4,n}$, $R_{5,n}$, and $R_{6,n}$ can be treated similarly to $R_{2,n}$.
In particular, applying the Gagliardo--Nirenberg inequality, \ref{item:potentials}

\begin{align}
\begin{split}
\expected{\abs{\sum_{n=1}^N R_{4,n}}^p}\leq&\,C\expected{\rkla{\rkla{1+\max_{0\leq n\leq N}\norm{\phi\h\nn}_{H^1\trkla{\Om}}^5}\sum_{n=1}^N\tau^{1/2}\norm{\phi\h\nn-\phi\h\no}_{H^1\trkla{\Om}}^2}^p}\\
&+C\expected{\rkla{\rkla{1+\max_{0\leq n\leq N}\norm{\phi\h\nn}_{H^1\trkla{\Om}}^5}\sum_{n=1}^N\tau^{-1/2}\norm{\phi\h\nn-\phi\h\no}_{L^2\trkla{\Om}}^4}^p}\\
\leq &\, C\tau^{p/2}\,,
\end{split}\\
\begin{split}
\expected{\abs{\sum_{n=1}^N R_{5,n}}^p}\leq&\,C\expected{\rkla{\rkla{1+\max_{0\leq n\leq N}\norm{\phi\h\nn}_{H^1\trkla{\Om}}^5}\sum_{n=1}^N\tau^{1/2}\norm{\phi\h\nn-\phi\h\no}_{H^1\trkla{\Om}}^2}^p}\\
&+C\expected{\rkla{\rkla{1+\max_{0\leq n\leq N}\norm{\phi\h\nn}_{H^1\trkla{\Om}}^5}\sum_{n=1}^N\tau^{-1/2}\norm{\phi\h\nn-\phi\h\no}_{L^2\trkla{\Om}}^4}^p}\\
\leq & \,C\tau^{p/2}\,,
\end{split}\\
\begin{split}
\expected{\abs{\sum_{n=1}^N R_{6,n}}^p}\leq&\,C\expected{\rkla{\max_{0\leq n\leq N}\rkla{1+\norm{\phi\h\nn}_{H^1\trkla{\Om}}^9}\sum_{n=1}^N\norm{\phi\h\nn-\phi\h\no}_{L^2\trkla{\Om}}^3}^p}\leq C\tau^{p/2}\,.
\end{split}
\end{align}
Combining the above estimates provides \eqref{eq:errorsavO}.
To obtain \eqref{eq:errorsavG}, we follow a similar pathway and obtain
\begin{align}
\begin{split}
\sqrt{E\h^\Gamma\trkla{\phi\h\nn}}&-\sqrt{E\h^\Gamma\trkla{\phi\h\no}}=s\h\nn-s\h\no\\
&-\frac{1}{4\sqrt{E\h^\Gamma\trkla{\phi\h\no}}}\iGamma\IhG{G^{\prime\prime}\trkla{\phi\h\no}\trkla{\phi\h\nn-\phi\h\no}\tau\theta\h\nn}\dG\\
&+\frac{1}{4\sqrt{E\h^\Gamma\trkla{\phi\h\no}}}\iGamma\IhG{\rkla{G^{\prime\prime}\trkla{\widehat{\varphi}_2}-G^{\prime\prime}\trkla{\phi\h\no}}\trkla{\phi\h\nn-\phi\h\no}^2}\dG\\
&+\frac{1}{8\tekla{E\h^\Gamma\trkla{\phi\h\no}}^{3/2}}\rkla{\iGamma\IhG{G^\prime\trkla{\phi\h\no}\trkla{\phi\h\nn-\phi\h\no}}\dG}\\
&\qquad\times\rkla{\iGamma\IhG{G^{\prime}\trkla{\phi\h\no}\tau\theta\h\nn}\dG}\\
&-\frac{1}{8\tekla{E\h^\Gamma\trkla{\phi\h\no}}^{3/2}}\rkla{\iGamma\IhG{G^\prime\trkla{\phi\h\no}\trkla{\phi\h\nn-\phi\h\no}}\dG}\\
&\qquad\times\rkla{\iGamma\IhG{G^{\prime\prime}\trkla{\widehat{\varphi}_3}\trkla{\phi\h\nn-\phi\h\no}^2}\dG}\\
&-\frac{1}{8\tekla{E\h^\Gamma\trkla{\phi\h\no}}^{3/2}}\rkla{\tfrac12\iGamma\IhG{G^{\prime\prime}\trkla{\widehat{\varphi}_3}\trkla{\phi\h\nn-\phi\h\no}^2}\dG}^2\\
&+\frac{1}{16    \tekla{E\h^\Gamma\trkla{\widehat{\varphi}_1}}^{5/2}}\rkla{\iGamma\IhG{G^\prime\trkla{\widehat{\varphi}_4}\trkla{\phi\h\nn-\phi\h\no}}\dG}^3\\
=:&\,s\h\nn-s\h\no+\widehat{R}_{1,n}+\widehat{R}_{2,n}+\widehat{R}_{3,n}+\widehat{R}_{4,n}+\widehat{R}_{5,n}+\widehat{R}_{6,n}
\end{split}
\end{align}
with $\widehat{\varphi}_1,\widehat{\varphi}_2,\widehat{\varphi}_3,\widehat{\varphi}_4\in\mathrm{conv}\tgkla{\phi\h\nn,\phi\h\no}$.
The error terms $\widehat{R}_{1,n},\ldots,\widehat{R}_{6,n}$ can in principle be estimated similarly to $R_{1,n},\ldots,R_{6,n}$.
On the boundary, however, this requires only control over the potential $\theta\h\nn$ instead of its discrete Laplacian.
Furthermore, the Gagliardo--Nirenberg inequality in $\trkla{d-1}$ dimensions allows for better estimates.
In particular, we obtain for the first term
\begin{multline}
\expected{\abs{\sum_{n=1}^N \widehat{R}_{1,n}}^p}\\
\leq C\expected{\abs{\rkla{1+\max_{0\leq n\leq N}\norm{\phi\h\nn}_{H^1\trkla{\Gamma}}^2}\tau^{1/2}\sum_{n=1}^N\rkla{\tau\norm{\theta\h\nn}_{L^2\trkla{\Gamma}}^2+\norm{\phi\h\nn-\phi\h\no}_{H^1\trkla{\Gamma}}^2}}^p} \leq C\tau^{p/2}\,.
\end{multline}
To estimate the second term, we again use the decomposition of the potential $G$.
Starting with the $C^{2,\nu}$-part, we compute by using the Gagliardo--Nirenberg inequality
\begin{multline}
\abs{\iGamma\IhG{\rkla{G_1^{\prime\prime}\trkla{\widehat{\varphi}_2}-G_1^{\prime\prime}\trkla{\phi\h\no}}\trkla{\phi\h\nn-\phi\h\no}^2}\dG}\leq C\iGamma\IhG{\abs{\phi\h\nn-\phi\h\no}^{2+\nu}}\dG\\
\leq C\norm{\phi\h\nn-\phi\h\no}_{H^1\trkla{\Gamma}}^\nu\norm{\phi\h\nn-\phi\h\no}_{L^2\trkla{\Gamma}}^2\\
\leq C\rkla{\tau^{\nu/2}\norm{\phi\h\nn-\phi\h\no}_{H^1\trkla{\Gamma}}^2+\tau^{-\nu^2/\trkla{4-2\nu}}\norm{\phi\h\nn-\phi\h\no}_{L^2\trkla{\Gamma}}^{4/\trkla{2-\nu}}}\,.
\end{multline}
Concerning the $C^3$-part of $G$, we deduce
\begin{multline}
\abs{\iGamma\IhG{\rkla{G_2^{\prime\prime}\trkla{\widehat{\varphi}_2}-G_2^{\prime\prime}\trkla{\phi\h\no}}\trkla{\phi\h\nn-\phi\h\no}^2}\dG}\\
\leq C\rkla{1+\norm{\phi\h\nn}_{H^1\trkla{\Gamma}}^2+\norm{\phi\h\no}_{H^1\trkla{\Gamma}}^2}\norm{\phi\h\nn-\phi\h\no}_{L^{3+\delta}\trkla{\Gamma}}^3\\
\leq C\rkla{1+\norm{\phi\h\nn}_{H^1\trkla{\Gamma}}^2+\norm{\phi\h\no}_{H^1\trkla{\Gamma}}^2}\norm{\phi\h\nn-\phi\h\no}_{H^1\trkla{\Gamma}}^{\trkla{3+3\delta}/\trkla{3+\delta}}\norm{\phi\h\nn-\phi\h\no}_{L^2\trkla{\Gamma}}^{6/\trkla{3+\delta}}\\
\leq C\rkla{1+\norm{\phi\h\nn}_{H^1\trkla{\Gamma}}^2+\norm{\phi\h\no}_{H^1\trkla{\Gamma}}^2}\rkla{\tau^{1/2}\norm{\phi\h\nn-\phi\h\no}_{H^1\trkla{\Gamma}}^2+\tau^{-\tfrac{3+3\delta}{6-2\delta}}\norm{\phi\h\nn-\phi\h\no}_{L^2\trkla{\Gamma}}^{12/\trkla{3-\delta}}}
\end{multline}
for arbitrary $0<\delta<1$.
Using \eqref{eq:nikolskii:3}, we obtain
\begin{align}
\expected{\abs{\sum_{n=1}^N \widehat{R}_{2,n}}^p}\leq C\tau^{\nu p/2}
\end{align}
as $\nu\leq 1$.
Furthermore, we have
\begin{align}
\begin{split}
\expected{\abs{\sum_{n=1}^N\widehat{R}_{3,n}}^p}\leq&\, C\expected{\abs{\sum_{n=1}^N\tau\norm{\IhG{G^\prime\trkla{\phi\h\no}}}_{L^2\trkla{\Gamma}}^2\norm{\phi\h\nn-\phi\h\no}_{L^2\trkla{\Gamma}}\norm{\theta\h\nn}_{L^2\trkla{\Gamma}}}^p}\\
\leq&\,C\tau^{p/2}\,,
\end{split}\\
\begin{split}
\expected{\abs{\sum_{n=1}^N\widehat{R}_{4,n}}^p}\leq&\, C\tau^{p/2}\,,
\end{split}\\
\begin{split}
\expected{\abs{\sum_{n=1}^N\widehat{R}_{5,n}}^p}\leq&\,C\tau^{p/2}\,,
\end{split}\\
\begin{split}
\expected{\abs{\sum_{n=1}^N\widehat{R}_{6,n}}^p}\leq&\,C\tau^{p/2}\,,
\end{split}
\end{align}
which concludes the proof.
\end{proof}
In the proof of Lemma \ref{lem:errorsav}, we relied on a bound on $\tau^{3/4}\norm{\Delta\h \mu\h\nn}_{h,\Om}^2$ (cf.~\eqref{eq:deltamul2}).
When passing to the limit $\tau\searrow0$, this bound loses its significance.
Using a weaker spatial norm, however, allows to obtain $\tau$-independent bounds for the weak form of the Laplacian of $\mu\h\nn$.
Introducing the linear form $l\tekla{\mu\h\nn}\,:\,H^1\trkla{\Om}\rightarrow \mathds{R}$ given via
\begin{align}\label{eq:def:weaklaplace}
\psi\mapsto l\tekla{\mu\h\nn}\trkla{\psi}:=\iO\nabla\mu\h\nn\cdot\nabla\psi\dx
\end{align}
for all $\psi\in H^1\trkla{\Om}$ a straightforward computation provides the following result:
\begin{corollary}\label{cor:laplacemu}
Let the assumptions \ref{item:time}, \ref{item:space1}, \ref{item:space2}, \ref{item:potentials}, \ref{item:initial}, \ref{item:rho}, and \ref{item:filtration}-\ref{item:color} hold true.
Then, for every $1\leq p<\infty$, there exists a constant $C\equiv C\trkla{p,T}>0$ independent of $h$ and $\tau$ such that the linear form $l\tekla{\mu\h\nn}\,:\,H^1\trkla{\Om}\rightarrow \mathds{R}$ given in \eqref{eq:def:weaklaplace}
satisfies
\begin{align*}
\expected{\rkla{\sum_{n=1}^N\tau\norm{l\tekla{\mu\h\nn}}_{\trkla{H^1\trkla{\Om}}^\prime}^2}^p}\leq C\,.
\end{align*}
\end{corollary}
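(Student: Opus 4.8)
The plan is to observe that the $\trkla{H^1\trkla{\Om}}^\prime$-norm of the linear form $l\tekla{\mu\h\nn}$ is controlled pathwise by the $L^2\trkla{\Om}$-norm of $\nabla\mu\h\nn$, after which the asserted moment bound follows immediately from the energy estimate already established in Lemma \ref{lem:energy}. Thus the corollary is essentially a repackaging of the $L^2\trkla{0,T;H^1\trkla{\Om}}$-bound on $\mu$ into a form that survives the passage $\tau\searrow0$, at which point the $\tau^{3/4}$-weighted bound on $\Delta\h\mu\h\nn$ from \eqref{eq:Deltahmu} loses its meaning.

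First I would estimate the dual norm. For an arbitrary test function $\psi\in H^1\trkla{\Om}$, the Cauchy--Schwarz inequality applied to \eqref{eq:def:weaklaplace} gives
\[
\abs{l\tekla{\mu\h\nn}\trkla{\psi}}=\abs{\iO\nabla\mu\h\nn\cdot\nabla\psi\dx}\leq\norm{\nabla\mu\h\nn}_{L^2\trkla{\Om}}\norm{\nabla\psi}_{L^2\trkla{\Om}}\leq\norm{\nabla\mu\h\nn}_{L^2\trkla{\Om}}\norm{\psi}_{H^1\trkla{\Om}}.
\]
Taking the supremum over all $\psi$ with $\norm{\psi}_{H^1\trkla{\Om}}\leq1$ yields the pathwise bound $\norm{l\tekla{\mu\h\nn}}_{\trkla{H^1\trkla{\Om}}^\prime}\leq\norm{\nabla\mu\h\nn}_{L^2\trkla{\Om}}$. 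Summing the squares against $\tau$ then produces
\[
\sum_{n=1}^N\tau\norm{l\tekla{\mu\h\nn}}_{\trkla{H^1\trkla{\Om}}^\prime}^2\leq\sum_{n=1}^N\tau\norm{\nabla\mu\h\nn}_{L^2\trkla{\Om}}^2,
\]
so that raising to the $p$-th power and taking expectations reduces the claim to the bound $\expected{\rkla{\sum_{n=1}^N\tau\norm{\nabla\mu\h\nn}_{L^2\trkla{\Om}}^2}^p}\leq C$, which is one of the terms controlled in \eqref{eq:energyestimate}.

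There is essentially no obstacle here: the only point worth noting is that one must test against the full $H^1\trkla{\Om}$-norm (rather than the seminorm) in the definition of the dual norm, which is precisely what makes the gradient (seminorm) control from Lemma \ref{lem:energy} sufficient, with no recourse to Poincaré-type arguments or to the mean value of $\mu\h\nn$ established separately in \eqref{eq:mul2}. The substantive analysis has already been carried out in Lemma \ref{lem:energy}, and the present statement merely extracts from it the $\trkla{h,\tau}$-uniform estimate on the weak Laplacian that will be invoked when identifying the limit equation in the compactness step.
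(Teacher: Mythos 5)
Your proof is correct and is precisely the ``straightforward computation'' the paper alludes to: the Cauchy--Schwarz bound $\norm{l\tekla{\mu\h\nn}}_{\trkla{H^1\trkla{\Om}}^\prime}\leq\norm{\nabla\mu\h\nn}_{L^2\trkla{\Om}}$ reduces the claim to the term $\expected{\rkla{\sum_{n=1}^N\tau\norm{\nabla\mu\h\nn}_{L^2\trkla{\Om}}^2}^p}\leq C$ already contained in the energy estimate \eqref{eq:energyestimate} of Lemma \ref{lem:energy}. Your remark that only the gradient seminorm control is needed (no Poincar\'e argument or recourse to \eqref{eq:mul2}) is also accurate, since the linear form involves only $\nabla\mu\h\nn$.
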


\section{Compactness properties of discrete solutions}\label{sec:compactness}
In this section, we shall establish the tightness of the laws of the discrete solutions and deduce the existence of weakly and strongly converging subsequences.

Using the time-index free notation defined in \eqref{eq:timeinterpol}, we restate the regularity results from the last section as follows:
\begin{subequations}\label{eq:regularity:timecont}
\begin{align}
\begin{split}\label{eq:regularity:timecont:1}
\norm{\phi\h\tpm}_{L^{2\p}\trkla{\Omega;L^\infty\trkla{0,T;H^1\trkla{\Om}}}}+\norm{\phi\h\tpm}_{L^{2\p}\trkla{\Omega;L^\infty\trkla{0,T;H^1\trkla{\Gamma}}}} +\norm{r\h\tpm}_{L^{2\p}\trkla{\Omega;L^\infty\trkla{0,T}}} \\
+\norm{s\h\tpm}_{L^{2\p}\trkla{\Omega;L^\infty\trkla{0,T}}} +\norm{\mu\h\tp}_{L^{2\p}\trkla{\Omega;L^2\trkla{0,T;H^1\trkla{\Om}}}} +\norm{\theta\h\tp}_{L^{2\p}\trkla{\Omega;L^2\trkla{0,T;L^2\trkla{\Gamma}}}}\\
+\tau^{-1/2}\norm{\phi\h\tp-\phi\h\tm}_{L^{2\p}\trkla{\Omega;L^2\trkla{0,T;H^1\trkla{\Omega}}}}+\tau^{-1/2}\norm{\phi\h\tp-\phi\h\tm}_{L^{2\p}\trkla{\Omega;L^2\trkla{0,T;H^1\trkla{\Gamma}}}}\\
+\tau^{-1/2}\norm{r\h\tp-r\h\tm}_{L^{2\p}\trkla{\Omega;L^2\trkla{0,T}}}+\tau^{-1/2}\norm{s\h\tp-s\h\tm}_{L^{2\p}\trkla{\Omega;L^2\trkla{0,T}}}\\
+\norm{l\tekla{\mu\h\tp}}_{L^{2\p}\trkla{\Omega;L^2\trkla{0,T;\trkla{H^1\trkla{\Om}}^\prime}}}&\leq C\,,
\end{split}
\end{align}
\begin{align}\label{eq:regularity:timecont:1.5}
\norm{\Xi_{h,\Om}\tp}_{L^\p\trkla{\Omega;L^2\trkla{0,T;L^2\trkla{\Om}}}}+\norm{\Xi_{h,\Gamma}\tp}_{L^\p\trkla{\Omega;L^2\trkla{0,T;L^2\trkla{\Gamma}}}}\leq C \tau^{1/2}\,,
\end{align}
\begin{align}\label{eq:regularity:timecont:2}
\norm{\phi\h\tl}_{L^{2\alpha}\trkla{\Omega;N^{1/4,2\alpha}\trkla{0,T;L^2\trkla{\Om}}}}+\norm{\phi\h\tl}_{L^{2\alpha}\trkla{\Omega;N^{1/2,2\alpha}\trkla{0,T;L^2\trkla{\Gamma}}}}\leq C\,,
\end{align}
\begin{align}\label{eq:regularity:timecont:3}
\norm{\phi\h\tl}_{L^{4\alpha}\trkla{\Omega;C^{0,\trkla{\alpha-1}/\trkla{4\alpha}}\trkla{\tekla{0,T};L^2\trkla{\Om}}}}+\norm{\phi\h\tl}_{L^{2\alpha}\trkla{\Omega;C^{0,\trkla{\alpha-1}/\trkla{2\alpha}}\trkla{\tekla{0,T};L^2\trkla{\Gamma}}}}\leq C\,,
\end{align}
\end{subequations}
for $\p\in[1,\infty)$ and $\alpha\in\trkla{1,\infty}$.
While \eqref{eq:regularity:timecont:1} is a direct consequence of Lemma \ref{lem:energy}, Lemma \ref{lem:potential}, and Corollary \ref{cor:laplacemu}, \eqref{eq:regularity:timecont:1.5} follows from Lemma \ref{lem:potential} and \eqref{eq:normequivalence}, \eqref{eq:regularity:timecont:2} follows from the results of Lemma \ref{lem:nikolskii} and Lemma 3.2 in \cite{Banas2013}.
The last inequality \eqref{eq:regularity:timecont:3} then follows from the embedding results in \cite{Simon1990}.\\
In the next step, we want to identify almost surely converging subsequences by applying Jakubowski's theorem (cf.~\cite{Jakubowski1998}).
In particular, we are interested in the convergence properties of $\trkla{\phi\h\tl,\trace{\phi\h\tl},r\h\tl,s\h\tl,\mu\h\tp,l\tekla{\mu\h\tp},\theta\h\tp}_{h,\tau}$ and the linear interpolation $\bs{\xi}\h\tl$ of $\tgkla{\bs{\xi}\h^{m,\tau}}_{m}$.
Although the time-continuous processes $\bs{\xi}\h\tl$ are not martingales, we will later show that they converge towards martingales.\\
We start by establishing uniform estimates for $\bs{\xi}\h\tl$:
\begin{lemma}\label{lem:boundxi}
Let the assumptions \ref{item:filtration}-\ref{item:color} hold true.
Then, the piecewise linear process $\bs{\xi}\h\tl$ satisfies
\begin{align}
\norm{\bs{\xi}\h\tl}_{L^{2p}\trkla{\Omega;C^{0,\trkla{p-1}/\trkla{2p}}\trkla{\tekla{0,T};H^2\trkla{\Om}}}}\leq C
\end{align}
for arbitrary $p\in\trkla{1,\infty}$ with a constant $C>0$ independent of $h$ and $\tau$.
\end{lemma}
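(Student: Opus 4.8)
The plan is to deduce the asserted Hölder regularity from a moment bound on the increments of $\bs{\xi}\h\tl$, combined with the Nikolskii-type embedding machinery already used for \eqref{eq:regularity:timecont:2}--\eqref{eq:regularity:timecont:3}. The central observation is that the increments of $\bs{\xi}\h^{m,\tau}$ (cf.~\eqref{eq:defxih}) fit the framework of Lemma \ref{lem:BDG} if one chooses $H:=H^2\trkla{\Om}$ and lets $\Phi\h\no$ be the ($n$-independent) truncated inclusion $g\mapsto\sum_{k\in\Zh}\trkla{g,\g{k}}_{L^2\trkla{\Om}}\g{k}$, regarded as a Hilbert--Schmidt operator from $\mathcal{Q}^{1/2}L^2\trkla{\Om}$ into $H^2\trkla{\Om}$ (note that this differs from the operator in \eqref{eq:def:phih}). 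With this choice $\Phi\h\no\g{k}=0$ for $k\notin\Zh$ and $\Phi\h\no\sinc{n}=\sqrt{\tau}\sum_{k\in\Zh}\lambda_k\g{k}\xi_k^{n,\tau}$, so that $\bs{\xi}\h^{m,\tau}-\bs{\xi}\h^{l,\tau}=\sum_{n=l+1}^m\Phi\h\no\sinc{n}$. Since $\Om$ is bounded we have $\norm{\g{k}}_{H^2\trkla{\Om}}\leq C\norm{\g{k}}_{W^{2,\infty}\trkla{\Om}}$, whence the coloring assumption \ref{item:color} yields the deterministic, $h$-uniform bound $\sum_{k\in\Zh}\norm{\lambda_k\Phi\h\no\g{k}}_{H^2\trkla{\Om}}^2=\sum_{k\in\Zh}\lambda_k^2\norm{\g{k}}_{H^2\trkla{\Om}}^2\leq C\widetilde{C}$.

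Then I would apply \eqref{eq:bdg2} with $p=2q$ to obtain, for all $0\leq l\leq m\leq N$,
\begin{align*}
\expected{\norm{\bs{\xi}\h^{m,\tau}-\bs{\xi}\h^{l,\tau}}_{H^2\trkla{\Om}}^{2q}}\leq C\trkla{\trkla{m-l}\tau}^{q-1}\sum_{n=l+1}^m\tau\,\trkla{C\widetilde{C}}^{q}\leq C\trkla{\trkla{m-l}\tau}^{q}\,,
\end{align*}
with $C$ independent of $h$ and $\tau$. To pass to arbitrary times I would use that on a single subinterval $\bs{\xi}\h\tl\trkla{t}-\bs{\xi}\h\tl\trkla{s}=\tfrac{t-s}{\tau}\Phi\h\no\sinc{n}$ for $s,t\in\tekla{t\no,t\nn}$, so that \eqref{eq:bdg1} together with $t-s\leq\tau$ gives $\expected{\norm{\bs{\xi}\h\tl\trkla{t}-\bs{\xi}\h\tl\trkla{s}}_{H^2\trkla{\Om}}^{2q}}\leq C\tfrac{\trkla{t-s}^{2q}}{\tau^q}\leq C\tabs{t-s}^{q}$; splitting a general increment into two within-interval pieces and one grid-to-grid piece and using the triangle inequality then yields
\begin{align*}
\expected{\norm{\bs{\xi}\h\tl\trkla{t}-\bs{\xi}\h\tl\trkla{s}}_{H^2\trkla{\Om}}^{2q}}\leq C\tabs{t-s}^{q}\qquad\text{for all~}s,t\in\tekla{0,T}\,,
\end{align*}
again with $C$ uniform in $h,\tau$.

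From here the argument parallels the derivation of \eqref{eq:regularity:timecont:2}--\eqref{eq:regularity:timecont:3}. Integrating this increment bound in time (using the convention $\bs{\xi}\h\tl\equiv0$ outside $\tekla{0,T}$ and $\bs{\xi}\h\tl\trkla{0}=0$) shows, via Lemma 3.2 in \cite{Banas2013}, that $\bs{\xi}\h\tl$ is bounded in $L^{2q}\trkla{\Omega;N^{1/2,2q}\trkla{0,T;H^2\trkla{\Om}}}$ uniformly in $h,\tau$. The embedding $N^{1/2,2q}\trkla{0,T;H^2\trkla{\Om}}\hookrightarrow C^{0,\trkla{q-1}/\trkla{2q}}\trkla{\tekla{0,T};H^2\trkla{\Om}}$ from \cite{Simon1990} (valid for $q>1$) then gives $\norm{\bs{\xi}\h\tl}_{L^{2q}\trkla{\Omega;C^{0,\trkla{q-1}/\trkla{2q}}\trkla{\tekla{0,T};H^2\trkla{\Om}}}}\leq C$; taking $q=p$ produces precisely the claimed exponent $\trkla{p-1}/\trkla{2p}$ and integrability $2p$, which finishes the proof. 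The main obstacle is really the first paragraph: recognizing that the $H$-valued discrete Burkholder--Davis--Gundy estimate of Lemma \ref{lem:BDG} can be invoked with $H=H^2\trkla{\Om}$ and the inclusion-type operator, and that the resulting constant stays uniform in $h$ and $\tau$. This hinges entirely on the $W^{2,\infty}$-coloring \ref{item:color}, which bounds the full $H^2$-Hilbert--Schmidt norm of the truncated noise independently of the growing index set $\Zh$; the remaining steps are the same time-regularity bookkeeping already employed elsewhere in this section.
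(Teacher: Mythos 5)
Your proof is correct and follows essentially the same route as the paper: both apply Lemma \ref{lem:BDG} with $H=H^2\trkla{\Om}$ to the partial sums of increments, use the coloring assumption \ref{item:color} to bound the Hilbert--Schmidt norm uniformly in $h$ and $\tau$, and conclude via Lemma 3.2 of \cite{Banas2013} together with the Nikolskii-to-Hölder embedding of \cite{Simon1990}. Your additional care (the explicit truncated-inclusion operator and the within-interval increment bound via \eqref{eq:bdg1}) only makes explicit what the paper leaves implicit in its appeal to \cite{Banas2013}.
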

\begin{proof}
Recalling the definition of $\bs{\xi}\h^{m,\tau}$ (cf.~\eqref{eq:defxih}) and Lemma \ref{lem:BDG}, we obtain
\begin{align}
\begin{split}
\expected{\sum_{m=0}^{N-l}\tau\norm{\bs{\xi}\h^{m+l,\tau}-\bs{\xi}\h^{m,\tau}}_{H^2\trkla{\Om}}^{2p}}=\expected{\sum_{m=0}^{N-l}\tau\norm{\sum_{n=m+1}^{m+l}\sqrt{\tau}\sum_{k\in\Zh}\lambda_k\g{k}\xi\h^{n,\tau} }_{H^2\trkla{\Om}}^{2p}}\\
\leq C\sum_{m=0}^{N-l}\tau\trkla{l\tau}^{p-1}\sum_{n=m+1}^{m+l}\tau\expected{\rkla{\sum_{k\in\Zh}\norm{\lambda_k\g{k}}_{H^2\trkla{\Om}}^2}^p}\leq C\trkla{l\tau}^p\,,
\end{split}
\end{align}
which is sufficient to establish the result (cf.~\cite[Lemma 3.2]{Banas2013}). 
\end{proof}
Lemma \ref{lem:boundxi} in particular provides the tightness of the laws of $\trkla{\bs{\xi}\h\tl}_{h,\tau}$ in $C\trkla{\tekla{0,T};H^1\trkla{\Om}}$.
With the next lemma, we establish the tightness of the laws of $\trkla{\phi\h\tl}_{h,\tau}$ and $\trkla{\trace{\phi\h\tl}}_{h,\tau}$ in $C\trkla{\tekla{0,T};L^s\trkla{\Om}}$ and $C\trkla{\tekla{0,T};L^r\trkla{\Gamma}}$ with $s\in[1,\tfrac{2d}{d-2})$ and $r\in[1,\infty)$:

\begin{lemma}\label{lem:tight}
Let $\trkla{\phi\h\tl,\trace{\phi\h\tl}}_{h,\tau}$ be a family of continuous, piecewise linear processes that satisfy the bounds stated in \eqref{eq:regularity:timecont}.
Then the family of laws $\trkla{\nu_{\phi\h\tl}}_{h,\tau}$ generated by $\trkla{\phi\h\tl}_{h,\tau}$ is tight on $C\trkla{\tekla{0,T};L^s\trkla{\Om}}$ ($s\in[1,\tfrac{2d}{d-2})$) and the family of laws $\trkla{\nu_{\trace{\phi\h\tl}}}_{h,\tau}$ generated by $\trkla{\trace{\phi\h\tl}}_{h,\tau}$ is tight on $C\trkla{\tekla{0,T};L^r\trkla{\Gamma}}$ with $r\in[1,\infty)$.
\end{lemma}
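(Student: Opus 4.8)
The plan is to deduce the stochastic tightness statement from a deterministic compactness criterion of Arzelà--Ascoli type, combined with a Chebyshev argument exploiting the moment bounds collected in \eqref{eq:regularity:timecont}. First I would fix $s\in[1,\tfrac{2d}{d-2})$; since $\Om$ is bounded, the continuous embedding $L^2\trkla{\Om}\emb L^s\trkla{\Om}$ reduces the case $s<2$ to $s=2$, so I may assume $2\leq s<\tfrac{2d}{d-2}$. The two ingredients I would extract from the regularity results are the uniform moment bound on $\norm{\phi\h\tl}_{L^\infty\trkla{0,T;H^1\trkla{\Om}}}$ provided by \eqref{eq:regularity:timecont:1}, and the uniform moment bound on the Hölder seminorm of $\phi\h\tl$ in $C^{0,\gamma}\trkla{\tekla{0,T};L^2\trkla{\Om}}$ with $\gamma=\trkla{\alpha-1}/\trkla{4\alpha}>0$ for any fixed $\alpha>1$, provided by \eqref{eq:regularity:timecont:3}.

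Next, for $R>0$ I would introduce the set
\begin{align*}
K_R:=\gkla{v\in C\trkla{\tekla{0,T};L^2\trkla{\Om}}\,:\,\norm{v}_{L^\infty\trkla{0,T;H^1\trkla{\Om}}}\leq R,\ [v]_{C^{0,\gamma}\trkla{\tekla{0,T};L^2\trkla{\Om}}}\leq R}
\end{align*}
and argue that $K_R$ is relatively compact in $C\trkla{\tekla{0,T};L^s\trkla{\Om}}$. This is the deterministic heart of the proof: the $H^1$-bound confines each value $v\trkla{t}$ to a fixed ball of $H^1\trkla{\Om}$, which is precompact in $L^s\trkla{\Om}$ by the Rellich--Kondrachov theorem (the embedding $H^1\trkla{\Om}\compemb L^s\trkla{\Om}$ being compact precisely for $s<\tfrac{2d}{d-2}$), while the Gagliardo--Nirenberg interpolation $\norm{w}_{L^s\trkla{\Om}}\leq C\norm{w}_{H^1\trkla{\Om}}^\theta\norm{w}_{L^2\trkla{\Om}}^{1-\theta}$ upgrades the equicontinuity in $L^2\trkla{\Om}$ coming from the Hölder seminorm to equicontinuity in $L^s\trkla{\Om}$. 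Combining these via Arzelà--Ascoli yields the asserted relative compactness; I would cite \cite{Simon1990} (and the criterion used for \eqref{eq:regularity:timecont:3} in \cite{Banas2013}) for this statement.

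The passage to tightness is then a routine Markov/Chebyshev estimate. For arbitrary $\varepsilon>0$ I would bound, uniformly in $h$ and $\tau$,
\begin{align*}
\prob{\phi\h\tl\notin K_R}\leq \prob{\norm{\phi\h\tl}_{L^\infty\trkla{0,T;H^1\trkla{\Om}}}>R}+\prob{[\phi\h\tl]_{C^{0,\gamma}\trkla{\tekla{0,T};L^2\trkla{\Om}}}>R}\leq CR^{-\min\tgkla{2\p,4\alpha}},
\end{align*}
where the last inequality uses \eqref{eq:regularity:timecont:1} and \eqref{eq:regularity:timecont:3} with $C$ independent of $h$ and $\tau$. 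Choosing $R=R\trkla{\varepsilon}$ large enough makes the right-hand side smaller than $\varepsilon$, which is exactly the tightness of $\trkla{\nu_{\phi\h\tl}}_{h,\tau}$ on $C\trkla{\tekla{0,T};L^s\trkla{\Om}}$.

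Finally, the boundary statement follows along identical lines with $\Om$ replaced by $\Gamma$, feeding in the uniform bounds for $\trace{\phi\h\tl}$ in $L^\infty\trkla{0,T;H^1\trkla{\Gamma}}$ and in $C^{0,\trkla{\alpha-1}/\trkla{2\alpha}}\trkla{\tekla{0,T};L^2\trkla{\Gamma}}$ from \eqref{eq:regularity:timecont:1} and \eqref{eq:regularity:timecont:3}. The only point requiring attention here is the admissible integrability range: since $\Gamma$ is a $\trkla{d-1}$-dimensional manifold with $d\in\tgkla{2,3}$, its dimension is at most $2$, whence $H^1\trkla{\Gamma}\compemb L^r\trkla{\Gamma}$ holds compactly for \emph{every} $r\in[1,\infty)$, explaining the full range $r\in[1,\infty)$. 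I expect the main obstacle to be not probabilistic but the verification of the deterministic compactness criterion, i.e.\ combining the compact Sobolev embedding with the interpolation inequality to obtain equicontinuity in the target space $L^s\trkla{\Om}$ (respectively $L^r\trkla{\Gamma}$); once that criterion is in place, the tightness is immediate from the moment bounds.
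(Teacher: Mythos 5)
Your proposal is correct and follows essentially the same route as the paper: identify the closed balls of $L^\infty\trkla{0,T;H^1\trkla{\Om}}\cap C^{0,\trkla{\alpha-1}/\trkla{4\alpha}}\trkla{\tekla{0,T};L^2\trkla{\Om}}$ (respectively their boundary counterparts) as compact subsets of $C\trkla{\tekla{0,T};L^s\trkla{\Om}}$ and then conclude by Markov's inequality using the moment bounds in \eqref{eq:regularity:timecont}. The only difference is cosmetic: where the paper cites Simon's compactness theorem \cite{Simon1987} for the deterministic step, you reprove it by hand via Rellich--Kondrachov, Gagliardo--Nirenberg interpolation, and Arzel\`a--Ascoli, which is a valid self-contained substitute.
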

\begin{proof}
Recalling the compactness theorem by Simon (cf.~\cite{Simon1987}) we note that the closed ball $\overline{B}^\Om_R$ in $L^\infty\trkla{0,T;H^1\trkla{\Om}}\cap C^{0,\trkla{\alpha-1}/\trkla{4\alpha}}\trkla{\tekla{0,T};L^2\trkla{\Om}}$ is a compact subset of $C\trkla{\tekla{0,T};L^s\trkla{\Om}}$. 
Furthermore, the family of laws $\rkla{\nu_{\phi\h\tl}}_{h,\tau}$ satisfies for any $R>0$
\begin{multline}
\nu_{\phi\h\tl}\trkla{C\trkla{\tekla{0,T};L^s\trkla{\Om}}\setminus\overline{B}_R^\Om} =\Prob\ekla{\norm{\phi\h\tl}_{L^\infty\trkla{0,T;H^1\trkla{\Om}}}^{4\alpha}+\norm{\phi\h\tl}_{C^{0,\trkla{\alpha-1}/\trkla{4\alpha}}\trkla{\tekla{0,T};L^2\trkla{\Om}}}^{4\alpha} >R^{4\alpha}}\\
\leq R^{-4\alpha}\expected{\norm{\phi\h\tl}_{L^\infty\trkla{0,T;H^1\trkla{\Om}}}^{4\alpha}+\norm{\phi\h\tl}_{C^{0,\trkla{\alpha-1}/\trkla{4\alpha}}\trkla{\tekla{0,T};L^2\trkla{\Om}}}^{4\alpha}}\,.
\end{multline}
The tightness of the family of laws $\rkla{\nu_{\trace{\phi\h\tl}}}_{h,\tau}$ on $C\trkla{\tekla{0,T};L^r\trkla{\Gamma}}$ follows by similar arguments.
\end{proof}
As the closed balls in $L^2\trkla{0,T}$, $L^2\trkla{0,T;H^1\trkla{\Om}}$, $L^2\trkla{0,T;\trkla{H^1\trkla{\Om}}^\prime}$, and $L^2\trkla{0,T;L^2\trkla{\Gamma}}$ are compact in the weak topology, the laws $\trkla{\nu_{r\h\tl}}_{h,\tau}$, $\trkla{\nu_{s\h\tl}}_{h,\tau}$, $\trkla{\nu_{\mu\h\tp}}_{h,\tau}$, $\trkla{\nu_{l\tekla{\mu\h\tp}}}_{h,\tau}$, and $\trkla{\nu_{\theta\h\tp}}_{h,\tau}$ generated by $\trkla{r\h\tl}_{h,\tau}$, $\trkla{s\h\tl}_{h,\tau}$, $\trkla{\mu\h\tp}_{h,\tau}$, $\trkla{l\tekla{\mu\h\tp}}_{h,\tau}$, and $\trkla{\theta\h\tp}_{h,\tau}$ are tight in $L^2\trkla{0,T}_{\weaktop}$, $L^2\trkla{0,T}_{\weaktop}$, $L^2\trkla{0,T;H^1\trkla{\Om}}_{\weaktop}$, $L^2\trkla{0,T;\trkla{H^1\trkla{\Om}}^\prime}_{\weaktop}$, and $L^2\trkla{0,T;L^2\trkla{\Gamma}}_{\weaktop}$ due to Markov's inequality and the bounds collected in \eqref{eq:regularity:timecont}.
Hence, the joint laws $\trkla{\nu\h\tl}_{h,\tau}$ of $\trkla{\phi\h\tl}_{h,\tau}$, $\trkla{\trace{\phi\h\tl}}_{h,\tau}$, $\trkla{r\h\tl}_{h,\tau}$, $\trkla{s\h\tl}_{h,\tau}$, $\trkla{\mu\h\tp}_{h,\tau}$, $\trkla{l\tekla{\mu\h\tp}}_{h,\tau}$, $\trkla{\theta\h\tp}_{h,\tau}$, $\trkla{\bs{\xi}\h\tl}_{h,\tau}$ are tight on the path space
\begin{align}
\begin{split}
\mathcal{X}:=&\, C\trkla{\tekla{0,T};L^s\trkla{\Om}}\times C\trkla{\tekla{0,T};L^r\trkla{\Gamma}}\times L^2\trkla{0,T}_{\weaktop}\times L^2\trkla{0,T}_{\weaktop}\\
&\quad\times L^2\trkla{0,T;H^1\trkla{\Om}}_{\weaktop}\times L^2\trkla{0,T;\trkla{H^1\trkla{\Om}}^\prime}_{\weaktop}\\
&\quad\times L^2\trkla{0,T;L^2\trkla{\Gamma}}_{\weaktop}\times C\trkla{\tekla{0,T};H^1\trkla{\Om}}\,,
\end{split}
\end{align}
$s\in[1,\tfrac{2d}{d-2})$ and $r\in[1,\infty)$.
Therefore, we can apply Jakubowski's generalization of the Skorokhod theorem (cf.~\cite{Jakubowski1998}) to obtain the following convergence result:
\begin{theorem}\label{thm:jakubowski}
Let $\rkla{\phi\h\tl,\trace{\phi\h\tl},r\h\tl,s\h\tl,\mu\h\tp,l\tekla{\mu\h\tp},\theta\h\tp}_{h,\tau}$ satisfy the estimates \eqref{eq:regularity:timecont} and let the family of discrete approximations $\trkla{\bs{\xi}\h\tl}_{h,\tau}$ of the $\mathcal{Q}$-Wiener process satisfy the bounds in Lemma \ref{lem:boundxi}.
Then there exists a subsequence
\begin{align*}
\rkla{\phi_j,\trace{\phi_j},r_j,s_j,\mu_j^+,l\tekla{\mu_j^+},\theta_j^+,\bs{\xi}_j}_{j}:=\rkla{\phi\hj^{\tau_j},\trace{\phi\hj^{\tau_j}},r\hj^{\tau_j},s\hj^{\tau_j},\mu\hj^{\tau_j,+},l\tekla{\mu\hj^{\tau_j,+}},\theta\hj^{\tau_j,+},\bs{\xi}\hj^{\tau_j}}_{j}\,,
\end{align*}
a stochastic basis $\trkla{\widetilde{\Omega},\widetilde{\mathcal{A}},\widetilde{\Prob}}$, a sequence of random variables
\begin{align*}
\rkla{\widetilde{\phi}_j,\trace{\widetilde{\phi}_j},\widetilde{r}_j,\widetilde{s}_j,\widetilde{\mu}_j^+,l\tekla{\widetilde{\mu}_j^+},\widetilde{\theta}_j^+,\widetilde{\bs{\xi}}_j}\,:\,\widetilde{\Omega}\rightarrow\mathcal{X}\,,
\end{align*}
and random variables
\begin{align*}
\rkla{\widetilde{\phi},\widetilde{\phi_\Gamma},\widetilde{r},\widetilde{s},\widetilde{\mu},\widetilde{L},\widetilde{\theta},\widetilde{W}}\,:\,\widetilde{\Omega}\rightarrow\mathcal{X}\,
\end{align*}
such that the following holds:
\begin{itemize}
\item The law of $\rkla{\widetilde{\phi}_j,\trace{\widetilde{\phi}_j},\widetilde{r}_j,\widetilde{s}_j,\widetilde{\mu}_j^+,l\tekla{\widetilde{\mu}_j^+},\widetilde{\theta}_j^+,\widetilde{\bs{\xi}}_j}$ on $\mathcal{X}$ under $\widetilde{\Prob}$ coincides with for any $j\in\mathds{N}$ with the law of $\rkla{\phi_j,\trace{\phi_j},r_j,s_j,\mu_j^+,l\tekla{\mu_j^+},\theta_j^+,\bs{\xi}_j}$ under $\Prob$.
\item The sequence $\rkla{\widetilde{\phi}_j,\trace{\widetilde{\phi}_j},\widetilde{r}_j,\widetilde{s}_j,\widetilde{\mu}_j^+,l\tekla{\widetilde{\mu}_j^+},\widetilde{\theta}_j^+,\widetilde{\bs{\xi}}_j}$ converges $\widetilde{\Prob}$-almost surely towards $\rkla{\widetilde{\phi},\widetilde{\phi_\Gamma},\widetilde{r},\widetilde{s},\widetilde{\mu},\widetilde{L},\widetilde{\theta},\widetilde{W}}$ in the topology of $\mathcal{X}$.
\end{itemize}
\end{theorem}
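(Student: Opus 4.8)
The plan is to verify the two hypotheses of Jakubowski's generalization of the Skorokhod representation theorem \cite{Jakubowski1998} for the joint laws on the path space $\mathcal{X}$, and then to invoke it directly. First I would reassemble the tightness of the family of joint laws $\trkla{\nu\h\tl}_{h,\tau}$ on $\mathcal{X}$. The tightness on the two strong factors $C\trkla{\tekla{0,T};L^s\trkla{\Om}}$ and $C\trkla{\tekla{0,T};L^r\trkla{\Gamma}}$ is exactly Lemma \ref{lem:tight}, the tightness on the factor $C\trkla{\tekla{0,T};H^1\trkla{\Om}}$ follows from the H\"older bound of Lemma \ref{lem:boundxi} via the Arzel\`a--Ascoli/Simon compactness argument, and the tightness on each weak-topology factor follows from Markov's inequality applied to the uniform moment bounds in \eqref{eq:regularity:timecont}, since the underlying spaces $L^2\trkla{0,T}$, $L^2\trkla{0,T;H^1\trkla{\Om}}$, $L^2\trkla{0,T;\trkla{H^1\trkla{\Om}}^\prime}$, and $L^2\trkla{0,T;L^2\trkla{\Gamma}}$ are reflexive, so that their closed balls are weakly compact. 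Because a finite product of tight families is tight on the product, $\trkla{\nu\h\tl}_{h,\tau}$ is tight on $\mathcal{X}$.

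Next I would confirm that $\mathcal{X}$ lies in the class of spaces to which Jakubowski's theorem applies, namely that $\mathcal{X}$ admits a countable family of continuous real-valued functions separating its points. Each strong factor is a Polish space and hence trivially possesses such a family. For each weak-topology factor $Y_{\weaktop}$, with $Y$ one of the separable Hilbert spaces listed above, a countable norm-dense subset of the (separable) dual $Y^\prime$ provides, through the dual pairings $y\mapsto\skla{y^\prime,y}$, a countable family of weakly continuous functions that separates the points of $Y$ by Hahn--Banach. Since this separating-sequence property is stable under finite products, $\mathcal{X}$ inherits it.

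With tightness and the separating-sequence property in hand, Jakubowski's theorem then supplies a subsequence indexed by $j$, a probability space $\trkla{\widetilde{\Omega},\widetilde{\mathcal{A}},\widetilde{\Prob}}$, a sequence of $\mathcal{X}$-valued random variables $\rkla{\widetilde{\phi}_j,\dots,\widetilde{\bs{\xi}}_j}$ whose laws coincide for every $j$ with those of $\rkla{\phi_j,\dots,\bs{\xi}_j}$ under $\Prob$, and limit random variables $\rkla{\widetilde{\phi},\widetilde{\phi_\Gamma},\widetilde{r},\widetilde{s},\widetilde{\mu},\widetilde{L},\widetilde{\theta},\widetilde{W}}$ to which the former converge $\widetilde{\Prob}$-almost surely in the topology of $\mathcal{X}$. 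This is precisely the assertion of the theorem, so the proof reduces to these verifications.

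The principal obstacle is the non-metrizability of the weak-topology factors, which is exactly the reason the classical Skorokhod theorem is unavailable and Jakubowski's generalization is required; the genuine work therefore lies in checking the separating-sequence condition, which rests on the separability of the Banach spaces in question and of their duals. A secondary point demanding care is that the almost sure limits on the weak factors are, at this stage, only abstract elements of the corresponding weak spaces: the identification of $\widetilde{L}$ with $l\tekla{\widetilde{\mu}}$ and of $\trace{\widetilde{\phi}}$ with $\widetilde{\phi_\Gamma}$ is not part of this statement and is postponed to the passage to the limit in Section \ref{sec:limit}; here one records only the bare convergence delivered by the theorem.
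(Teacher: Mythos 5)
Your tightness verification and your check that $\mathcal{X}$ falls within the scope of Jakubowski's theorem (existence of a countable family of continuous functionals separating points) follow the same route as the paper; the latter check is in fact left implicit in the paper and is a worthwhile addition. However, there is a genuine gap in how you conclude. Jakubowski's theorem, applied to the joint laws, produces on the new probability space a tuple $\rkla{\widetilde{\phi}_j,\widetilde{\phi_{\Gamma,j}},\widetilde{r}_j,\widetilde{s}_j,\widetilde{\mu}_j^+,\widetilde{L}_j^+,\widetilde{\theta}_j^+,\widetilde{\bs{\xi}}_j}$ whose components are, a priori, \emph{unrelated} elements of the respective factors of $\mathcal{X}$. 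The theorem you are proving asserts more: the second component is $\trace{\widetilde{\phi}_j}$ and the sixth is $l\tekla{\widetilde{\mu}_j^+}$, i.e.\ the functional relationships tying the components of the original tuple together survive the change of probability space. Your closing paragraph declares this identification ``not part of this statement'' and postpones it to the passage to the limit; that is accurate only for the \emph{limit} variables $\widetilde{\phi_\Gamma}$, $\widetilde{L}$, $\widetilde{W}$ (for which the statement indeed makes no structural claim), but not for the approximating sequence, where the claim is written explicitly into the statement and is used essentially later (e.g.\ in the martingale identification of Section \ref{sec:limit}, where one tests with traces of bulk functions and with $l\tekla{\widetilde{\mu}_j^+}$).

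The missing argument is precisely what the paper's short proof is mostly devoted to. Since the laws of $\widetilde{\phi}_j$ and $\phi_j$ coincide, $\widetilde{\phi}_j$ is $\widetilde{\Prob}$-almost surely a piecewise linear time-interpolant of $\Uhj$-valued, hence continuous, functions, so $\trace{\widetilde{\phi}_j}$ is well defined and the trace map is continuous from $C\trkla{\overline{\Om}}$ to $C\trkla{\Gamma}$ on this set. Because the relation ``second component equals the trace of the first component'' holds with probability one under the law of the original pair and defines a measurable subset of the product space, equality of joint laws forces $\widetilde{\phi_{\Gamma,j}}=\trace{\widetilde{\phi}_j}$ $\widetilde{\Prob}$-almost surely. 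The same reasoning shows $\widetilde{\mu}_j^+$ and $\widetilde{L}_j^+$ are a.s.\ piecewise constant in time and identifies $\widetilde{L}_j^+\trkla{\psi}=\iO\nabla\widetilde{\mu}_j^+\cdot\nabla\psi\dx=l\tekla{\widetilde{\mu}_j^+}\trkla{\psi}$ for all $\psi\in H^1\trkla{\Om}$, $\widetilde{\Prob}$-almost surely. With this supplement your argument is complete and coincides with the paper's proof.
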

\begin{proof}
As the combined laws on $\mathcal{X}$ are tight, Jakuboswki's theorem provides the existence of a stochastic basis and a sequence of random variables $\rkla{\widetilde{\phi}_j,\widetilde{\phi_{\Gamma,j}},\widetilde{r}_j,\widetilde{s}_j,\widetilde{\mu}_j^+,\widetilde{L}_j^+,\widetilde{\theta}_j^+,\widetilde{\bs{\xi}}}$ on $\widetilde{\Omega}$ with the stated convergence properties.
Hence, it only remains to identify $\widetilde{\phi_{\Gamma,j}}$ and $\widetilde{L}_j^+$ with $\trace{\widetilde{\phi}_j}$ and $l\tekla{\widetilde{\mu}_j^+}$.
As the laws of $\widetilde{\phi}_j$ and $\phi_j$ coincide, $\widetilde{\phi}_j$ is $\widetilde{\Prob}$-almost surely a piecewise linear (time-)interpolation of an $\Uh$-valued random variable.
Hence, the trace of $\widetilde{\phi}_j$ is well-defined as a continuous mapping from $C\trkla{\overline{\Omega}}$ to $C\trkla{\Gamma}$.
Therefore, by identity of laws, we may identify $\widetilde{\phi_{\Gamma,j}}$ as $\trace{\widetilde{\phi}_j}$.
Similarly, we deduce that $\widetilde{\mu}_j^+$ and $\widetilde{L}_j^+$ are $\widetilde{\Prob}$-almost surely piecewise constant in time.
Using again the identity of laws, we can identify $\widetilde{L}_j\trkla{\psi}$ with $\iO\nabla\widetilde{\mu}_j^+\cdot\nabla\psi\dx=l\tekla{\widetilde{\mu}_j^+}\trkla{\psi}$ for all $\psi\in H^1\trkla{\Om}$ $\widetilde{\Prob}$-almost surely.
\end{proof}
\begin{remark}
The restriction to subsequences in Theorem \ref{thm:jakubowski} is necessary, as Jakubowski's theorem uses a generalization of Prokhorov's theorem to deduce convergence in distribution for a subsequence from uniform bounds.
For $\trkla{\bs{\xi}\h\tl}_{h,\tau}$, convergence in distribution can already be deduced from Donsker's invariance theorem.
\end{remark}
As the random variables introduced in Theorem \ref{thm:jakubowski} are $\widetilde{\Prob}$-almost surely continuous and piecewise linear (or left continuous and piecewise constant, respectively), we can evaluate them at the nodes of the time grid and recover the remaining time-interpolants. 
Analogously to \eqref{eq:deferrorterms}, we introduce the $\widetilde{\Prob}$-almost surely left continuous and piecewise constant in time finite element-valued random variables $\widetilde{\Xi}_{j,\Om}^+$ and $\widetilde{\Xi}_{j,\Gamma}^+$.
Due to the identity of laws, the constructed sequences satisfy the bounds collected in \eqref{eq:regularity:timecont} with respect to the new probability space $\trkla{\widetilde{\Omega},\widetilde{\mathcal{A}},\widetilde{\Prob}}$.
Hence, we obtain the following additional convergence properties:
\begin{lemma}\label{lem:convergence}
Let $\rkla{\widetilde{\phi}_j\pml}_{j\in\mathds{N}}$, $\rkla{\trace{\widetilde{\phi}_j\pml}}_{j\in\mathds{N}}$, $\rkla{\widetilde{r}_j\pml}_{j\in\mathds{N}}$, $\rkla{\widetilde{s}_j\pml}_{j\in\mathds{N}}$, $\rkla{\widetilde{\mu}_j^+}_{j\in\mathds{N}}$, $\rkla{l\tekla{\widetilde{\mu}_j^+}}_{j\in\mathds{N}}$, and $\rkla{\widetilde{\theta}_j^+}_{j\in\mathds{N}}$ be the sequences defined by interpolation of the sequences from Theorem \ref{thm:jakubowski}.
Furthermore, let the assumptions \ref{item:time}, \ref{item:space1}, \ref{item:space2}, \ref{item:potentials}, \ref{item:initial}, \ref{item:rho}, and \ref{item:filtration}-\ref{item:color} hold true. 
Then there exist functions
\begin{subequations}
\begin{align}
\begin{split}
\widetilde{\phi}\in&\,L^{2\p}_{\operatorname{weak-}(*)}\trkla{\widetilde{\Omega};L^\infty\trkla{0,T;H^1\trkla{\Om}}}\cap L^{4\p}\trkla{\widetilde{\Omega};C^{0,\trkla{\p-1}/\trkla{4\p}}\trkla{\tekla{0,T};L^2\trkla{\Om}}}\,,
\end{split}\\
\begin{split}
\widetilde{\phi_\Gamma}\in&\, L^{2\p}_{\operatorname{weak-}(*)}\trkla{\widetilde{\Omega};L^\infty\trkla{0,T;H^1\trkla{\Gamma}}}\cap L^{2\p}\trkla{\widetilde{\Omega};C^{0,\trkla{\p-1}/\trkla{2\p}}\trkla{\tekla{0,T};L^2\trkla{\Gamma}}}\,,
\end{split}\\
\widetilde{r}\in&\,L^{2\p}_{\operatorname{weak-}(*)}\trkla{\widetilde{\Omega};L^\infty\trkla{0,T}}\,,\\
\widetilde{s}\in&\,L^{2\p}_{\operatorname{weak-}(*)}\trkla{\widetilde{\Omega};L^\infty\trkla{0,T}}\,,\\
\begin{split}
\widetilde{\mu}\in &\, L^{2\p}\trkla{\widetilde{\Omega};L^2\trkla{0,T;H^1\trkla{\Om}}}\,,
\end{split}\\
\widetilde{L}\in&\,L^{2\p}\trkla{\widetilde{\Omega};L^2\trkla{0,T;\trkla{H^1\trkla{\Om}}^\prime}}\,,\\
\begin{split}
\widetilde{\theta}\in &\,L^{2\p}\trkla{\widetilde{\Omega};L^2\trkla{0,T;L^2\trkla{\Gamma}}}
\end{split}
\end{align}
\end{subequations}
for any $\p\in\trkla{1,\infty}$ such that $\trace{\widetilde{\phi}}=\widetilde{\phi_\Gamma}$ $\widetilde{\Prob}$-almost surely almost everywhere on $\trkla{0,T}\times\Gamma$, $\widetilde{r}=\sqrt{\iO F\rkla{\widetilde{\phi}}\dx}$ and $\widetilde{s}=\sqrt{\iGamma G\rkla{\widetilde{\phi_\Gamma}}\dG}$ $\widetilde{\Prob}$-almost surely almost everywhere on $\trkla{0,T}$, and $\widetilde{L}=l\tekla{\widetilde{\mu}}$ $\widetilde{\Prob}$-almost surely almost everywhere on $\trkla{0,T}\times\Om$.
Furthermore, for $j\rightarrow\infty$ the following convergence statements hold true along a subsequence:
\begin{subequations}
\begin{align}
\widetilde{\phi}_j&\rightarrow\widetilde{\phi}&&\text{in~}L^p\trkla{\widetilde{\Omega};C\trkla{\tekla{0,T};L^s\trkla{\Om}}}\,,\label{eq:conv:phiO:strong}\\
\widetilde{\phi}_j\pml&\rightarrow\widetilde{\phi}&&\text{in~}L^p\trkla{\widetilde{\Omega};L^p\trkla{0,T;L^s\trkla{\Om}}}\,,\label{eq:conv:phiO:strongLp}\\
\widetilde{\phi}_j\pml&\weakstar\widetilde{\phi}&&\text{in~}L^p_{\operatorname{weak-}(*)}\trkla{\widetilde{\Omega};L^\infty\trkla{0,T;H^1\trkla{\Om}}}\,,\label{eq:conv:phiO:weakstar}\\
\trace{\widetilde{\phi}_j}&\rightarrow \widetilde{\phi_\Gamma}&&\text{in~}L^p\trkla{\widetilde{\Omega};C\trkla{\tekla{0,T};L^r\trkla{\Gamma}}}\,,\label{eq:conv:phiG:strong}\\
\trace{\widetilde{\phi}_j\pml}&\rightarrow\widetilde{\phi_\Gamma}&&\text{in~} L^p\trkla{\widetilde{\Omega};L^p\trkla{0,T;L^r\trkla{\Gamma}}}\label{eq:conv:phiG:strongLp}\\
\trace{\widetilde{\phi}_j\pml}&\weakstar\widetilde{\phi_\Gamma}&&\text{in~}L^p_{\operatorname{weak-}(*)}\trkla{\widetilde{\Omega};L^\infty\trkla{0,T;H^1\trkla{\Gamma}}}\,,\label{eq:conv:phiG:weakstar}\\
\widetilde{\mu}_j^+&\weak \widetilde{\mu}&&\text{in~}L^p\trkla{\widetilde{\Omega};L^2\trkla{0,T;H^1\trkla{\Om}}}\,,\label{eq:conv:mu:weak}\\
\widetilde{\theta}_j^+&\weak\widetilde{\theta}&&\text{in~}L^p\trkla{\widetilde{\Omega};L^2\trkla{0,T;L^2\trkla{\Gamma}}}\,,\label{eq:conv:theta:weak}\\
\iO\Ihj{F\rkla{\widetilde{\phi}_j\pml}}\dx&\rightarrow\iO F\rkla{\widetilde{\phi}}\dx&&\text{in~}L^p\trkla{\widetilde{\Omega};L^p\trkla{0,T}}\,,\label{eq:conv:intF}\\
\widetilde{r}_j\pml&\rightarrow\widetilde{r}&&\text{in~} L^p\trkla{\widetilde{\Omega};L^p\trkla{0,T}}\,,\label{eq:conv:r:strong}\\
\widetilde{r}_j\pml&\weakstar\widetilde{r}&&\text{in~} L^p_{\operatorname{weak-}(*)}\trkla{\widetilde{\Omega};L^\infty\trkla{0,T}}\,,\label{eq:conv:r:weakstar}\\
\iGamma\IhGj{G\rkla{\trace{\widetilde{\phi}_j\pml}}}\dG&\rightarrow \iGamma G\rkla{\widetilde{\phi_\Gamma}}\dG&&\text{in~}L^p\trkla{\widetilde{\Omega},L^p\trkla{0,T}}\,,\label{eq:conv:intG}\\
\widetilde{s}_j\pml&\rightarrow\widetilde{s}&&\text{in~}L^p\trkla{\widetilde{\Omega};L^p\trkla{0,T}}\,,\label{eq:conv:s:strong}\\
\widetilde{s}_j\pml&\weakstar\widetilde{s}&&\text{in~}L^p_{\operatorname{weak-}(*)}\trkla{\widetilde{\Omega};L^\infty\trkla{0,T}}\label{eq:conv:s:weakstar}\,,\\
\widetilde{\Xi}_{j,\Om}^+&\rightarrow 0&&\text{in~} L^p\trkla{\widetilde{\Omega};L^2\trkla{0,T;L^2\trkla{\Om}}}\label{eq:conv:xiO}\,,\\
\widetilde{\Xi}_{j,\Gamma}^+&\rightarrow 0&&\text{in~} L^p\trkla{\widetilde{\Omega};L^2\trkla{0,T;L^2\trkla{\Gamma}}}\,\label{eq:conv:xiG}
\end{align}
\end{subequations}
for $p\in[1,\infty)$, $s\in[1,\tfrac{2d}{d-2})$, and $r\in[1,\infty)$.
\end{lemma}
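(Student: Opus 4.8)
The plan is to transfer the uniform estimates \eqref{eq:regularity:timecont} from the original discrete quantities to their tilde-counterparts using the identity of laws provided by Theorem \ref{thm:jakubowski}, and then to combine these bounds with the $\widetilde{\Prob}$-almost sure convergence in the topology of $\mathcal{X}$ to identify the limits and to upgrade the modes of convergence. First I would record that, since the laws coincide for every $j$, each of the sequences $\widetilde{\phi}_j\pml$, $\trace{\widetilde{\phi}_j\pml}$, $\widetilde{r}_j\pml$, $\widetilde{s}_j\pml$, $\widetilde{\mu}_j^+$, $\widetilde{\theta}_j^+$ satisfies \eqref{eq:regularity:timecont} with the same constants on $\trkla{\widetilde{\Omega},\widetilde{\mathcal{A}},\widetilde{\Prob}}$. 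The existence of the limit functions in the stated Bochner spaces then follows from weak and $\operatorname{weak-}(*)$ compactness together with these uniform bounds; by uniqueness of limits the weak(-$*$) limits agree $\widetilde{\Prob}$-almost surely with the almost sure limits delivered by Jakubowski's theorem. This already yields \eqref{eq:conv:mu:weak}, \eqref{eq:conv:theta:weak}, \eqref{eq:conv:phiO:weakstar}, \eqref{eq:conv:phiG:weakstar}, \eqref{eq:conv:r:weakstar}, and \eqref{eq:conv:s:weakstar}, the reflexive spaces $L^{2\p}\trkla{\widetilde{\Omega};L^2\trkla{0,T;H^1\trkla{\Om}}}$ and $L^{2\p}\trkla{\widetilde{\Omega};L^2\trkla{0,T;L^2\trkla{\Gamma}}}$ supplying $\widetilde{\mu}$ and $\widetilde{\theta}$ directly.

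Next I would promote the pathwise convergence to strong $L^p\trkla{\widetilde{\Omega};\cdot}$ convergence. Since Theorem \ref{thm:jakubowski} gives $\widetilde{\phi}_j\to\widetilde{\phi}$ $\widetilde{\Prob}$-a.s.\ in $C\trkla{\tekla{0,T};L^s\trkla{\Om}}$ and the higher-moment bounds in \eqref{eq:regularity:timecont:1} and \eqref{eq:regularity:timecont:3} guarantee uniform integrability of any fixed power of $\norm{\widetilde{\phi}_j}_{C\trkla{\tekla{0,T};L^s\trkla{\Om}}}$, Vitali's convergence theorem yields \eqref{eq:conv:phiO:strong}, and analogously \eqref{eq:conv:phiG:strong}. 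To pass from the linear interpolant to the piecewise-constant interpolants in \eqref{eq:conv:phiO:strongLp} and \eqref{eq:conv:phiG:strongLp}, I would estimate $\norm{\widetilde{\phi}_j\pml-\widetilde{\phi}_j}$ by the time increments $\widetilde{\phi}_j\tp-\widetilde{\phi}_j\tm$, which tend to zero in $L^p\trkla{\widetilde{\Omega};L^p\trkla{0,T;L^s\trkla{\Om}}}$ by the Nikolskii estimate \eqref{eq:nikolskii:2} (transferred by identity of laws) interpolated against the uniform $H^1$-bounds, together with \eqref{eq:conv:phiO:strong}. The error terms \eqref{eq:conv:xiO} and \eqref{eq:conv:xiG} are then immediate, since \eqref{eq:regularity:timecont:1.5} transferred to $\widetilde{\Omega}$ gives $\norm{\widetilde{\Xi}_{j,\Om}^+}_{L^p\trkla{\widetilde{\Omega};L^2\trkla{0,T;L^2\trkla{\Om}}}}\leq C\tau_j^{1/2}\to0$, and likewise on $\Gamma$.

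The core of the argument is the convergence of the nonlinear energies \eqref{eq:conv:intF}, \eqref{eq:conv:intG} and the ensuing identification of the auxiliary variables. For \eqref{eq:conv:intF} I would split
\begin{align*}
\iO\Ihj{F\rkla{\widetilde{\phi}_j\pml}}\dx-\iO F\rkla{\widetilde{\phi}}\dx=\iO\rkla{\Ihjop-1}\gkla{F\rkla{\widetilde{\phi}_j\pml}}\dx+\iO\rkla{F\rkla{\widetilde{\phi}_j\pml}-F\rkla{\widetilde{\phi}}}\dx\,,
\end{align*}
controlling the nodal interpolation error by $Ch_j$ via Lemma \ref{lem:interpolationerror}, the growth of $F^{\prime\prime}$ from \ref{item:potentials}, and the uniform $H^1$-bounds in \eqref{eq:regularity:timecont:1} (with inverse estimates where needed), and the Nemytskii difference by the strong convergence \eqref{eq:conv:phiO:strongLp} in $L^s\trkla{\Om}$ with $s=4$ (admissible since $4<\tfrac{2d}{d-2}$ for $d\in\tgkla{2,3}$), using the cubic growth of $F^\prime$ and the uniform $L^4$-bounds. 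Having \eqref{eq:conv:intF}, I would invoke Lemma \ref{lem:errorsav}, which by identity of laws yields $\widetilde{r}_j\pml-\sqrt{E\hj^\Om\trkla{\widetilde{\phi}_j\pml}}\to0$ in $L^p\trkla{\widetilde{\Omega};L^p\trkla{0,T}}$; combining this with \eqref{eq:conv:intF} and the almost sure limit $\widetilde{r}_j\to\widetilde{r}$ identifies $\widetilde{r}=\sqrt{\iO F\rkla{\widetilde{\phi}}\dx}$ and simultaneously establishes the strong convergence \eqref{eq:conv:r:strong}. The boundary quantities \eqref{eq:conv:intG}, \eqref{eq:conv:s:strong} and the identification of $\widetilde{s}$ follow along the same lines, using the $\trkla{d-1}$-dimensional interpolation estimates from Lemma \ref{lem:interpolationerror} and the $W^{1,4}\trkla{\Gamma}$-control of the traces.

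Finally, the remaining identifications are routine once the above convergences are in place: $\trace{\widetilde{\phi}}=\widetilde{\phi_\Gamma}$ follows from the continuity of the trace operator combined with \eqref{eq:conv:phiO:strong} and \eqref{eq:conv:phiG:strong} (equivalently from the identity of laws as already exploited in Theorem \ref{thm:jakubowski}), and $\widetilde{L}=l\tekla{\widetilde{\mu}}$ follows since $l\tekla{\cdot}$ is linear and continuous from $L^2\trkla{0,T;H^1\trkla{\Om}}$ into $L^2\trkla{0,T;\trkla{H^1\trkla{\Om}}^\prime}$, so that the weak limit of $l\tekla{\widetilde{\mu}_j^+}$ coincides with $l\tekla{\widetilde{\mu}}$ by \eqref{eq:conv:mu:weak} and uniqueness of limits. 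I expect the main obstacle to be the nonlinear energy convergence \eqref{eq:conv:intF} and \eqref{eq:conv:intG}: it requires handling the nodal interpolation of the quartic potential simultaneously with the passage to the limit in the Nemytskii operator under only $L^\infty\trkla{0,T;H^1}$-in-space control, and it is precisely here that the vanishing SAV-approximation error of Lemma \ref{lem:errorsav} — the very \emph{raison d'être} of the augmented scheme — enters to pin down $\widetilde{r}$ and $\widetilde{s}$.
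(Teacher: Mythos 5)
Your overall route coincides with the paper's proof: transfer the bounds \eqref{eq:regularity:timecont} to $\trkla{\widetilde{\Omega},\widetilde{\mathcal{A}},\widetilde{\Prob}}$ by identity of laws, upgrade the $\widetilde{\Prob}$-a.s.\ convergence from Theorem \ref{thm:jakubowski} to $L^p\trkla{\widetilde{\Omega};\cdot}$-convergence via Vitali, obtain the weak and weak-$(*)$ statements from the uniform bounds and uniqueness of limits, treat the nodal interpolation of $F$ and $G$ as a vanishing perturbation, and use Lemma \ref{lem:errorsav} together with \eqref{eq:conv:intF} to identify $\widetilde{r}$ and $\widetilde{s}$; the terms $\widetilde{\Xi}_{j,\Om}^+$, $\widetilde{\Xi}_{j,\Gamma}^+$ are handled exactly as in the paper via \eqref{eq:regularity:timecont:1.5}. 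Your quantitative treatment of the Nemytskii difference (a Lipschitz-type estimate with $L^4$-norms and the cubic growth of $F^\prime$) is a legitimate alternative to the paper's argument, which instead extracts a pointwise a.e.\ convergent subsequence and applies Vitali once more.

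There is, however, one genuine gap: the identification $\trace{\widetilde{\phi}}=\widetilde{\phi_\Gamma}$ cannot follow ``from the continuity of the trace operator combined with \eqref{eq:conv:phiO:strong} and \eqref{eq:conv:phiG:strong}'', because the trace operator is not defined, let alone continuous, on $L^s\trkla{\Om}$; the convergence \eqref{eq:conv:phiO:strong} carries no boundary information. The parenthetical fallback ``by identity of laws'' also does not suffice: in Theorem \ref{thm:jakubowski} that argument identifies $\trace{\widetilde{\phi}_j}$ at each fixed discrete level $j$, where $\widetilde{\phi}_j$ is a.s.\ a finite element function, but it says nothing about the limit object. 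You must exploit the $H^1$-level convergence. The paper does this via the divergence theorem: for smooth $\bs{\psi}$ one writes $\iO \widetilde{\phi}_j\pml\div\bs{\psi}\dx=-\iO\nabla\widetilde{\phi}_j\pml\cdot\bs{\psi}\dx+\iGamma\trace{\widetilde{\phi}_j\pml}\bs{\psi}\cdot\bs{n}\dG$ and passes to the limit using the weak-$(*)$ convergence of the gradients and the strong convergence of the traces, which forces $\iGamma\rkla{\widetilde{\phi_\Gamma}-\trace{\widetilde{\phi}}}\bs{\psi}\cdot\bs{n}\dG=0$; equivalently, you could use that the trace is a bounded linear, hence weakly continuous, map from $H^1\trkla{\Om}$ to $L^2\trkla{\Gamma}$, combined with \eqref{eq:conv:phiO:weakstar}, \eqref{eq:conv:phiG:strong}, and uniqueness of limits. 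Either fix uses only facts you already have, but some such argument is required. A smaller inaccuracy: Lemma \ref{lem:interpolationerror} bounds the interpolation error of \emph{products} of finite element functions and does not apply to the composition $F\rkla{\widetilde{\phi}_j\pml}$; the paper instead invokes a dedicated estimate for $\trkla{1-\Ihjop}\tgkla{F\rkla{\widetilde{\phi}_j}}$ from earlier work. Your plan (elementwise interpolation estimates, growth of $F^{\prime\prime}$, inverse estimates) is sound, but the cited lemma is not the right tool.
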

\begin{proof}
Due to Theorem \ref{thm:jakubowski}, we have $\widetilde{\phi}_j\rightarrow\widetilde{\phi}$ in $C\trkla{\tekla{0,T};L^s\trkla{\Om}}$ $\widetilde{\Prob}$-almost surely.
By Vitali's convergence theorem and the bounds in \eqref{eq:regularity:timecont}, we obtain the strong convergence expressed in \eqref{eq:conv:phiO:strong}.
This strong convergence provides the existence of a subsequence $\widetilde{\phi_j}\rightarrow\widetilde{\phi}$ pointwise almost everywhere in $\widetilde{\Omega}\times\tekla{0,T}\times\Om$.
As \eqref{eq:regularity:timecont:1} entails $\norm{\widetilde{\phi}_j\pml-\widetilde{\phi}_j}_{L^{2\p}\trkla{\widetilde{\Omega};L^2\trkla{0,T;H^1\trkla{\Om}}}}\rightarrow0$, there exists a subsequence $\rkla{\widetilde{\phi}_j\pml}_{j\in\mathds{N}}$ converging pointwise almost everywhere towards $\widetilde{\phi}$.
Hence, we can again combine the uniform bounds from \eqref{eq:regularity:timecont:1} and Vitali's convergence theorem to establish \eqref{eq:conv:phiO:strongLp}.
Similar arguments provide \eqref{eq:conv:phiG:strong} and \eqref{eq:conv:phiG:strongLp}.
Finally, choosing $\bs{\psi}\in L^2\trkla{\widetilde{\Omega};L^2\trkla{0,T;\trkla{C^\infty\trkla{\overline{\Om}}}^d}}$ we obtain
\begin{multline}
\expectedt{\int_0^T\iO\widetilde{\phi}\div\bs{\psi}\dx\dt}\leftarrow \expectedt{\int_0^T\iO\widetilde{\phi}_j\pml\div\bs{\psi}\dx\dt}\\
=\expectedt{-\int_0^T\iO\nabla\widetilde{\phi}_j\pml\cdot\bs{\psi}\dx\dt+\int_0^T\iGamma\trace{\widetilde{\phi}_j\pml}\bs{\psi}\cdot\bs{n}\dG\dt}\\
\rightarrow \expectedt{-\int_0^T\iO\nabla\widetilde{\phi}\cdot\bs{\psi}\dx\dt+\int_0^T\iGamma\widetilde{\phi_\Gamma}\bs{\psi}\cdot\bs{n}\dG\dt}\\
=\expectedt{\int_0^T\iO\widetilde{\phi}\div\bs{\psi}\dx\dt}+\expectedt{\int_0^T\iGamma\rkla{\widetilde{\phi_\Gamma}-\trace{\widetilde{\phi}}}\bs{\theta}\cdot\bs{n}\dG\dt}
\end{multline}
allowing us to identify $\widetilde{\phi_\Gamma}$ with $\trace{\widetilde{\phi}}$.\\
The weak and weak* convergence results stated in \eqref{eq:conv:phiO:weakstar}, \eqref{eq:conv:phiG:weakstar}, \eqref{eq:conv:mu:weak}, \eqref{eq:conv:theta:weak}, \eqref{eq:conv:r:weakstar}, and \eqref{eq:conv:s:weakstar} are a direct consequence of the uniform bounds in \eqref{eq:regularity:timecont:1}.\\
The convergence results \eqref{eq:conv:intF}--\eqref{eq:conv:r:strong} can be obtained as follows (see also \cite{\citeASAV}):
By estimate (4.8) in \cite{Metzger2023} we have
\begin{align}
\esssup_{t\in\trkla{0,T}}\iO\abs{\trkla{1-\Ihjop}\gkla{F\rkla{\widetilde{\phi}_j}}}\dx\leq Ch_j\tabs{\Om}+C h_j\esssup_{t\in\trkla{0,T}}\norm{\widetilde{\phi}_j\pml}_{H^1\trkla{\Om}}^6
\end{align}
which indicates that the interpolation operator $\Ihjop$ is negligible when passing to the limit.
Starting from a pointwise almost everywhere convergent subsequence, we may use the continuity of $F$, the growth condition in \ref{item:potentials}, and Vitali's convergence theorem to establish \eqref{eq:conv:intF}.
The strong convergence follows from \eqref{eq:conv:intF} and Lemma \ref{lem:errorsav}.
This also allows for the identification of $\widetilde{r}$ with $\sqrt{\iO F\rkla{\widetilde{\phi}}\dx}$.
Similar arguments provide \eqref{eq:conv:intG}, \eqref{eq:conv:s:strong}, and the identification of $\widetilde{s}$.
The strong convergence of $\Xi_{j,\Om}^+$ and $\Xi_{j,\Gamma}^+$ stated in \eqref{eq:conv:xiO} and \eqref{eq:conv:xiG} follows from \eqref{eq:regularity:timecont:1.5}.
\end{proof}

\section{Passage to the limit}\label{sec:limit}
The goal of this section is the identification of the limits of the (discrete) stochastic integrals.
We shall use a more precise notation for our time grid and write $t\nn_j$ ($n=0,\ldots,N_j$) for the nodes of the equidistant grid obtained using the step size $\tau_j$.
Denoting the increments $\widetilde{\bs{\xi}}_j\trkla{t\nn_j}-\widetilde{\bs{\xi}}_j\trkla{t\no_j}$ by $\sinctilde{n}$ and using the stochastic processes defined in the last section, we can rewrite \eqref{eq:modeldisc:phiO} and \eqref{eq:modeldisc:phiG} as
\begin{subequations}\label{eq:model:timecont}
\begin{multline}\label{eq:model:timecont:Om}
\iO\Ihj{\rkla{\widetilde{\phi}_j\trkla{t}-\widetilde{\phi}_j^-}\psi\hj}\dx +\trkla{t-t_j\no}\iO\nabla\widetilde{\mu}_j^+\cdot\nabla\psi\hj\dx\\
=\frac{t-t_j\no}{\tau_j}\iO\Ihj{\Phi\hj\rkla{\widetilde{\phi}_j^-}\sinctilde{n}\psi\hj}\dx\,,
\end{multline}
\begin{multline}\label{eq:model:timecont:Gamma}
\iGamma\IhGj{\trace{\widetilde{\phi}_j\trkla{t}-\widetilde{\phi}_j^-}\widehat{\psi}\hj}\dG+\trkla{t-t\no_j}\iGamma\IhG{\widetilde{\theta}_j^+\widehat{\psi}\hj}\dG\\
=\frac{t-t_j\no}{\tau_j}\iGamma\IhGj{\trace{\Phi\hj\rkla{\widetilde{\phi}_j^-}\sinctilde{n}}\widehat{\psi}\hj}\dG\,,
\end{multline}
for all $\psi\hj\in\Uhj$ and $\widehat{\psi}\hj\in\UhGj$.
The relation between $\widetilde{\phi}_j$, $\widetilde{r}_j^+$, $\widetilde{s}_j^+$, $\widetilde{\mu}_j^+$, and $\widetilde{\theta}_j^+$ in the interval $\trkla{t_j\no,t_j\nn}$ is given by
\begin{align}\label{eq:model:timecont:potentials}
\begin{split}
\iO&\Ihj{\widetilde{\mu}_j^+\eta\hj}\dx+\iGamma\IhGj{\widetilde{\theta}_j^+\trace{\eta\hj}}\dG\\
=&\iO\nabla\widetilde{\phi}_j^+\cdot\nabla\eta\hj\dx +\iGamma\nablaG \trace{\widetilde{\phi}_j^+}\cdot\nablaG\trace{\eta\hj}\dG\\
&+ \frac{\widetilde{r}_j^+}{\sqrt{E\hj^\Om\trkla{\widetilde{\phi}_j^-}}}\iO\Ihj{F^\prime\trkla{\widetilde{\phi}_j^-}\eta\hj}\dx +\iO\Ihj{\widetilde{\Xi}_{j,\Om}^+\eta\hj}\dx\\
&+\frac{\widetilde{s}_j^+}{\sqrt{E\hj^\Gamma\trkla{\trace{\widetilde{\phi}_j^-}}}}\iGamma\IhGj{G^\prime\rkla{\trace{\widetilde{\phi}_j^-}}\trace{\eta\hj}}\dG + \iGamma\IhGj{\widetilde{\Xi}_{j,\Gamma}^+\trace{\eta\hj}}\dG\,
\end{split}
\end{align}
for all $\eta\hj\in\Uhj$.

\end{subequations}
As the convergence properties collected in Lemma \ref{lem:convergence} allow to pass to the limit $j\rightarrow\infty$ in the \eqref{eq:model:timecont:potentials} and the left-hand sides of \eqref{eq:model:timecont:Om} and \eqref{eq:model:timecont:Gamma}, it remains to identify the limit of the right-hand sides of \eqref{eq:model:timecont:Om} and \eqref{eq:model:timecont:Gamma} as suitable \Ito-integrals.\\
We start by introducing the filtration $\trkla{\widetilde{\mathcal{F}}_t}_{t\in\tekla{0,T}}$ as the augmentation of the filtration generated by $\widetilde{\phi}$, $\widetilde{L}$, $\widetilde{\mu}$, $\widetilde{\phi_\Gamma}$, $\widetilde{\theta}$, and $\widetilde{W}$ and show that $\widetilde{W}$ is a $\mathcal{Q}$-Wiener process.
\begin{lemma}\label{lem:wienerprocess}
The process $\widetilde{W}$ obtained in Theorem \ref{thm:jakubowski} is a $\mathcal{Q}$-Wiener process adapted to $\trkla{\widetilde{\mathcal{F}}_t}_{t\in\tekla{0,T}}$ and can be written as
\begin{align*}
\widetilde{W}=\sum_{k\in\mathds{Z}}\lambda_k\g{k}\widetilde{\beta}_k\,.
\end{align*}
Here $\trkla{\widetilde{\beta}_k}_{k\in\mathds{Z}}$ is a family of independently and identically distributed Brownian motions with respect to $\trkla{\widetilde{\mathcal{F}}_t}_{t\in\tekla{0,T}}$.
\end{lemma}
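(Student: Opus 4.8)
The plan is to characterise $\widetilde W$ through its martingale structure rather than by a direct central limit argument, since L\'evy's characterisation will automatically upgrade a continuous martingale carrying the correct (co-)variation into a Gaussian process, thereby absorbing the non-Gaussianity of the discrete increments. First I would set, for every $k\in\mathds{Z}$ with $\lambda_k\neq0$, $\widetilde\beta_k:=\lambda_k^{-1}\trkla{\widetilde W,\g k}_{L^2\trkla{\Om}}$ together with the discrete counterparts $\widetilde\beta_{j,k}:=\lambda_k^{-1}\trkla{\widetilde{\bs\xi}_j,\g k}_{L^2\trkla{\Om}}$. By orthonormality of $\trkla{\g k}$ and the definition \eqref{eq:defxih}, the $\widetilde\beta_{j,k}$ are precisely the piecewise linear interpolants of the one-dimensional random walks $\sum_n\sqrt{\tau_j}\,\xi_k^{n,\tau_j}$. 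Since $\widetilde W\in C\trkla{\tekla{0,T};H^1\trkla{\Om}}$ by Theorem \ref{thm:jakubowski}, each $\widetilde\beta_k$ is continuous, and adaptedness to $\trkla{\widetilde{\mathcal F}_t}$ holds by construction of the filtration. It then suffices to prove that each $\widetilde\beta_k$ is an $\trkla{\widetilde{\mathcal F}_t}$-martingale and that $\crossvar{\widetilde\beta_k}{\widetilde\beta_l}_t=\delta_{kl}\,t$.

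To verify these properties I would test against an arbitrary bounded continuous functional $\Psi$ on the path space that depends only on the restriction of the processes to $\tekla{0,s}$ (continuous with respect to the mixed strong/weak topology of $\mathcal X$), and establish the discrete identities
\begin{align*}
\expectedt{\Psi\,\trkla{\widetilde\beta_{j,k}\trkla{t}-\widetilde\beta_{j,k}\trkla{s}}}&=0\,,\\
\expectedt{\Psi\,\trkla{\widetilde\beta_{j,k}\trkla{t}\widetilde\beta_{j,l}\trkla{t}-\widetilde\beta_{j,k}\trkla{s}\widetilde\beta_{j,l}\trkla{s}-\delta_{kl}\trkla{t-s}}}&\longrightarrow0\,.
\end{align*}
The first identity follows from the discrete martingale property of the random walk, which in turn rests on \ref{item:increment} and the normalisation $\expected{\xi_k^{n,\tau}}=0$ in \ref{item:randomvars}, transported to $\widetilde\Omega$ by equality of laws. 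For the second identity one checks that, because $\expected{\xi_k^{n,\tau}\xi_l^{n,\tau}}=\delta_{kl}$ and the increments are independent across $n$, the grid process $\widetilde\beta_{j,k}\trkla{t_j^m}\widetilde\beta_{j,l}\trkla{t_j^m}-\delta_{kl}\,t_j^m$ is an \emph{exact} discrete martingale; the only error is the interpolation discrepancy at off-grid times $s,t$, which is $O\trkla{\tau_j^{1/2}}$ in $L^2$ thanks to the uniformly bounded higher moments of \ref{item:randomvars}. Passing $j\to\infty$ using the $\widetilde\Prob$-a.s.\ convergence $\widetilde{\bs\xi}_j\to\widetilde W$ in $C\trkla{\tekla{0,T};H^1\trkla{\Om}}$ together with the moment bounds of Lemma \ref{lem:boundxi} (which furnish the equi-integrability needed to pass the expectations to the limit) then yields the corresponding identities for the limit processes $\widetilde\beta_k$.

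Having the martingale property and the cross-variation $\crossvar{\widetilde\beta_k}{\widetilde\beta_l}_t=\delta_{kl}\,t$, the (multidimensional) L\'evy characterisation identifies $\trkla{\widetilde\beta_k}_{k\in\mathds{Z}}$ as a family of independent standard $\trkla{\widetilde{\mathcal F}_t}$-Brownian motions; hence $\widetilde W=\sum_{k\in\mathds{Z}}\lambda_k\g k\widetilde\beta_k$ is a $\mathcal Q$-Wiener process with the covariance operator $\mathcal Q$ prescribed in \ref{item:W1}.

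The hard part will be the uniform control of the mode sum. At finite $j$ only the modes $k\in\mathds{Z}_{h_j}$ are present, so both the truncation and the tail $\sum_{k\notin\mathds{Z}_{h_j}}$ must be handled simultaneously; this is precisely where the colouring assumption \ref{item:color}/\eqref{eq:colorW2} and the bounds of Lemma \ref{lem:boundxi} enter, guaranteeing that the limiting covariance is exactly $\mathcal Q$ and that the countable family of martingale and variation identities holds at once. A second delicate point is that the Gaussian nature of the limit is never assumed but produced by L\'evy's theorem, which is why the quadratic-variation concentration estimate—the $O\trkla{\tau_j^{1/2}}$ bound relying on the fourth-moment control—is the quantitative core of the argument. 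Finally, some care is required to ensure the test functionals $\Psi$ are genuinely continuous on $\mathcal X$ with its partly weak topology, so that the almost sure convergence supplied by Theorem \ref{thm:jakubowski} indeed transfers to $\Psi$.
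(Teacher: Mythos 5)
Your proposal is correct and follows essentially the same route as the paper's proof: both transport the discrete random-walk structure to $\widetilde{\Omega}$ by equality of laws, establish the martingale and (cross-)variation identities by testing against bounded continuous functionals of the processes' past and passing to the limit via the $\widetilde{\Prob}$-almost sure convergence from Theorem \ref{thm:jakubowski} together with the moment bounds of Lemma \ref{lem:boundxi} (Vitali), and conclude with L\'evy's characterization. The only cosmetic difference is that you work coordinate-wise with $\widetilde{\beta}_{j,k}$ from the outset, whereas the paper first shows that $\widetilde{W}$ itself is a martingale and then verifies that $\widetilde{\beta}_k\widetilde{\beta}_l-\delta_{kl}t$ is a martingale --- the same ingredients in a slightly different order.
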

\begin{proof}
As the laws of $\bs{\xi}\hj^{\tau_j}$ and $\widetilde{\bs{\xi}}_j$ coincide, we have $\widetilde{\Prob}$-almost surely that $\widetilde{\bs{\xi}}_j$ is piecewise linear in time and satisfies
\begin{align}
\widetilde{\bs{\xi}}_j\trkla{t_j^m}=\sum_{n=1}^m\sqrt{\tau_j}\sum_{k\in\mathds{Z}\hj}\g{k}\widetilde{\xi}_k^{n,\tau_j}\,,
\end{align}
where $\widetilde{\xi}_k^{n,\tau_j}$ are mutually independent random variables on $\widetilde{\Omega}$ satisfying \ref{item:randomvars}. 
Although $\widetilde{\bs{\xi}}_j$ is not a martingale, we can use the $\widetilde{\Prob}$-almost sure convergence of $\widetilde{\phi}_j$, $\trace{\widetilde{\phi}_j}$, $l\tekla{\widetilde{\mu}_j^+}$, $\widetilde{\mu}_j^+$, $\widetilde{\theta}_j^+$, and $\widetilde{\bs{\xi}}_j$ established in Theorem \ref{thm:jakubowski} to show that the limit process $\widetilde{W}$ is a martingale.
According to \cite[Lemma 5.8]{Ondrejat2022} it suffices to show that
\begin{align}
\expectedt{\rkla{\widetilde{W}\trkla{t_2}-\widetilde{W}\trkla{t_1}}\prod_{i=1}^q\prod_{\iota=1}^o f_{i\iota}\rkla{\Psi_{i\iota}\rkla{\widetilde{\phi}\trkla{s_\iota},\widetilde{\phi_\Gamma}\trkla{s_\iota},\widetilde{L}\trkla{s_\iota},\widetilde{\mu}\trkla{s_\iota},\widetilde{\theta}\trkla{s_\iota},\widetilde{W}\trkla{s_\iota}}}}=0
\end{align}
for all times $0\leq s_1<\ldots<s_q\leq t_1<t_2\leq T$, all $f_{i\iota}\in C_b\trkla{\mathds{R}}$, and for all $\Psi_{1\iota},\ldots,\Psi_{r\iota}\in\mathcal{A}_{s_\iota}$, where $\mathcal{A}_{s_\iota}$ is a subset of real-valued functions on 
\begin{align}
\begin{split}
\widehat{\mathcal{X}}_{s_\iota}:=&\, C\trkla{\tekla{0,s_\iota};L^s\trkla{\Om}}\times C\trkla{\tekla{0,s_\iota};L^r\trkla{\Gamma}}\times L^2\trkla{0,s_\iota;\trkla{H^1\trkla{\Om}}^\prime}_{\weaktop}\\
&\times L^2\trkla{0,s_\iota;H^1\trkla{\Om}}_{\weaktop}\times L^2\trkla{0,s_\iota;L^2\trkla{\Gamma}}_{\weaktop}\times C\trkla{\tekla{0,s_\iota};H^1\trkla{\Om}}
\end{split}
\end{align}
for which the $\sigma$-algebra of $\mathcal{A}_{s_\iota}$ equals the Borel $\sigma$-algebra on $\widehat{\mathcal{X}}_{s_\iota}$.
As explained in Example 5.9 in \cite{Ondrejat2022}, these functions can always be chosen in a manner such that they are continuous with respect to weakly converging sequences.
Hence, the uniform integrability of $\widetilde{\bs{\xi}}_j$ established in Lemma \ref{lem:boundxi} and a Vitali argument provide
\begin{align}
\begin{split}
&\expectedt{\rkla{\widetilde{W}\trkla{t_2}-\widetilde{W}\trkla{t_1}}\prod_{i=1}^q\prod_{\iota=1}^o f_{i\iota}\rkla{\Psi_{i\iota}\rkla{\widetilde{\phi}\trkla{s_\iota},\widetilde{\phi_\Gamma}\trkla{s_\iota},\widetilde{L}\trkla{s_\iota},\widetilde{\mu}\trkla{s_\iota},\widetilde{\theta}\trkla{s_\iota},\widetilde{W}\trkla{s_\iota}}}}\\
&=\lim_{j\rightarrow\infty}\expectedt{\rkla{\widetilde{\bs{\xi}}_j\trkla{t_2}-\widetilde{\bs{\xi}}_j\trkla{t_1}}\prod_{i=1}^q\prod_{\iota=1}^o f_{i\iota}\rkla{\Psi_{i\iota}\rkla{\widetilde{\phi}_j\trkla{s_\iota},\trace{\widetilde{\phi}_j}\trkla{s_\iota},l\tekla{\widetilde{\mu}_j^+}\trkla{s_\iota},\widetilde{\mu}_j^+\trkla{s_\iota},\widetilde{\theta}_j^+\trkla{s_\iota},\widetilde{\bs{\xi}}_j\trkla{s_\iota}}}}\\
&=\lim_{j\rightarrow\infty}\expectedt{\rkla{\widetilde{\bs{\xi}}_j\trkla{t_j^n}-\widetilde{\bs{\xi}}_j\trkla{t_j^m}}\prod_{i=1}^q\prod_{\iota=1}^o f_{i\iota}\rkla{\Psi_{i\iota}\rkla{\widetilde{\phi}_j\trkla{s_\iota},\trace{\widetilde{\phi}_j}\trkla{s_\iota},l\tekla{\widetilde{\mu}_j^+}\trkla{s_\iota},\widetilde{\mu}_j^+\trkla{s_\iota},\widetilde{\theta}_j^+\trkla{s_\iota},\widetilde{\bs{\xi}}_j\trkla{s_\iota}}}}\\
&+\lim_{j\rightarrow\infty}\expectedt{\rkla{\widetilde{\bs{\xi}}_j\trkla{t_2}-\widetilde{\bs{\xi}}_j\trkla{t_j^n}}\prod_{i=1}^q\prod_{\iota=1}^o f_{i\iota}\rkla{\Psi_{i\iota}\rkla{\widetilde{\phi}_j\trkla{s_\iota},\trace{\widetilde{\phi}_j}\trkla{s_\iota},l\tekla{\widetilde{\mu}_j^+}\trkla{s_\iota},\widetilde{\mu}_j^+\trkla{s_\iota},\widetilde{\theta}_j^+\trkla{s_\iota},\widetilde{\bs{\xi}}_j\trkla{s_\iota}}}}\\
&-\lim_{j\rightarrow\infty}\expectedt{\rkla{\widetilde{\bs{\xi}}_j\trkla{t_1}-\widetilde{\bs{\xi}}_j\trkla{t_j^m}}\prod_{i=1}^q\prod_{\iota=1}^o f_{i\iota}\rkla{\Psi_{i\iota}\rkla{\widetilde{\phi}_j\trkla{s_\iota},\trace{\widetilde{\phi}_j}\trkla{s_\iota},l\tekla{\widetilde{\mu}_j^+}\trkla{s_\iota},\widetilde{\mu}_j^+\trkla{s_\iota},\widetilde{\theta}_j^+\trkla{s_\iota},\widetilde{\bs{\xi}}_j\trkla{s_\iota}}}}\\
&=\lim_{j\rightarrow\infty}\expectedt{\sum_{a=m+1}^n\sqrt{\tau_j}\sum_{k\in\mathds{Z}\hj}\g{k}\widetilde{\xi}_k^{a,\tau_j}}=0\,.
\end{split}
\end{align}
Here, we compared $t_1$ and $t_2$ to grid points $t_j^n$ and $t_j^m$ satisfying $0\leq t_j^n-t_2\leq \tau_j$ and $0\leq t_j^m-t_1\leq \tau_j$ and used the continuity of $\widetilde{\bs{\xi}}_j$.
We now define $\widetilde{\beta}_k\trkla{t}:=\lambda_k^{-1}\iO \widetilde{W}\trkla{t,x}\g{k}\trkla{x}\dx$ and use similar arguments to show that
\begin{align}
\widetilde{\beta}_k\trkla{t}\widetilde{\beta}_l\trkla{t}-\delta_{kl}t
\end{align}
with $\delta_{kl}$ denoting the usual Kronecker delta is a martingale.
Hence, by Levy's characterization of Brownian motions we obtain the result.
\end{proof}

In the next step, we shall show that the limit processes of the left-hand sides of \eqref{eq:model:timecont:Om} and \eqref{eq:model:timecont:Gamma} are martingales with respect to $\trkla{\widetilde{\mathcal{F}}_t}_{t\in\tekla{0,T}}$ and identify the corresponding quadratic variation processes.
Hence, we consider for arbitrary but fixed $v\in C^\infty\trkla{\overline{\Om}}$ and $w\in C^\infty\trkla{\Gamma}$ the families of processes
\begin{subequations}
\begin{align}
\widetilde{M}_j^v\trkla{t}:=&\,\iO\Ihj{\trkla{\widetilde{\phi}_j\trkla{t}-\widetilde{\phi}_j\trkla{0}}v}\dx + \int_0^t\iO\nabla\widetilde{\mu}_j^+\cdot\nabla \Ihj{v}\dx\ds\,,\label{eq:def:Mvj}\\
\widetilde{N}_j^w\trkla{t}:=&\,\iGamma\IhGj{\trkla{\widetilde{\phi}_j\trkla{t}-\widetilde{\phi}_j\trkla{0}}w}\dG +\int_0^t\iGamma\IhG{\widetilde{\theta}^+_jw}\dG\ds\,.
\end{align}
\end{subequations}
We shall now show that these processes converge towards the processes
\begin{subequations}\label{eq:def:MvNv}
\begin{align}
\widetilde{M}^v\trkla{t}:=&\,\iO\trkla{\widetilde{\phi}\trkla{t}-\widetilde{\phi}\trkla{0}}v\dx+\int_0^t\iO\nabla\widetilde{\mu}\cdot\nabla v\dx\ds\,,\\
\widetilde{N}^w\trkla{t}:=&\iGamma\trkla{\widetilde{\phi_\Gamma}\trkla{t}-\widetilde{\phi_\Gamma}\trkla{0}}w\dG+\int_0^t\iGamma\widetilde{\theta}w\dG\ds\,,
\end{align}
\end{subequations}
which are martingales with respect to $\trkla{\widetilde{\mathcal{F}}_t}_{t\in\tekla{0,T}}$:
\begin{lemma}\label{lem:martingales}
Let the assumptions of Lemma \ref{lem:convergence} hold true.
Then, the processes $\widetilde{M}^v$ and $\widetilde{N}^w$ defined in \eqref{eq:def:MvNv} with $v\in C\trkla{\overline{\Om}}$ and $w\in C\trkla{\Gamma}$ are martingales with respect to the filtration $\trkla{\widetilde{\mathcal{F}}_t}_{t\in\tekla{0,T}}$.
\end{lemma}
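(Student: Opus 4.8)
The plan is to establish the martingale property via the same functional characterization from \cite[Lemma 5.8]{Ondrejat2022} that was used for $\widetilde{W}$ in the proof of Lemma \ref{lem:wienerprocess}, reducing it to the discrete martingale structure carried by the approximations $\widetilde{M}_j^v$, $\widetilde{N}_j^w$ from \eqref{eq:def:Mvj}. Concretely, it suffices to show that for all $0\le s_1<\dots<s_q\le t_1<t_2\le T$, all $f_{i\iota}\in C_b(\mathds{R})$, and all admissible functionals $\Psi_{i\iota}$ of the paths up to time $s_\iota$,
\[
\expectedt{\rkla{\widetilde{M}^v(t_2)-\widetilde{M}^v(t_1)}\prod_{i=1}^q\prod_{\iota=1}^o f_{i\iota}\rkla{\Psi_{i\iota}}}=0,
\]
and analogously for $\widetilde{N}^w$.

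First I would record the discrete martingale structure. Reading off \eqref{eq:model:timecont:Om} with test function $\Ihj{v}$ and telescoping (using $\Ihj{g\,\Ihj{v}}=\Ihj{gv}$ at the nodes), the continuous-time interpolant satisfies at grid points
\[
\widetilde{M}_j^v(t_j^m)=\sum_{n=1}^m\iO\Ihj{\Phi\hj\rkla{\widetilde{\phi}_j^-}\sinctilde{n}\,v}\dx .
\]
Since the law of $\widetilde{\bs{\xi}}_j$ equals that of $\bs{\xi}\hj^{\tau_j}$, each summand splits via \eqref{eq:def:phih} and \eqref{eq:defxih} into an $\widetilde{\mathcal{F}}_{t_j^{n-1}}$-measurable coefficient times $\widetilde{\xi}_k^{n,\tau_j}$, and the latter are symmetric, mean-zero, and independent of $\widetilde{\mathcal{F}}_{t_j^{n-1}}$ by \ref{item:increment} and \ref{item:randomvars}. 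Hence each increment has vanishing conditional expectation and $\widetilde{M}_j^v$ is a discrete $(\widetilde{\mathcal{F}}_{t_j^m})_m$-martingale; the same holds for $\widetilde{N}_j^w$ using \eqref{eq:model:timecont:Gamma} and \eqref{eq:tracecolor}.

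Second, I would justify $\widetilde{M}_j^v(t)\to\widetilde{M}^v(t)$. For the first term one writes $\iO\Ihj{\rkla{\widetilde{\phi}_j(t)-\widetilde{\phi}_j(0)}v}\dx$ and removes the nodal interpolation by Lemma \ref{lem:interpolationerror} together with $\Ihj{v}\to v$, after which the strong convergence \eqref{eq:conv:phiO:strong} yields $\iO\rkla{\widetilde{\phi}(t)-\widetilde{\phi}(0)}v\dx$; for the drift term one combines the weak convergence \eqref{eq:conv:mu:weak} with $\nabla\Ihj{v}\to\nabla v$ in $L^2\trkla{\Om}$. Then, in the displayed identity above I would replace $\widetilde{M}^v$ by $\widetilde{M}_j^v$ and the times $t_1,t_2$ by neighbouring grid points $t_j^m,t_j^n$ with $0\le t_j^m-t_1\le\tau_j$ and $0\le t_j^n-t_2\le\tau_j$, exactly as in Lemma \ref{lem:wienerprocess}; the resulting discrete expectation vanishes by the martingale property just established, while the error from the grid shift vanishes by the equicontinuity encoded in \eqref{eq:regularity:timecont:3}.

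The main obstacle is the passage to the limit inside the expectation: the factor $\widetilde{\mu}_j^+$ (and, through $l\tekla{\widetilde{\mu}_j^+}$, the argument of the $\Psi_{i\iota}$) converges only weakly, so the functionals must be chosen sequentially continuous with respect to the weak topologies of $\mathcal{X}$, as in Example 5.9 of \cite{Ondrejat2022}, and the convergence of products of $\widetilde{\Prob}$-almost surely converging, weakly converging, and bounded factors must be upgraded to convergence of their expectations. This I would handle by a Vitali argument, the required uniform integrability being supplied by the uniform moment bounds in \eqref{eq:regularity:timecont}. The boundary process $\widetilde{N}^w$ is treated identically, with $\widetilde{\theta}$ in place of $\nabla\widetilde{\mu}$ and \eqref{eq:conv:theta:weak}, \eqref{eq:conv:phiG:strong} replacing the bulk convergences.
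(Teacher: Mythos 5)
Your proposal follows the paper's own proof essentially step for step: the functional characterization of the martingale property from \cite[Lemma 5.8]{Ondrejat2022}, the $L^p\trkla{\widetilde{\Omega}}$-convergence $\widetilde{M}_j^v\trkla{t}\rightarrow\widetilde{M}^v\trkla{t}$ obtained from Lemma \ref{lem:interpolationerror}, the convergences of Theorem \ref{thm:jakubowski}/Lemma \ref{lem:convergence} and the bounds \eqref{eq:regularity:timecont}, the comparison of $t_1,t_2$ with neighbouring grid points $t_j^m,t_j^n$, the vanishing of the discrete expectation via \ref{item:increment}/\ref{item:randomvars}, and the Vitali argument with functionals chosen sequentially continuous for the weak topologies. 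The only cosmetic imprecision is that the centering/independence of the increments should be invoked with respect to the filtration carried by the discrete paths (transferred from $\trkla{\Omega,\mathcal{A},\mathcal{F},\Prob}$ by equality of laws), not the limit filtration $\trkla{\widetilde{\mathcal{F}}_t}_t$; since your test functionals $\mathfrak{f}_j^{qo}$ depend only on the discrete paths up to time $t_1$, this is exactly what your computation uses, so the argument is correct as written.
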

\begin{proof}
We start by showing that the piecewise linear process $\widetilde{M}_j^v$ converges towards $\widetilde{M}^v$ in $L^p\trkla{\widetilde{\Omega}}$ for $t\in\tekla{0,T}$:
\begin{multline}
\expectedt{\abs{\widetilde{M}^v\trkla{t}-\widetilde{M}_j^v\trkla{t}}^p}\\
\leq C\expectedt{\abs{\iO\rkla{\widetilde{\phi}\trkla{t}-\widetilde{\phi}_j\trkla{t}-\widetilde{\phi}\trkla{0}+\widetilde{\phi}_j\trkla{0}}v\dx}^p} +C\expectedt{\abs{\iO\rkla{\widetilde{\phi}_j\trkla{t}-\widetilde{\phi}_j\trkla{0}}\trkla{1-\Ihjop}\gkla{v}\dx}^p}\\
 +C\expectedt{\abs{\iO\trkla{1-\Ihjop}\gkla{\rkla{\widetilde{\phi}_j\trkla{t}-\widetilde{\phi}_j\trkla{0}}\Ihj{v}}\dx}^p}
+C\expectedt{\abs{\int_0^t\iO\nabla\rkla{\widetilde{\mu}-\widetilde{\mu}_j^+}\cdot\nabla v\dx\ds}^p} \\
+ C\expectedt{\abs{\int_0^t\iO\nabla\widetilde{\mu}_j^+\cdot\nabla\trkla{1-\Ihjop}\tgkla{v}\dx\ds}^p}
\rightarrow 0\,,
\end{multline}
due to the convergence results from Theorem \ref{thm:jakubowski}, Lemma \ref{lem:interpolationerror}, the regularity results collected in \eqref{eq:regularity:timecont}, and the standard error estimates for the nodal interpolation operator (see e.g.~\cite{BrennerScott}).
Hence, we can repeat the arguments from the proof of Lemma \ref{lem:wienerprocess} and show that 
\begin{align}
\begin{split}
\expectedt{\rkla{\widetilde{M}^v\trkla{t_2}-\widetilde{M}^v\trkla{t_1}}\mathfrak{f}^{qo}}=&\,\lim_{j\rightarrow\infty} \expectedt{\rkla{\widetilde{M}^v_j\trkla{t_j^n}-\widetilde{M}^v_j\trkla{t_j^m}}\mathfrak{f}^{qo}_j}\\
=&\,\lim_{j\rightarrow\infty}\expectedt{\sum_{a=m+1}^n\iO{\Phi\hj\trkla{\widetilde{\phi}_j\trkla{t_j^{a-1}}}\sinctilde{a}}\dx}=0\,,
\end{split}
\end{align}
due to \ref{item:randomvars}.
Here, we again used grid points $t_j^n$ and $t_j^m$ satisfying $0\leq t_j^n-t_2\leq \tau_j$ and $0\leq t_j^m-t_1\leq \tau_j$ and the abbreviations
\begin{subequations}\label{eq:sigmaalgebra}
\begin{align}
\mathfrak{f}^{qo}:=&\,\prod_{i=1}^q\prod_{\iota=1}^o f_{i\iota}\rkla{\Psi_{i\iota}\rkla{\widetilde{\phi}\trkla{s_\iota},\widetilde{\phi_\Gamma}\trkla{s_\iota},\widetilde{L}\trkla{s_\iota},\widetilde{\mu}\trkla{s_\iota},\widetilde{\theta}\trkla{s_\iota},\widetilde{W}\trkla{s_\iota}}}\\
\mathfrak{f}^{qo}_j:=&\,\prod_{i=1}^q\prod_{\iota=1}^o f_{i\iota}\rkla{\Psi_{i\iota}\rkla{\widetilde{\phi}_j\trkla{s_\iota},\trace{\widetilde{\phi}_j}\trkla{s_\iota},l\tekla{\widetilde{\mu}_j^+}\trkla{s_\iota},\widetilde{\mu}_j^+\trkla{s_\iota},\widetilde{\theta}_j^+\trkla{s_\iota},\widetilde{\bs{\xi}}_j\trkla{s_\iota}}}
\end{align}
\end{subequations}
for times $0\leq s_1 <\ldots<s_q\leq t_1<t_2$ to simplify the notation.
This establishes the martingale property of $\widetilde{M}^v$.
The properties of $\widetilde{N}^w$ follow by similar arguments.
\end{proof}

In the next step, we shall identify the quadratic variation processes $\skla{\!\!\skla{\widetilde{M}^v}\!\!}$ and $\skla{\!\!\skla{\widetilde{N}^w}\!\!}$ of $\widetilde{M}^v$ and $\widetilde{N}^w$, as well as their cross variation processes $\skla{\!\!\skla{\widetilde{M}^v,\widetilde{\beta}_k}\!\!}$ and $\skla{\!\!\skla{\widetilde{N}^w,\widetilde{\beta}_k}\!\!}$.
Natural candidates for the quadratic and cross variation processes are
\begin{subequations}\label{eq:def:variations}
\begin{align}
\skla{\!\!\skla{\widetilde{M}^v}\!\!}\trkla{t}:=&\,\int_0^t\sum_{k\in\mathds{Z}}\rkla{\iO \varrho\trkla{\widetilde{\phi}}\lambda_k\g{k} v\dx}^2\ds\,,\label{eq:quadvar:M}\\
\skla{\!\!\skla{\widetilde{M}^v,\widetilde{\beta}_k}\!\!}\trkla{t}:=&\,\int_0^t\iO\varrho\trkla{\widetilde{\phi}}\lambda_k\g{k}v\dx\ds\,,\label{eq:crossvar:M}\\
\skla{\!\!\skla{\widetilde{N}^w}\!\!}\trkla{t}:=&\,\int_0^t\sum_{k\in\mathds{Z}}\rkla{\iGamma \varrho\trkla{\widetilde{\phi_\Gamma}}\lambda_k\trace{\g{k}} w\dG}^2\ds\,,\label{eq:quadvar:N}\\
\skla{\!\!\skla{\widetilde{N}^w,\widetilde{\beta}_k}\!\!}\trkla{t}:=&\,\int_0^t\iGamma\varrho\trkla{\widetilde{\phi_\Gamma}}\lambda_k\trace{\g{k}}w\dG\ds\,.\label{eq:crossvar:N}
\end{align}
\end{subequations}
\begin{lemma}\label{lem:quadcrossvar}
The processes defined in \eqref{eq:def:variations} are the quadratic and cross variation processes of the martingales $\widetilde{M}^v$ and $\widetilde{N}^w$ defined in \eqref{eq:def:MvNv}.
\end{lemma}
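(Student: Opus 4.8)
The plan is to identify the candidate processes in \eqref{eq:def:variations} as the predictable quadratic and cross variations by verifying the defining martingale property: that $(\widetilde{M}^v)^2-\qvar{\widetilde{M}^v}$, that $\widetilde{M}^v\widetilde{\beta}_k-\crossvar{\widetilde{M}^v}{\widetilde{\beta}_k}$, and that the corresponding expressions for $\widetilde{N}^w$ are $\trkla{\widetilde{\mathcal{F}}_t}_{t\in\tekla{0,T}}$-martingales. As in Lemma \ref{lem:martingales} and Lemma \ref{lem:wienerprocess}, this reduces to checking identities of the type $\expectedt{\rkla{(\widetilde{M}^v(t_2))^2-(\widetilde{M}^v(t_1))^2-\qvar{\widetilde{M}^v}(t_2)+\qvar{\widetilde{M}^v}(t_1)}\mathfrak{f}^{qo}}=0$ for the test functionals $\mathfrak{f}^{qo}$ introduced in \eqref{eq:sigmaalgebra}, and I would establish these by passing to the limit $j\rightarrow\infty$ in the corresponding discrete identities, comparing $t_1,t_2$ to grid points $t_j^m,t_j^n$ with $0\leq t_j^n-t_2\leq\tau_j$ and $0\leq t_j^m-t_1\leq\tau_j$ exactly as in the earlier lemmas.

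First I would record the discrete martingale structure. Evaluating \eqref{eq:model:timecont:Om} at the grid points with $\psi\hj=\Ihj{v}$ and summing shows that the process $\widetilde{M}_j^v$ from \eqref{eq:def:Mvj} satisfies $\widetilde{M}_j^v(t_j^n)=\sum_{a=1}^n\Delta_a$ with increments $\Delta_a=\iO\Ihj{\Phi\hj(\widetilde{\phi}_j(t_j^{a-1}))\sinctilde{a}\Ihj{v}}\dx$. Inserting the representation $\sinctilde{a}=\sqrt{\tau_j}\sum_{k\in\Zh}\lambda_k\g{k}\widetilde{\xi}_k^{a,\tau_j}$ (cf.~\eqref{eq:defxih}) together with \eqref{eq:def:phih} yields $\Delta_a=\sqrt{\tau_j}\sum_{k\in\Zh}\lambda_k\widetilde{\xi}_k^{a,\tau_j}c_k^{a-1}$, where $c_k^{a-1}=\iO\Ihj{\Ihj{\varrho(\widetilde{\phi}_j(t_j^{a-1}))\g{k}}\Ihj{v}}\dx$. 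Since the $\widetilde{\xi}_k^{a,\tau_j}$ are centred, mutually independent, of unit variance, and independent of $\widetilde{\mathcal{F}}_{t_j^{a-1}}$ (assumption \ref{item:randomvars}), all mixed products drop out under conditional expectation, so the discrete predictable brackets are the explicit Riemann sums $\sum_{a=m+1}^n\expectedt{\Delta_a^2\mid\widetilde{\mathcal{F}}_{t_j^{a-1}}}=\sum_{a=m+1}^n\tau_j\sum_{k\in\Zh}\lambda_k^2(c_k^{a-1})^2$ and $\sum_{a=m+1}^n\expectedt{\Delta_a\,\sqrt{\tau_j}\widetilde{\xi}_k^{a,\tau_j}\mid\widetilde{\mathcal{F}}_{t_j^{a-1}}}=\sum_{a=m+1}^n\tau_j\lambda_k c_k^{a-1}$, in which $\sqrt{\tau_j}\widetilde{\xi}_k^{a,\tau_j}$ is precisely the increment of the discrete approximation $\lambda_k^{-1}\iO\widetilde{\bs{\xi}}_j\g{k}\dx$ of $\widetilde{\beta}_k$.

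It then remains to pass to the limit. Using the strong convergence $\widetilde{\phi}_j\rightarrow\widetilde{\phi}$ from Lemma \ref{lem:convergence}, the continuity and boundedness of $\varrho$ (assumption \ref{item:rho}), and the estimates of Lemma \ref{lem:interpolationerror} to discard the nodal interpolation operators, I would show $c_k^{a-1}\rightarrow\iO\varrho(\widetilde{\phi})\g{k}v\dx$ and that the Riemann sums above converge towards the time integrals in \eqref{eq:def:variations}; the summation over $k$ is controlled uniformly via the colouring assumption \ref{item:color} (that is, \eqref{eq:colorW2}) together with $\Zh\uparrow\mathds{Z}$, which permits the interchange of limit and summation. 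The main obstacle is the passage to the limit in the quadratic terms: one needs $L^2\trkla{\widetilde{\Omega}}$-convergence of $\widetilde{M}_j^v(t_j^n)$ and uniform integrability of the squared quantities and of the discrete brackets multiplied by the bounded functionals $\mathfrak{f}_j^{qo}$. Both are secured by the moment bounds in \eqref{eq:regularity:timecont}, which hold for every exponent $\p<\infty$, so that a Vitali argument upgrades the $\widetilde{\Prob}$-almost sure convergence from Theorem \ref{thm:jakubowski} to convergence in expectation.

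Finally, the cross-variation statement follows from the identical scheme applied to the product with the scalar martingale $\widetilde{\beta}_k$, and the boundary processes $\widetilde{N}^w$ are treated verbatim by testing \eqref{eq:model:timecont:Gamma} with $\IhGj{w}$, replacing $\g{k}$ by its trace $\trace{\g{k}}$, and invoking the trace colouring bound \eqref{eq:tracecolor} and the surface estimates of Lemma \ref{lem:interpolationerror} in place of their bulk counterparts. Assembling these pieces identifies all four expressions in \eqref{eq:def:variations} as the claimed quadratic and cross variation processes of $\widetilde{M}^v$ and $\widetilde{N}^w$.
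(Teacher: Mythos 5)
Your proposal is correct and takes essentially the same route as the paper's proof: the paper likewise defines discrete Riemann-sum approximations of the brackets, uses the independence and unit-variance properties from \ref{item:randomvars} to get exact discrete identities (the mixed products dropping out under expectation against $\mathfrak{f}_j^{qo}$), proves $L^p\trkla{\widetilde{\Omega}}$-convergence of these brackets via Lemma \ref{lem:convergence}, assumptions \ref{item:rho} and \ref{item:color}, and the interpolation error estimates, and then passes to the limit with a Vitali argument, treating $\widetilde{N}^w$ by the analogous trace estimates.
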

\begin{proof}
We start by showing that $\rkla{\widetilde{M}^v}^2-\skla{\!\!\skla{\widetilde{M}^v}\!\!}$ is a martingale, i.e.~using the abbreviations introduced in \eqref{eq:sigmaalgebra} we shall show that
\begin{align}
\expectedt{\rkla{\rkla{\widetilde{M}^v\trkla{t_2}}^2-\rkla{\widetilde{M}^v\trkla{t_1}}^2 -\skla{\!\!\skla{\widetilde{M}^v}\!\!}\trkla{t_2}+\skla{\!\!\skla{\widetilde{M}^v}\!\!}\trkla{t_1}}\mathfrak{f}^{qo}}=0\,.
\end{align}
For this reason, we introduce a family of approximations $\rkla{\skla{\!\!\skla{\widetilde{M}_j^v}\!\!}}_{j\in\mathds{N}}$ of $\skla{\!\!\skla{\widetilde{M}^v}\!\!}$ via
\begin{align}
\skla{\!\!\skla{\widetilde{M}_j^v}\!\!}\trkla{t_j^m}:=&\,\int_0^{t_j^m}\sum_{k\in\mathds{Z}\hj}\rkla{\iO\Ihj{\varrho\trkla{\widetilde{\phi}_j^-}\lambda_k\g{k}v}\dx}^2\ds
\end{align}
and prove that $\skla{\!\!\skla{\widetilde{M}_j^v}\!\!}\trkla{t_j^m}$ converges in $L^p\trkla{\widetilde{\Omega}}$ towards $\skla{\!\!\skla{\widetilde{M}^v}\!\!}\trkla{t}$ for $t_j^m\searrow t$.
We decompose the discretization error as follows
\begin{multline}
\expectedt{\abs{\skla{\!\!\skla{\widetilde{M}^v}\!\!}\trkla{t}-\skla{\!\!\skla{\widetilde{M}^v_j}\!\!}\trkla{t_j^m}}^p}\\
\leq C\expectedt{\abs{\int_0^t\sum_{k\in\mathds{Z}}\rkla{\iO\varrho\trkla{\widetilde{\phi}}\lambda_k\g{k}v\dx}^2\ds-\int_0^t\sum_{k\in\mathds{Z}\hj}\rkla{\iO\varrho\trkla{\widetilde{\phi}^-_j}\lambda_k\g{k}v\dx}^2}^p} \\
+C\expectedt{\abs{\int_t^{t_j^m}\sum_{k\in\mathds{Z}\hj}\rkla{\iO{\varrho\trkla{\widetilde{\phi}_j^-}\lambda_k\g{k}v}\dx}^2\ds}^p}\\
+C\expectedt{\abs{\int_0^{t_j^m}\sum_{k\in\mathds{Z}\hj}\ekla{\rkla{\iO\varrho\trkla{\widetilde{\phi}_j^-}\lambda_k\g{k}v\dx}^2 -\rkla{\iO\Ihj{\varrho\trkla{\widetilde{\phi}_j^-}\lambda_k\g{k}v}\dx}^2}\ds}^p}\\
=:Q^{\Om}_1+Q^{\Om}_2+Q^{\Om}_3\,.
\end{multline}
From \ref{item:color}, \ref{item:rho}, and Lemma \ref{lem:convergence}, we obtain that $Q^{\Om}_1$ vanishes.
The bounds stated in \ref{item:rho} yield $Q_2^{\Om}\leq C\trkla{v}\tabs{t_j^m-t}^p\rightarrow0$.
To show that also $Q^{\Om}_3$ vanishes, we use \ref{item:rho}, the standard error estimates for the interpolation operator (see e.g.~\cite{BrennerScott}), and Lemma \ref{lem:interpolationerror} to compute
\begin{multline}\label{eq:tmp:q2}
\abs{\iO\varrho\trkla{\widetilde{\phi}_j^-}\lambda_k\g{k}v\dx -\iO\Ihj{\varrho\trkla{\widetilde{\phi}_j^-}\lambda_k\g{k}v}\dx}\\
\leq\abs{\iO\trkla{1-\Ihjop}\gkla{\varrho\trkla{\widetilde{\phi}_j^-}}\lambda_k\g{k} v\dx} +\abs{\iO\Ihj{\varrho\trkla{\widetilde{\phi}_j^-}}\lambda_k\trkla{1-\Ihjop}\gkla{\g{k}v}\dx} \\
+\abs{\iO\trkla{1-\Ihjop}\gkla{\Ihj{\varrho\trkla{\widetilde{\phi}_j^-}}\Ihj{\lambda_k\g{k}v}}\dx}\\
\leq C h_j \norm{\nabla\widetilde{\phi}_j^-}_{L^2\trkla{\Om}}\tabs{\lambda_k}\norm{\g{k}}_{L^\infty\trkla{\Om}}\norm{v}_{L^2\trkla{\Om}} + Ch_j\tabs{\lambda_k}\norm{\g{k}}_{W^{1,\infty}\trkla{\Om}}\norm{v}_{W^{1,\infty}\trkla{\Om}}\\
+C h_j \tabs{\lambda_k}\norm{\g{k}}_{W^{1,\infty}\trkla{\Om}}\norm{v}_{W^{1,\infty}\trkla{\Om}}\,.
\end{multline}
Hence,
\begin{align}
\begin{split}
Q_3^{\Om}\leq &\,C\widetilde{\mathds{E}}\left[\left|\int_0^{t^m_j}\sum_{k\in\mathds{Z}\hj}\abs{\iO\trkla{1+\Ihjop}\gkla{\varrho\trkla{\widetilde{\phi}_j^-}\lambda_k\g{k}v}\dx} h_j\tabs{\lambda_k}\norm{\g{k}}_{W^{2,\infty}\trkla{\Om}}\right.\right.\\
&\quad\times\left.\left.\norm{v}_{W^{1,\infty}\trkla{\Om}}\rkla{1+\norm{\nabla\widetilde{\phi}_j^-}_{L^2\trkla{\Om}}}\ds\right|^p\right]\\
\leq&\, C\trkla{v}\expectedt{\abs{h_j\sum_{k\in\mathds{Z}}\tabs{\lambda_k}^2\norm{\g{k}}_{W^{2,\infty}\trkla{\Om}}^2\int_0^{t_j^m}\rkla{1+\norm{\nabla\widetilde{\phi}_j^-}_{L^2\trkla{\Om}}}\ds}^p}\rightarrow 0\,.
\end{split}
\end{align}
Having established the convergence in $L^p\trkla{\widetilde{\Omega}}$, we have $\widetilde{\Prob}$-almost sure convergence for a subsequence.
Using similar arguments, we also obtain the uniform bounds for higher moments of $\skla{\!\!\skla{\widetilde{M}^v_j}\!\!}$.
Hence, we can reuse the ideas from the proof of Lemma \ref{lem:martingales} and deduce for $0\leq t_j^n-t_2\leq \tau_j$ and $0\leq t_j^m-t_1\leq\tau_j$:
\begin{align}
\begin{split}
&\expectedt{\rkla{\rkla{\widetilde{M}^v\trkla{t_2}}^2-\rkla{\widetilde{M}^v\trkla{t_1}}^2 -\skla{\!\!\skla{\widetilde{M}^v}\!\!}\trkla{t_2}+\skla{\!\!\skla{\widetilde{M}^v}\!\!}\trkla{t_1}}\mathfrak{f}^{qo}}\\
&=\lim_{j\rightarrow\infty}\expectedt{\rkla{\rkla{\widetilde{M}^v_j\trkla{t_j^n}}^2-\rkla{\widetilde{M}^v_j\trkla{t_j^m}}^2 -\skla{\!\!\skla{\widetilde{M}^v_j}\!\!}\trkla{t_j^n}+\skla{\!\!\skla{\widetilde{M}^v_j}\!\!}\trkla{t_j^m}}\mathfrak{f}^{qo}_j}\\
&=\lim_{j\rightarrow\infty}\widetilde{\mathds{E}}\left[\left(\sum_{a=m+1}^n\rkla{\iO\Ihj{\Phi\hj\trkla{\widetilde{\phi}_j\trkla{t_j^{a-1}}}\sinctilde{a}v}\dx}^2 \right.\right.\\
&\qquad\qquad\left.\left.-\sum_{a=m+1}^n\tau_j\sum_{k\in\mathds{Z}\hj}\rkla{\iO\Ihj{\varrho\trkla{\widetilde{\phi}_j\trkla{t_j^{a-1}}}\lambda_k\g{k}v}\dx}^2  \right)\mathfrak{f}_j^{qo}\right]\\
&=0\,.
\end{split}
\end{align}
Here we used that the mutual independence of the stochastic increments and $\expectedt{\abs{\widetilde{\xi}_k^{n,\tau_j}}^2}= 1$ (cf.~\ref{item:randomvars}).\\
We shall now apply similar arguments to show that $\skla{\!\!\skla{\widetilde{M}^v,\widetilde{\beta}_k}\!\!}$ defined in \eqref{eq:crossvar:M} is indeed the cross variation process.
We start by defining suitable time discrete approximations for time points $t_j^m$ via
\begin{align}
\skla{\!\!\skla{\widetilde{M}_j^v,\sum_{a=1}^m\sqrt{\tau_j}\widetilde{\xi}_k^{a,\tau_j}}\!\!}:=\left\{\begin{array}{cc}\int_0^{t_j^m}\iO\lambda_k\Ihj{\varrho\trkla{\widetilde{\phi}_j^-}\g{k}v}\dx\ds&\text{if~}k\in\mathds{Z}\hj\,,\\
0&\text{else,}
\end{array}\right.
\end{align}
and show that this approximation converges towards $\skla{\!\!\skla{\widetilde{M}^v,\widetilde{\beta}_k}\!\!}\trkla{t}$ for $t_j^m\searrow t$.
As $\bigcup_{h>0}\Zh=\mathds{Z}$, we can assume w.l.o.g. that $k\in\mathds{Z}\hj$. 
Hence
\begin{multline}
\expectedt{\abs{\int_0^t\iO\varrho\trkla{\widetilde{\phi}}\lambda_k\g{k} v\dx\ds-\int_0^{t_j^m}\iO\Ihj{\varrho\trkla{\widetilde{\phi}_j^-}\g{k}v}\dx\ds}^p}\\
\leq C\expectedt{\abs{\int_0^t\iO\rkla{\varrho\trkla{\widetilde{\phi}}-\varrho\trkla{\widetilde{\phi}_j^-}}\lambda_k\g{k} v\dx\ds}^p} +C\expectedt{\abs{\int_t^{t_j^m}\!\!\iO{\varrho\trkla{\widetilde{\phi}_j^-}\lambda_k\g{k}v}\dx\ds}^p} \\
+C\expectedt{\abs{\int_0^t\iO\trkla{1-\Ihjop}\gkla{\varrho\trkla{\widetilde{\phi}_j^-}\lambda_k\g{k} v}\dx\ds}^p}\\
=:R_1^{\Om}+R_2^{\Om}+R_3^{\Om}\,.
\end{multline}
As before, $R_1^{\Om}$ vanish due to \ref{item:rho} and Lemma \ref{lem:convergence}, while  $R_2^{\Om}$ vanishes due to $R_2^{\Om}\leq C\trkla{v}\tabs{t_j^m-t}^p$.
To treat $R_3^{\Om}$, we use the estimates in \eqref{eq:tmp:q2}.
Hence, we can again argue for $0\leq t_j^n-t_2\leq \tau_j$ and $0\leq t_j^m-t_1\leq\tau_j$ as follows:
\begin{align}
\begin{split}
\widetilde{\mathds{E}}&\ekla{\rkla{\widetilde{M}^v\trkla{t_2}\widetilde{\beta}_k\trkla{t_2}-\widetilde{M}^v\trkla{t_1}\widetilde{\beta}_k\trkla{t_1} -\skla{\!\!\skla{\widetilde{M}^v,\widetilde{\beta}_k}\!\!}\trkla{t_2}+\skla{\!\!\skla{\widetilde{M}^v,\widetilde{\beta}_k}\!\!}\trkla{t_1} }\mathfrak{f}^{qo}}\\
&=\lim_{j\rightarrow\infty}\widetilde{\mathds{E}}\left[\left(\widetilde{M}_j^v\trkla{t_j^n}\sum_{a=1}^n\sqrt{\tau_j}\widetilde{\xi}_k^{a,\tau_j}-\widetilde{M}_j^v\trkla{t_j^m}\sum_{a=1}^m\sqrt{\tau_j}\widetilde{\xi}_j^{a,\tau_j} \right.\right.\\
&\qquad\qquad\qquad\left.\left.-\int_{t_j^m}^{t_j^n}\iO\lambda_k\Ihj{\varrho\trkla{\widetilde{\phi}_j^-}\g{k}v}\dx\ds \right)\mathfrak{f}^{qo}_j\right]\\
&=\lim_{j\rightarrow\infty}\expectedt{\sum_{b=1}^n\iO\Ihj{\Phi\hj\rkla{\widetilde{\phi}_j\trkla{t_j^{b-1}}}\sinctilde{b} v}\dx \sum_{a=1}^n\sqrt{\tau_j}\widetilde{\xi}_k^{a,\tau_j} \mathfrak{f}^{qo}_j}\\
&\qquad-\lim_{j\rightarrow\infty}\expectedt{\sum_{b=1}^m\iO\Ihj{\Phi\hj\rkla{\widetilde{\phi}_j\trkla{t_j^{b-1}}}\sinctilde{b} v}\dx \sum_{a=1}^m\sqrt{\tau_j}\widetilde{\xi}_k^{a,\tau_j} \mathfrak{f}^{qo}_j}\\
&\qquad-\lim_{j\rightarrow\infty}\expectedt{\int_{t_j^m}^{t_j^n}\iO\lambda_k\Ihj{\varrho\trkla{\widetilde{\phi}_j^-}\g{k} v}\dx\ds \,\mathfrak{f}_j^{qo}}\\
&=\lim_{j\rightarrow\infty} \expectedt{\sum_{b=m+1}^n\sum_{a=1}^n \iO\Ihj{\Phi\hj\rkla{\widetilde{\phi}_j\trkla{t_j^{b-1}}}\sinctilde{b} \sqrt{\tau_j}\widetilde{\xi}_k^{a,\tau_j}v}\dx\,\mathfrak{f}_j^{qo}}\\
&\qquad+\lim_{j\rightarrow\infty} \expectedt{\sum_{b=1}^m\sum_{a=m+1}^n \iO\Ihj{\Phi\hj\rkla{\widetilde{\phi}_j\trkla{t_j^{b-1}}}\sinctilde{b} \sqrt{\tau_j}\widetilde{\xi}_k^{a,\tau_j}v}\dx\,\mathfrak{f}_j^{qo}}\\
&\qquad-\lim_{j\rightarrow\infty}\expectedt{\int_{t_j^m}^{t_j^n}\iO\lambda_k\Ihj{\varrho\trkla{\widetilde{\phi}_j^-}\g{k} v}\dx\ds \,\mathfrak{f}_j^{qo}}\\
&=\lim_{j\rightarrow\infty} \expectedt{\sum_{b=m+1}^n \iO\Ihj{\Phi\hj\rkla{\widetilde{\phi}_j\trkla{t_j^{b-1}}}\sinctilde{b} \sqrt{\tau_j}\widetilde{\xi}_k^{b,\tau_j}v}\dx\,\mathfrak{f}_j^{qo}}\\
&\qquad-\lim_{j\rightarrow\infty}\expectedt{\int_{t_j^m}^{t_j^n}\iO\lambda_k\Ihj{\varrho\trkla{\widetilde{\phi}_j^-}\g{k} v}\dx\ds \,\mathfrak{f}_j^{qo}}\\
&=\lim_{j\rightarrow\infty} \expectedt{\sum_{b=m+1}^n \iO\Ihj{\Phi\hj\rkla{\widetilde{\phi}_j\trkla{t_j^{b-1}}}\lambda_k\g{k}\widetilde{\xi}_k^{b,\tau_j} {\tau_j}\widetilde{\xi}_k^{b,\tau_j}v}\dx\,\mathfrak{f}_j^{qo}}\\
&\qquad-\lim_{j\rightarrow\infty}\expectedt{\int_{t_j^m}^{t_j^n}\iO\lambda_k\Ihj{\varrho\trkla{\widetilde{\phi}_j^-}\g{k} v}\dx\ds \,\mathfrak{f}_j^{qo}}\\
&=\lim_{j\rightarrow\infty}\widetilde{\mathds{E}}\left[\left(\sum_{b=m+1}^n\iO\Ihj{\Phi\hj\rkla{\widetilde{\phi}_j\trkla{t_j^{b-1}}}\lambda_k\g{k}\tau_j v}\dx\right.\right.\\
&\qquad\qquad\qquad\left.\left.-\int_{t_j^m}^{t_j^n}\iO\lambda_k\Ihj{\varrho\trkla{\widetilde{\phi}_j^-}\g{k} v}\dx\ds\right)\mathfrak{f}_j^{qo}\right]\\
&=0\,.
\end{split}
\end{align}
Here, we used \eqref{eq:def:Mvj} and \ref{item:randomvars}.\\ 
We repeat the above arguments to show that $\skla{\!\!\skla{\widetilde{N}^w}\!\!}$ and $\skla{\!\!\skla{\widetilde{N}^w,\widetilde{\beta}_k}\!\!}$ defined in \eqref{eq:quadvar:N} and \eqref{eq:crossvar:N} are indeed the quadratic and cross variation processes of the martingale $\widetilde{N}^w$.
We again introduce a family of approximations $\rkla{\skla{\!\!\skla{\widetilde{N}^w_j}\!\!}}_{j\in\mathds{N}}$ of $\skla{\!\!\skla{\widetilde{N}^w}\!\!}$ via
\begin{align}
\skla{\!\!\skla{\widetilde{N}^w_j}\!\!}\trkla{t_j^m}:=\int_0^{t_j^m}\sum_{k\in\mathds{Z}\hj}\rkla{\iGamma\IhGj{\trace{\varrho\trkla{\widetilde{\phi}_j^-}\lambda_k\g{k}}w}\dG}^2\ds\,.
\end{align}
To establish convergence, we again decompose the error as follows:
\begin{multline}
\expectedt{\abs{\skla{\!\!\skla{\widetilde{N}^w}\!\!}\trkla{t}-\skla{\!\!\skla{\widetilde{N}_j^w}\!\!}\trkla{t_j^m}}^p}\\
\leq C\expectedt{\abs{\int_0^t\sum_{k\in\mathds{Z}}\rkla{\iGamma \varrho\trkla{\widetilde{\phi_\Gamma}}\lambda_k\trace{\g{k}}w\dG}^2\ds -\int_0^t\sum_{k\in\mathds{Z}\hj}\rkla{\iGamma\varrho\trkla{\trace{\widetilde{\phi}_j^-}}\lambda_k\trace{\g{k}}w\dG}^2\ds}^p}\\
+C\expectedt{\abs{\int_t^{t_j^m}\sum_{k\in\mathds{Z}\hj}\rkla{{\trace{\varrho\trkla{\widetilde{\phi}_j^-}\lambda_k\g{k}}w}\dG}^2\ds}^p}\\
+C\expectedt{\abs{\int_0^{t_j^m}\sum_{k\in\mathds{Z}\hj}\ekla{\rkla{\iGamma\varrho\rkla{\trace{\widetilde{\phi}_j^-}}\lambda_k\trace{\g{k}}w\dG}^2-\rkla{\iGamma\IhGj{\trace{\varrho\trkla{\widetilde{\phi}_j^-}\lambda_k\g{k}}w}\dG}^2}\ds}^p}\\
=:Q_1^\Gamma+Q_2^\Gamma+Q_3^\Gamma\,.
\end{multline}
As before, $Q_1^\Gamma$ vanishes due to Lemma \ref{lem:convergence}, \ref{item:color}, and \ref{item:rho}.
The term $Q_2^\Gamma$ can be treated analogously to $Q_2$.
To show that $Q_3^\Gamma$ vanishes, we note that $\widetilde{\phi}_j$ are finite element functions and $\g{k}$ are bounded in $W^{2,\infty}\trkla{\Om}$, i.e.~they are continuous on $\overline{\Om}$.
Together with \eqref{eq:tracespace}, \ref{item:rho}, and the arguments from \eqref{eq:tmp:q2}, this provides
\begin{multline}
\abs{\iGamma \varrho\trkla{\trace{\widetilde{\phi}_j^-}}\lambda_k\trace{\g{k}}w\dG- \iGamma\IhGj{\trace{\rho\trkla{\widetilde{\phi}_j^-}\lambda_k\g{k}}w}\dG}\\
\leq\abs{\iGamma \varrho\trkla{\trace{\widetilde{\phi}_j^-}}\lambda_k\trace{\g{k}}w\dG- \iGamma\IhGj{\rho\trkla{\trace{\widetilde{\phi}_j^-}}}\lambda_k\IhGj{\trace{\g{k}}w}\dG}\\
+\abs{\iGamma\trkla{1-\IhGjop}\gkla{\IhGj{\rho\trkla{\trace{\widetilde{\phi}_j^-}}}\lambda_k\IhGj{\trace{\g{k}}w}}\dG}\\
\leq C h_j\tabs{\lambda_k}\norm{\trace{\g{k}}}_{W^{1,\infty}\trkla{\Gamma}}\norm{w}_{W^{1,\infty}\trkla{\Gamma}}\rkla{1+\norm{\nablaG\widetilde{\phi}_j^-}_{L^2\trkla{\Gamma}}}\,.
\end{multline}
As $\norm{\trace{\g{k}}}_{W^{1,\infty}\trkla{\Gamma}}\leq C\norm{\g{k}}_{W^{2,\infty}\trkla{\Om}}$, we have
\begin{align}
\begin{split}
Q_3^\Gamma\leq&\,C\trkla{w}\expectedt{\abs{h_j\sum_{k\in\mathds{Z}}\tabs{\lambda_k}^2\norm{\g{k}}_{W^{2,\infty}\trkla{\Om}}^2\int_0^t\rkla{1+\norm{\nablaG\widetilde{\phi}_j^-}_{L^2\trkla{\Gamma}}}\ds}^p}\rightarrow 0\,.
\end{split}
\end{align}
As before, we use this convergence property to show that $\rkla{\widetilde{N}^w}^2-\skla{\!\!\skla{\widetilde{N}^w}\!\!}$ is a martingale by showing this property for its approximations and using the convergence.
For $0\leq t_j^n-t_2\leq\tau_j$ and $0\leq t_j^m-t_1\leq \tau_j$ we compute
\begin{align}
\begin{split}
&\expectedt{\rkla{\rkla{\widetilde{N}^w\trkla{t_2}}^2-\rkla{\widetilde{N}^w\trkla{t_1}}^2-\skla{\!\!\skla{\widetilde{N}^w}\!\!}\trkla{t_2}+\skla{\!\!\skla{\widetilde{N}^w}\!\!}\trkla{t_1}}\mathfrak{f}^{qo}}\\
&\quad=\lim_{j\rightarrow\infty}\widetilde{\mathds{E}}\left[\left(\sum_{a=m+1}^n\rkla{\iGamma\IhGj{\trace{\Phi\hj\trkla{\widetilde{\phi}_j\trkla{t_j^{a-1}}}\sinctilde{a}} w}\dG}^2\right.\right.\\
&\qquad\quad\left.\left.-\sum_{a=m+1}^n\tau_j\sum_{k\in\mathds{Z}\hj}\rkla{\iGamma\IhGj{\trace{\varrho\trkla{\widetilde{\phi}_j\trkla{t_j^{a-1}}}\lambda_k\g{k}}w}\dG}^2\right)\mathfrak{f}_j^{qo}\right]\\
&\quad=0
\end{split}
\end{align}
due to \ref{item:randomvars}.
To identify the cross variations of $\widetilde{N}^w$, we define suitable approximations for time points $t_j^m$ via
\begin{align}
\skla{\!\!\skla{\widetilde{N}^w_j,\sum_{a=1}^m\sqrt{\tau_j}\widetilde{\xi}_k^{a,\tau_j}}\!\!}:=\left\{\begin{array}{cc}
\int_0^{t_j^m}\iGamma\IhGj{\trace{\varrho\trkla{\widetilde{\phi}_j^-}\lambda_k\g{k}}w}\dG\ds&\text{if~}k\in\mathds{Z}\hj\,,\\
0&\text{else}\,.
\end{array}\right.
\end{align}
Assuming w.l.o.g.~$k\in\mathds{Z}\hj$ and arguing like before shows
\begin{align}
\expectedt{\abs{\int_0^t\iGamma\varrho\trkla{\widetilde{\phi_\Gamma}}\lambda_k\trace{\g{k}}w\dG\ds-\int_0^{t_j^m}\iGamma\IhGj{\trace{\varrho\trkla{\widetilde{\phi}_j^-}\lambda_k\g{k}}w}\dG\ds}^p}\rightarrow 0
\end{align}
and hence
\begin{multline}
\expectedt{\rkla{\widetilde{N}^w\trkla{t_2}\widetilde{\beta}_k\trkla{t_2}-\widetilde{N}^w\trkla{t_1}\widetilde{\beta}_k\trkla{t_1} -\skla{\!\!\skla{\widetilde{N}^w,\widetilde{\beta}_k}\!\!}\trkla{t_2}+\skla{\!\!\skla{\widetilde{N}^w,\widetilde{\beta}_k}\!\!}\trkla{t_1} }\mathfrak{f}^{qo}}\\
=\lim_{j\rightarrow\infty} \widetilde{\mathds{E}}\left[\left(\widetilde{N}_j^w\trkla{t_j^n}\sum_{a=1}^n\sqrt{\tau_j}\widetilde{\xi}_k^{a,\tau_j}-\widetilde{N}_j^w\trkla{t_j^m}\sum_{a=1}^m\sqrt{\tau_j}\widetilde{\xi}_j^{a,\tau_j} \right.-\right.\\
\left.\left.-\int_{t_j^m}^{t_j^n}\iGamma\lambda_k\IhGj{\trace{\varrho\trkla{\widetilde{\phi}_j^-}\g{k}}w}\dG\ds \right)\mathfrak{f}^{qo}_j\right]=0\,.
\end{multline}
\end{proof}
Having obtained explicit expressions for the quadratic and cross variations of the martingales $\widetilde{M}^v$ and $\widetilde{N}^w$, we can follow the approach introduced in \cite{BrzezniakOndrejat, Ondrejat2010, HofmanovaSeidler} (see also \cite{HofmanovaRoegerRenesse,BreitFeireislHofmanova}) to show that $\widetilde{M}^v$ and $\widetilde{N}^w$ can be written as It\^o integrals with the Wiener process $\widetilde{W}$.
\begin{lemma}
Let the assumptions of Theorem \ref{thm:jakubowski} and Lemma \ref{lem:convergence} hold true.
Then we have for $\widetilde{M}^v$ and $\widetilde{N}^w$
\begin{subequations}
\begin{align}
\widetilde{M}^v\trkla{t}&=\int_0^t\iO\sum_{k\in\mathds{Z}} \varrho\trkla{\widetilde{\phi}} \lambda_k\g{k}v\dx\,\mathrm{d}\widetilde{\beta}_k\,,\label{eq:itoM}\\
\widetilde{N}^w\trkla{t}&=\int_0^t\iGamma\sum_{k\in\mathds{Z}} \varrho\trkla{\widetilde{\phi_\Gamma}} \lambda_k\trace{\g{k}}w\dG\,\mathrm{d}\widetilde{\beta}_k\,.\label{eq:itoN}
\end{align}
\end{subequations}
\end{lemma}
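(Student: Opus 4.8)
The plan is to identify $\widetilde{M}^v$ and $\widetilde{N}^w$ with \Ito\ integrals via the abstract martingale representation argument of \cite{BrzezniakOndrejat, Ondrejat2010, HofmanovaSeidler}. The starting point is that, by Lemma \ref{lem:martingales}, both $\widetilde{M}^v$ and $\widetilde{N}^w$ are continuous, square-integrable, real-valued martingales with respect to $\trkla{\widetilde{\mathcal{F}}_t}_{t\in\tekla{0,T}}$ that vanish at $t=0$, the square-integrability being inherited from the uniform bounds in \eqref{eq:regularity:timecont}. Lemma \ref{lem:wienerprocess} guarantees that $\widetilde{W}=\sum_{k\in\mathds{Z}}\lambda_k\g{k}\widetilde{\beta}_k$ is a $\mathcal{Q}$-Wiener process built from mutually independent Brownian motions $\widetilde{\beta}_k$. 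The candidate integrands $\iO\varrho\trkla{\widetilde{\phi}}\lambda_k\g{k}v\dx$ and $\iGamma\varrho\trkla{\widetilde{\phi_\Gamma}}\lambda_k\trace{\g{k}}w\dG$ are progressively measurable, since $\widetilde{\phi}$ and $\widetilde{\phi_\Gamma}$ are adapted to $\trkla{\widetilde{\mathcal{F}}_t}_{t\in\tekla{0,T}}$ by construction in Section \ref{sec:limit}, and they are square-integrable over $\widetilde{\Omega}\times\trkla{0,T}$ by \ref{item:rho}, \ref{item:color}, and the regularity \eqref{eq:regularity:timecont}. Hence the right-hand sides of \eqref{eq:itoM} and \eqref{eq:itoN} define genuine \Ito\ integrals $I^v$ and $J^w$, which are themselves continuous square-integrable martingales starting at $0$.

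The core of the argument is to show that the difference $D^v:=\widetilde{M}^v-I^v$ has vanishing quadratic variation. By bilinearity of the (co)variation,
\begin{align*}
\qvar{D^v}=\qvar{\widetilde{M}^v}-2\crossvar{\widetilde{M}^v}{I^v}+\qvar{I^v}\,.
\end{align*}
The \Ito\ isometry together with the mutual independence of the $\widetilde{\beta}_k$ yields
\begin{align*}
\qvar{I^v}\trkla{t}=\int_0^t\sum_{k\in\mathds{Z}}\rkla{\iO\varrho\trkla{\widetilde{\phi}}\lambda_k\g{k}v\dx}^2\ds\,,
\end{align*}
which is exactly the expression for $\qvar{\widetilde{M}^v}$ obtained in \eqref{eq:quadvar:M}. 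For the mixed term I would invoke the associativity of the covariation with respect to stochastic integration, writing
\begin{align*}
\crossvar{\widetilde{M}^v}{I^v}\trkla{t}=\sum_{k\in\mathds{Z}}\int_0^t\iO\varrho\trkla{\widetilde{\phi}}\lambda_k\g{k}v\dx\,\mathrm{d}\crossvar{\widetilde{M}^v}{\widetilde{\beta}_k}\trkla{s}\,,
\end{align*}
and insert the cross variation $\crossvar{\widetilde{M}^v}{\widetilde{\beta}_k}$ from \eqref{eq:crossvar:M}, whose density with respect to $\ds$ is precisely $\iO\varrho\trkla{\widetilde{\phi}}\lambda_k\g{k}v\dx$. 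This shows $\crossvar{\widetilde{M}^v}{I^v}=\qvar{\widetilde{M}^v}$, so all three terms cancel and $\qvar{D^v}\equiv0$. Since a continuous martingale with vanishing quadratic variation that starts at $0$ is $\widetilde{\Prob}$-indistinguishable from $0$, we conclude $\widetilde{M}^v=I^v$, i.e.~\eqref{eq:itoM}. The representation \eqref{eq:itoN} follows verbatim, replacing $\qvar{\widetilde{M}^v}$ and $\crossvar{\widetilde{M}^v}{\widetilde{\beta}_k}$ by the quantities \eqref{eq:quadvar:N} and \eqref{eq:crossvar:N} established for $\widetilde{N}^w$ in Lemma \ref{lem:quadcrossvar}.

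The main obstacle I expect lies in the rigorous treatment of the mixed covariation $\crossvar{\widetilde{M}^v}{I^v}$: one has to justify the associativity identity relating the covariation of $\widetilde{M}^v$ with the stochastic integral $I^v$ to the integral of its integrand against $\crossvar{\widetilde{M}^v}{\widetilde{\beta}_k}$, and—crucially—the interchange of the infinite sum over $k\in\mathds{Z}$ with the (co)variation and the time integral. This summability is exactly what the coloredness assumption \ref{item:color} (and its boundary counterpart \eqref{eq:tracecolor}) together with the boundedness of $\varrho$ from \ref{item:rho} are designed to supply, so the exchange of summation and integration can be controlled by a dominated-convergence argument. A secondary, routine point is to confirm the adaptedness of the candidate integrand to the augmented filtration $\trkla{\widetilde{\mathcal{F}}_t}_{t\in\tekla{0,T}}$, which is immediate from the measurability of $\widetilde{\phi}$ and $\widetilde{\phi_\Gamma}$ with respect to this filtration.
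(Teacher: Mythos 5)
Your proposal is correct and follows essentially the same route as the paper: reduce to showing that the difference $\widetilde{M}^v-I^v$ has vanishing quadratic variation, expand it by bilinearity, compute the mixed term via the cross-variation formula together with the absolute continuity of $s\mapsto\skla{\!\!\skla{\widetilde{M}^v,\widetilde{\beta}_k}\!\!}\trkla{s}$ from \eqref{eq:crossvar:M}, and match all three terms using Lemma \ref{lem:quadcrossvar}, concluding since a continuous martingale with vanishing quadratic variation starting at zero is indistinguishable from zero. The additional care you flag (summability over $k\in\mathds{Z}$ via \ref{item:color}/\ref{item:rho} and adaptedness of the integrand) is consistent with, and slightly more explicit than, the paper's treatment.
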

\begin{proof}
As a martingale with vanishing quadratic variation is almost surely constant, it suffices to show that the quadratic variation of 
\begin{align}
\widetilde{M}^v\trkla{t}-\int_0^t\iO\sum_{k\in\mathds{Z}}\varrho\trkla{\widetilde{\phi}}\lambda_k\g{k}v\dx\,\mathrm{d}\widetilde{\beta}_k
\end{align}
vanishes to establish \eqref{eq:itoM}.
We get
\begin{multline}\label{eq:tmp:quadraticvarbinom}
\skla{\!\!\skla{\widetilde{M}^v-\int_0^{\trkla{\cdot}}\iO\sum_{k\in\mathds{Z}}\varrho\trkla{\widetilde{\phi}}\lambda_k\g{k}v\dx\,\mathrm{d}\widetilde{\beta}_k}\!\!}\trkla{t}\\
=\skla{\!\!\skla{\widetilde{M}^v}\!\!}\trkla{t}+\skla{\!\!\skla{\int_0^{\trkla{\cdot}}\iO\sum_{k\in\mathds{Z}}\varrho\trkla{\widetilde{\phi}}\lambda_k\g{k}v\dx\,\mathrm{d}\widetilde{\beta}_k}\!\!}\trkla{t}\\
-2\skla{\!\!\skla{\widetilde{M}^v,\int_0^{\trkla{\cdot}}\iO\sum_{k\in\mathds{Z}}\varrho\trkla{\widetilde{\phi}}\lambda_k\g{k}v\dx\,\mathrm{d}\widetilde{\beta}_k}\!\!}\trkla{t}\,.
\end{multline}
To find a suitable expression for the last term on the right-hand side of \eqref{eq:tmp:quadraticvarbinom}, we apply the cross-variation formula (see e.g.~\cite{KaratzasShreve2004}) to obtain
\begin{align}
\skla{\!\!\skla{\widetilde{M}^v,\int_0^{\trkla{\cdot}}\iO\sum_{k\in\mathds{Z}}\varrho\trkla{\widetilde{\phi}}\lambda_k\g{k}v\dx\,\mathrm{d}\widetilde{\beta}_k}\!\!}\trkla{t} = \int_0^t\iO \sum_{k\in\mathds{Z}}\varrho\trkla{\widetilde{\phi}}\lambda_k\g{k} v\dx\,\mathrm{d}\skla{\!\!\skla{\widetilde{M}^v,\widetilde{\beta}_k}\!\!}\trkla{s}\,.
\end{align}
By \eqref{eq:crossvar:M} and \ref{item:rho}, we deduce that the process $\tekla{0,T}\ni s\mapsto \skla{\!\!\skla{\widetilde{M}^v,\widetilde{\beta}_k}\!\!}\trkla{s}$ is absolutely continuous.
Hence, we have
\begin{align}
\mathrm{d}\skla{\!\!\skla{\widetilde{M}^v,\widetilde{\beta}_k}\!\!}\trkla{s}= \lambda_k\iO\varrho\trkla{\widetilde{\phi}}\g{k} v\dx\ds
\end{align}
and consequently
\begin{align}
\skla{\!\!\skla{\widetilde{M}^v,\int_0^{\trkla{\cdot}}\iO\sum_{k\in\mathds{Z}}\varrho\trkla{\widetilde{\phi}}\lambda_k\g{k}v\dx\,\mathrm{d}\widetilde{\beta}_k}\!\!}\trkla{t}=\int_0^t\sum_{k\in\mathds{Z}}\tabs{\lambda_k}^2\rkla{\iO\varrho\trkla{\widetilde{\phi}}\g{k} v\dx}^2\ds\,.
\end{align}
Together with \eqref{eq:quadvar:M} and
\begin{align}
\skla{\!\!\skla{\int_0^{\trkla{\cdot}}\iO\sum_{k\in\mathds{Z}}\varrho\trkla{\widetilde{\phi}}\lambda_k\g{k}v\dx\,\mathrm{d}\widetilde{\beta}_k}\!\!}\trkla{t}=\int_0^t\sum_{k\in\mathds{Z}}\tabs{\lambda_k}^2\rkla{\iO\varrho\trkla{\widetilde{\phi}}\g{k}v\dx}^2\ds\,,
\end{align}
this provides \eqref{eq:itoM}.
Repeating the above arguments also provides \eqref{eq:itoN}.
\end{proof}
We conclude the proof of Theorem \ref{thm:mainresult} by passing to the limit in \eqref{eq:model:timecont:potentials}.
For any $v\in C^\infty\trkla{\overline{\Om}}$, we choose $\Ihj{v}$ multiplied by a sufficiently regular function from $\widetilde{\Omega}\times\tekla{0,T}$ to $\mathds{R}$ as a test function in \eqref{eq:model:timecont:potentials}, an application of the convergence results collected in Lemma \ref{lem:convergence} yields the following result:
\begin{lemma}
Let the assumptions of Theorem \ref{thm:jakubowski} and Lemma \ref{lem:convergence} hold true.
Then the limit processes $\widetilde{\phi}$, $\widetilde{\phi_\Gamma}$, $\widetilde{\mu}$, and $\widetilde{\theta}$ satisfy
\begin{multline}
\iO \widetilde{\mu}\eta\dx+ \iGamma\widetilde{\theta}\trace{\eta}\dG=\iO\nabla\widetilde{\phi}\cdot\nabla \eta\dx+\iGamma\nablaG\widetilde{\phi_\Gamma}\cdot\nablaG \trace{\eta}\dG\\
+\iO F^\prime\trkla{\widetilde{\phi}} \eta\dx +\iGamma G^\prime\trkla{\widetilde{\phi_\Gamma}}\trace{\eta}\dG
\end{multline}
$\widetilde{\Prob}$-almost surely for almost all $t\in\tekla{0,T}$ and all $\eta\in C^\infty\trkla{\overline{\Om}}$.
\end{lemma}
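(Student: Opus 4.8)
The plan is to pass to the limit $j\to\infty$ in the discrete relation \eqref{eq:model:timecont:potentials}. First I would fix $v\in C^\infty\trkla{\overline{\Om}}$, choose $\eta\hj\equiv\Ihj{v}$, multiply the equation by an arbitrary bounded $\trkla{\widetilde{\mathcal{F}}_t}$-adapted test function $\Theta\,:\,\widetilde{\Omega}\times\tekla{0,T}\to\mathds{R}$, integrate over $\tekla{0,T}$, and take the expectation $\expectedt{\cdot}$. This reduces the claim to identifying the limit of each individual term; since $\Theta$ and $v$ are arbitrary, the desired identity then holds $\widetilde{\Prob}$-almost surely for almost all $t\in\tekla{0,T}$ and all $v\in C^\infty\trkla{\overline{\Om}}$.

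Next I would dispatch the linear contributions. On the left-hand side the weak convergences \eqref{eq:conv:mu:weak} and \eqref{eq:conv:theta:weak} handle the terms carrying $\widetilde{\mu}_j^+$ and $\widetilde{\theta}_j^+$, whereas the two gradient terms on the right are controlled by the weak-$*$ convergences \eqref{eq:conv:phiO:weakstar} and \eqref{eq:conv:phiG:weakstar}. In each term the nodal interpolation operators $\Ihjop$ and $\IhGjop$ would be removed by splitting $\Ihj{v}=v-\trkla{1-\Ihjop}\gkla{v}$ (and analogously for the products appearing under the integral sign) and estimating the remainders with Lemma \ref{lem:interpolationerror} and the standard interpolation error bounds; the resulting powers of $h_j$ send these contributions to zero. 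The two terms involving $\widetilde{\Xi}_{j,\Om}^+$ and $\widetilde{\Xi}_{j,\Gamma}^+$ disappear in the limit by the strong convergences \eqref{eq:conv:xiO} and \eqref{eq:conv:xiG}.

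The hard part will be the two nonlinear source terms. Consider the bulk contribution $\tfrac{\widetilde{r}_j^+}{\sqrt{E\hj^\Om\trkla{\widetilde{\phi}_j^-}}}\iO\Ihj{F^\prime\trkla{\widetilde{\phi}_j^-}v}\dx$. The key observation is that its prefactor is a quotient of two quantities tending to the \emph{same} strictly positive limit: by \eqref{eq:conv:r:strong} one has $\widetilde{r}_j^+\to\widetilde{r}=\sqrt{\iO F\trkla{\widetilde{\phi}}\dx}$, while \eqref{eq:conv:intF} gives $E\hj^\Om\trkla{\widetilde{\phi}_j^-}\to\iO F\trkla{\widetilde{\phi}}\dx$, and the uniform lower bound on $F$ from \ref{item:potentials} keeps the denominator away from zero, so the prefactor converges to $1$. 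At the same time, combining the almost-everywhere convergence of $\widetilde{\phi}_j^-$, the growth bound $\tabs{F^\prime\trkla{\zeta}}\leq C\trkla{1+\tabs{\zeta}^3}$, and the uniform moments in \eqref{eq:regularity:timecont} in a Vitali argument (again after discarding the interpolation error) yields $\iO\Ihj{F^\prime\trkla{\widetilde{\phi}_j^-}v}\dx\to\iO F^\prime\trkla{\widetilde{\phi}}v\dx$ strongly in $L^p\trkla{\widetilde{\Omega};L^p\trkla{0,T}}$. The product of the convergent prefactor with this strongly convergent integral then delivers the target $\iO F^\prime\trkla{\widetilde{\phi}}v\dx$. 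The boundary source term would be treated identically, using \eqref{eq:conv:s:strong}, \eqref{eq:conv:intG}, the lower bound on $G$, and the identification $\trace{\widetilde{\phi}}=\widetilde{\phi_\Gamma}$. I expect the principal difficulty to be precisely this step: guaranteeing that the product of the three factors (prefactor, potential derivative, and test function) passes to the limit under the expectation, which I would secure through the uniform integrability furnished by the moment estimates of Lemma \ref{lem:energy}. Assembling all the individual limits and invoking the arbitrariness of $\Theta$ and $v$ concludes the argument.
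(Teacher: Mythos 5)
Your proposal is correct and follows essentially the same route as the paper: the paper's own (very terse) proof consists precisely of testing \eqref{eq:model:timecont:potentials} with $\Ihj{v}$ multiplied by a sufficiently regular function on $\widetilde{\Omega}\times\tekla{0,T}$ and invoking the convergence results of Lemma \ref{lem:convergence}, which is exactly your structure, including the key observation that the prefactors $\widetilde{r}_j^+/\sqrt{E\hj^\Om\trkla{\widetilde{\phi}_j^-}}$ and $\widetilde{s}_j^+/\sqrt{E\hj^\Gamma\trkla{\trace{\widetilde{\phi}_j^-}}}$ tend to $1$ by \eqref{eq:conv:r:strong}, \eqref{eq:conv:intF}, \eqref{eq:conv:s:strong}, \eqref{eq:conv:intG} and the lower bounds in \ref{item:potentials}, while the $\widetilde{\Xi}$-terms vanish by \eqref{eq:conv:xiO}--\eqref{eq:conv:xiG}. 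Your Vitali argument for $\iO\Ihj{F^\prime\trkla{\widetilde{\phi}_j^-}v}\dx\rightarrow\iO F^\prime\trkla{\widetilde{\phi}}v\dx$ supplies the detail the paper leaves implicit, so no gap remains.
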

A density argument shows that the martingale solutions satisfy the formulation stated in Theorem \ref{thm:mainresult}.
Hence, it remains to establish the pathwise uniqueness.

\section{Pathwise uniqueness of martingale solutions}\label{sec:uniqueness}
In this section we shall prove that the martingale solutions established in Section \ref{sec:limit} are pathwise unique and hence complete the proof of Theorem \ref{thm:mainresult}.
As the Allen--Cahn type equation on $\Gamma$ can be interpreted as an $L^2$-gradient flow and the Cahn--Hilliard equation on $\Om$ is a gradient flow with respect to the $H^{-1}$-norm, we shall estimate the difference between two possible solutions using a combination of these norms.
To define the dual norm on $\Om$, we follow the ideas used in \cite{Colli2014} to prove the uniqueness of the deterministic problem and define
\begin{align}
\dom \LapInv:=\gkla{\varphi^*\in \trkla{H^1\trkla{\Om}}^\prime\,:\,\trkla{\varphi^*}_\Om=0}&&\text{and}&&\LapInv\,:\,\dom\LapInv\,\rightarrow\,\gkla{\varphi\in H^1\trkla{\Om}\,:\,\trkla{\varphi}_\Om=0}
\end{align}
by setting for $\varphi^*\in\dom\LapInv$
\begin{align}\label{eq:defLapInv}
\LapInv\varphi^*\in H^1\trkla{\Om}\,,\quad \trkla{\LapInv\varphi^*}_\Om=0\,\quad\text{and}\quad \iO\nabla\LapInv\varphi^*\cdot\nabla z\dx=\skla{\varphi^*,z}\quad\forall z\in H^1\trkla{\Om}\,.
\end{align}
Hence, $\LapInv \varphi^*$ is the solution to the generalized Neumann problem for $-\Delta$ with datum $\varphi^*$ satisfying $\trkla{\varphi^*}_\Om =0$.
From \eqref{eq:defLapInv}, we immediately obtain the identity
\begin{align}
\skla{\psi^*,\LapInv\varphi^*}=\skla{\varphi^*,\LapInv\psi^*}=\iO\nabla\LapInv\psi^*\cdot\nabla\LapInv\varphi^*\dx\qquad \text{for~}\psi^*,\varphi^*\in\dom\LapInv\,.
\end{align}
Using $\LapInv$, we define a norm $\norm{\cdot}_*$ on $\trkla{H^1\trkla{\Om}}^\prime$ that is equivalent to the usual dual norm via
\begin{align}\label{eq:def:dualnorm}
\norm{\varphi^*}_*^2:=\norm{\nabla\LapInv\rkla{\varphi^*-\trkla{\varphi^*}_\Om}}_{L^2\trkla{\Om}}^2+\abs{\trkla{\varphi^*}_\Om}^2\,
\end{align}
and denote the corresponding semi-norm by
\begin{align}
\abs{\varphi^*}_*^2:=\norm{\nabla\LapInv\rkla{\varphi^*-\trkla{\varphi^*}_\Om}}_{L^2\trkla{\Om}}^2\,.
\end{align}

\begin{lemma}\label{lem:normestimate}
Let the assumptions \ref{item:time}, \ref{item:space1}, \ref{item:space2}, \ref{item:potentials}, \ref{item:initial}, \ref{item:rho}, and \ref{item:filtration}-\ref{item:color} hold true.
Then, martingale solutions to \eqref{eq:weakform} are pathwise unique.
\end{lemma}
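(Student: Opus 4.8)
The plan is to show that any two martingale solutions $\trkla{\phi_1,\mu_1,\theta_1}$ and $\trkla{\phi_2,\mu_2,\theta_2}$ of \eqref{eq:weakform} living on the same stochastic basis, driven by the same Wiener process $\widetilde{W}$ and issued from the same initial datum, coincide $\widetilde{\Prob}$-almost surely. Setting $\phi:=\phi_1-\phi_2$, $\mu:=\mu_1-\mu_2$, $\theta:=\theta_1-\theta_2$, these differences solve the weak system with vanishing initial data and with the stochastic integrands replaced by $\varrho\trkla{\phi_1}-\varrho\trkla{\phi_2}$. Since the bulk equation is an $H^{-1}$-gradient flow and the boundary equation an $L^2$-gradient flow, I would quantify the difference through
\begin{align*}
\mathcal{G}\trkla{t}:=\tfrac12\norm{\phi\trkla{t}}_*^2+\tfrac12\norm{\trace{\phi}\trkla{t}}_{L^2\trkla{\Gamma}}^2\,,\qquad \norm{\phi}_*^2=\abs{\phi}_*^2+\abs{\trkla{\phi}_\Om}^2\,,
\end{align*}
with $\norm{\cdot}_*$ the dual norm built from $\LapInv$ (cf.~\eqref{eq:def:dualnorm}), and aim at a stochastic Gronwall estimate forcing $\mathcal{G}\equiv0$.

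First I would apply \Ito's formula to $\mathcal{G}$. Testing the bulk equation with $\LapInv\trkla{\phi-\trkla{\phi}_\Om}$ and using \eqref{eq:defLapInv}, the drift of $\tfrac12\abs{\phi}_*^2$ equals $-\iO\mu\trkla{\phi-\trkla{\phi}_\Om}\dx$, the drift of $\tfrac12\norm{\trace{\phi}}_{L^2\trkla{\Gamma}}^2$ equals $-\iGamma\theta\trace{\phi}\dG$, and $\trkla{\phi}_\Om$ is a pure martingale because $\nabla\mu\cdot\bs{n}=0$. Choosing $\eta=\phi-\trkla{\phi}_\Om\in\mathcal{V}$ in the chemical potential relation and combining, the leading dissipative terms telescope into $-\norm{\nabla\phi}_{L^2\trkla{\Om}}^2-\norm{\nablaG\trace{\phi}}_{L^2\trkla{\Gamma}}^2$ together with the potential increments $-\iO\trkla{F^\prime\trkla{\phi_1}-F^\prime\trkla{\phi_2}}\trkla{\phi-\trkla{\phi}_\Om}\dx$ and $-\iGamma\trkla{G^\prime\trkla{\trace{\phi_1}}-G^\prime\trkla{\trace{\phi_2}}}\trace{\phi}\dG$. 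By \ref{item:potentials}, the Gagliardo--Nirenberg inequality and the interpolation $\norm{\phi-\trkla{\phi}_\Om}_{L^2\trkla{\Om}}^2\leq\abs{\phi}_*\norm{\nabla\phi}_{L^2\trkla{\Om}}$, these increments are bounded by $\varepsilon\norm{\nabla\phi}_{L^2\trkla{\Om}}^2+\varepsilon\norm{\nablaG\trace{\phi}}_{L^2\trkla{\Gamma}}^2+C\trkla{1+\norm{\phi_1}_{H^1\trkla{\Om}}^{a}+\norm{\phi_2}_{H^1\trkla{\Om}}^{a}}\mathcal{G}$ and absorbed. The \Ito\ corrections together with the quadratic variation of $\trkla{\phi}_\Om$ are controlled by the Lipschitz continuity \ref{item:rho} and the colouring \ref{item:color}, \eqref{eq:tracecolor}, yielding again $\leq\varepsilon\trkla{\text{dissipation}}+C\mathcal{G}$ after the same interpolation.

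The main obstacle is a boundary-flux term that survives the telescoping precisely because the mass $\iO\phi\dx$ is not conserved and the boundary condition is dynamic: the combination leaves the drift contribution $-\trkla{\phi}_\Om\iGamma\nabla\phi\cdot\bs{n}\dG=-\trkla{\phi}_\Om\iO\Delta\phi\dx$, which cannot be estimated by $\mathcal{G}$ directly, since neither $\trkla{\mu}_\Om$ nor $\iGamma\theta\dG$ is of order $\mathcal{G}^{1/2}$. I would resolve this by returning to the boundary equation: because $\iGamma\nabla\phi\cdot\bs{n}\dG=\iGamma\theta\dG-\iGamma\trkla{G^\prime\trkla{\trace{\phi_1}}-G^\prime\trkla{\trace{\phi_2}}}\dG$, and testing the boundary equation in \eqref{eq:weakform} with $\widehat{\psi}\equiv1$ expresses $\iGamma\theta\dG\,\ds$ through the differential of $\iGamma\trace{\phi}\dG$ and a boundary martingale, an \Ito\ integration by parts in time converts $\int_0^t\trkla{\phi}_\Om\iGamma\theta\dG\ds$ into the endpoint term $\trkla{\phi\trkla{t}}_\Om\iGamma\trace{\phi}\trkla{t}\dG$, genuine martingales, a cross-variation, and lower-order contributions. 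The cross-variation and the lower-order terms are $\leq\varepsilon\trkla{\text{dissipation}}+C\mathcal{G}$ by \ref{item:rho} and \ref{item:color}; the endpoint term is handled by Young's inequality together with the martingale identity $\expectedt{\abs{\trkla{\phi\trkla{t}}_\Om}^2}\leq C\int_0^t\expectedt{\norm{\phi}_{L^2\trkla{\Om}}^2}\ds$, so that it too folds back into the Gronwall integral.

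Collecting these bounds, then localising with the stopping times $\tau_R:=\inf\tgkla{t\,:\,\norm{\phi_1\trkla{t}}_{H^1\trkla{\Om}}+\norm{\phi_2\trkla{t}}_{H^1\trkla{\Om}}+\norm{\trace{\phi_1}\trkla{t}}_{H^1\trkla{\Gamma}}+\norm{\trace{\phi_2}\trkla{t}}_{H^1\trkla{\Gamma}}>R}$, which are finite $\widetilde{\Prob}$-almost surely by the regularity in Theorem \ref{thm:mainresult}, and taking expectations so that all martingale terms vanish, I expect an estimate of the form
\begin{align*}
\expectedt{\mathcal{G}\trkla{t\wedge\tau_R}}\leq C_R\int_0^t\expectedt{\mathcal{G}\trkla{s\wedge\tau_R}}\ds
\end{align*}
with a deterministic constant $C_R$. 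As $\mathcal{G}\trkla{0}=0$, Gronwall's lemma gives $\expectedt{\mathcal{G}\trkla{t\wedge\tau_R}}=0$, hence $\mathcal{G}\equiv0$ on $\tekla{0,\tau_R}$; letting $R\to\infty$ (so that $\tau_R\nearrow T$ almost surely) yields $\phi_1=\phi_2$ and $\trace{\phi_1}=\trace{\phi_2}$, and then $\mu_1=\mu_2$, $\theta_1=\theta_2$ via the chemical potential relation, i.e.~pathwise uniqueness. I expect the boundary-flux and mean-value coupling to be the only genuinely delicate point, whereas the superlinear potential terms in dimension three become routine once the interpolation $\norm{\phi-\trkla{\phi}_\Om}_{L^2\trkla{\Om}}^2\leq\abs{\phi}_*\norm{\nabla\phi}_{L^2\trkla{\Om}}$ is used.
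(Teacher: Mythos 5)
Your proposal is correct, and its analytic core coincides with the paper's proof: both measure the difference of two solutions in the $\LapInv$-based dual norm plus the $L^2\trkla{\Gamma}$-norm of the boundary part, both test the chemical-potential relation with the mean-free difference $\phi-\trkla{\phi}_\Om\in\mathcal{V}$, estimate the potential increments with \ref{item:potentials} and the interpolation $\norm{\phi-\trkla{\phi}_\Om}_{L^2\trkla{\Om}}^2\leq\abs{\phi}_*\norm{\nabla\phi}_{L^2\trkla{\Om}}$ (this is exactly \eqref{eq:Ehrling} in the paper), and bound the It\^o corrections via \ref{item:rho} and \ref{item:color}. You deviate at precisely the two delicate points, in each case by an equivalent but differently packaged device. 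First, the paper never encounters your surviving coupling term $\trkla{\phi}_\Om\iGamma\theta\dG$: it applies It\^o's formula not to $\mathcal{G}$ but to the augmented functional $\mathfrak{N}\trkla{\varphi^*,\varphi^*_\Gamma}=\norm{\varphi^*}_*^2+\norm{\varphi^*_\Gamma}_{L^2\trkla{\Gamma}}^2+2\tabs{\Gamma}\trkla{\varphi^*}_\Om^2+2\trkla{\varphi^*}_\Om\trkla{\varphi^*_\Gamma}_\Gamma$, whose bulk-mean/boundary-mean cross term is chosen so that the drift appears directly in the telescoping form $-\iO\trkla{\phi-\trkla{\phi}_\Om}\mu\dx-\iGamma\trkla{\phi_\Gamma-\trkla{\phi}_\Om}\theta\dG$, while the extra squared-mean term keeps $\mathfrak{N}$ coercive, $\mathfrak{N}\geq\norm{\varphi^*}_*^2+\tfrac12\norm{\varphi^*_\Gamma}_{L^2\trkla{\Gamma}}^2$. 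Your It\^o integration by parts of $\int_0^t\trkla{\phi}_\Om\iGamma\theta\dG\ds$ against $\iGamma\trace{\phi}\dG$ is exactly the It\^o product rule applied to that missing cross term, so the two computations agree term by term; you pay with the endpoint term (Young's inequality plus the martingale bound for $\trkla{\phi}_\Om$), the paper pays with the coercivity check. Second, you close the argument by stopping-time localization with $\tau_R$ followed by the deterministic Gronwall lemma and $R\to\infty$, which additionally requires verifying that $\tau_R$ is a stopping time even though the $H^1$-norms of the solutions are only $L^\infty$-in-time processes, and that $\tau_R\nearrow T$ almost surely; the paper avoids stopping times entirely by weighting with $e^{-\int_0^t\trkla{\widehat{C}+\vartheta}\ds}$, where $\vartheta$ is proportional to $\norm{\widetilde{\phi}_1}_{H^1\trkla{\Om}}^8+\norm{\widetilde{\phi}_2}_{H^1\trkla{\Om}}^8+\norm{\widetilde{\phi_\Gamma}_1}_{H^1\trkla{\Gamma}}^4+\norm{\widetilde{\phi_\Gamma}_2}_{H^1\trkla{\Gamma}}^4$ (the local-monotonicity device of \cite{Liu2013, rocknerShangZhang2024}), so that all randomly weighted terms are absorbed at once and uniqueness follows from $\int_0^T\vartheta\dt<\infty$ almost surely. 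Both implementations are valid, and both produce the same Gronwall exponents (eight in the bulk, four on the boundary).
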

\begin{proof}
Let $\trkla{\widetilde{\phi}_1,\widetilde{\phi_\Gamma}_1,\widetilde{\mu}_1, \widetilde{\theta}_1}$ and $\trkla{\widetilde{\phi}_2,\widetilde{\phi_\Gamma}_2,\widetilde{\mu}_2, \widetilde{\theta}_2}$ be two martingale solutions to \eqref{eq:weakform} with the same initial data $\phi_0$, the same probability space $\rkla{\widetilde{\Omega},\widetilde{\mathcal{A}},\widetilde{\mathcal{F}},\widetilde{\Prob}}$, and the same $\mathcal{Q}$-Wiener process $\widetilde{W}$
To show that these solutions are identical, we will make use a local monotonicity argument (cf.~\cite{Liu2013} and \cite{rocknerShangZhang2024} for a generalization) and apply a Gronwall type argument.
Hence, we introduce
\begin{align}
\mathfrak{N}\trkla{\varphi^*,\varphi^*_\Gamma}:= \norm{\varphi^*}_*^2+\norm{\varphi^*_\Gamma}_{L^2\trkla{\Gamma}}^2+2\tabs{\Gamma}\trkla{\varphi^*}_\Om^2 +2\trkla{\varphi^*}_\Om\trkla{\varphi^*_\Gamma}_\Gamma\geq \norm{\varphi^*}_*^2+\tfrac12\norm{\varphi^*_\Gamma}_{L^2\trkla{\Gamma}}^2
\end{align}
and apply \Ito's formula to the expression
\begin{align}
\mathfrak{R}\trkla{t,\trkla{\widetilde{\phi}_1,\widetilde{\phi_\Gamma}_1}-\trkla{\widetilde{\phi}_2,\widetilde{\phi_\Gamma}_2}}
:=\tfrac12e^{-\int_0^t \trkla{\widehat{C}+\vartheta\trkla{\widetilde{\phi}_1,\widetilde{\phi}_2,\widetilde{\phi_\Gamma}_1,\widetilde{\phi_\Gamma}_2}}\ds }\mathfrak{N}\trkla{\widetilde{\phi}_1-\widetilde{\phi}_2,\widetilde{\phi_\Gamma}_1-\widetilde{\phi_\Gamma}_2}\,
\end{align}
with a suitable positive constant $\widehat{C}$ and a measurable mapping $\vartheta$ which we will define later.
As both solutions share the same initial conditions, we obtain for $\hat{T}\in\tekla{0,T}$
\begin{align}
\begin{split}
\widetilde{\mathds{E}}&\left[\mathfrak{R}\trkla{\hat{T},\trkla{\widetilde{\phi}_1\trkla{\hat{T}},\widetilde{\phi_\Gamma}_1\trkla{\hat{T}}}-\trkla{\widetilde{\phi}_2\trkla{\hat{T}},\widetilde{\phi_\Gamma}_2\trkla{\hat{T}}}}\right]\\
=&\,\expectedt{\int_0^{\hat{T}}\ekla{-\rkla{\widehat{C}+\vartheta\trkla{\widetilde{\phi}_1,\widetilde{\phi}_2,\widetilde{\phi_\Gamma}_1,\widetilde{\phi_\Gamma}_2}}} \mathfrak{R}\trkla{t,\trkla{\widetilde{\phi}_1,\widetilde{\phi_\Gamma}_1}-\trkla{\widetilde{\phi}_2,\widetilde{\phi_\Gamma}_2}}\dt}\\
&+\expectedt{\int_0^{\hat{T}} e^{-\int_0^t \trkla{\widehat{C}+\vartheta\trkla{\widetilde{\phi}_1,\widetilde{\phi}_2,\widetilde{\phi_\Gamma}_1,\widetilde{\phi_\Gamma}_2}}\ds } \rkla{-\iO\rkla{\widetilde{\phi}_1-\widetilde{\phi}_2-\trkla{\widetilde{\phi}_1-\widetilde{\phi}_2}_\Om}\rkla{\widetilde{\mu}_1-\widetilde{\mu}_2}\dx}\dt}\\
&+\expectedt{\int_0^{\hat{T}} e^{-\int_0^t \trkla{\widehat{C}+\vartheta\trkla{\widetilde{\phi}_1,\widetilde{\phi}_2,\widetilde{\phi_\Gamma}_1,\widetilde{\phi_\Gamma}_2}}\ds } \rkla{-\iGamma \rkla{\widetilde{\phi_\Gamma}_1-\widetilde{\phi_\Gamma}_2-\trkla{\widetilde{\phi}_1-\widetilde{\phi}_2}_\Om}\rkla{\widetilde{\theta}_1-\widetilde{\theta}_2} \dG}\dt}\\
&+\tfrac12\widetilde{\mathds{E}}\left[\int_0^{\hat{T}} e^{-\int_0^t \trkla{\widehat{C}+\vartheta\trkla{\widetilde{\phi}_1,\widetilde{\phi}_2,\widetilde{\phi_\Gamma}_1,\widetilde{\phi_\Gamma}_2}}\ds } \left(\sum_{k\in\mathds{Z}}\norm{\rkla{\varrho\trkla{\widetilde{\phi}_1}-\varrho\trkla{\widetilde{\phi}_2}}\lambda_k\g{k} }_*^2\right.\right.\\
&\qquad\qquad + \sum_{k\in\mathds{Z}}\norm{\trace{\rkla{\varrho\trkla{\widetilde{\phi}_1}-\varrho\trkla{\widetilde{\phi}_2}} \lambda_k\g{k}}}_{L^2\trkla{\Gamma}}^2  + 2\tabs{\Gamma}\sum_{k\in\mathds{Z}}\trkla{\trkla{\varrho\trkla{\widetilde{\phi}_1}-\varrho\trkla{\widetilde{\phi}_2}}\lambda_k\g{k}}_\Om^2\\
&\left.\left.\qquad\qquad+2\sum_{k\in\mathds{Z}}\trkla{\trkla{\varrho\trkla{\widetilde{\phi}_1}-\varrho\trkla{\widetilde{\phi}_2}}\lambda_k\g{k}}_\Om \rkla{\trace{\trkla{\varrho\trkla{\widetilde{\phi}_1}-\varrho\trkla{\widetilde{\phi}_2}}\lambda_k\g{k}}}_\Gamma \right)\dt\right]\\
=:&\,I_1+I_2+I_3+I_4\,.
\end{split}
\end{align}
We start by estimating the $I_2+I_3$:
As $\widetilde{\phi_\Gamma}_1-\widetilde{\phi_\Gamma}_2-\trkla{\widetilde{\phi}_1-\widetilde{\phi}_2}_\Om$ is $\widetilde{\Prob}$-a.s.~the trace of $\widetilde{\phi}_1-\widetilde{\phi}_2 -\trkla{\widetilde{\phi}_1-\widetilde{\phi}_2}_\Om$, we obtain
\begin{align}
\begin{split}
-\iO&\rkla{\widetilde{\phi}_1-\widetilde{\phi}_2-\trkla{\widetilde{\phi}_1-\widetilde{\phi}_2}_\Om}\trkla{\widetilde{\mu}_1-\widetilde{\mu}_2}\dx-\iGamma \rkla{\widetilde{\phi_\Gamma}_1-\widetilde{\phi_\Gamma}_2-\trkla{\widetilde{\phi}_1-\widetilde{\phi}_2}_\Om}\rkla{\widetilde{\theta}_1-\widetilde{\theta}_2} \dG\\
=&\,-\norm{\nabla\widetilde{\phi}_1-\nabla\widetilde{\phi}_2}_{L^2\trkla{\Om}}^2-\norm{\nablaG\widetilde{\phi_\Gamma}_1-\nablaG\widetilde{\phi_\Gamma}_2}_{L^2\trkla{\Gamma}}^2 \\
&-\iO\rkla{F^\prime\trkla{\widetilde{\phi}_1}-F^\prime\trkla{\widetilde{\phi}_2}}\rkla{\widetilde{\phi}_1-\widetilde{\phi}_2-\trkla{\widetilde{\phi}_1-\widetilde{\phi}_2}_\Om}\dx\\
&-\iGamma\rkla{G^\prime\trkla{\widetilde{\phi_\Gamma}_1}-G^\prime\trkla{\widetilde{\phi_\Gamma}_2}}\rkla{\widetilde{\phi_\Gamma}_1-\widetilde{\phi_\Gamma}_2-\trkla{\widetilde{\phi}_1-\widetilde{\phi}_2}_\Om}\dG\\
=:&\,R_1+R_2+R_3+R_4\,.
\end{split}
\end{align}
Similarly to \cite{\citeASAV}, we use Hölder's inequality, \ref{item:potentials}, the standard Sobolev embeddings, and Young's inequality to deduce for $0<\alpha<\!\!<1$
\begin{align}
\begin{split}
\tabs{R_4}\leq&\,C\norm{1+\abs{\widetilde{\phi_\Gamma}_1}^2+\abs{\widetilde{\phi_\Gamma}_2}^2}_{L^3\trkla{\Gamma}}\norm{\widetilde{\phi_\Gamma}_1-\widetilde{\phi_\Gamma}_2}_{L^2\trkla{\Gamma}} \rkla{\norm{\widetilde{\phi_\Gamma}_1-\widetilde{\phi_\Gamma}_2}_{H^1\trkla{\Gamma}} +\tabs{\trkla{\widetilde{\phi}_1-\widetilde{\phi}_2}_\Om}}\\
\leq&\,C_\alpha \rkla{1+ \norm{\widetilde{\phi_\Gamma}_1}_{H^1\trkla{\Gamma}}^4 +\norm{\widetilde{\phi_\Gamma}_2}_{H^1\trkla{\Gamma}}^4}\rkla{\norm{\widetilde{\phi_\Gamma}_1-\widetilde{\phi_\Gamma}_2}_{L^2\trkla{\Gamma}}^2 + \norm{\widetilde{\phi}_1-\widetilde{\phi}_2}_*^2} \\
&+ \alpha\norm{\nablaG\widetilde{\phi_\Gamma}_1-\nablaG\widetilde{\phi_\Gamma}_2}_{L^2\trkla{\Gamma}}^2\,.
\end{split}
\end{align}
From \ref{item:potentials}, Hölder's inequality, Poincar\'e's inequality, and Young's inequality, we obtain
\begin{align}
\begin{split}
\tabs{R_3}\leq&\, C\norm{1+\abs{\widetilde{\phi}_1}^2+\abs{\widetilde{\phi}_2}^2}_{L^3\trkla{\Om}}\norm{\widetilde{\phi}_1-\widetilde{\phi}_2}_{L^2\trkla{\Om}}\norm{\nabla\widetilde{\phi}_1-\nabla\widetilde{\phi}_2}_{L^2\trkla{\Om}}\\
\leq&\,C_\alpha\rkla{1+\norm{\widetilde{\phi}_1}_{H^1\trkla{\Om}}^4+\norm{\widetilde{\phi}_2}_{H^1\trkla{\Om}}^4}\norm{\widetilde{\phi}_1-\widetilde{\phi}_2}_{L^2\trkla{\Om}}^2+\alpha\norm{\nabla\widetilde{\phi}_1-\nabla\widetilde{\phi}_2}_{L^2\trkla{\Om}}^2\,.
\end{split}
\end{align}
Recalling the definition of $\norm{\cdot}_*$ in \eqref{eq:def:dualnorm}, we compute
\begin{multline}\label{eq:Ehrling}
\norm{\widetilde{\phi}_1-\widetilde{\phi}_2}_{L^2\trkla{\Om}}^2 = \norm{\widetilde{\phi}_1-\widetilde{\phi}_2-\trkla{\widetilde{\phi}_1-\widetilde{\phi}_2}_\Om}_{L^2\trkla{\Om}}^2+\trkla{\widetilde{\phi}_1-\widetilde{\phi}_2}_\Om^2\\
\leq \abs{\widetilde{\phi}_1-\widetilde{\phi}_2}_* \norm{\nabla\widetilde{\phi}_1-\nabla\widetilde{\phi}_2}_{L^2\trkla{\Om}}+\trkla{\widetilde{\phi}_1-\widetilde{\phi}_2}_\Om^2\,.
\end{multline}
Hence, by Young's inequality, we obtain
\begin{align}
\tabs{R_3}\leq C_\alpha\rkla{1+\norm{\widetilde{\phi}_1}_{H^1\trkla{\Om}}^8+\norm{\widetilde{\phi}_2}_{H^1\trkla{\Om}}^8}\norm{\widetilde{\phi}_1-\widetilde{\phi}_2}_*^2 +2\alpha\norm{\nabla\widetilde{\phi}_1-\nabla\widetilde{\phi}_2}_{L^2\trkla{\Om}}^2\,.
\end{align}
We continue by deducing estimates for $I_4$. 
Using the Lipschitz continuity of $\varrho$ (cf.~\ref{item:rho}), we obtain using \ref{item:color}, \eqref{eq:Ehrling}, and Young's inequality
\begin{multline}
\tfrac12\sum_{k\in\mathds{Z}}\norm{\rkla{\varrho\trkla{\widetilde{\phi}_1}-\varrho\trkla{\widetilde{\phi}_2}}\lambda_k\g{k}}_*^2\leq C \sum_{k\in\mathds{Z}}\norm{\rkla{\varrho\trkla{\widetilde{\phi}_1}-\varrho\trkla{\widetilde{\phi}_2}}\lambda_k\g{k}}_{L^2\trkla{\Om}}^2\\
\leq C\norm{\widetilde{\phi}_1-\widetilde{\phi}_2}_{L^2\trkla{\Om}}^2\leq C_\alpha\norm{\widetilde{\phi}_1-\widetilde{\phi}_2}_*^2 +\alpha\norm{\nabla\widetilde{\phi}_1-\nabla\widetilde{\phi}_2}_{L^2\trkla{\Om}}^2\,.
\end{multline}
Similar computations provide $\widetilde{\Prob}$-a.s.
\begin{align}
\tabs{\Gamma}\sum_{k\in\mathds{Z}}\trkla{\trkla{\varrho\trkla{\widetilde{\phi}_1}-\varrho\trkla{\widetilde{\phi}_2}}\lambda_k\g{k}}_\Om^2\leq C_\alpha\norm{\widetilde{\phi}_1-\widetilde{\phi}_2}_*^2+\alpha\norm{\nabla\widetilde{\phi}_1-\nabla\widetilde{\phi}_2}_{L^2\trkla{\Om}}^2\,,
\end{align}
\begin{align}
\tfrac12\sum_{k\in\mathds{Z}}\norm{\trace{\rkla{\varrho\trkla{\widetilde{\phi}_1}-\varrho\trkla{\widetilde{\phi}_2}}\lambda_k\g{k}}}_{L^2\trkla{\Gamma}}^2\leq C\norm{\widetilde{\phi_\Gamma}_1-\widetilde{\phi_\Gamma}_2}_{L^2\trkla{\Gamma}}^2\,,
\end{align}
and
\begin{multline}
\sum_{k\in\mathds{Z}}\trkla{\trkla{\varrho\trkla{\widetilde{\phi}_1}-\varrho\trkla{\widetilde{\phi}_2}}\lambda_k\g{k}}_\Om \trkla{\trace{\trkla{\varrho\trkla{\widetilde{\phi}_1}-\varrho\trkla{\widetilde{\phi}_2}}\lambda_k\g{k}}}_\Gamma\\
\leq C_\alpha\norm{\widetilde{\phi}_1-\widetilde{\phi}_2}_*^2+ C\norm{\widetilde{\phi_\Gamma}_1-\widetilde{\phi_\Gamma}_2}_{L^2\trkla{\Gamma}}^2 +\alpha\norm{\nabla\widetilde{\phi}_1-\nabla\widetilde{\phi}_2}_{L^2\trkla{\Gamma}}^2\,.
\end{multline}
Combining the above results, we obtain for $\alpha$ sufficiently small
\begin{align}
\begin{split}
I_2&+I_3+I_4\\
\leq&\, \widetilde{\mathds{E}}\left[\int_0^{\hat{T}}e^{-\int_0^t\trkla{\widehat{C}-\vartheta\trkla{\widetilde{\phi}_1,\widetilde{\phi}_2,\widetilde{\phi_\Gamma}_1,\widetilde{\phi_\Gamma}_2}}\ds} \rkla{\norm{\widetilde{\phi}_1-\widetilde{\phi}_2}_*^2+\norm{\widetilde{\phi_\Gamma}_1-\widetilde{\phi_\Gamma}_2}_{L^2\trkla{\Gamma}}^2} \right. \\
&\left.\qquad\times\rkla{C_1+C_2\rkla{1+\norm{\widetilde{\phi}_1}_{H^1\trkla{\Om}}^8+\norm{\widetilde{\phi}_2}_{H^1\trkla{\Om}}^8 +\norm{\widetilde{\phi_\Gamma}_1}_{H^1\trkla{\Gamma}}^4+\norm{\widetilde{\phi_\Gamma}_2}_{H^1\trkla{\Gamma}}^4}  
}\dt \right]\\
\leq&\,\widetilde{\mathds{E}}\left[\int_0^{\hat{T}}\rkla{C_1+C_2\rkla{1+\norm{\widetilde{\phi}_1}_{H^1\trkla{\Om}}^8+\norm{\widetilde{\phi}_2}_{H^1\trkla{\Om}}^8 +\norm{\widetilde{\phi_\Gamma}_1}_{H^1\trkla{\Gamma}}^4+\norm{\widetilde{\phi_\Gamma}_2}_{H^1\trkla{\Gamma}}^4}  
}\right.\\
&\qquad\left.\times\,\mathfrak{R}\trkla{t,\trkla{\widetilde{\phi}_1,\widetilde{\phi_\Gamma}_1}-\trkla{\widetilde{\phi}_2,\widetilde{\phi_\Gamma}_2}}\dt\right]\,.
\end{split}
\end{align}
Choosing 
\begin{align}
\vartheta\trkla{\widetilde{\phi}_1,\widetilde{\phi}_2,\widetilde{\phi_\Gamma}_1,\widetilde{\phi_\Gamma}_2}=C_2\rkla{\norm{\widetilde{\phi}_1}_{H^1\trkla{\Om}}^8+\norm{\widetilde{\phi}_2}_{H^1\trkla{\Om}}^8+\norm{\widetilde{\phi_\Gamma}_1}_{H^1\trkla{\Gamma}}^4+\norm{\widetilde{\phi_\Gamma}_2}_{H^1\trkla{\Gamma}}^4}
\end{align}
and $\widehat{C}$ sufficiently large, we obtain
\begin{align}
\expectedt{\mathfrak{R}\trkla{\hat{T},\trkla{\widetilde{\phi}_1\trkla{\hat{T}},\widetilde{\phi_\Gamma}_1\trkla{\hat{T}}}-\trkla{\widetilde{\phi}_2\trkla{\hat{T}},\widetilde{\phi_\Gamma}_2\trkla{\hat{T}}}}}\leq 0\,.
\end{align}
Due to the already established regularity of the martingale solutions to \eqref{eq:weakform}, we have
\begin{align}
\int_0^T \vartheta\trkla{\widetilde{\phi}_1,\widetilde{\phi}_2,\widetilde{\phi_\Gamma}_1,\widetilde{\phi_\Gamma}_2} \dt<\infty
\end{align}
$\widetilde{\Prob}$-almost surely.
Together with the continuity $\trkla{\widetilde{\phi}_1,\widetilde{\phi_\Gamma}_1}$ and $\trkla{\widetilde{\phi}_2,\widetilde{\phi_\Gamma}_2}$ in $L^2\trkla{\Om}\times L^2\trkla{\Gamma}$ this implies the pathwise uniqueness of martingale solutions.
\end{proof}

\section{Convergence towards strong solutions}\label{sec:strongsolutions}
In this section, we shall prove Theorem \ref{thm:strongsolutions}.
Hence, we assume that for a given a filtered probability space $\trkla{\Omega,\mathcal{A},\mathcal{F},\Prob}$ with a $\mathcal{Q}$-Wiener process $W$ satisfying \ref{item:W1} and \ref{item:W2}, we have a finite dimensional approximation
\begin{align}\label{eq:finiteQWienerNEU}
\bs{\xi}\h^{m,\tau}=\sum_{k\in\mathds{Z}_h}\rkla{W\trkla{t^m},\g{k}}_{L^2\trkla{\Om}}\g{k}\,,
\end{align}
satisfying \ref{item:filtration}, \ref{item:increment}, \ref{item:randomvars}, and \ref{item:color}.
As this is a specialization of the previously discussed setting, the prior results remain valid.
In particular, we still have the pathwise uniqueness of martingale solutions.
Hence, we can apply a generalization of the Gyöngy--Krylov characterization of convergence in probability (cf.~\cite{GyongyKrylov1996}) to the setting of quasi-Polish spaces that was established in \cite{BreitFeireislHofmanova}.\\
As shown before, our numerical scheme \eqref{eq:modeldisc} has a pathwise unique solution for any $h$ and $\tau$.
Hence, there exists a sequence of stochastic processes defined on $\trkla{\Omega,\mathcal{A},\mathcal{F},\Prob}$ satisfying
\begin{subequations}
\begin{multline}
\iO\Ih{\trkla{\phi\h\tl\trkla{t}-\phi\h\tm}\psi\h}\dx+\trkla{t-t\no}\iO\nabla\mu\h\tp\cdot\nabla\psi\h\dx\\
=\frac{t-t\no}\tau\iO\Ih{\Phi\h\trkla{\phi\h\tm}\trkla{W\trkla{t}-W\trkla{t\no}}\psi\h}\dx\,,
\end{multline}
\begin{multline}
\iGamma\IhG{\trace{\phi\h\tl\trkla{t}-\phi\h\tm}\widehat{\psi}\h}\dG+\trkla{t-t\no}\iGamma\IhG{\theta\h\tp\widehat{\psi}\h}\dG\\
=\frac{t-t\no}{\tau}\iGamma\IhG{\trace{\Phi\h\trkla{\phi\h\tm}\trkla{W\trkla{t}-W\trkla{t\no}}}\widehat{\psi}\h}\dG\,,
\end{multline}
and
\begin{align}
\begin{split}
\iO&\Ih{\mu\h\tp\eta\h}\dx+\iGamma\IhG{\theta\h\tp\trace{\eta\h}}\dG\\
=&\,\iO\nabla\phi\h\tp\cdot\nabla\eta\h\dx+\iGamma\nablaG\trace{\phi\h\tp}\cdot\nablaG\trace{\eta\h}\dG\\
&+\frac{r\h\tp}{\sqrt{E\h^\Om\trkla{\phi\h\tm}}}\iO\Ih{F^\prime\trkla{\phi\h\tm}\eta\h}\dx+\iO\Ih{\Xi_{h,\Om}\tp\eta\h}\dx\\
&+\frac{s\h\tp}{\sqrt{E\h^\Gamma\trkla{\trace{\phi\h\tm}}}}\iGamma\IhG{G^\prime\rkla{\trace{\phi\h\tm}}\trace{\eta\h}}\dG+\iGamma\IhG{\Xi_{h,\Gamma}\tp\trace{\eta\h}}\dG\,
\end{split}
\end{align}
\end{subequations}
for all $\psi\h,\eta\h\in\Uh$ and $\widehat{\psi}\h\in\UhG$.
Here, $\Xi_{h,\Om}\tp$ and $\Xi_{h,\Gamma}\tp$ are the piecewise constant in time interpolation of $\Xi_{h,\Om}\nn$ and $\Xi_{h,\Gamma}\nn$ which were defined in \eqref{eq:deferrorterms}.
As shown in Lemma \ref{lem:potential}, these terms will vanish for $\tau\searrow0$.\\
From this sequence of stochastic processes, we excerpt an arbitrary pair of subsequences which we shall denote by $a_\alpha:=\rkla{\phi_{h_\alpha}^{\tau_\alpha},\trace{\phi_{h_\alpha}^{\tau_\alpha}},r_{h_\alpha}^{\tau_\alpha},s_{h_\alpha}^{\tau_\alpha},\mu_{h_\alpha}^{\tau_\alpha,+}, l\tekla{\mu_{h_\alpha}^{\tau_\alpha,+}},\theta_{h,\alpha}^{\tau_\alpha,+} }_{\alpha\in\mathds{N}}$ and $a_\beta:=\rkla{\phi_{h_\beta}^{\tau_\beta},\trace{\phi_{h_\beta}^{\tau_\beta}},r_{h_\beta}^{\tau_\beta},s_{h_\beta}^{\tau_\beta},\mu_{h_\beta}^{\tau_\beta,+}, l\tekla{\mu_{h_\beta}^{\tau_\beta,+}},\theta_{h,\beta}^{\tau_\beta,+} }_{\beta\in\mathds{N}}$.
For these subsequences, we consider their joint laws $\trkla{\nu_{\alpha,\beta}}_{\alpha,\beta\in\mathds{N}}$ on $\mathcal{Y}\times\mathcal{Y}$ with
\begin{align}
\begin{split}
\mathcal{Y}:=&\,C\trkla{\tekla{0,T};L^s\trkla{\Om}}\times C\trkla{\tekla{0,T};L^r\trkla{\Gamma}}\times L^2\trkla{0,T}_{\weaktop}\times L^2\trkla{0,T}_{\weaktop}\\
&\quad\times L^2\trkla{0,T;H^1\trkla{\Om}}_{\weaktop}\times L^2\trkla{0,T;\trkla{H^1\trkla{\Om}}^\prime}_{\weaktop}\times L^2\trkla{0,T;L^2\trkla{\Gamma}}
\end{split}
\end{align}
and $s\in[1,\tfrac{2d}{d-2})$ and $r\in[1,\tfrac{2\trkla{d-1}}{d-3})$.
In the following we will show that for the sequence of joint laws, there exists a further subsequence which converges weakly to a probability measure $\nu$ such that
\begin{align}\label{eq:limitmeasure}
\nu\trkla{\trkla{a,b}\in\mathcal{Y}\times\mathcal{Y}\,:\,a=b}=1\,.
\end{align}
Then Proposition A.4 in \cite{BreitFeireislHofmanova} provides $\Prob$-almost sure convergence in the topology of $\mathcal{Y}$ for a subsequence $\rkla{\phi\h\tl,\trace{\phi\h\tl},r\h\tl,s\h\tl,\mu\h\tp, l\tekla{\mu\h\tp},\theta\h\tp }_{h,\tau}$.
Together with the pathwise uniqueness, this guarantees convergence for the complete sequence.\\
To establish \eqref{eq:limitmeasure}, we define the extended path space
\begin{align}
\widehat{\mathcal{X}}:=\mathcal{Y}\times\mathcal{Y}\times C\trkla{\tekla{0,T};H^1\trkla{\Om}}\,,
\end{align}
and consider the sequence
\begin{align}
\trkla{z_{\alpha\beta}}_{\alpha,\beta\in\mathds{N}}:=\rkla{a_\alpha,a_\beta,W}_{\alpha,\beta\in\mathds{N}}\,.
\end{align}
Recalling the arguments from Section \ref{sec:compactness}, we note that the joint laws of this sequence are tight on $\widehat{\mathcal{X}}$.
Hence, repeating the arguments of Theorem \ref{thm:jakubowski}, we obtain obtain for a subsequence $\trkla{z_{\alpha_i \beta_i}}_{i\in\mathds{N}}$ the existence of a probability space $\trkla{\widetilde{\Omega},\widetilde{\mathcal{A}},\widetilde{\Prob}}$ and a sequence of random variables
\begin{align}
\trkla{\widetilde{z}_i}_{i\in\mathds{N}}=\trkla{\widetilde{a}_{\alpha_i},\widetilde{a}_{\beta_i},\widetilde{W}_i}_{i\in\mathds{N}}
\end{align}
defined on $\trkla{\widetilde{\Omega},\widetilde{\mathcal{A}},\widetilde{\Prob}}$ that has the same law as $\trkla{z_{\alpha_i \beta_i}}_{i\in\mathds{N}}$ and converges towards random variables
\begin{align}
\rkla{\widetilde{\phi},\widetilde{\phi_\Gamma},\widetilde{r},\widetilde{s},\widetilde{\mu},\widetilde{L},\widetilde{\theta},\widehat{\phi},\widehat{\phi_\Gamma},\widehat{r},\widehat{s},\widehat{\mu},\widehat{L},\widehat{\theta},\widetilde{W}}\,.
\end{align}
Arguing as in Lemma \ref{lem:convergence}, we obtain $\trace{\widetilde{\phi}}=\widetilde{\phi_\Gamma}$, $\widetilde{r}=\sqrt{\iO F\trkla{\widetilde{\phi}}\dx}$, $\widetilde{s}=\sqrt{\iGamma G\trkla{\widetilde{\phi_\Gamma}}\dG}$, $\widetilde{L}=l\tekla{\widetilde{\mu}}$, $\trace{\widehat{\phi}}=\widehat{\phi_\Gamma}$, $\widehat{r}=\sqrt{\iO F\trkla{\widehat{\phi}}\dx}$, $\widehat{s}=\sqrt{\iGamma G\trkla{\widehat{\phi_\Gamma}}\dG}$, and $\widehat{L}=l\tekla{\widehat{\mu}}$ $\widetilde{\Prob}$-almost surely almost everywhere.
Following the arguments of Section \ref{sec:limit}, we can show that $\trkla{\widetilde{\phi},\widetilde{\phi_\Gamma},\widetilde{\mu},\widetilde{\theta},\widetilde{W}}$ and $\trkla{\widehat{\phi},\widehat{\phi_\Gamma},\widehat{\mu},\widehat{\theta},\widetilde{W}}$ are both martingale solutions satisfying \eqref{eq:weakform} with the same initial conditions and the same Wiener process. 
Due to the pathwise uniqueness established in Section \ref{sec:uniqueness} we have $\widetilde{\phi}=\widehat{\phi}$, $\widetilde{\phi_\Gamma}=\widehat{\phi_\Gamma}$, $\widetilde{\mu}=\widehat{\mu}$, $\widetilde{\theta}=\widehat{\theta}$ $\widetilde{\Prob}$-almost surely.
As a consequence, we also obtain $\widetilde{r}=\widehat{r}$, $\widetilde{s}=\widehat{s}$, and $\widetilde{L}=\widehat{L}$.
Hence, by equality of laws, we obtain \eqref{eq:limitmeasure}.\\
As this guarantees $\Prob$-almost sure convergence in the topology of $\mathcal{Y}$ for a subsequence of $\rkla{\phi\h\tl,\trace{\phi\h\tl},r\h\tl,s\h\tl,\mu\h\tp, l\tekla{\mu\h\tp},\theta\h\tp }_{h,\tau}$, we have a convergence result similar to the one stated in Theorem \ref{thm:jakubowski} without introducing a new probability space.
Consequently, we can repeat the arguments of the previous sections to conclude the proof of Theorem \ref{thm:strongsolutions}.

\section{Conclusion}
For small droplets, contact line tension has a major influence on the evolution of the three-phase contact line.
We studied a diffuse interface model consisting of a Cahn--Hilliard equation with Allen--Cahn type dynamic boundary conditions.
The influence of thermal fluctuations is included in this model via multiplicative noise terms in the bulk and on the boundary.
We investigated the well-posedness and the numerical treatment of the arising stochastic PDEs.
We started by proposing a fully discrete, linear finite element scheme based on a recently developed augmented version of the scalar auxiliary variable method.
Based on a priori estimates, we established the existence of pathwise unique, (stochastically) strong solutions by showing convergence of the discrete solution to our proposed scheme.\\
As the investigation of the influence of thermal fluctuations requires simulating multiple individual paths, the numerical exploration of \eqref{eq:model} which is already challenging in the deterministic case, becomes even more expensive.
Hence, we expect the linearity of the scheme to be a key tool opening a pathway for further numerical investigations of the effect of contact line tension on the wetting behavior and contact angle of droplets.
\bibliographystyle{amsplain}
\providecommand{\bysame}{\leavevmode\hbox to3em{\hrulefill}\thinspace}
\providecommand{\MR}{\relax\ifhmode\unskip\space\fi MR }
\providecommand{\MRhref}[2]{%
  \href{http://www.ams.org/mathscinet-getitem?mr=#1}{#2}
}
\providecommand{\href}[2]{#2}

\end{document}